\documentclass[review]{elsarticle} 

\usepackage{hyperref}


\journal{Comput.~Methods Appl.~Mech.~Engrg.}

\bibliographystyle{elsarticle-num}
\usepackage{amssymb, amsmath, amsthm}
\usepackage{amsfonts} 
\usepackage{color}
\usepackage{mathrsfs}
\usepackage{comment}
\usepackage{dcolumn}
\usepackage{bm} 

\usepackage[hang,small,bf]{caption}
\usepackage[subrefformat=parens]{subcaption}
\captionsetup{compatibility=false}

\numberwithin{equation}{section}
\newcommand{\e}{\varepsilon}
\renewcommand{\d}{\mathrm{d}}
\newcommand{\R}{\mathbb R}
\newcommand{\N}{\mathbb N}
\newcommand{\dv}{{\rm{div}}}

\newtheorem{thm}{Theorem}[section]
\newtheorem{lem}[thm]{Lemma}
\newtheorem{rmk}[thm]{Remark}
\newtheorem{prop}[thm]{Proposition}
\newtheorem{defi}[thm]{Definition}

\begin{document}

\begin{frontmatter}

\title{Topology optimization method\\ with nonlinear diffusion\tnoteref{mytitlenote}}
\tnotetext[mytitlenote]{This study is partially supported by JSPS KAKENHI Grant Number JP22K20331.}

\author[mymainaddress]{Tomoyuki Oka\corref{mycorrespondingauthor}}
\cortext[mycorrespondingauthor]{Corresponding author}
\ead{tomoyuki-oka@g.ecc.u-tokyo.ac.jp}

\author[mymainaddress]{Takayuki Yamada}
\ead{t.yamada@mech.t.u-tokyo.ac.jp}

\address[mymainaddress]{Graduate School of Engineering, The University of Tokyo, \\Yayoi 2-11-16, Bunkyo-ku, Tokyo 113-8656, Japan}

\begin{abstract}
This paper is concerned with topology optimization based on a level set method using (doubly) nonlinear diffusion equations. Topology optimization using the level set method is called level set-based topology optimization, which is possible to determine optimal configurations that minimize objective functionals by updating level set functions. In this paper, as an update equation for level set functions, (doubly) nonlinear diffusion equations with reaction terms are derived, and then the singularity and degeneracy of the diffusion coefficient are applied to obtain fast convergence of configurations and damping oscillation on boundary structures. In particular, the reaction terms in the proposed method do not depend on the topological derivatives, and therefore, sensitivity analysis to determine a descent direction for objective functionals is relaxed. Furthermore, a numerical algorithm for the proposed method is constructed and applied to typical minimization problems to show numerical validity. This paper is a justification and generalization of the method using reaction-diffusion equations developed by one of the authors in Yamada et al.~(2010).
\end{abstract}

\begin{keyword} topology optimization, level set method, fast diffusion equation, porous medium equation, parabolic $p$-Laplace equation, doubly nonlinear diffusion equation
\MSC[2010] {\emph{Primary}: 46N10; \emph{Secondary}: 35Q93, 47J35 }

\end{keyword}

\end{frontmatter}

\section{Introduction} \label{S:intro}
It is a natural question to consider geometries that maximize desired physical properties for given materials, and \emph{topology optimization} is known as one of the methods to answer the question. 
Topology optimization is a type of structural optimization and is also known as the most flexible structural optimization since it allows changes in the shape of materials and in the topology of materials by generating holes; in other words, 
it deeply concerns distribution problems that determine the presence or absence of materials.
Moreover, topological change can be used to create high-performance and even lightweight materials and is therefore attracting attention for industrial applications along with recent improvements in additive manufacturing technology.

In general, topology optimization is formulated as minimization problems for objective functions whose (design) variables are materials (i.e.,~domains). 
In particular, since topological change must be allowed, a fixed design domain is introduced to involve them, and materials implying design variables are represented as in computer graphics with the use of characteristic functions. 
Thus, the design variables are replaced from domains to characteristic functions, and topology optimization is attributed to minimization problems for objective functionals such as usual variational. 
However, the class of design variables also includes discontinuous functions with rapid oscillation, which means that domains with countless holes can also be optimal configurations, and thus, the following research topics arise\/{\rm:}
\begin{itemize}
\item[(i)] The existence of (global) optimal configurations.
\item[(ii)] The specific geometry of the optimal configurations.
\item[(iii)] Reproducibility in manufacturing.
\end{itemize}
These topics are critical 
in various research fields, such as mathematics, physics, engineering and computer science.

As a typical example of topology optimization, the stiffness maximization problem is well known and is formulated as a minimization problem for an objective functional called mean compliance, whose state variables are described by second-order partial differential equations (PDEs) called linearized elastic systems. 
The existence of an optimal configuration (for the generalized problem)
is guaranteed by \emph{homogenization theory}, which is used to specify (weak star) limits of minimizing sequences for design variables (i.e.,~characteristic functions with rapid oscillation). 
Homogenization is a method to replace heterogeneous materials with countless microstructures with an equivalent macrostructure and then provides the replacement of some average quantity for rapidly oscillating functions in the mathematical sense.
Among existing functional analyses, \emph{H-convergence} \cite{MT97}, \emph{two-scale convergence} \cite{N89,A92} and the \emph{unfolding method} \cite{ADH90,CDG08} are well known as typical methods, and in particular, $H$-convergence ensures that there exists at least one global optimal configuration (for the generalized problem). 
Furthermore, two-scale convergence and the unfolding method correspond to mathematical justifications for the asymptotic expansion method \cite{BLP78}, which is classically known as formal computation for periodic homogenization. 
These methods are deeply concerned with \emph{G-closure problem}, which is also related to the existence theory for optimal configurations and optimal conditions. 
Thus, these methods play a crucial role and provide an answer to (i) and form the basis of related
numerical analysis (see, e.g.,~\cite{A02} for details). 

As for (ii),  based on the homogenization theory, the so-called \emph{homogenization design method {\rm (}HDM{\rm\,)}} was first developed (see \cite{BK88,SK91}). In this method, the optimal configuration is regarded as a periodic porous material, and the size and angle of holes are optimized. Furthermore, the \emph{solid isotropic material with penalization {\rm(}SIMP{\rm\,)} method} \cite{B88, BS03,S01}, which optimizes the density of a material by replacing the characteristic function with the density function, has been developed as a simplified version and is still used today. 
However, replacement by density functions generates intermediate domains that are neither material nor void domains, and the so-called \emph{grayscale problem} arises. 
Moreover, \emph{filtering} was developed as a solution strategy; however, issues such as \emph{checkerboarding} and \emph{mesh dependency} sometimes arise if regularization schemes are not used. In \cite{S07}, 
these issues are positively solved by morphology-based density filtering schemes, which are also crucial to manufacturability (see also, e.g.,~\cite{AB93, BJ01, BT01, KY00} for filtering and \cite{ACMOY19} for the resurrection of HDM).

To overcome these numerical issues drastically, the \emph{level set method {\rm(}LSM{\rm\,)}} is also devised (see, e.g.,~\cite{AJT02, OS88, WWG03}). The LSM is an optimization method in which, instead of the characteristic function, a (weak) differentiable function called the \emph{level set function {\rm(}LSF{\rm\,)}} is introduced as a design variable, and the characteristic function is constructed using the sign of the LSF;  details of this method are explained in the next section. 
Thus, the awkward discontinuities are eliminated as design variables, and the use of characteristic functions allows for an explicit material representation again.
As a typical example of the LSF, the signed distance function is well known, and the Hamilton-Jacobi equation is derived by partially differentiating it with respect to a fictitious time variable. 
Thus, the optimal configuration is 
determined by updating the LSF with the aid of the shape derivative and solving it (see also, e.g.,~\cite{AJT04, KWW16, WK18}).
Furthermore, using the bubble method \cite{ba},  improvements that do not rely on the initial configuration are devised (see \cite{AGJ05} for details and also \cite{S99} for the reinitialization of the LSF). 

On the other hand, in \cite{Y10}, the LSF is characterized as a solution to
a \emph{reaction-diffusion equation {\rm(}RDE{\rm\,)}}, and then the complexity of boundary geometries can be controlled by the contribution of diffusion (see, e.g.,~\cite{C11, E16, W22, Z21}).
Thus, the results in \cite{Y10} mean that the method using the RDE also overcomes the above numerical issues and concurrently contributes to (iii).
Based on \cite{Y10}, various applications have been developed from an engineering viewpoint (see, e.g.,~\cite{N22,Y13, YN22}).
Thus, the method in \cite{Y10} can be regarded as a highly effective method not only for (ii) and (iii) under satisfying (i).
However, that report did not mention decreases in objective functionals and optimality for configurations (see also \cite{C11} for a derivation of the RDE). 
Furthermore, many study subjects throughout topology optimization problems are static and linear problems (i.e.,~state variables are solutions to linear elliptic equations). 
Therefore, in a practical sense, it is critical to develop more general and rigorous methods that can adequately manage dynamic and nonlinear problems, particularly for use in future research.

\subsection{Aims and plan of this paper}
 This paper aims to justify the results in \cite{Y10}, and then, in terms of practicality and versatility, we shall generalize the method developed in \cite{Y10} for improving the convergence of optimal configurations. To this end, we shall construct an approximated sensitivity independent of the topological derivative (defined in the next section) to discuss optimality for obtained configurations and show that it is deeply related to the topological derivative derived in \cite{Y10}. Furthermore, we shall focus on nonlinear diffusion (more precisely, singularity and degeneracy of diffusion coefficients) to update LSFs and show numerically that (i)\/{\rm :}~the proposed method converges to an optimized configuration
faster than the method using reaction-diffusion and (ii)\/{\rm :}~the proposed method enables configurations to optimize even in settings where the method using reaction-diffusion cannot converge configurations due to the effect of oscillation near the boundary structures.

This paper is composed of seven sections. In the next section, we shall describe a mathematical overview of level set-based topology optimization and briefly review 
the method using the RDE developed in \cite{Y10}. 
Section \ref{S:nld} will describe the development of methods using (doubly) nonlinear diffusion equations with reaction terms to update the LSF. 
In particular, we shall introduce a reaction term independent of the topological derivative and establish a relaxation method for sensitivity analysis. 
Furthermore, from a mathematical viewpoint, we shall describe how the method using (doubly) nonlinear diffusion equations improves the convergence to optimal configurations. 
Thus, Section \ref{S:nld} is the most contributing. 
Section \ref{S:algo} will describe the numerical algorithm, and we shall apply it to typical minimization problems in Section \ref{S:ex}.
Furthermore, we shall consider an application to Nesterov's accelerated gradient method
in Section \ref{S:app}. The final section then concludes this paper.

\section{Preliminaries}\label{S:pre}

In this section, we describe the level set-based topology optimization developed in \cite{Y10}.
For simplicity, we recall it using the following objective function\/{\rm:}
\begin{equation}\label{eq:mino}
F(\Omega)=\int_{\Omega} f(x,u_\Omega,\nabla u_\Omega)\, \d x.
\end{equation}
Here and henceforth, $\Omega$ is a bounded open set of $\R^{d}$, $d\ge1$, with smooth boundary $\partial\Omega$,
$f:\Omega\times \R\times \R^{d}\to\R$ is a Lebesgue integrable function on $\Omega$,  $u_\Omega\in H^1(\Omega;\R)$ is a given state variable and $\nabla u_{\Omega}\in L^2(\Omega;\R^d)$ is the gradient for $u_\Omega \in H^1(\Omega)$.
In topology optimization, we determine an optimal configuration such that the objective function is minimized. 
Note that $\Omega\mapsto F(\Omega)$ is a set function as a variable $\Omega\subset \R^d$ and is different from the usual variational that deals with objective functionals where functions are variables.
To avoid this difficulty, let $D\subset \R^d$ be a fixed design domain such that $\Omega\subset D$, and
we introduce the following level set function (LSF)
$\phi\in H^1(D;[-1,1])$ and characteristic function $\chi_{\phi}\in L^{\infty}(D;\{0,1\})$, respectively\/{\rm :}
\begin{align*}
\phi(x)
\begin{cases}
>0,&x\in\Omega,\\
=0,\quad &x\in\partial\Omega,\\
<0,\quad &x\in D\setminus \overline{\Omega}\\
\end{cases}
\qquad \text{ and } \qquad
\chi_\phi(x)=
\begin{cases}
1\quad &\text{ if } \phi(x)\ge 0,\\  
0\quad &\text{ if } \phi(x)< 0.
\end{cases}
\end{align*}
Thus, \emph{material domains} and \emph{void domains} can be described as 
$$
[\chi_\phi=1]:=\{x\in D\colon \chi_{\phi}(x)=1\}\quad \text{ and }\quad  
[\chi_\phi=0]:=\{x\in D\colon \chi_{\phi}(x)=0\},
$$
respectively, and then $\Omega\mapsto F(\Omega)$ can be replaced with the following objective functional\/{\rm:} 
\begin{align*}
F(\phi)=\int_{D} f(x,u_\phi,\nabla u_\phi)\chi_\phi(x)\, \d x,
\end{align*}
which implies that topology optimization can be regarded as the distribution problem of materials by formulating it as
\begin{align*}
\inf_{\phi\in H^1(D;[-1,1])} F(\phi).
\end{align*}
Moreover, an optimal configuration $\Omega_{\rm opt}\subset D$ can also be represented as 
$$
\Omega_{\rm opt}:=\{x\in D\colon \chi_{\phi_{\rm opt}}(x)=1\},\quad F(\phi_{\rm opt}):=\inf_{\phi\in H^1(D;[-1,1])} F(\phi)
$$
(i.e.,~$\Omega_{\rm opt}=[\chi_{\phi_{\rm opt}}=1]=D\setminus [\chi_{\phi_{\rm opt}}=0]=[\phi_{\rm opt}\ge 0]$).

\begin{rmk}[Grayscale problem]
\rm 
Since the characteristic function $\chi_{\phi}\in L^{\infty}(D:\{0,1\})$ is constructed by employing the sign of the LSF, an explicit material representation can be obtained, which is an advantage of the level set method (LSM) that does not depend on updating the LSF. Notably, this advantage is not the same as removing the grayscale problem by cutting off the density function $h(\theta)=\theta^r$ for the density $\theta \in L^{\infty}(D:[0,1])$ and $r>0$; indeed, 
in the concept of the original optimal design problem, \eqref{eq:mino} is extended as follows\/{\rm :}
$$
F(\chi_\Omega)=\int_{D} f(x,u_\Omega,\nabla u_\Omega)\chi_\Omega(x)\, \d x,
\quad 
\chi_\Omega(x):=
\begin{cases}
1,\quad &x\in \overline{\Omega},\\  
0,\quad &x\in D\setminus \overline{\Omega}.
\end{cases}
$$  
Thus, $\chi_\Omega\in L^{\infty}(D;\{0,1\})$ is identified with $\chi_\phi\in L^{\infty}(D;\{0,1\})$, and therefore, $[\chi_\Omega=1]=[\chi_\phi=1]=[\phi\ge 0]$. 
On the other hand, since $\chi_\Omega\in L^{\infty}(D;\{0,1\})$
is identified with $h(\theta)\in L^{\infty}(D:[0,1])$ in the SIMP method,
the material domain $[\chi_\Omega=1]$ can be represented by $[h(\theta)=1]$, and then 
$[0<h(\theta)<1]$ indicates the intermediate domain, which implies that $[\chi_\Omega=1]\neq [h(\theta)\ge s]$  for any $0\le s<1$, and the grayscale problem remains unless $[0<h(\theta)<1]$ is excluded in general (see \cite{S07} for the solving strategy). 
However, the use of characteristic functions remains the issue of constructing the rigorous sensitivity that varies the topology (see Proposition \ref{prop} below). 
\end{rmk}

Now, we are in a position to find $\Omega_{\rm opt}\subset D$.
Let $(\phi_n)_{n\in\N}$ be a sequence in $H^1(D;[-1,1])$ of LSFs and let $\phi_0\in L^{\infty}(D;[-1,1])$ be an initial LSF. 
In the LSM, a minimizer for $F:H^1(D;[-1,1])\to \R$ is determined by updating the LSF.
To this end, we recall the following classic well known gradient descent method\/{\rm :}  
\begin{equation}\label{GM}
\phi_{n+1}(x):=\phi_{n}(x)-kF'(\phi_n)\quad \text{ for } n\in \mathbb{N}\cup\{0\}.
\end{equation}
Here $k>0$ is the step width, and $F'(\phi_n)$ is the steepest descent direction of $F$ and stands for the Fr\'echet derivative. 
In \cite{Y10}, the following approximated objective functional is introduced in terms of the regularity for \eqref{GM}\/{\rm :}
\begin{align}\label{eq:modiobj}
F_\e(\phi_n,\phi_{n+1})=F(\phi_n)+\frac{\e}{2}\int_{D}|\nabla \phi_{n+1}(x)|^2\, \d x\quad \text{ for $\e>0$}.
\end{align}
Thus, the second term of the right-hand side implies the regularization term.
Then, by replacing $F$ with $F_\e$ in \eqref{GM}, 
the discretized version of the following (reaction) diffusion equation can be obtained\/{\rm :}
\begin{align}\label{RD}
\begin{cases}
\partial_t \phi-\tau\Delta \phi=\rho \d_{\rm T}F\
\text{ in } D\times (0,+\infty),\\
\phi|_{\partial D}=0,\quad \phi|_{t=0}=\phi_0\in L^{\infty}(D), 
\end{cases}
\end{align}
where $\partial_t=\partial/\partial_t$, $\tau, \rho>0$ and $\d_{\rm T} F$ is the topological derivative of $F$, and is defined as follows (see also \cite{Td1,Td2,Td3,Td4} for details)\/{\rm :} 
\begin{defi}[Topological derivative]\label{D:d_T}
 Let $A=\{\Omega\subset D\colon \Omega \text{ is open in } D\}$. 
 A function $J$ defined on $A$ is said to be topologically differentiable at $\Omega_0$ and at $x\in \Omega_0$ if the following limit exists\/{\rm :}
\begin{align*}
\d_{\rm T}J(\Omega_0,x):=\lim_{\e\to 0_+}\frac{J(\Omega_0\setminus \overline{B_\e(x)})-J(\Omega_0)}{|B_\e(x)|}.
\end{align*}
Here $B_\e(x):=\{ y\in \Omega_0\colon |x-y|<\e \}$ and $|B_\e(x)|$ denotes the Lebesgue measure of $B_\e(x)\subset \Omega_0$. 
\end{defi}
Thus, $\Omega_{\rm opt}\subset D$ will be obtained in specific cases by solving \eqref{RD} and updating $\phi_n\in H^1(D)$ such that $|\phi_n|\le 1$ under $F' \approx -\d_{\rm T} F$. 

\begin{rmk}[Contribution of diffusion]\rm
If $\e>0$ is sufficiently small, the (local) minimizer for the original functional $F: H^1(D;[-1,1])\to \R$ would be characterized by ones for the approximate functional $F_\e$. Conversely, a large $\e>0$ enhances the engineering value from the viewpoint that complex geometries can be avoided as the optimal configuration due to the smoothing effect of diffusion, and practical geometries can be obtained (see \cite{Y10} for details).
\end{rmk}

\begin{rmk}[Boundary condition]
\rm 
In \eqref{RD}, it is assumed that $\phi\in L^2(0,+\infty;$ $H^1_0(D))$ satisfies the homogeneous Dirichlet boundary condition for simplicity, but it is only imposed in terms of the uniqueness for weak solutions. 
Therefore, other boundary conditions can also be allowed.
\end{rmk}

\begin{rmk}[Modified reaction term]\label{R:mrt}
\rm
The combination by \eqref{GM}, \eqref{eq:modiobj} and Definition \ref{D:d_T} is the most innovative idea in \cite{Y10}.
However, the derivation of \eqref{RD} is heuristic, and hence, there is no guarantee that given initial configurations achieve optimal configurations through the proposed method. 
On the other hand, the replacement of $F'$ with $-\d_{\rm T} F$ 
is known to be valid for various topology optimization problems and may be deeply related to the Fr\'echet derivative; for instance, let $J: A \to \R$ be a function given by $J(\Omega):=|\Omega|$.
Then we readily have 
$$
J(\Omega)=\int_D \chi_\phi(x)\, \d x=:J(\chi_\phi),
$$
and hence, $-\d_{\rm T}J(\Omega)=J'(\chi_\phi)$. 
However, even if we can obtain that relation, since the state variable is often a solution to some PDE and the topological derivative needs that information, deriving it becomes hard for complex equations such as nonlinear equations, and issues in the viewpoint of versatility may arise.
\end{rmk}

\section{
Nonlinear diffusion equations for level set functions
}\label{S:nld}

In this study,
under the same assumptions as in \S \ref{S:pre},
we stress that the level set method (LSM) using the following nonlinear diffusion equation as a generalization in \cite{Y10} is more practical and versatile\/{\rm:}
\begin{align}\label{NLD}
\begin{cases}
\partial_{t}\phi^q-\tau \Delta \phi= \rho F_{\eta}'(\phi) \ \text{ in } D\times (0,+\infty),\\
\phi|_{\partial D}=0,\quad \phi|_{t=0}=\phi_0 \in L^{\infty}(D), 
\end{cases}
\end{align}
where 
$q, \tau, \rho> 0$, $\phi^q:=|\phi|^{q-1}\phi$ and 
\begin{equation}\label{eq:Rterm}
F_\eta'(\phi)(x):=f(x,u_\phi,\nabla u_\phi)\delta_\eta(\phi(x))
\end{equation}
for some approximated delta function $\delta_\eta\ge0$ such that $\delta_\eta(\phi)\to \delta(\phi)$  as $\eta\to 0_+$. 

The derivation and rationale are explained below.

\subsection{Reaction term}
As already mentioned in \S \ref{S:intro}, 
an optimal configuration in topology optimization (for the generalized problem) is characterized by employing the weak-star limit (or homogenization limit) for a minimizing sequence of the characteristic function, which means that intermediate domains appear in general. Therefore, in terms of manufacturing, avoiding such domains is a critical issue, even if $\phi_0\in L^{\infty}(D)$ is only locally optimized. Generally, it is not easy to find descent directions of objective functionals that allow for changes in topology (see also Remark \ref{R:mrt}). As one relaxation method, this subsection is devoted to showing that $F'(\phi)$ in \eqref{GM} can be (locally) approximated by $-F_\eta'(\phi)$ under some conditions. We prove the following 
\begin{prop}[Approximated sensitivity with assumptions for initial LSFs]\label{prop}
Let $F:H^1(D;[-1,1])\to\R$ be a functional defined by
$$
F(\phi)=\int_D f_\phi(x)\chi_\phi(x)\, \d x,\quad f_\phi(x):=f(x,u_\phi,\nabla u_\phi).
$$
Let $F_{\eta}'(\phi)$ be a function appeared in \eqref{eq:Rterm} and let
$(\phi_{n})_{n\in\N}$ be a sequence in $H^1(D;[-1,1])$ such that 
\begin{align}\label{eq:reset}
\phi_n(x)
:=
\begin{cases}
{\rm sgn}(\tilde{\phi}_{n}(x))&\text{ if } |\tilde{\phi}_{n}(x)|>1,\\
\tilde{\phi}_{n}(x)&\text{ otherwise}
\end{cases}
\end{align}
and $\tilde{\phi}_{n}:=\phi_{n-1}+kF_\eta'(\phi_{n-1})$ for $k, \eta>0$. Then, under a suitable initial level set function $\phi_0\in L^{\infty}(D)$ such that it is in the neighborhood of critical points,
$F'$ can be at least approximated by $-F'_\eta$ except for boundary structures. 
In addition, suppose that, for $\e>0$ small enough, there exists $N_{\e}\in \N$ such that  
\begin{equation}\label{eq:CC}
\|\phi_{n+1}-\phi_n\|_{L^{\infty}(D)}< k\e 
\quad \text{ for all } n\ge N_{\e}. 
\end{equation}
Then $[\chi_{\phi_n}=1]$ provides a candidate {\rm(}locally{\rm)} optimized configuration. 
\end{prop}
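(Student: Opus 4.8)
The plan is to compute the first variation of $F$, isolate the part of it supported on the structural interface $\{\phi=0\}$, and then show that under the stated hypotheses this part reduces to $-F_\eta'(\phi)$ up to controlled errors living near that interface. First I would write $\chi_\phi=H(\phi)$ with $H$ the Heaviside function and differentiate $F(\phi)=\int_D f_\phi\chi_\phi\,\d x$ along a smooth path $\phi\mapsto\phi+t\psi$. Since the state is coupled to $\phi$ only through the material distribution, the induced variations are $\chi_\phi\mapsto\chi_\phi+t\,\delta(\phi)\psi+o(t)$ and $u_\phi\mapsto u_\phi+t\,u'+o(t)$, with $u'$ the linearised state driven by the interfacial source $\delta(\phi)\psi$, so that, formally,
\[
\langle F'(\phi),\psi\rangle=\int_D\bigl(f_u\,u'+f_\xi\cdot\nabla u'\bigr)\chi_\phi\,\d x+\int_D f_\phi\,\delta(\phi)\,\psi\,\d x,
\]
where $f_u,f_\xi$ denote the partials of $f=f(x,u,\xi)$ at $(x,u_\phi,\nabla u_\phi)$. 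Introducing the adjoint state associated with the first integral rewrites it as $\int_D g_\phi\,\delta(\phi)\,\psi\,\d x$ for a bulk density $g_\phi$, so that $F'(\phi)=(f_\phi+g_\phi)\,\delta(\phi)$ is supported on $\{\phi=0\}$; generalising the model computation for $J(\Omega)=|\Omega|$ in Remark~\ref{R:mrt} gives $f_\phi+g_\phi=-\d_{\rm T}F$, i.e.\ $F'(\phi)=-\d_{\rm T}F\,\delta(\phi)$. Thus all the PDE information sits inside $\d_{\rm T}F$.

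Next I would carry out the reduction. Away from a fixed neighbourhood of $\{\phi=0\}$ there is nothing to prove: there $|\phi|$ is bounded away from $0$, so $\delta_\eta(\phi)=0$ for $\eta$ small, whence $F_\eta'(\phi)=0$, while $F'(\phi)$ is already concentrated on $\{\phi=0\}$. The content lies in the interfacial tube $\{|\phi|\lesssim\eta\}$, where I would compare $F'(\phi)=-\d_{\rm T}F\,\delta(\phi)$ with $-f_\phi\,\delta_\eta(\phi)=-F_\eta'(\phi)$. The discrepancy splits into the smearing $\delta-\delta_\eta$, which is $O(\eta)$ since $\delta_\eta\to\delta$, and the gap between $\d_{\rm T}F$ and $f_\phi$ on $\{\phi=0\}$ — the portion of the topological derivative carrying the state and adjoint. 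This is where the assumption that $\phi_0$ lies in the neighbourhood of a critical configuration enters: for the objectives of interest the adjoint structure collapses so that, near such a configuration, $\d_{\rm T}F\approx f_\phi$ on the interface up to a positive normalisation absorbed into $\rho$ and up to terms made small by the proximity to criticality. Combining, $F'(\phi)\approx-f_\phi\,\delta_\eta(\phi)=-F_\eta'(\phi)$ throughout $D$, with errors confined to the interfacial tube — which is the assertion that $F'$ is approximated by $-F_\eta'$ except for boundary structures; note that $F_\eta'$ carries no topological derivative, the promised relaxation of the sensitivity analysis.

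For the ``in addition'' part I would start from $\tilde{\phi}_n=\phi_{n-1}+kF_\eta'(\phi_{n-1})$ and the clipping \eqref{eq:reset}. Wherever $|\phi_n|<1$ the clipping is inactive and $\phi_{n+1}-\phi_n=kF_\eta'(\phi_n)$ pointwise; wherever $|\phi_n|=1$ one has $\delta_\eta(\phi_n)=\delta_\eta(\pm1)$, negligible for $\eta$ small, so $F_\eta'(\phi_n)\approx0$ there too and the clipping cannot mask a large reaction. Hence \eqref{eq:CC} forces $\|F_\eta'(\phi_n)\|_{L^\infty(D)}\lesssim\e$ for all $n\ge N_\e$; since $F_\eta'(\phi_n)(x)=f_{\phi_n}(x)\delta_\eta(\phi_n(x))$ and $\delta_\eta$ peaks on $\{|\phi_n|\lesssim\eta\}$, this yields $f_{\phi_n}\approx0$ on the interface $\{\phi_n=0\}$, which by the previous step is (up to the normalisation) the first-order optimality condition $F'(\phi_n)=-\d_{\rm T}F\,\delta(\phi_n)\approx0$ for $\inf_\phi F(\phi)$. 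Finally, since the step is, up to the interfacial error and the projection onto $\{|\phi|\le1\}$, a gradient step for the regularised objective, the descent lemma gives $F(\phi_{n+1})\le F(\phi_n)$ for $k$ small; with $F$ bounded below (e.g.\ $f_\phi\ge0$ for the objectives in question), $F(\phi_n)$ decreases and stabilises, so $[\chi_{\phi_n}=1]$ is indeed a candidate (locally) optimized configuration.

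The main obstacle, which I expect to dominate the work, is twofold. First, $F$ is not Fr\'echet differentiable in the classical sense because $\chi_\phi=H(\phi)$ is discontinuous, so the computation above must be recast with the regularisation $H_\eta$ ($H_\eta'=\delta_\eta$): then $F_\eta(\phi):=\int_D f_\phi H_\eta(\phi)\,\d x$ is genuinely differentiable with the $F_\eta'(\phi)$ above, and one must separately control $F_\eta\to F$ and the fate of critical points and descent directions as $\eta\to0_+$. Second, the hypothesis ``$\phi_0$ in the neighbourhood of critical points'' is, as stated, essentially an assumption: to upgrade the heuristics to a theorem one would need a quantitative non-degeneracy (or {\L}ojasiewicz-type) condition at the target configuration guaranteeing both $\d_{\rm T}F\approx f_\phi$ on the interface there and that the iterates, including the non-smooth clipping \eqref{eq:reset}, stay in that neighbourhood. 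I expect the honest outcome — consistent with the hedged wording (``can be at least approximated'', ``provides a candidate'') — to be a conditional/heuristic justification under these idealised hypotheses rather than an unconditional convergence theorem, in line with the paper's stated aim of justifying the method of \cite{Y10} rather than re-deriving it from scratch.
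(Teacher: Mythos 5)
Your argument reaches the same conclusions at essentially the same (formal) level of rigor as the paper, but by a genuinely different decomposition. The paper splits the G\^ateaux derivative as in \eqref{eq:Rterm1} into an $f$-variation term and a $\chi$-variation term, kills the latter by restricting to $[\phi\neq 0]$ (where $\chi_{\phi+\eta w}=\chi_\phi$ for small perturbations), and then simply asserts \eqref{eq:NC}, i.e.\ that the surviving $f$-variation term equals $-\langle F_\eta'(\psi),w\rangle+C_\eta$ near a critical point; the ``except for boundary structures'' caveat is the unanalysed remainder $\tilde F_\eta'\chi_{[\phi_n=0]}$. You instead put the entire derivative on the interface, $F'(\phi)=(f_\phi+g_\phi)\,\delta(\phi)=-\d_{\rm T}F\,\delta(\phi)$ via an adjoint state, and use proximity to criticality to collapse the adjoint contribution so that $\d_{\rm T}F\approx f_\phi$ there. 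This is in effect the route the paper reserves for Lemma \ref{lem} and Remark \ref{R:GR} (the adjoint method, which removes the restriction on initial LSFs), deployed already at the level of Proposition \ref{prop}; it buys a more transparent account of where the ``neighbourhood of critical points'' hypothesis actually enters and makes the link to Remark \ref{R:mrt} and the topological derivative explicit, at the cost of invoking the distributional calculus for $H(\phi)$ that the paper avoids by working on $[\phi\neq 0]$. Your treatment of the ``in addition'' part coincides with the paper's (\eqref{eq:CC} forces $\|F_\eta'(\phi_n)\|_{L^\infty}\lesssim\e$, hence approximate criticality), and you are if anything more careful about the clipping \eqref{eq:reset}, which the paper's inequality $|F_\eta'(\phi_n)|\le k^{-1}\|\phi_{n+1}-\phi_n\|_{L^\infty(D)}$ silently ignores; the monotonicity discussion you append is placed by the paper in a separate remark rather than inside the proof. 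Your closing assessment --- that the statement is conditional and the argument heuristic --- matches the paper's own ``we formally deduce''.
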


\begin{proof}
we formally deduce that, for any $w\in H^1(D;[-1,1])$,
\begin{align}
\langle F'(\phi),w\rangle_{H^1(D)}
&=\lim_{\eta\to 0_+}\int_{D}\frac{f_{\phi+\eta w}(x)-f_{\phi}(x)}{\eta}\chi_{\phi+\eta w}(x)\, \d x\nonumber\\
&\quad +\lim_{\eta\to 0_+}\int_{D}f_{\phi}(x)\frac{\chi_{\phi+\eta w}(x)-\chi_{\phi}(x)}{\eta}\, \d x.
\label{eq:Rterm1}
\end{align}
By replacing $D$ with $[\phi\neq 0]$, the second term of the right-hand side vanishes. 
On the other hand, the first term of the right-hand side can be represented as
\begin{align}\label{eq:NC}
\int_{[\psi\neq 0]}\frac{f_{\psi+\eta w}(x)-f_{\psi}(x)}{\eta}\chi_{\psi+\eta w}(x)\,\d x
=-
\langle F_\eta'(\psi),w\rangle_{H^1([\psi\neq 0])}+C_\eta
\end{align}
for any $\psi$ in the neighborhood of the critical point.
Here $C_\eta$ is some sufficiently small value depending on $\eta>0$.
Thus, if $\phi_0\in L^{\infty}(D)$ is at least in 
the neighborhood critical ones, $F'(\phi_n)$ can be approximated by $-F_\eta'(\phi_n)+\tilde{F}_\eta'(\phi_n)\chi_{[\phi_n=0]}$ for some $\tilde{F}_\eta'$ (i.e,~$F'\approx -F'_\eta$ except for boundary structures).
Furthermore, \eqref{eq:CC} ensures that  
\begin{align*}
|F_\eta '(\phi_n)(x)|\le k^{-1}\|\phi_{n+1}-\phi_n\|_{L^{\infty}(D)}\le \e\quad \text{ for a.e.~} x\in D, 
\end{align*} 
which together with \eqref{eq:Rterm1} and \eqref{eq:NC} implies that 
$\phi_n$ turns out to be a critical point of $F$ under $|\tilde{F}_\eta'\chi_{[\phi_n=0]}|$ small enough.  
This completes the proof.
\end{proof}

\begin{rmk}[More precise sensitivity analysis]
\rm
By the use of weak forms for governing or state equations, Proposition \ref{prop} also makes sense for objective functionals defined by the integral on the boundary. Otherwise, as one method, the so-called {\it adjoint method} plays a crucial role, and in particular, it is possible to remove the restrictions for initial LSFs and domains (see Lemma \ref{lem} and Remark \ref{R:GR} below).    
\end{rmk}

\begin{rmk}[Removability for local maximization]
\rm
By Proposition \ref{prop}, the following properties are obtained\/{\rm:}
\begin{itemize}
\item[(i)] Since $F'(\phi_n)(x)$ can be approximated by $-F_\eta'(\phi_n)(x)$ for a.e.~$x\in [\phi_n\neq 0]$, it follows that
\begin{align*}
\lefteqn{
\int_{[\phi_n\neq 0]} f_{\phi_{n+1}}(x) \chi_{\phi_{n+1}}(x) \, \d x - \int_{[\phi_n\neq 0]} f_{\phi_{n}}(x) \chi_{\phi_{n}}(x) \, \d x}\\ 
&\qquad=
k\langle F'(\phi_n), F_\eta'(\phi_n)\rangle_{H^1([\phi_n\neq 0])}+o(k)
\le 0 \quad \text{ as $k\to 0_+$.}
\end{align*}
Hence, if one can confirm that
\begin{equation*}
\int_{[\phi_n=0]}f_{\phi_{n+1}}(x)\chi_{\phi_{n+1}}(x) \, \d x \le \int_{[\phi_n=0]} f_{\phi_{n}}(x)\chi_{\phi_{n}}(x)\, \d x, 
\end{equation*}
then the monotonicity $F(\phi_{n+1})\le F(\phi_n)$ is obtained.
\item[(ii)]
By setting 
$$
\delta_\eta(\phi(x))
\begin{cases}
>0 &\text{ if }  x\in [\phi\ge 0],\\
=0 &\text{ otherwise}
\end{cases}
$$
and $m_\eta:=\min_{x\in [\phi_n\ge 0]} \delta_\eta(\phi_n(x))>0$, the assumption \eqref{eq:CC} ensures that 
\begin{align*}
m_\eta |f_{\phi_n}(x)|\le |F_\eta'(\phi_n)|\le \e  \quad \text{ for a.e.~} x\in [\phi_n\ge 0],
\end{align*}
which yields 
\begin{align*}
|F(\phi_n)|
\le  \int_{[\phi_n\ge 0]}|f_{\phi_n}(x)|\, \d x
\le m_\eta^{-1}\e\bigl|[\phi_n\ge 0]\bigl|=: M_{\eta,\e}.
\end{align*}
Thus, if $F\ge 0$ at least, then the objective functional $F:H^1(D;[-1,1])\to\R$ can be minimized 
under the situation where it is possible to take $\e>0$ such that $M_{\eta,\e}$ is sufficiently small.
\end{itemize}

\end{rmk}

\begin{rmk}[Relaxation of sensitivity analysis]
\rm 
Compare \eqref{RD} with \eqref{NLD}. It is noteworthy that the topological derivative $\d_{\rm T}F$ does not appear in \eqref{NLD}. 
In other words, the argument mentioned above corresponds to the relaxation of sensitivity analysis (i.e.,~deriving $F'(\phi)$). 
Therefore, even if deriving it is too hard, the reaction term might be readily obtained by
\begin{align}\label{eq:aRterm}
F'(\phi)(x)\approx -F_\eta'(\phi)(x):=-f(x,u_\phi,\nabla u_\phi)\delta_\eta(\phi(x)),
\end{align}
which implies that this method is more versatile in such specific cases. 
However, we note the cases where \eqref{eq:CC} is no longer satisfied  (see \S \ref{SS:cm} and \S \ref{SS:ex2} for examples where \eqref{eq:CC} is not satisfied due to the effect of boundary oscillation). 
On the other hand, since \eqref{eq:CC} can be verified numerically, it can be adopted as the convergence condition for the numerical algorithm (see \S \ref{S:algo} below).
\end{rmk}

\subsection{Diffusion term}
We can readily derive \eqref{NLD} with the aid of $F_\eta'(\phi)$ mentioned in the previous subsection.  
As in \eqref{GM}, by setting
\begin{align*}
\phi_{n+1}(x)=\phi_{n}(x)+K(\phi_n)F_\eta'(\phi_n) \quad{ and }\quad  
K(\phi_n)=k(q|\phi_n|^{q-1})^{-1},
\end{align*}
the same argument as in the derivation of \eqref{RD} yields \eqref{NLD} formally.

We next explain the motivation for introducing \eqref{NLD}. Recall the following nonlinear diffusion equation\/{\rm:} 
\begin{equation}\label{nld}
\partial_tv=\Delta v^p\quad \text{ in }\ \R^d\times (0,+\infty).
\end{equation}
If $p=1$, \eqref{nld} describes the (linear) diffusion equation. 
For $p\neq 1$, the nonlinear diffusion equation \eqref{nld} is called the \emph{porous medium equation} (or \emph{slow diffusion equation, SDE}) if $1<p<+\infty$ and the \emph{fast diffusion equation {\rm(}FDE\,{\rm)}} if $0<p<1$ (see \cite{V1,V2} for details).
We note that the diffusion term $\Delta v^p$ can be expanded as
\begin{equation}\label{d-coef}
\Delta v^p=\dv\left(p|v|^{p-1}\nabla v\right),
\end{equation}
which implies that $p|v|^{p-1}$ can be regarded as the diffusion coefficient.

In case $p>1$ (i.e.,~SDE) and if $|v|\ll 1$, then the diffusion coefficient is much smaller than that of linear diffusion. As a self-similar solution, the following so-called \emph{Barenblatt solution} (or \emph{Zel'dovich-Kompaneets-Barenblatt solution}) is known\/{\rm :}
$$
\mathcal{B}(x,t) = t^{-\alpha} \left[ C - \kappa (t^{-\alpha/d} |x|)^2 \right]_+^{1/(p-1)} \ \mbox{ for } \ x \in \mathbb R^d, \ t > 0,
$$
where $\alpha := \frac{d}{d(p-1)+2}$, $\kappa := \frac{\alpha(p-1)}{2dp} > 0$ and any $C > 0$. 
Then the support of $\mathcal{B}(\cdot,t)$ can be represented as
$$ 
\text{supp } \mathcal{B}(\cdot,t):=\overline{\{x\in \R^d\colon \mathcal{B}(x,t)>0\}}
=\{x\in \R^d\colon |x|^2\le (C/\kappa)t^{2\alpha/d} \}.
$$
Thus, $\text{supp } \mathcal{B}(\cdot,t)$ is always bounded, and then it spreads at a finite rate with time.  
In particular, the interface spreads at the velocity of $\sqrt{C/\kappa}t^{\alpha/d}>0$, and such a property is called \emph{finite propagation property}, which is a rapidly different property from that of the linear diffusion (i.e., $p=1$); indeed, under the nonnegative initial data $v_0\ge 0$, we assume that there exists $y\in \R^d$ such that $v(y,t)=0$ for any $t>0$ fixed. Then the solution $v(x,t)$ can be represented as
$$
v(y,t)=\int_{\R^d}\frac{1}{(4\pi t)^{d/2}}\exp(-|y-z|^2/4t)v_0(z)\, \d z,
$$
which implies that $v_0\equiv 0$ since the other is positive. This contradicts the assumption of $v_0\ge 0$, that is, $v(x,t)>0$ for all $(x,t)\in \R^d\times (0,+\infty)$.
In other words, the linear diffusion has \emph{infinite propagation property} since $\text{supp } v_0$ spreads instantly.

On the other hand, in case $p<1$ (i.e.,~FDE), different properties from the SDE are derived. 
Actually, we consider the cases where $|v|\ll 1$.
In terms of \eqref{d-coef}, the diffusion coefficient can be regarded as very large near the interface. 
Furthermore, as in linear diffusion, the FDE is known to exhibit infinite propagation property. 

Based on the above facts, we set $v$ as the LSF (i.e.,~$v=\phi$). Then, 
the following effect of nonlinear diffusion will be expected\/{\rm :} 
\begin{itemize}
\setlength{\leftskip}{3mm}
\item[(i-FDE)] Since the diffusion coefficient near boundary structures is very large, it will try to spread out even if the sensitivity is small. In other words, 
the method using fast diffusion is expected to converge to optimal configurations faster than the method using reaction-diffusion as long as boundary structures do not oscillate since the boundary structures still try to spread even near the optimal configuration.
 
\item[(ii-SDE)] Even if the sensitivity is large, boundary structures will not try to spread out as in linear diffusion due to the small diffusion coefficient. 
Therefore, in terms of damping oscillation on boundary structures, the method using slow diffusion is expected to be effective for problems that cannot be converged by the method using reaction-diffusion due to the oscillation on the boundary structures.

\end{itemize}
The above theoretical interpretation is the motivation for introducing \eqref{NLD}.

\begin{rmk}[Novelty in LSM]
\rm 
The novelty of \eqref{RD} is that it applies the effect of diffusion using a perturbed approximate objective functional; in other words, the novelty is the modification of the descent direction for the objective functionals appearing in the gradient descent method.  
On the other hand, \eqref{NLD} is derived by focusing on the step width of the descent direction and making the LSF depend on it.
Therefore, it is noteworthy that the method using \eqref{NLD} is expected to improve convergence to the optimal configuration without relying on other well-known optimization methods  (e.g.,~Newton's method and quasi-Newton method).
Furthermore, \eqref{NLD} corresponds to \eqref{RD} by setting $q=1$, which implies a generalization of the result in \cite{Y10} (see \S \ref{S:ex} below for the reaction term).  
\end{rmk}

\begin{rmk}[Regularity for LSF]
\rm
The LSF $\phi(t)\in H^1_0(D)$ in \eqref{NLD} for $\rho=0$ and $\tau=1$ coincides with $v^p(t)$ in \eqref{nld}, but \eqref{NLD} would be reasonable in terms of regularities; indeed, $v^p(t)$ is in $H^1(\R^d)$ for any $p\ge 2$, but $v(t)$ is not even if it has nonnegative initial data (see \cite[Remark 3.2]{AO2} for counterexamples).
\end{rmk}

\begin{rmk}[Generalization to doubly nonlinear diffusion]
\rm
As another nonlinear diffusion,  the following \emph{parabolic p-Laplace equation} (see, e.g.,~\cite{Di93, Di12}) is well known\/{\rm :}
\begin{align}\label{eq:pLP}
\partial_t v=\Delta_p v,\quad 
\Delta_p v:=\text{div}(|\nabla v|^{p-2}\nabla v).
\end{align}
Here, $\Delta_p$ is called the \emph{$p$-Laplacian}. In particular, $\Delta_p$ is a generalization of Laplacian $\Delta=\Delta_2$, and applied in the SIMP method from a different viewpoint of filtering (see, e.g.,~\cite{W04, Z22}).
Regarding  $|\nabla v|^{p-2}$ as the diffusion coefficient and assuming $|\nabla v|\ll 1$, we can classify \eqref{eq:pLP} as \emph{degenerate diffusion} if $p>2$, and \emph{singular diffusion} if $p<2$ for the same reason as \eqref{NLD}.
Moreover, \eqref{NLD} can be generalized to the following doubly nonlinear diffusion equation\/{\rm :}
\begin{align}\label{DNLD}
\begin{cases}
\partial_{t}\phi^q-\tau \Delta_p \phi= \rho F_{\eta}'(\phi) \ \text{ in } D\times (0,+\infty),\\
\phi|_{\partial D}=0,\quad \phi|_{t=0}=\phi_0 \in L^{\infty}(D), 
\end{cases}
\end{align}
by formally replacing \eqref{GM} with
$$
\phi_{n+1}:=\phi_n-\frac{k}{q|\phi_n|^{q-1}}\left(
F(\phi_n)+\frac{\e}{p}\int_{D}|\nabla \phi_{n+1}(x)|^p\, \d x\right)',
$$
which means that it is expected that convergence to the optimal configuration can be improved by handling information on both the LSF and its gradients.
In particular, \eqref{NLD} is a specific case of \eqref{DNLD} as $p=2$
(see \S \ref{S:app} for a partial application).
\end{rmk}

\section{Numerical algorithm}\label{S:algo}
In this section, based on \S \ref{S:nld}, we construct a numerical algorithm for 
the volume-constrained minimization problem,  
\begin{equation}\label{eq:opt-prob}
\inf_{\phi\in H^1(D;[-1,1])} 
F(\phi)\quad \text{ subject to }\ G(\phi)\le 0,
\end{equation}
where
$F(\phi)$ and $G(\phi)$ are an objective functional and a volume constraint functional, respectively. 
Define the Lagrangian of \eqref{eq:opt-prob} by
$
\mathcal{L}(\phi,\lambda):=F(\phi)+\lambda G(\phi)
$
for the Lagrange multiplier $\lambda\ge 0$. The following steps describe this algorithm\/{\rm :} 
\begin{itemize}
\setlength{\leftskip}{3mm}
\item[\bf Step\,1.]
Set a fixed design domain $D\subset \R^d$, boundary conditions for governing equations and 
an initial LSF $\phi_0\in L^{\infty}(D;[-1,1])$.  
\item[\bf Step\,2.]
Determine a state value (i.e.,~solve governing equations).
\item[\bf Step\,3.]
Compute $F(\phi_n)$ and $G(\phi_n)$ for $n\in\N\cup\{0\}$. 
\item[\bf Step\,4.]
Check for the convergence condition \eqref{eq:CC}. 
Throughout this paper, 
let $k\e=1.0\times 10^{-2}$ in terms of practicality, when no confusion can arise.    
If \eqref{eq:CC} is satisfied, terminate the optimization; 
otherwise, proceed to the next step. 

\item[\bf Step\,5.] 
Compute the reaction term of \eqref{NLD} and $\lambda\ge 0$ by virtue of the augmented Lagrangian method.
\item[\bf Step\,6.] 
Solve the following discretized in time and modified weak form of \eqref{NLD} by
employing the finite element method\/{\rm :}
\begin{align}
&\int_D \tilde{q}(|\phi_{n}(x)|+\xi)^{q-1}\frac{\phi_{n+1}-\phi_{n}}{\varDelta t}(x){\psi}(x) \, \d x
+\int_D\tau \nabla \phi_{n+1}(x)\cdot \nabla {\psi}(x)\, \d x \nonumber\\
&\qquad=
\int_D \rho \mathcal{L}_\eta'(\phi_{n},\lambda) \psi(x)\, \d x
\quad \text{ for all } \psi\in V. \label{discNLD}
\end{align}
Here 
$\varDelta t, \tilde{q}, q,\xi,\tau,\rho \ge 0$, 
$\phi_{n+i}(x)=\phi(x,(n+i)\varDelta t)$, $n\in \N\cup\{0\}$, $i=0,1$, 
$V$ is a subspace of $H^1(D;[-1,1])$ explained later and $\mathcal{L}_\eta'(\phi,\lambda)=F_\eta'(\phi)-\lambda$.
Throughout this paper, let $\rho=0.7$, $\tilde{q}\in\{1,q\}$ and $\xi=1.0\times 10^{-4}$, when no confusion can arise.
\item[\bf Step\,7.] 
Return to {Step\,2} after setting the next given level set function $\phi_n\in V$ as in \eqref{eq:reset}.
This completes the numerical algorithm.
\end{itemize}
As for Steps $1$ and $2$, we need to be modified according to optimization problems, and the remaining steps are common steps if the reaction term $\mathcal{L}_\eta'(\cdot, \cdot)$ in \eqref{discNLD} is determined.

\section{Numerical results}\label{S:ex}

In this section, the numerical algorithm constructed in \S \ref{S:algo} is used to prove numerically for (i-FDE) and (ii-SDE) described in \S \ref{S:nld}. 
To this end, the proposed method using nonlinear diffusion is applied to the so-called cantilever, MBB beam, bridge, compliant mechanism and heat conduction as typical minimization problems in two dimensions mainly (see Figure \ref{fig:DBC} for fixed design domains $D\subset \R^2$ and boundary conditions). 
Here, based on the code in a previous study \cite{O22}, FreeFEM++ \cite{H12} is employed.
\begin{figure}[htbp]
\hspace*{-5mm} 
    \begin{tabular}{cccc}
      \begin{minipage}[t]{0.48\hsize}
        \centering
 \includegraphics[keepaspectratio, scale=0.16]{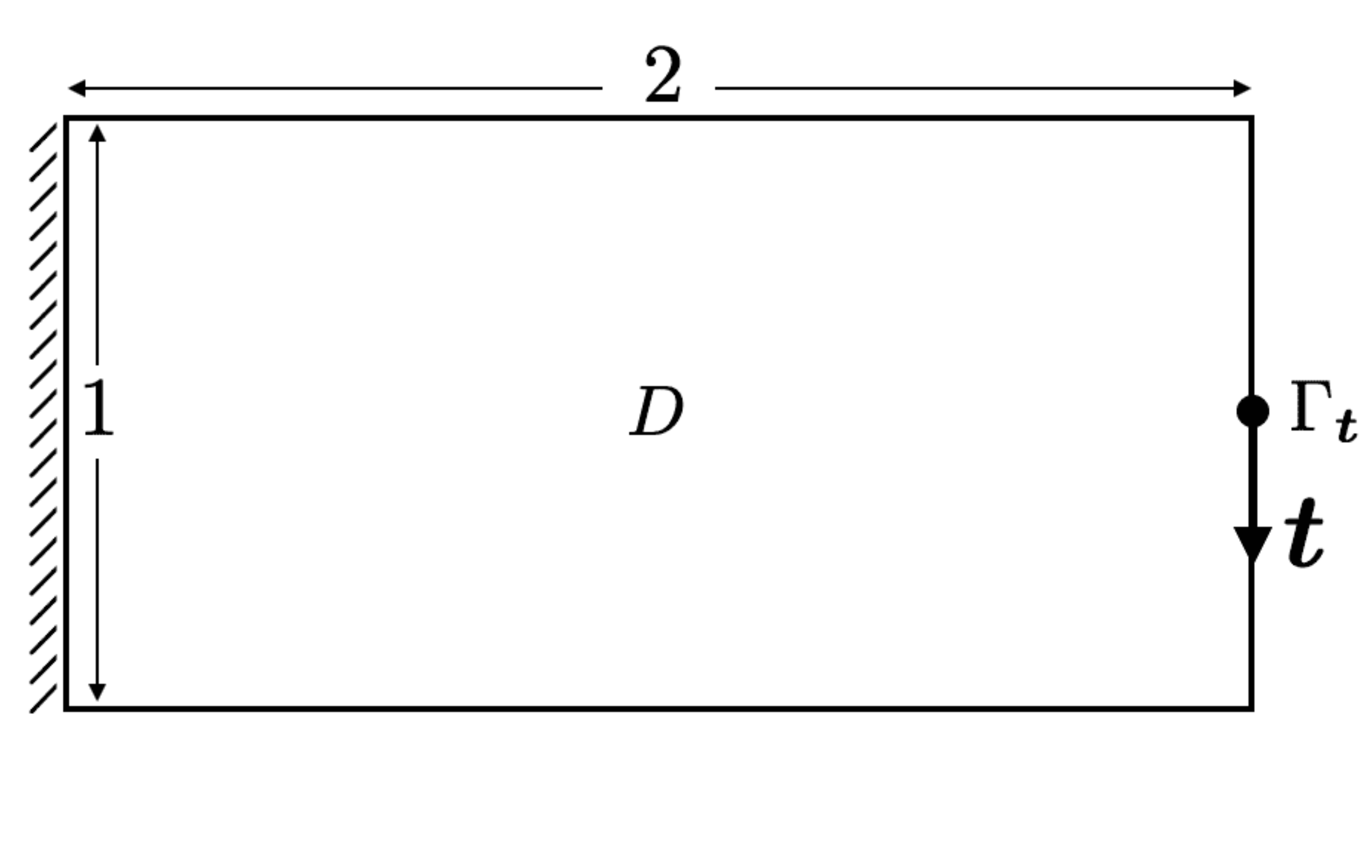}
        \subcaption{Cantilever}
        \label{ica}
      \end{minipage} 
               
    \begin{minipage}[t]{0.48\hsize}
        \centering
        \includegraphics[keepaspectratio, scale=0.16]{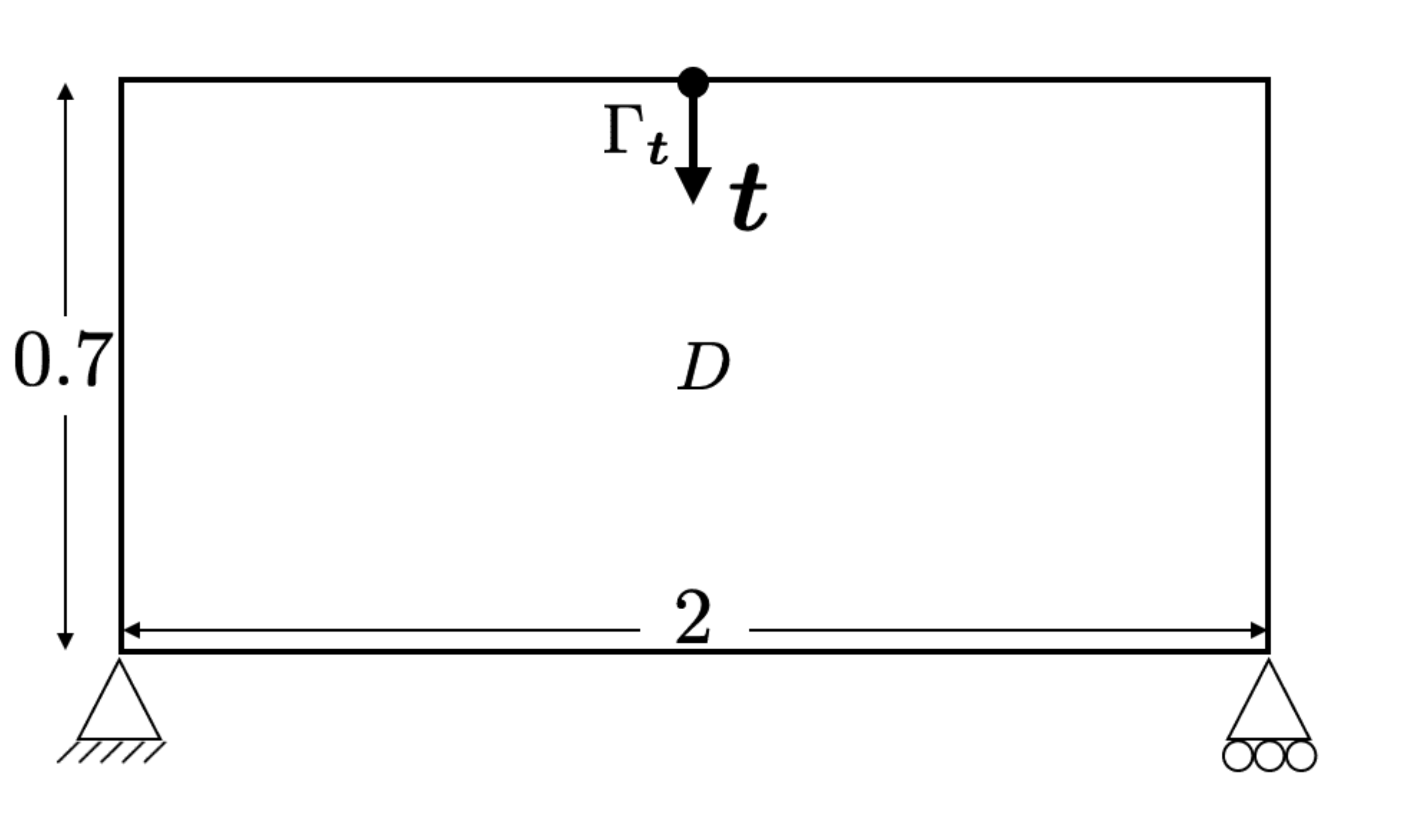}
        \subcaption{MBB beam}
        \label{imbb}
      \end{minipage} 
      \\
       \begin{minipage}[t]{0.48\hsize}
        \centering
        \includegraphics[keepaspectratio, scale=0.16]{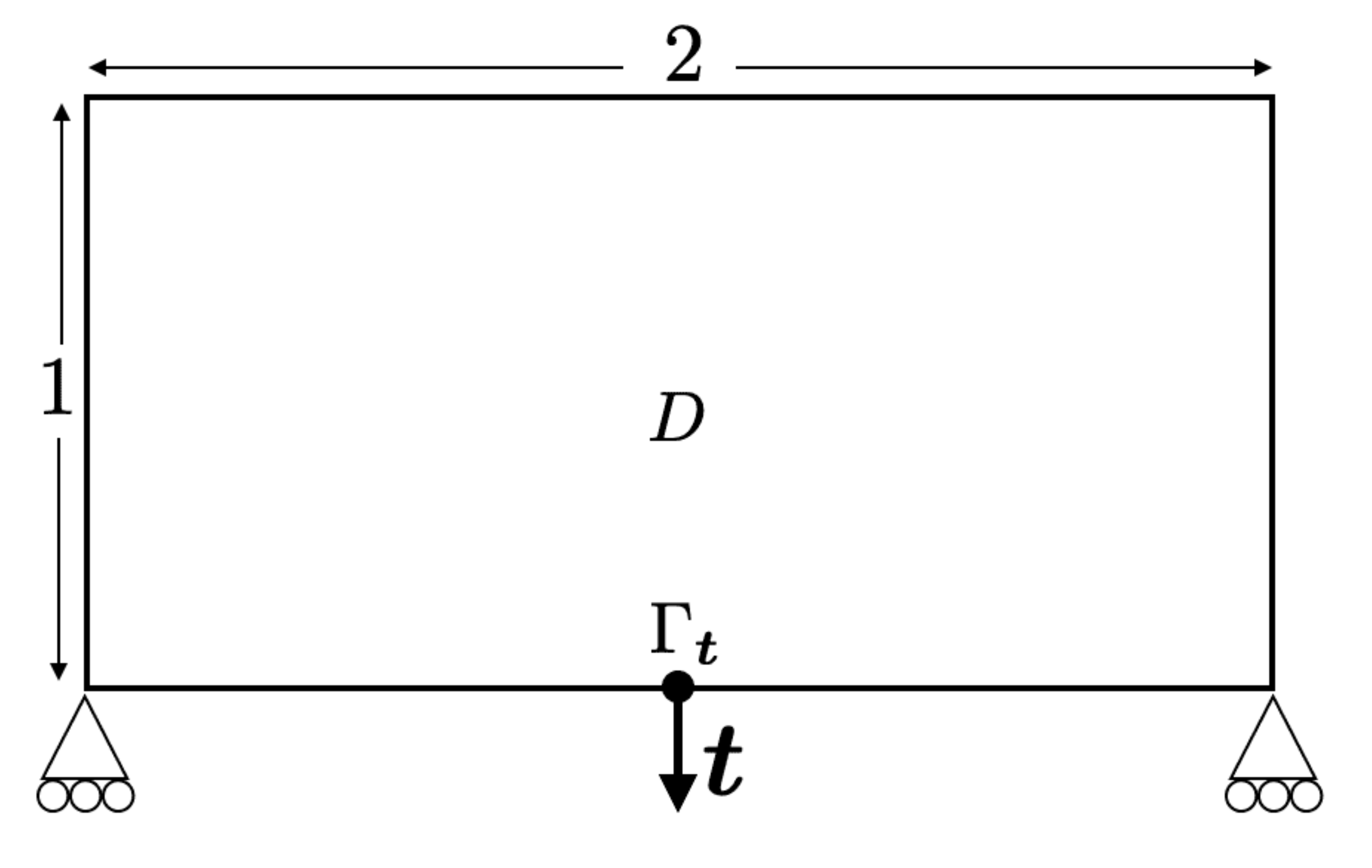}
        \subcaption{Bridge}
        \label{ibri}
      \end{minipage}

      \begin{minipage}[t]{0.24\hsize}
        \centering
        \includegraphics[keepaspectratio, scale=0.16]{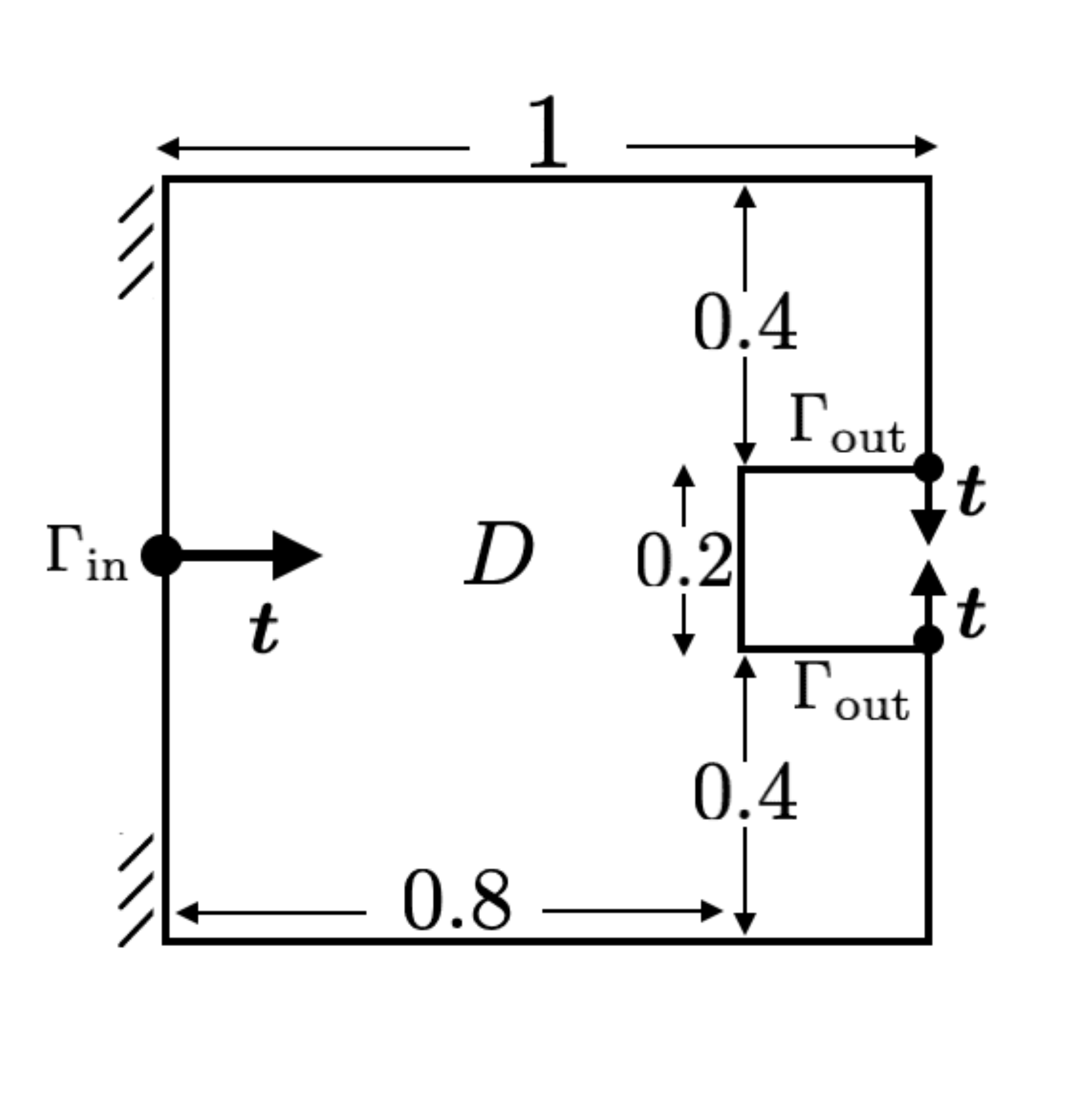}
        \subcaption{Compliant mechanism}
        \label{icm}
      \end{minipage} 
      \begin{minipage}[t]{0.24\hsize}
        \centering
        \includegraphics[keepaspectratio, scale=0.104]{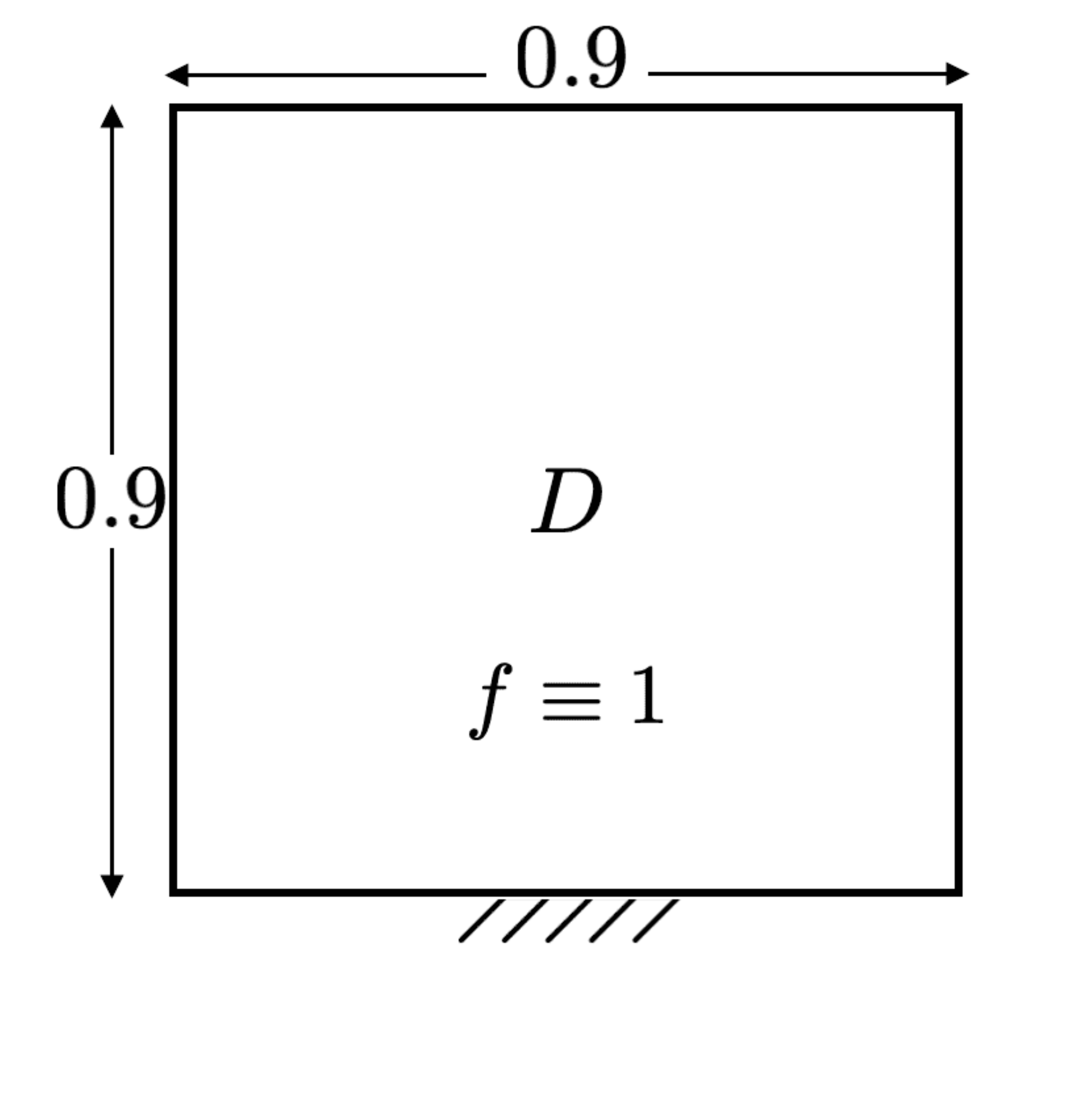}
        \subcaption{Heat conduction}
        \label{ih}
      \end{minipage} 
    \end{tabular}
     \caption{Fixed design domain $D\subset \R^2$ and boundary conditions.}
     \label{fig:DBC}
 \end{figure}

\subsection{Minimum mean compliance problem}\label{SS:ex1}
We first show (i-FDE) for the following minimization problem of 
volume-constrained mean compliance\/{\rm :}
\begin{align}\label{op:MC}
\inf_{\Omega\in \mathcal{U}_{\rm ad}} \left\{F(\Omega):=\langle \boldsymbol{t}, \boldsymbol{u}_\Omega \rangle_{H^{1/2}(\Gamma_t)^d}\right\}, 
\end{align}
where 
$$
\mathcal{U}_{\rm ad}:=\{\Omega \subset D \colon |\Omega|\le G_{\text{max}}|D|  \},
$$
$G_{\text{max}}>0$, the vector-valued state variable ${\boldsymbol u}_\Omega\in V^d$ is a unique  solution to the following linearized elasticity system\/{\rm:}
\begin{align}\label{eq:MCgov}
\int_\Omega
\mathbb{D}{\boldsymbol \varepsilon}({\boldsymbol u}_\Omega)(x) \colon {\boldsymbol \varepsilon}({\boldsymbol v})(x)\, \d x
=\langle \boldsymbol{t}, \boldsymbol{v} \rangle_{H^{1/2}(\Gamma_t)^d}
\quad \text{ for all }\  \boldsymbol{v}\in V^d.
\end{align}
Here and henceforth, $|\Omega|$ stands for the Lebesgue measure of $\Omega\subset \R^d$, 
the forth-order elastic tensor $\mathbb{D}=\mathbb{D}_{ijk\ell}e_i\otimes e_j\otimes e_k\otimes e_\ell$
and the strain tensor ${\boldsymbol \varepsilon}({\boldsymbol u})$ are given by
$$
\mathbb{D}_{ijk\ell}=E\left(\frac{\nu}{(1+\nu)(1-2\nu)}\delta_{ij}\delta_{k\ell}+\frac{1}{2(1+\nu)}(\delta_{ik}\delta_{j\ell}+\delta_{i\ell}\delta_{jk})\right)
$$
for some $E, \nu>0$ and 
$$
{\boldsymbol \varepsilon}({\boldsymbol u})=\frac{1}{2}\left({\boldsymbol \nabla} {\boldsymbol u}+({\boldsymbol \nabla} {\boldsymbol u})^{\bf T}\right), \quad
\boldsymbol \nabla {\boldsymbol u}=\partial_{x_i}u_{j}e_j\otimes e_i,  
$$
respectively, the traction ${\bf t}\in \R^d$ is a constant vector, $\Gamma_t, \Gamma_D\subset \partial D\cap \partial\Omega$ and 
$$
V^d:=\{ \boldsymbol{ v}\in H^1(D)^d\colon \boldsymbol{ v}=0\ \text{ on } \Gamma_D\}.
$$
In particular, $\delta_{ij}$ and $e_k$ stand for the Kronecker delta and 
the $k$-th vector of the canonical basis of $\R^d$, respectively.

The minimization problem \eqref{op:MC} can be replaced by the unconstrained minimization problem \eqref{eq:opt-prob}, where 
\begin{align*}
&
F(\phi)=
\int_{\Gamma_t}
\boldsymbol{t}\cdot \boldsymbol{u}_\phi(x) \, \d \sigma
=
\int_D
\mathbb{D}\chi_\phi(x){\boldsymbol \varepsilon}({\boldsymbol u}_\phi)(x)\colon {\boldsymbol \varepsilon}({\boldsymbol u}_\phi)(x)\, \d x,
\\
&G(\phi)=\int_D \chi_\phi(x)\, \d x- G_{\text{max}}|D|\le 0.
\end{align*}

Now, we perform the numerical analysis for the minimizing problem along with the following steps\/{\rm :} 
\begin{itemize}
\setlength{\leftskip}{3mm}
\item[\bf Step\,1.]
Set Young's modulus $\bm{E}>0$, Poisson's ratio ${\nu}>0$ and
the traction vector $\boldsymbol{t}\in\R^d$  as follows{\rm :}
$$
E=2.1\times 10^{11} , \quad \nu=0.3, \quad \boldsymbol{t} =(0,-1.0\times 10^3).
$$
\item[\bf Step\,2.]
Solve the governing equation \eqref{eq:MCgov}, where 
$\mathbb{D}$ and $\boldsymbol{\e}(\boldsymbol{u}_\phi)$ are represented as 
$$
D:=
\begin{pmatrix}
\tilde\lambda+2\mu & \tilde\lambda& 0 \\
\tilde\lambda & \tilde\lambda+2\mu& 0 \\
0&0&\mu
\end{pmatrix},\quad
\tilde{\lambda}=\frac{E\nu}{(1+\nu)(1-2\nu)},\quad 
\mu=\frac{E}{2(1+\nu)}
$$
and $\boldsymbol{\epsilon}(\boldsymbol{u})=(\partial_{x_1}u_1, \partial_{x_2}u_2, \partial_{x_2}u_1+ \partial_{x_1}u_2)$, respectively.
\item[\bf Step\,5.]
By combining \eqref{eq:MCgov} with \eqref{eq:aRterm}, the reaction term of \eqref{discNLD} is represented by   
\begin{align}\label{MCRterm}
\mathcal{L}_\eta'(\phi,\lambda)=
\mathbb{D}\delta_\eta(\phi){\boldsymbol \varepsilon}({\boldsymbol u}_\phi)\colon {\boldsymbol \varepsilon}({\boldsymbol u}_\phi) - \lambda.
\end{align}
Here we set $\delta_\eta(\phi(x))= C_\eta\chi_{[0\le\phi\le \eta]}(x)/\eta$ and $C_\eta=\eta=1$ for simplicity. 
In particular, $\chi_\phi\in L^{\infty}(D;\{0,1\})$ is treated as $1/2+(15/16)(\phi/\delta)-(5/8)(\phi/\delta)^3+(3/16)(\phi/\delta)^5$ for $|\phi|<\delta$, and we set $\delta=0.8$ below to provide sensitivity in void domains as in {\it topological ligament} (see, e.g.,~\cite{tl}). 
\end{itemize}
The rest of the steps (i.e.,~Steps\,3, 4, 6 and 7) are the same as in \S \ref{S:algo}.

\begin{rmk}[Mathematical justification for sensitivity]
\rm It is noteworthy that the reaction term \eqref{MCRterm} 
coincides with that in \cite[(30)]{Y10}. 
Thus, the approximation $-F_\eta'(\phi)$ for $F'(\phi)$ yields the justification of the result in \cite{Y10}, which also completes the validation of the numerical algorithm in \S \ref{S:algo} (see Remark \ref{R:GR} below for more details).
\end{rmk}

\subsubsection{ Cantilever model}\label{SS:ca}

Under the setting as in Figure \ref{ica}, we set
$(\tau, G_{\rm max},\varDelta t)=(3.0\times 10^{-4}, 0.45, 0.7)$.
Then we can first confirm from Figures \ref{1-d} and \ref{1-h} that the same optimal configurations are obtained. We next see that the method using \eqref{discNLD} for $q>1$ converges to the optimal configuration faster than the method using the RDE (i.e.,~$q=1$); indeed, Figure \ref{fig:mc} shows that only $20$ steps are required to obtain the same shape as the optimal configuration, but for $q=1$, even if the LSF is updated by $325$ steps, the topology is still not optimized. This completes the check for (i-FDE).

\begin{figure}[htbp]
   \hspace*{-5mm} 
    \begin{tabular}{cccc}
      \begin{minipage}[t]{0.24\hsize}
        \centering
        \includegraphics[keepaspectratio, scale=0.1]{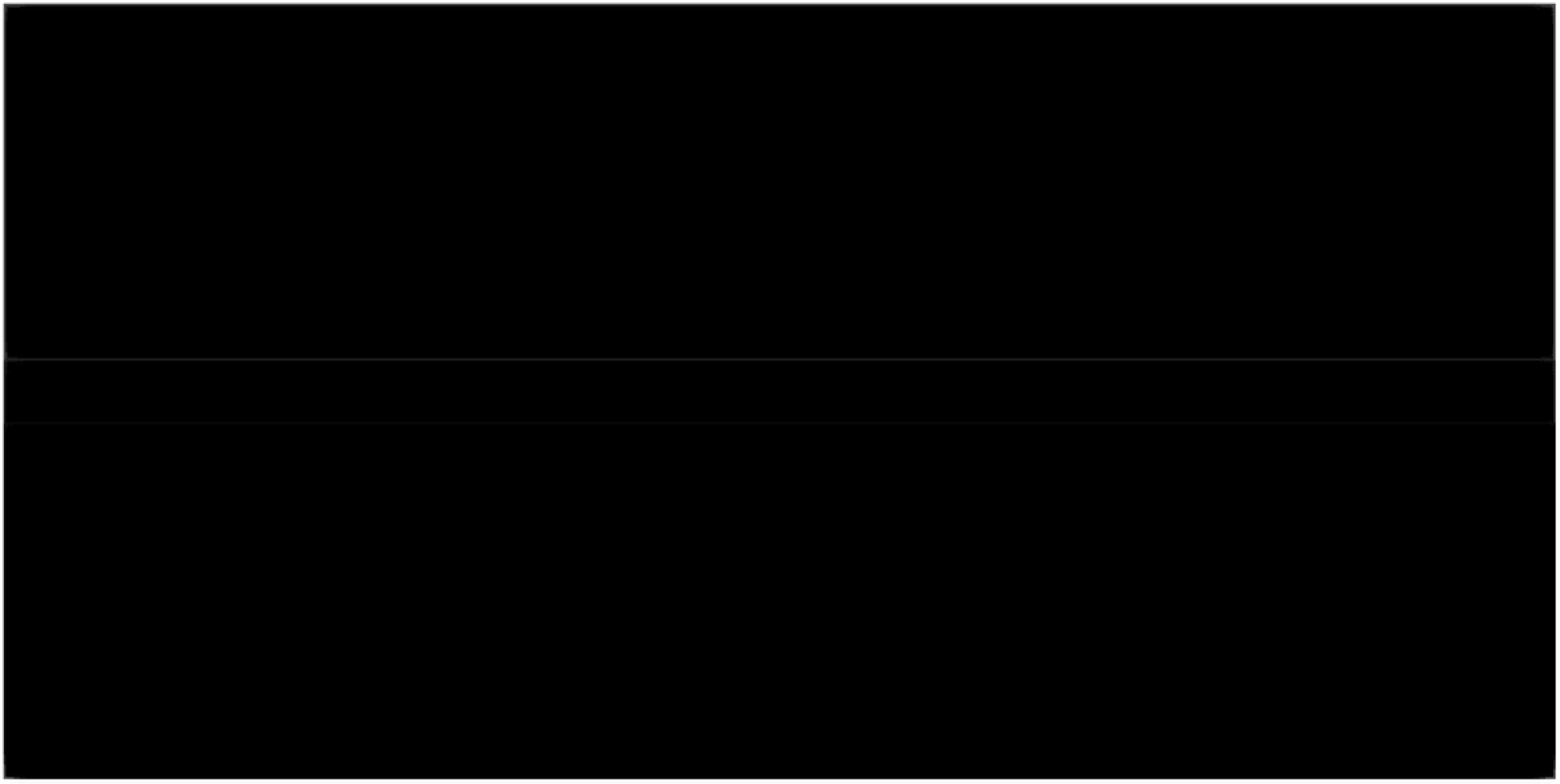}
        \subcaption{Step\,0}
        \label{1-a}
      \end{minipage} 
      \begin{minipage}[t]{0.24\hsize}
        \centering
        \includegraphics[keepaspectratio, scale=0.1]{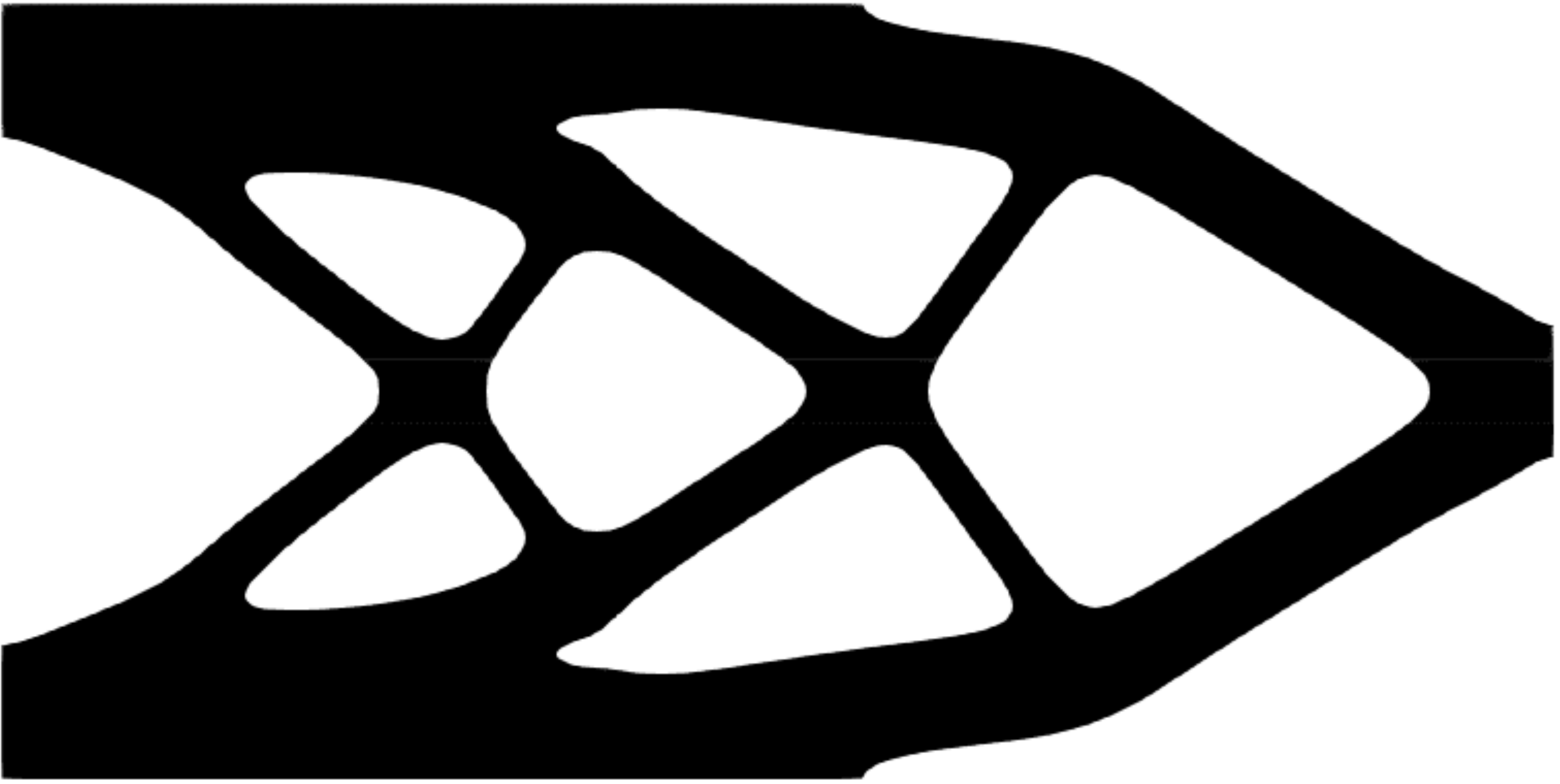}
        \subcaption{Step\,20}
        \label{1-b}
      \end{minipage} 
         \begin{minipage}[t]{0.24\hsize}
        \centering
        \includegraphics[keepaspectratio, scale=0.1]{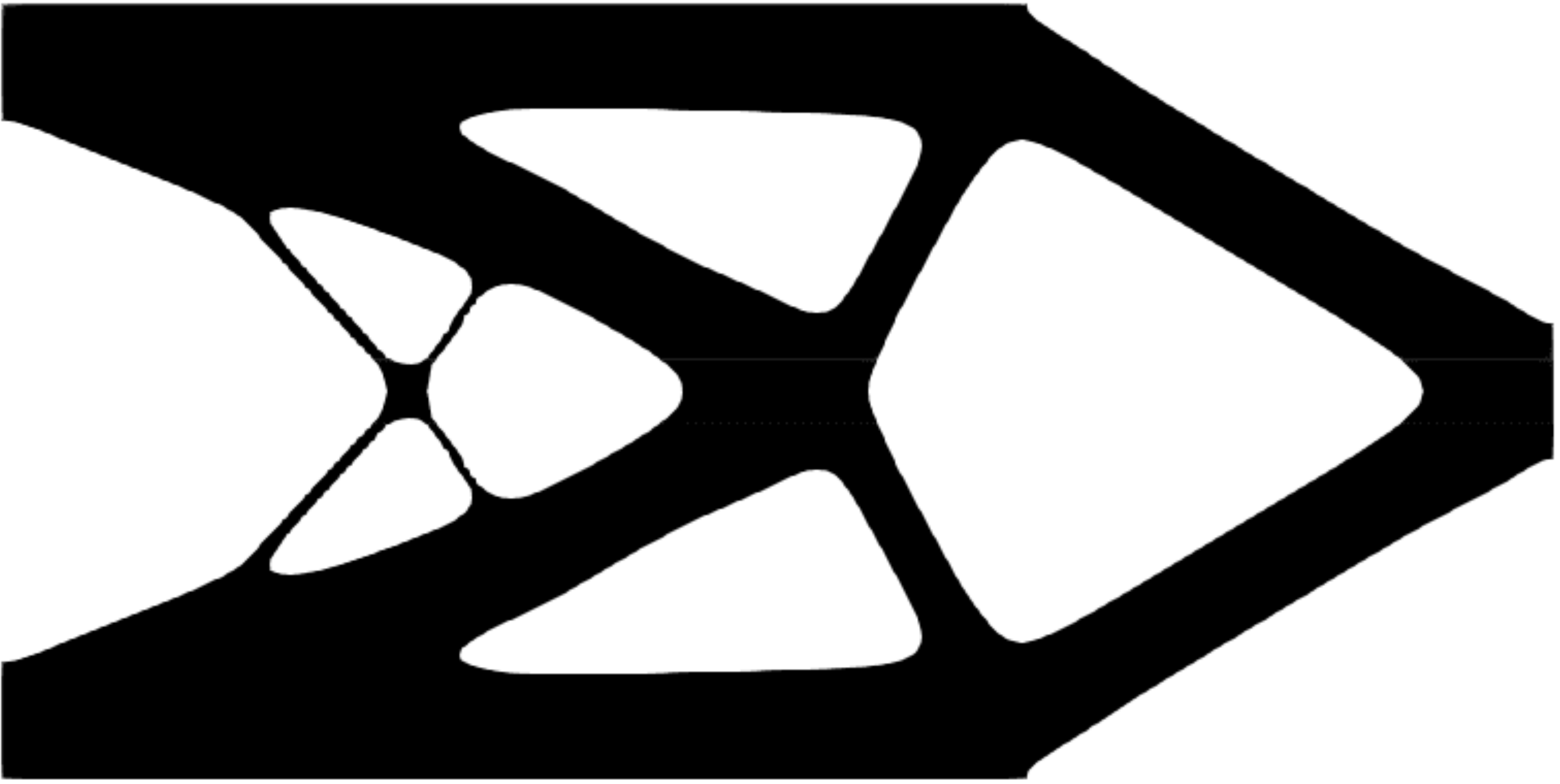}
        \subcaption{Step\,325}
        \label{1-c}
      \end{minipage}
           \begin{minipage}[t]{0.24\hsize}
        \centering
        \includegraphics[keepaspectratio, scale=0.1]{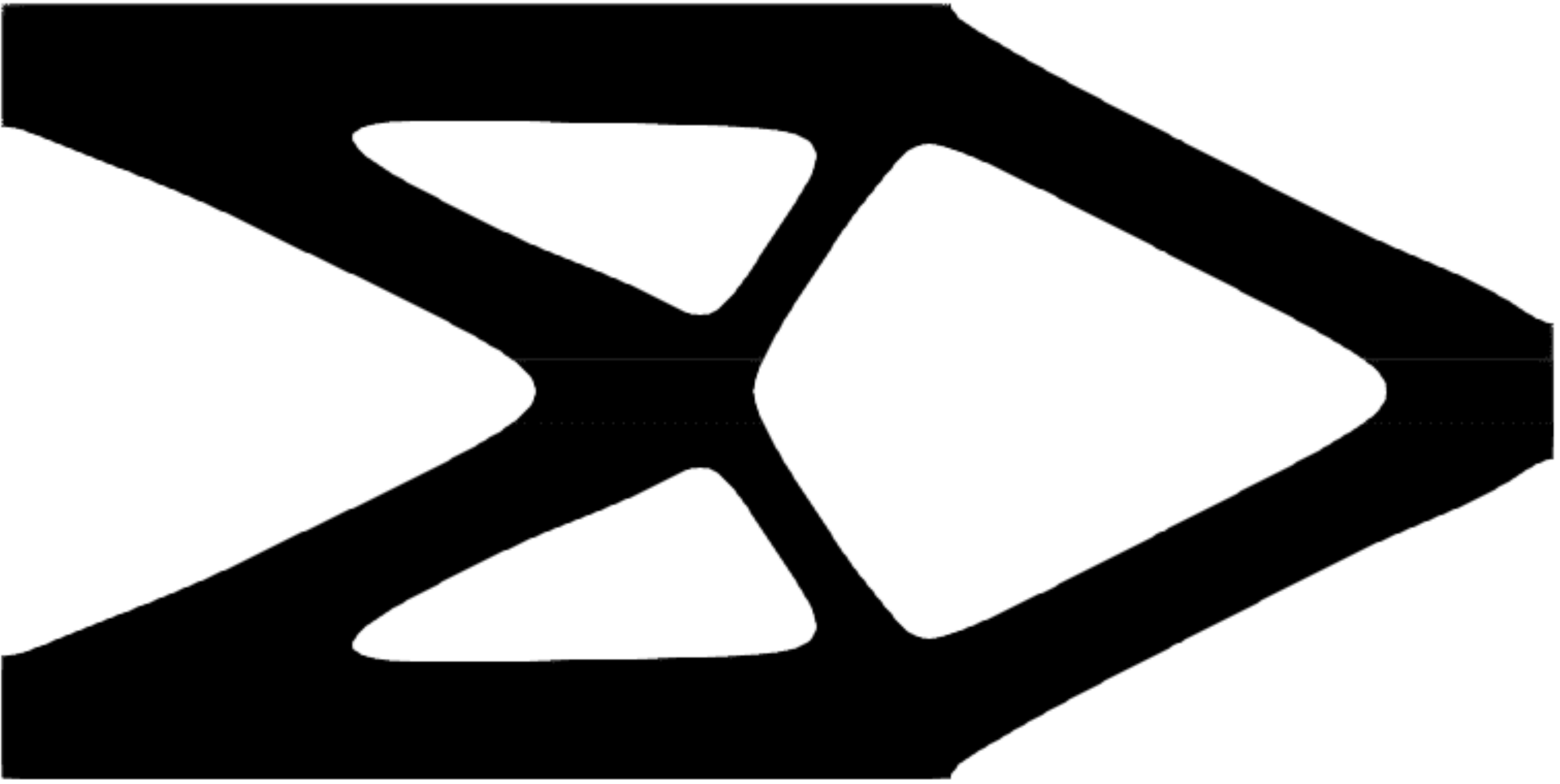}
        \subcaption{Step\,700$^{\#}$}
        \label{1-d}
      \end{minipage}
         \\ 
    \begin{minipage}[t]{0.24\hsize}
        \centering
        \includegraphics[keepaspectratio, scale=0.1]{mc-0.pdf}
        \subcaption{Step\,0}
        \label{1-e}
      \end{minipage} 
      \begin{minipage}[t]{0.24\hsize}
        \centering
        \includegraphics[keepaspectratio, scale=0.1]{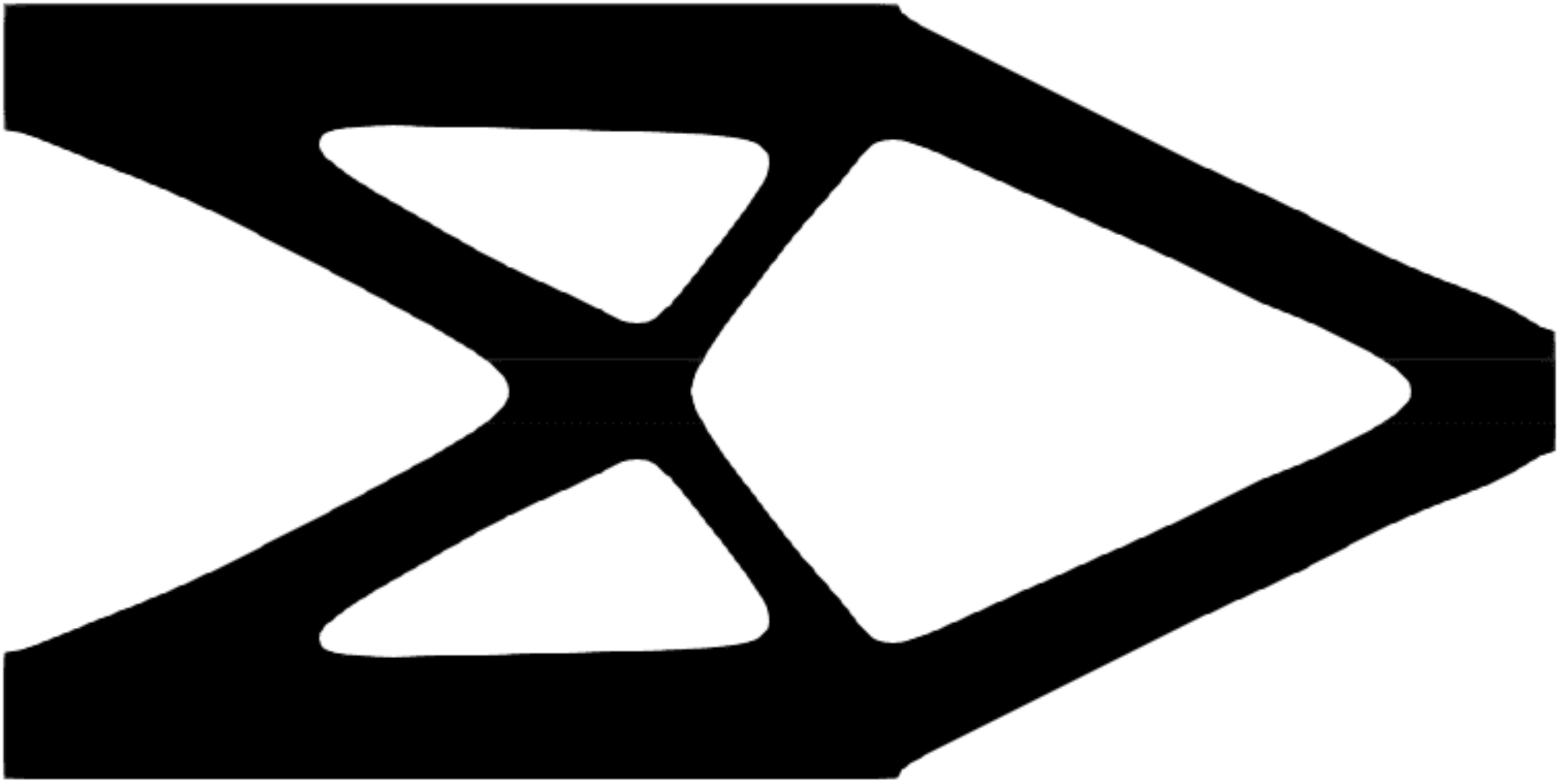}
        \subcaption{Step\,20}
        \label{1-f}
      \end{minipage} 
       \begin{minipage}[t]{0.24\hsize}
        \centering
        \includegraphics[keepaspectratio, scale=0.1]{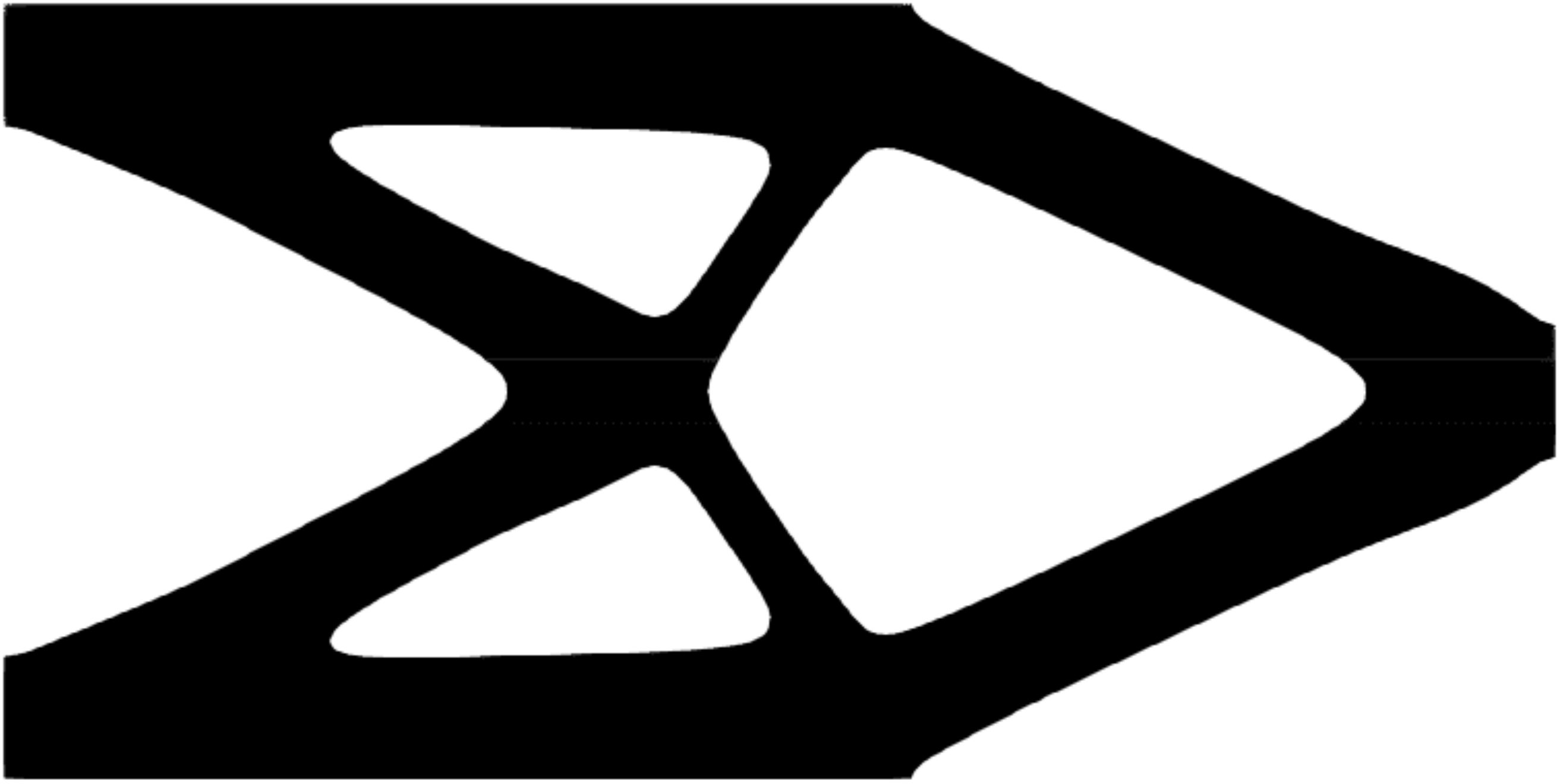}
        \subcaption{Step\,60}
        \label{1-g}
      \end{minipage} 
           \begin{minipage}[t]{0.24\hsize}
        \centering
        \includegraphics[keepaspectratio, scale=0.1]{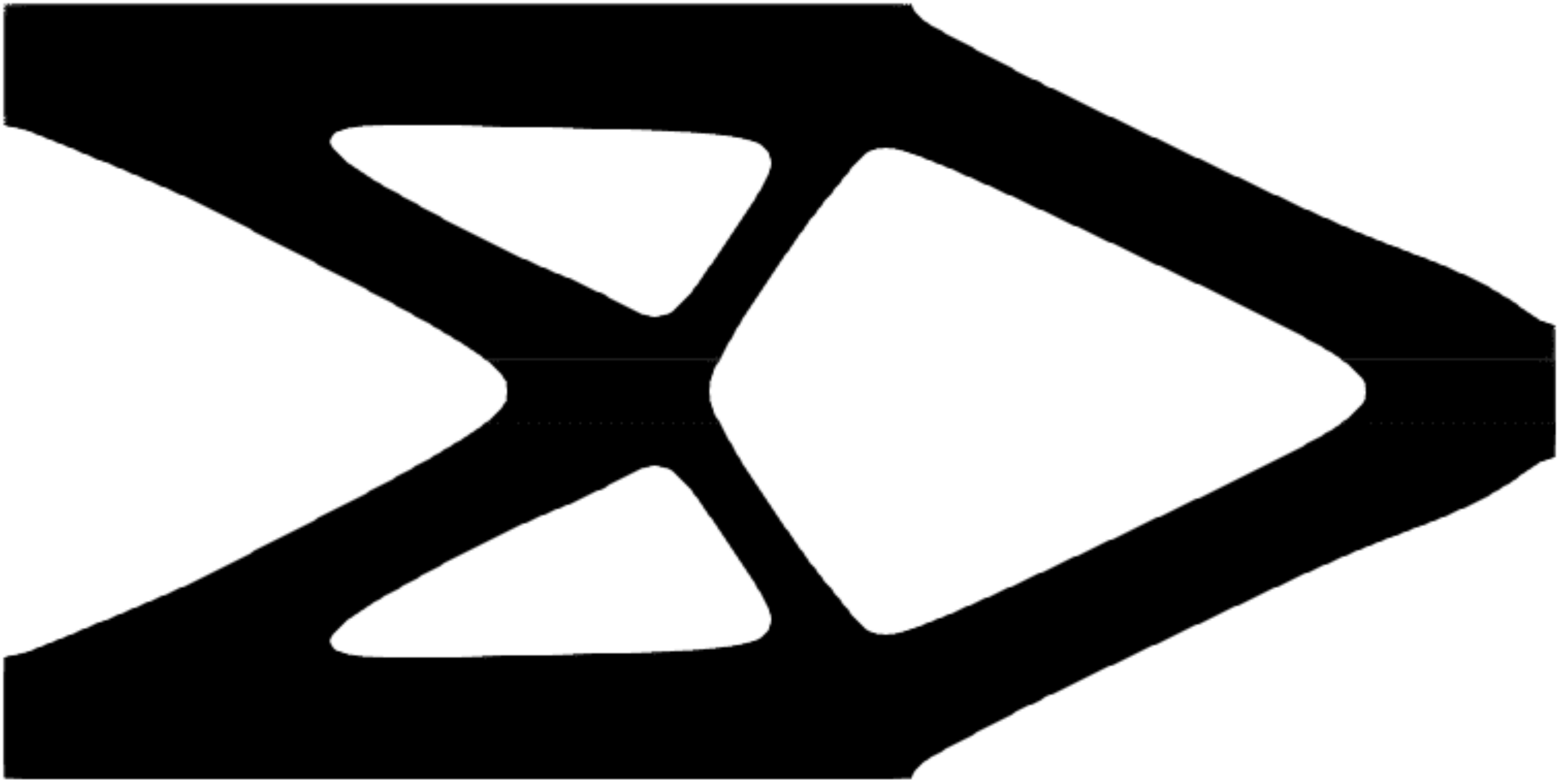}
        \subcaption{Step\,64$^{\#}$}
        \label{1-h}
        \end{minipage} 
    \end{tabular}
     \caption{Configuration $\Omega_{\phi_n}\subset D$ for the case where the initial configuration is the whole domain. 
Figures (a)--(d) and (e)--(h) represent $\Omega_{\phi_n}\subset D$ for $q=1$ and $q=4$ in \eqref{discNLD}, respectively. 
The symbol ${}^{\#}$ implies the final step.      
     }
     \label{fig:mc}
 \end{figure}

\begin{figure}[htbp]
        \centering
        \includegraphics[keepaspectratio, scale=0.35]{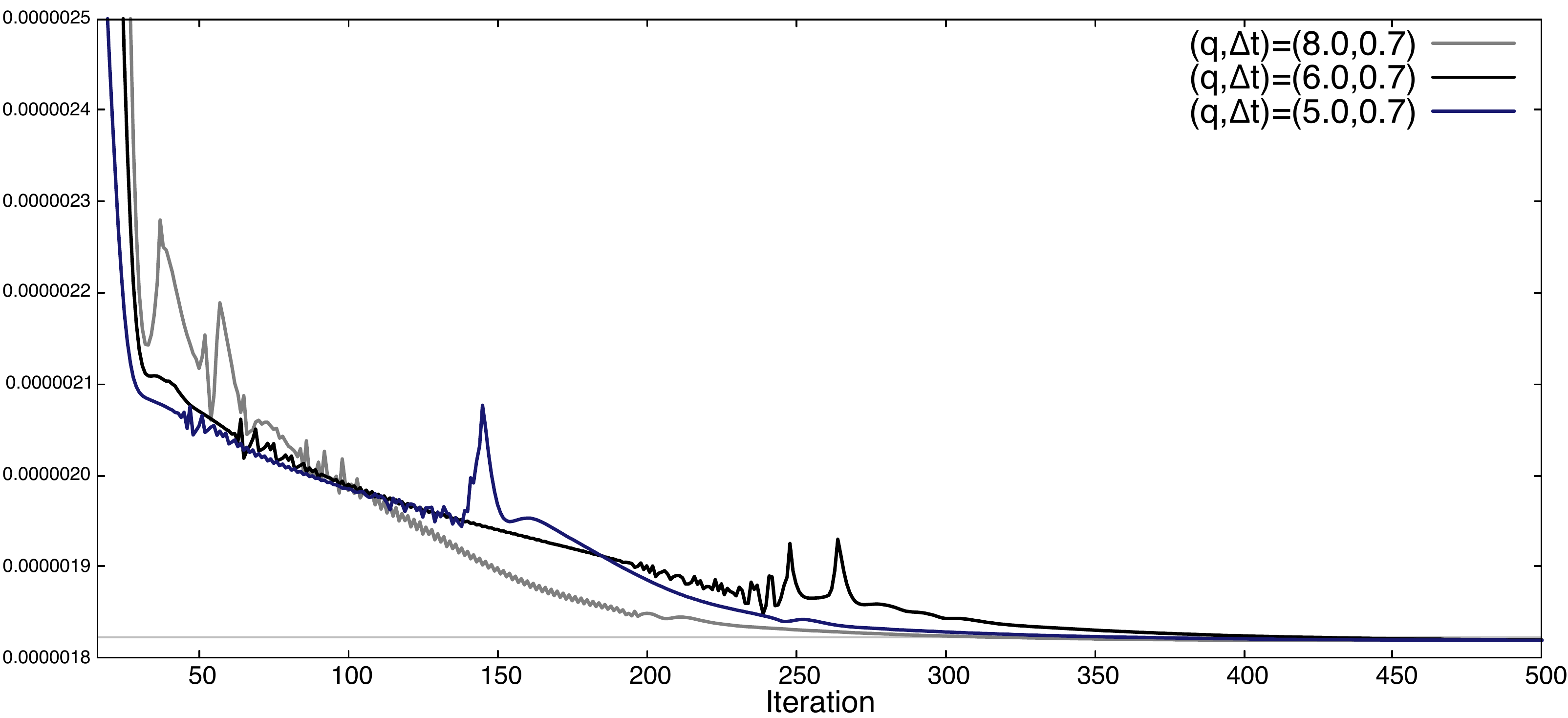}
   \caption{Objective functional $F(\phi_n)$ using fast diffusion.}
\label{fig:mcobj}
\end{figure}       
  
\begin{figure}[htbp]
        \centering
        \includegraphics[keepaspectratio, scale=0.37]{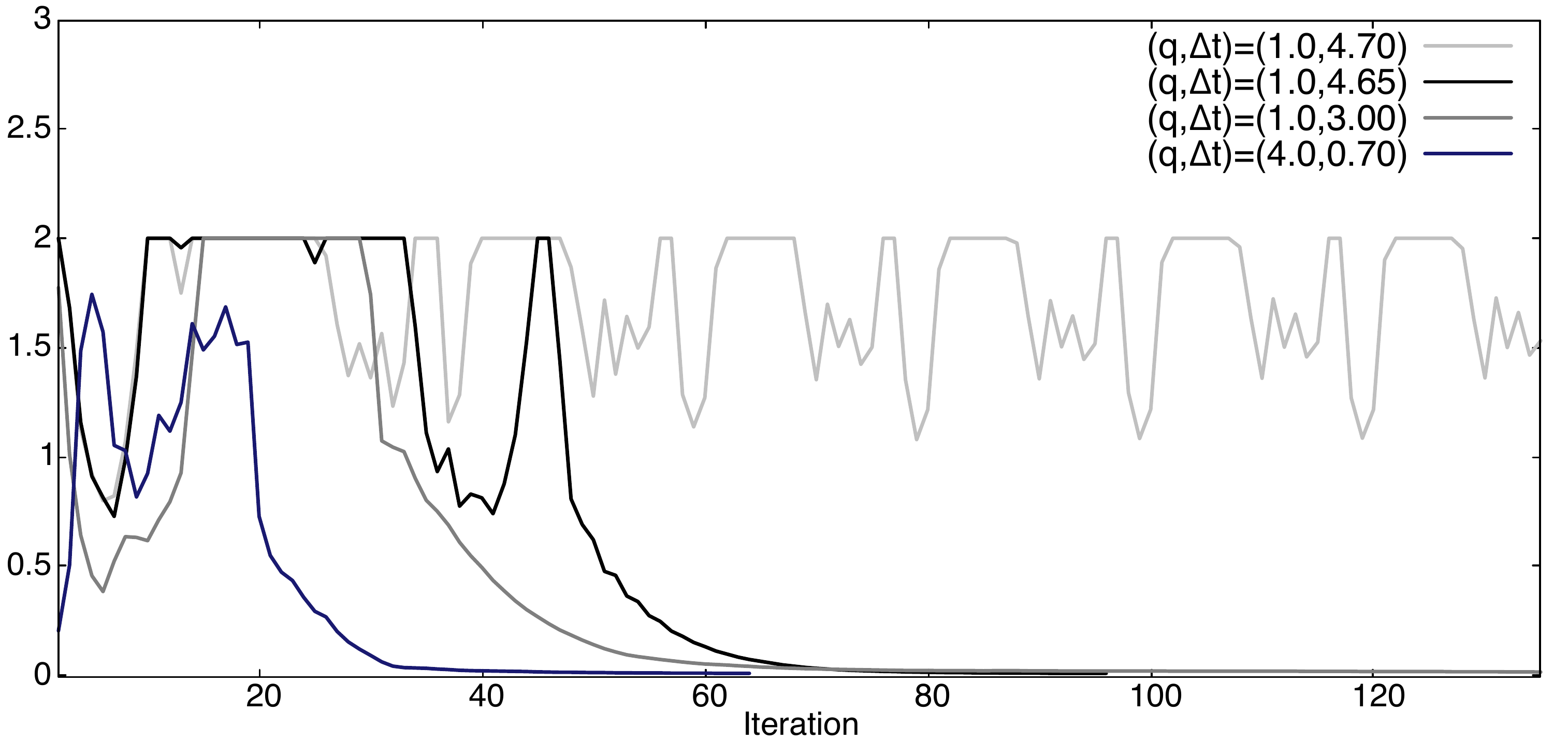}
   \caption{Convergence condition $\|\phi_{n+1}-\phi_n\|_{L^{\infty}(D)}$.}
\label{fig:caconv}
\end{figure}

\begin{rmk}[Non-monotonicity of convergence]
\rm 
We deduce from Figure \ref{fig:mcobj} that the monotonicity of 
convergence speed with respect to $q>1$ does not seem to be obtained.
\end{rmk}

  \begin{rmk}[Irrelevance of time step] \label{R:tinc}
  \rm
 Since $\varDelta t>0$ corresponds to the updated step width of the LSF, it is possible to improve the speed for convergence by changing to larger values; indeed, one can confirm the fact in Figure \ref{fig:caconv}. However, under this setting, we see that the results for $q=4$ cannot be improved by the method using reaction-diffusion even if so is $\varDelta t>0$. 

On the other hand, since both methods with reaction-diffusion and (doubly) nonlinear diffusion are based on the gradient descent method, if $\varDelta t>0$ is too large, the objective functional oscillates, and the convergence condition \eqref{eq:CC} is not satisfied. 
To avoid the oscillation of shapes, we restrict $0<\varDelta t< 1$ below. 
\end{rmk}

As for the corresponding three-dimensional case, the same results can be obtained (see Figure \ref{fig:3dmc}). Here we set $(\tau, G_{\rm max},\varDelta t,\xi)=(1.0\times 10^{-4}, 0.45, 0.5,0)$, and the boundary condition is the same as in \cite[Fig.12]{Y10}.
In particular, it is noteworthy that only $40$ steps are needed to optimize the topology in case of fast diffusion, and the sensitivity in the void domain is effective in Figure \ref{3d-b}-\ref{3d-c}. 

\begin{figure}[htbp]
   \hspace*{-5mm} 
    \begin{tabular}{cccc}
      \begin{minipage}[t]{0.24\hsize}
        \centering
        \includegraphics[keepaspectratio, scale=0.12,angle=0]{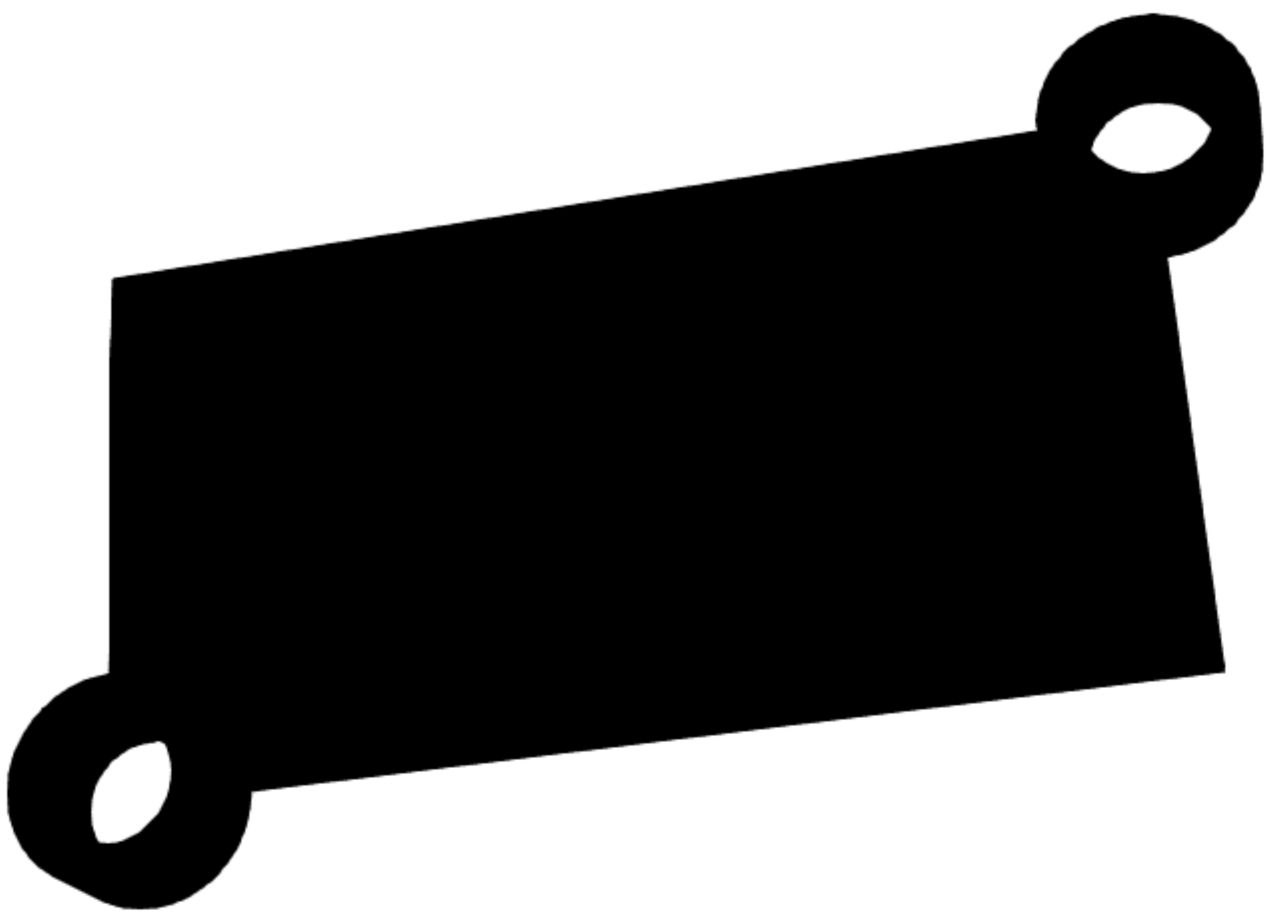}
        \subcaption{Step\,0}
        \label{3d-a}
      \end{minipage} 
      \begin{minipage}[t]{0.24\hsize}
        \centering
        \includegraphics[keepaspectratio, scale=0.12,angle=0]{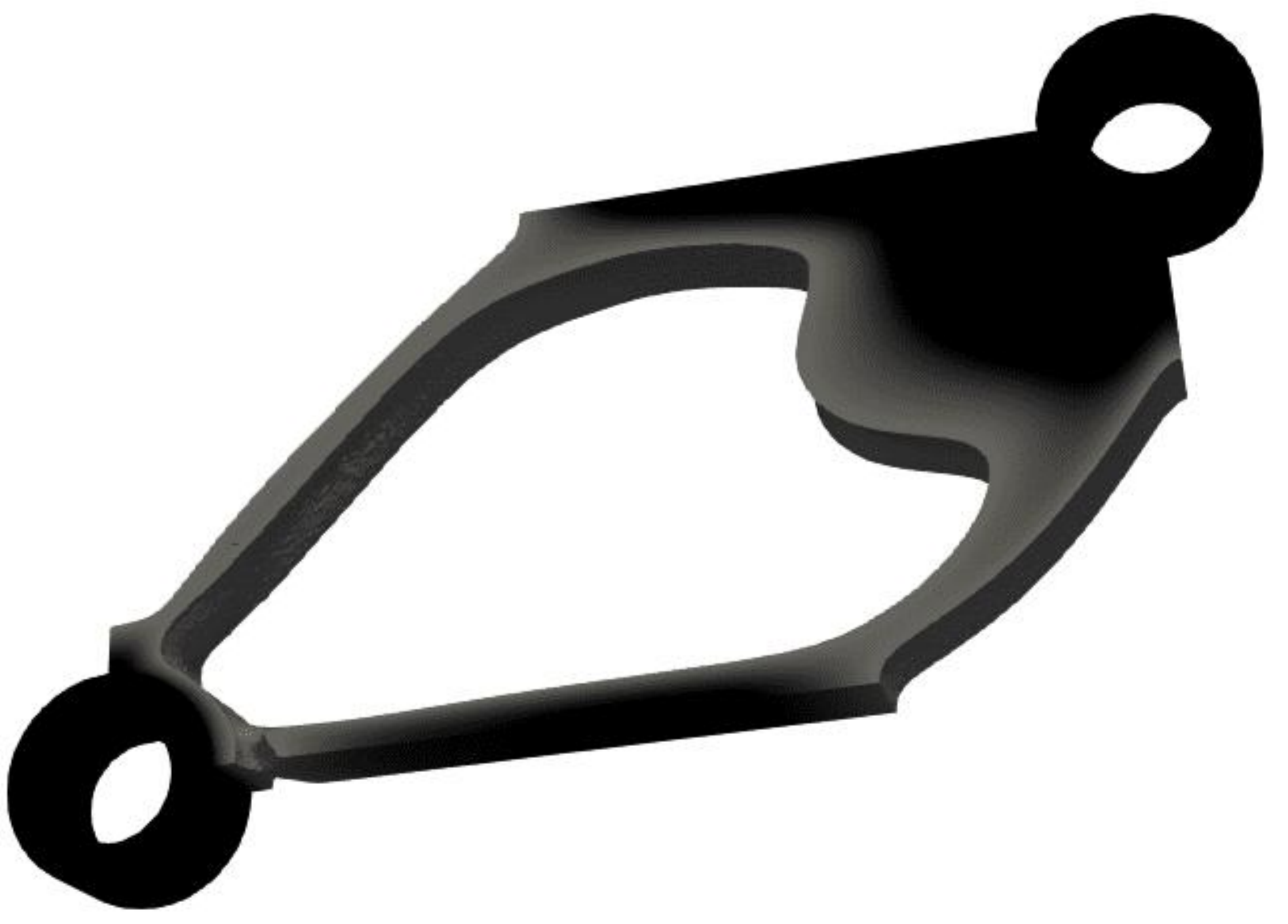}
        \subcaption{Step\,40}
        \label{3d-b}
      \end{minipage} 
         \begin{minipage}[t]{0.24\hsize}
        \centering
        \includegraphics[keepaspectratio, scale=0.12,angle=0]{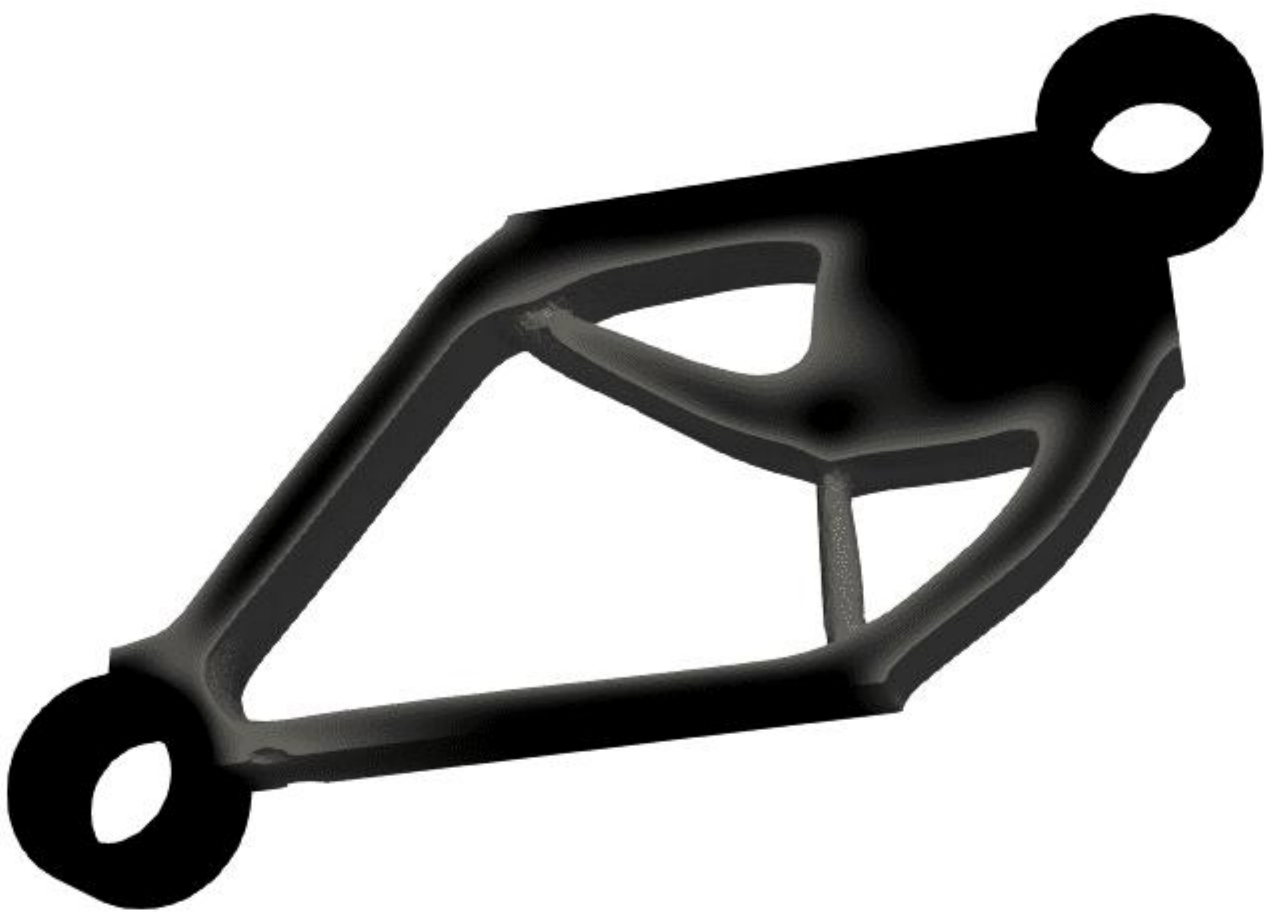}
        \subcaption{Step\,80}
        \label{3d-c}
      \end{minipage}
           \begin{minipage}[t]{0.24\hsize}
        \centering
        \includegraphics[keepaspectratio, scale=0.12,angle=0]{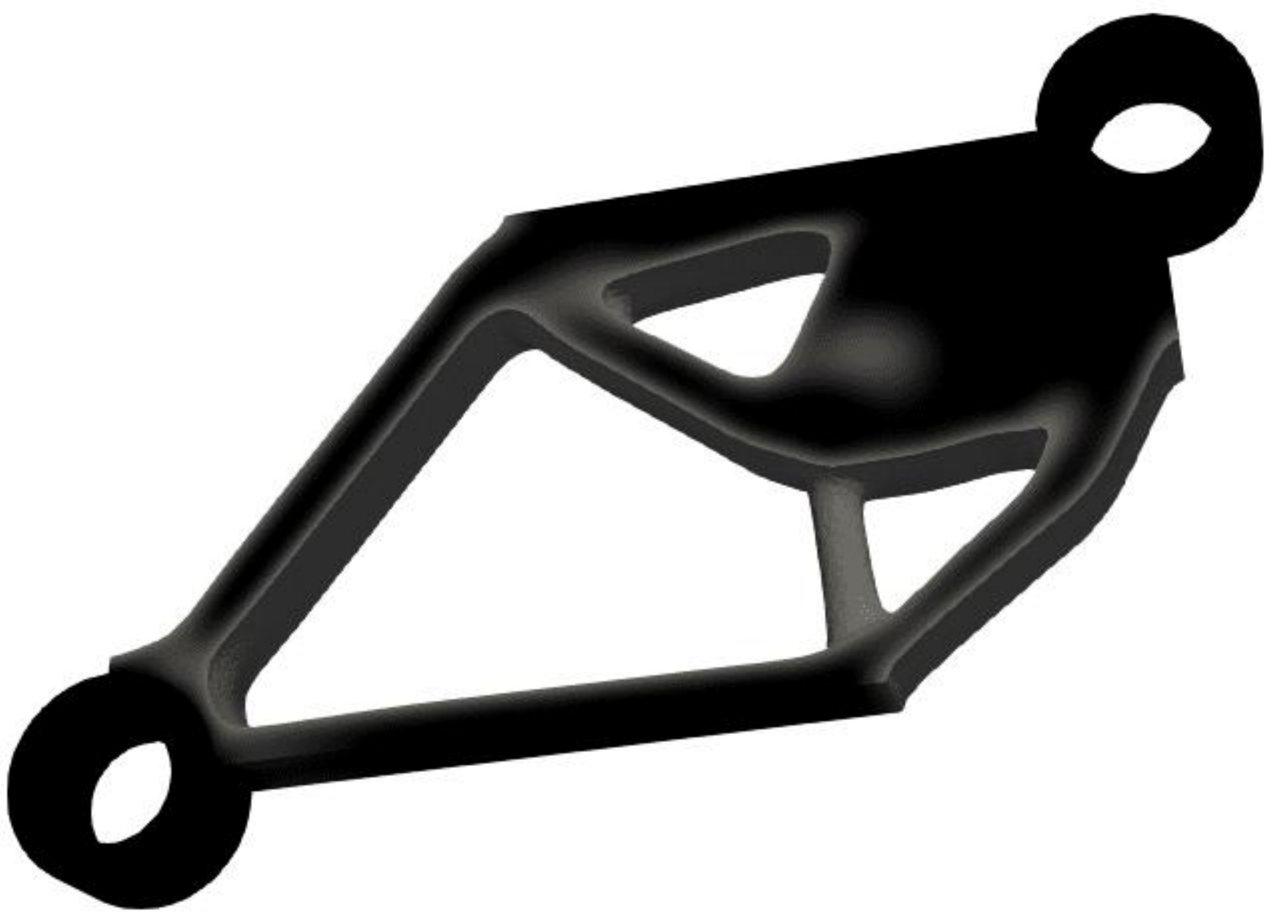}
        \subcaption{Step\,164$^{\#}$}
        \label{3d-d}
      \end{minipage}
         \\ 
    \begin{minipage}[t]{0.24\hsize}
        \centering
        \includegraphics[keepaspectratio, scale=0.12,angle=0]{3d0.pdf}
        \subcaption{Step\,0}
        \label{3d-e}
      \end{minipage} 
      \begin{minipage}[t]{0.24\hsize}
        \centering
        \includegraphics[keepaspectratio, scale=0.12,angle=0]{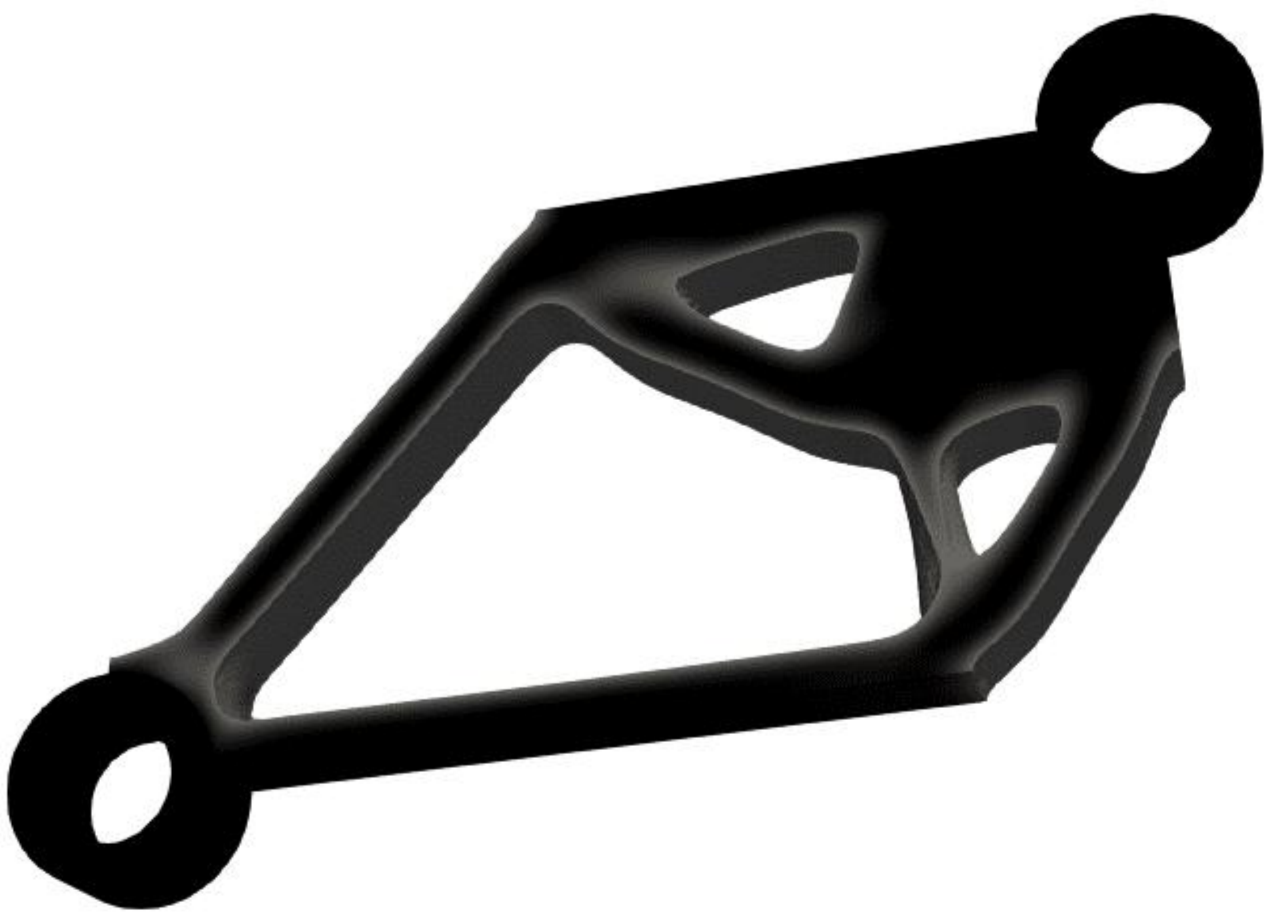}
        \subcaption{Step\,40 }
        \label{3d-f}
      \end{minipage} 
       \begin{minipage}[t]{0.24\hsize}
        \centering
        \includegraphics[keepaspectratio, scale=0.12,angle=0]{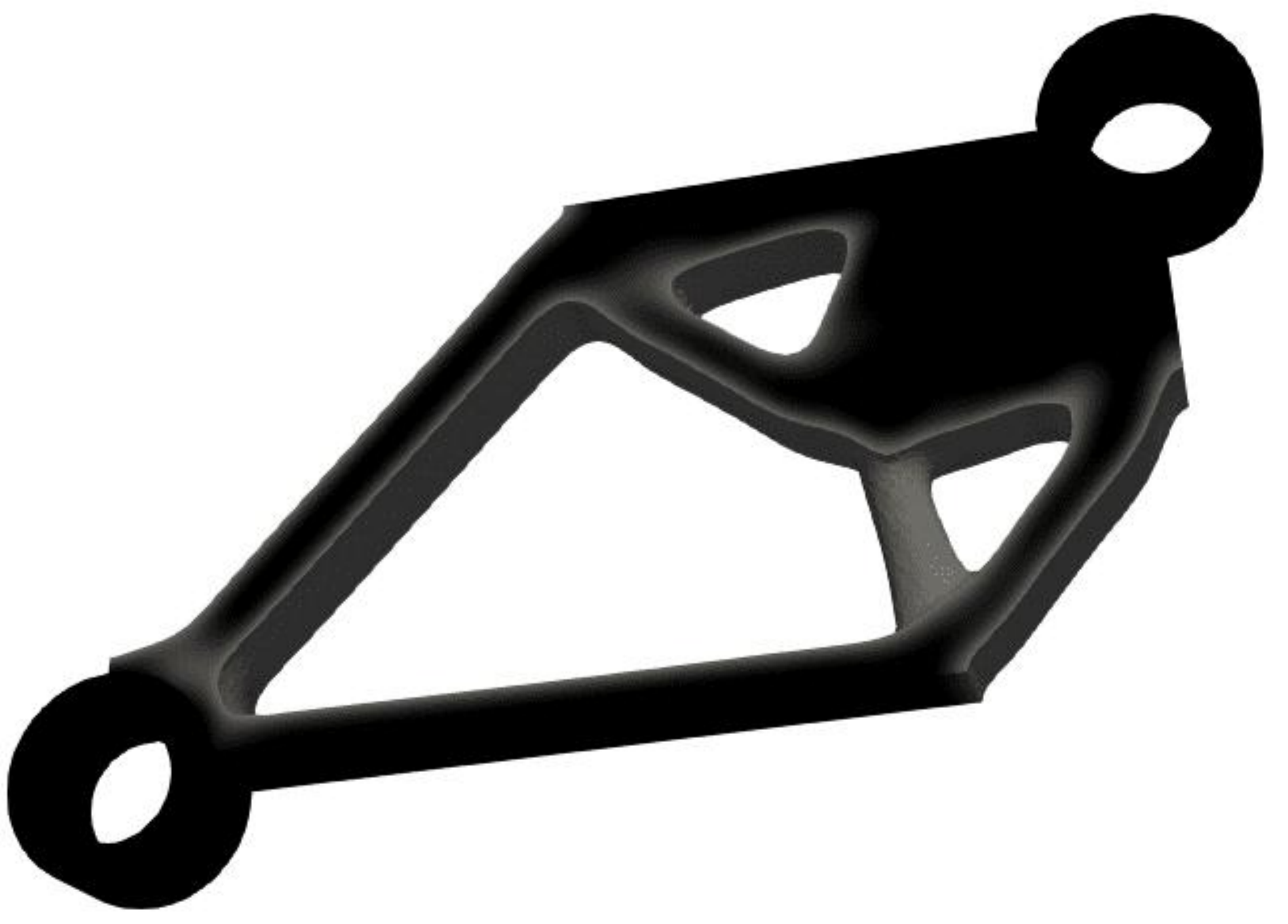}
        \subcaption{Step\,80} 
        \label{3d-g}
      \end{minipage} 
           \begin{minipage}[t]{0.24\hsize}
        \centering
        \includegraphics[keepaspectratio, scale=0.12,angle=0]{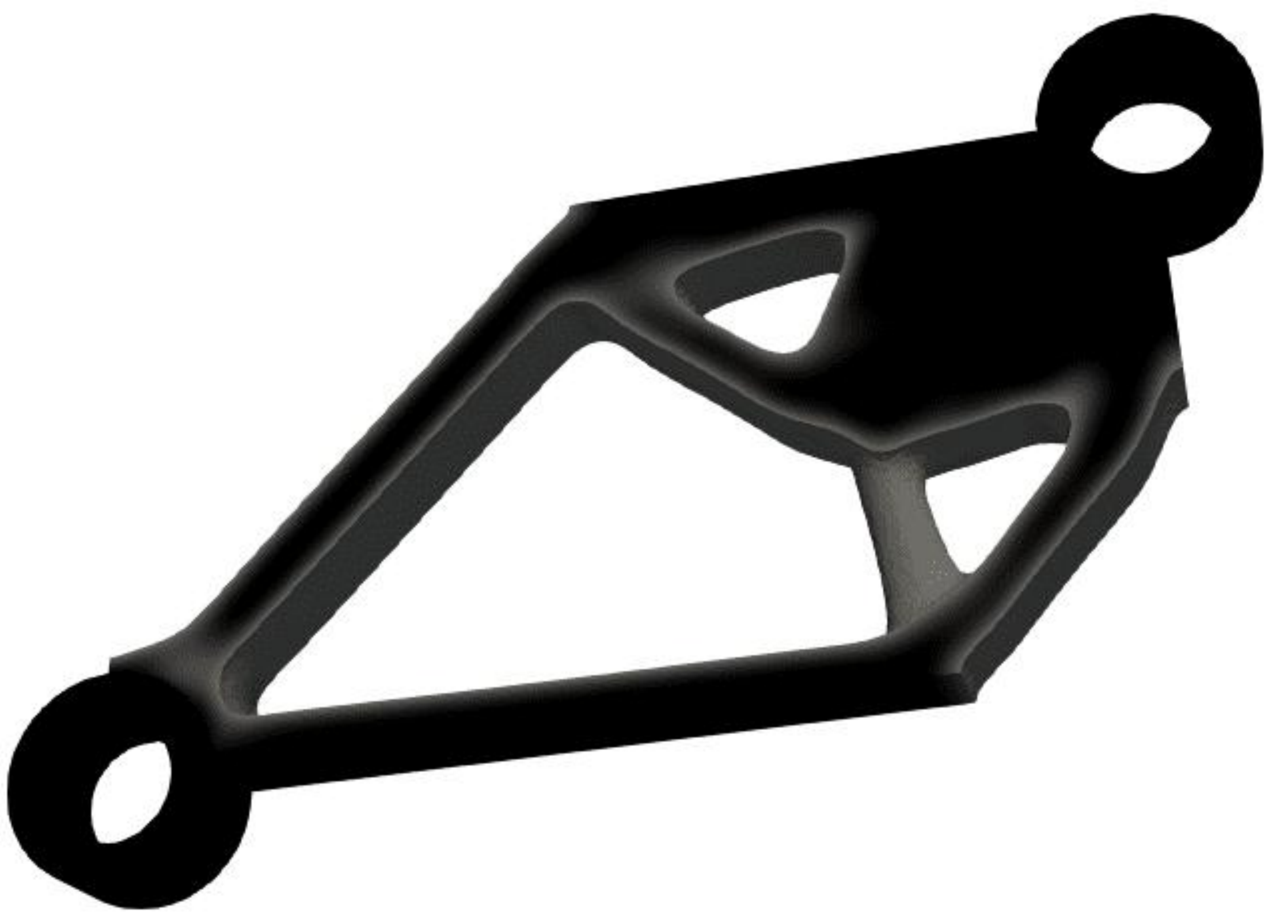}
        \subcaption{Step\,94$^{\#}$}
        \label{3d-h}
        \end{minipage} 
    \end{tabular}
     \caption{Configuration $\Omega_{\phi_n}\subset D\subset\R^3$ for the case where the initial configuration is the whole domain. 
Figures (a)--(d) and (e)--(h)  
represent $\Omega_{\phi_n}\subset D$ for $q=1$ and $q=5$ in \eqref{discNLD}, respectively. 
The symbol ${}^{\#}$ implies the final step.      
     }
     \label{fig:3dmc}
  \end{figure}

\subsubsection{MBB beam model}
In order to get the validity of the proposed method, we next consider another boundary condition (see Figure \ref{imbb}). Here we choose $(\tau, G_{\rm max},\varDelta t)=(6.0\times 10^{-5}, 0.4, 0.3)$ as the given parameters to make the setting different from that of the cantilever.
Then the proposed method optimizes the topology in $40$ steps (see Figure \ref{mbb-f}), whereas the method using reaction-diffusion does not, even in $160$ steps.
Comparing Figure \ref{mbb-d} with Figure \ref{mbb-h},  we see that convergence for configurations can be improved, which completes the confirmation of (i-FDE). 
\begin{figure}[htbp]
   \hspace*{-5mm} 
    \begin{tabular}{ccccc} 
             \begin{minipage}[t]{0.25\hsize}
        \centering
        \includegraphics[keepaspectratio, scale=0.09]{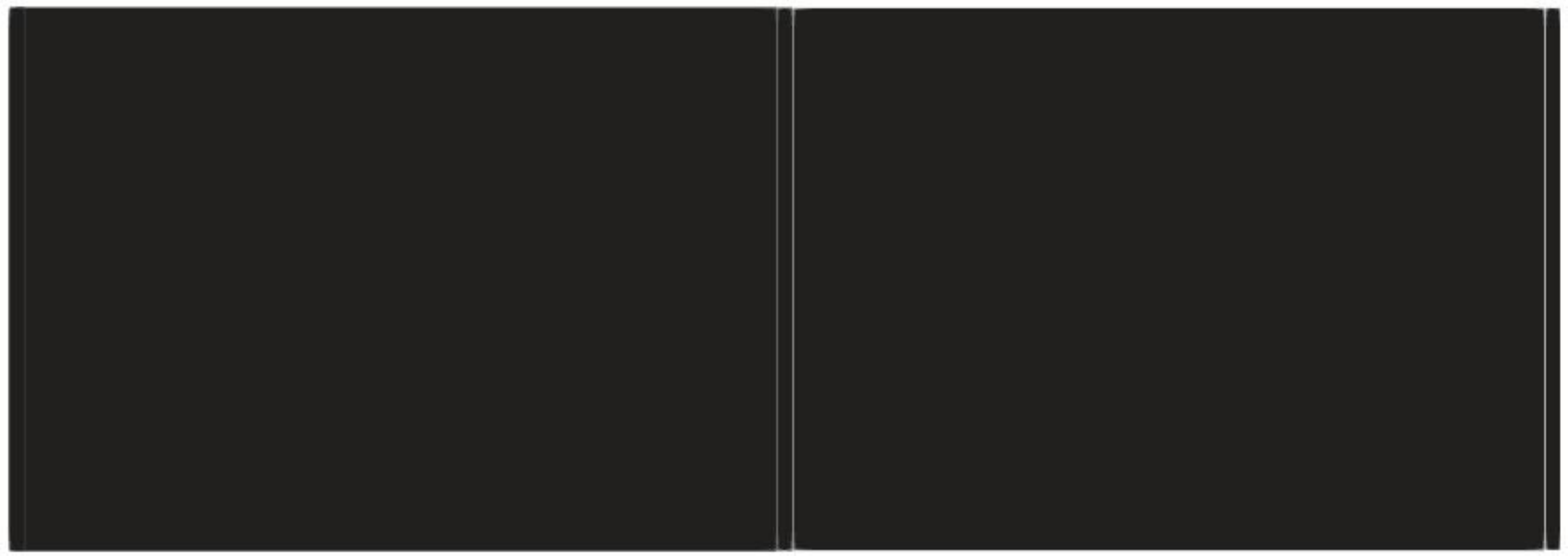}
        \subcaption{Step\,0}
        \label{mbb-a}
      \end{minipage} 
      \begin{minipage}[t]{0.25\hsize}
        \centering
        \includegraphics[keepaspectratio, scale=0.09]{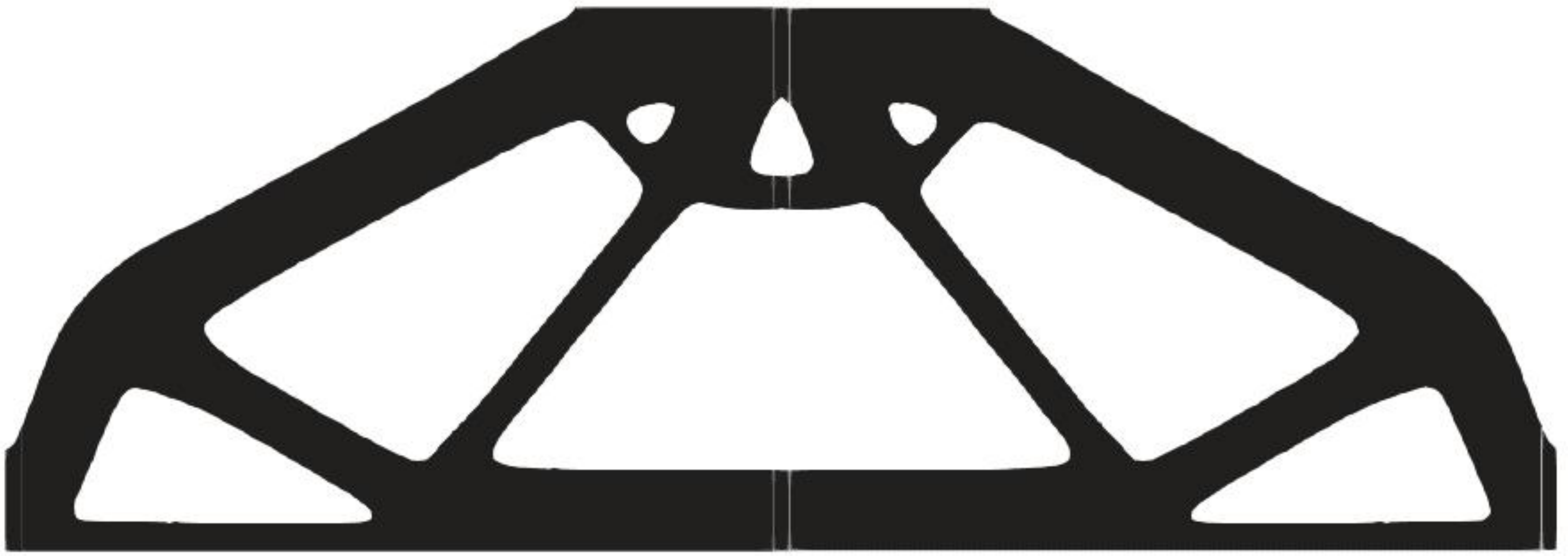}
        \subcaption{Step\,40}
        \label{mbb-b}
      \end{minipage} 
         \begin{minipage}[t]{0.25\hsize}
        \centering
        \includegraphics[keepaspectratio, scale=0.09]{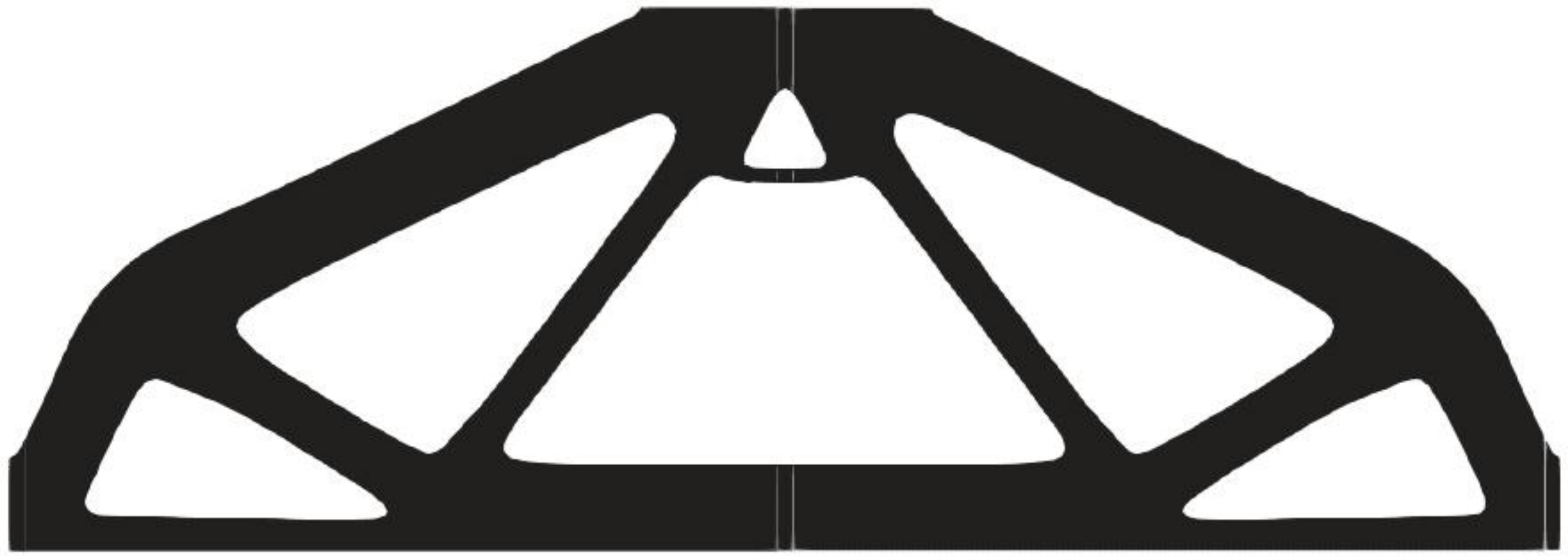}
        \subcaption{Step\,160}
        \label{mbb-c}
      \end{minipage}
      \begin{minipage}[t]{0.25\hsize}
        \centering
        \includegraphics[keepaspectratio, scale=0.09]{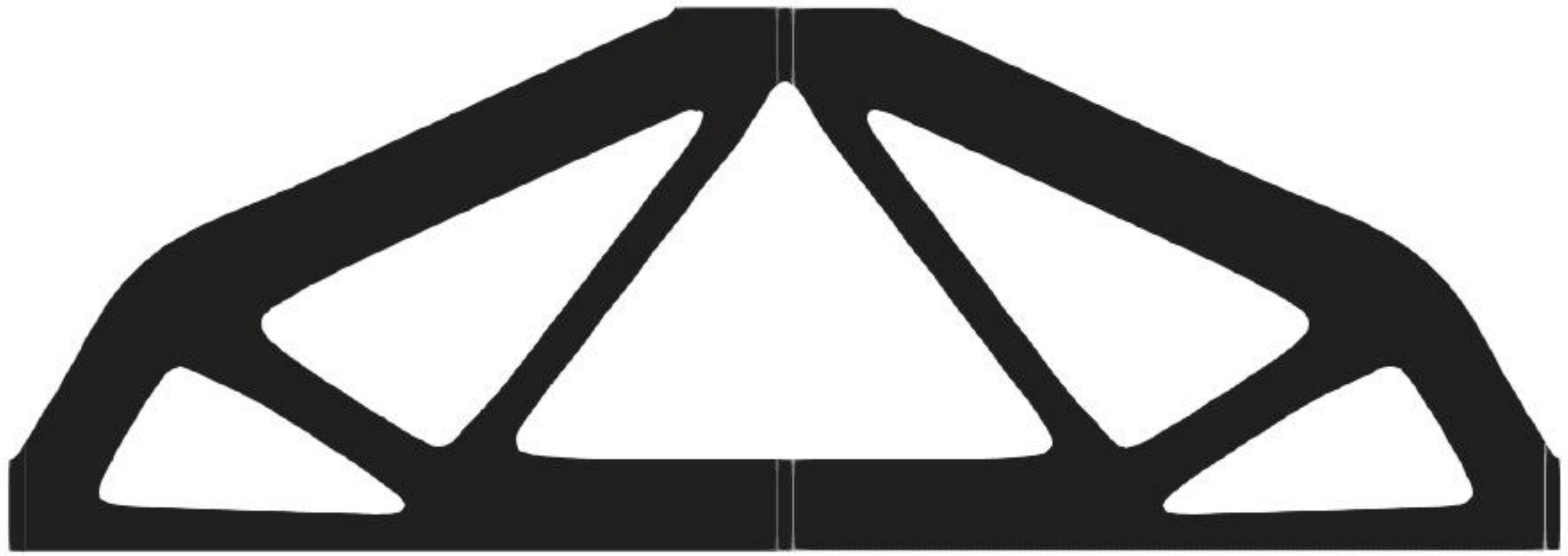}
        \subcaption{Step\,414$^{\#}$}
        \label{mbb-d}
      \end{minipage}
        \\
       \begin{minipage}[t]{0.25\hsize}
        \centering
        \includegraphics[keepaspectratio, scale=0.09]{mbbb0.pdf}
        \subcaption{Step\,0}
        \label{mbb-e}
      \end{minipage} 
      \begin{minipage}[t]{0.25\hsize}
        \centering
        \includegraphics[keepaspectratio, scale=0.09]{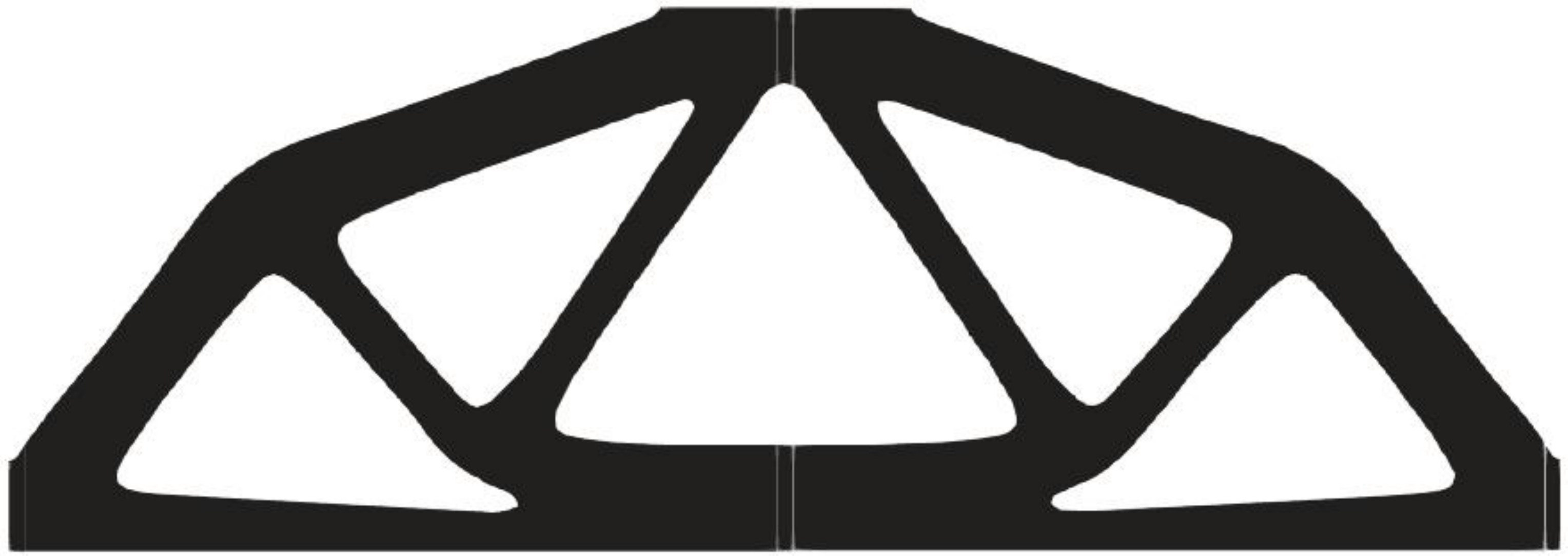}
        \subcaption{Step\,40}
        \label{mbb-f}
      \end{minipage} 
         \begin{minipage}[t]{0.25\hsize}
        \centering
        \includegraphics[keepaspectratio, scale=0.09]{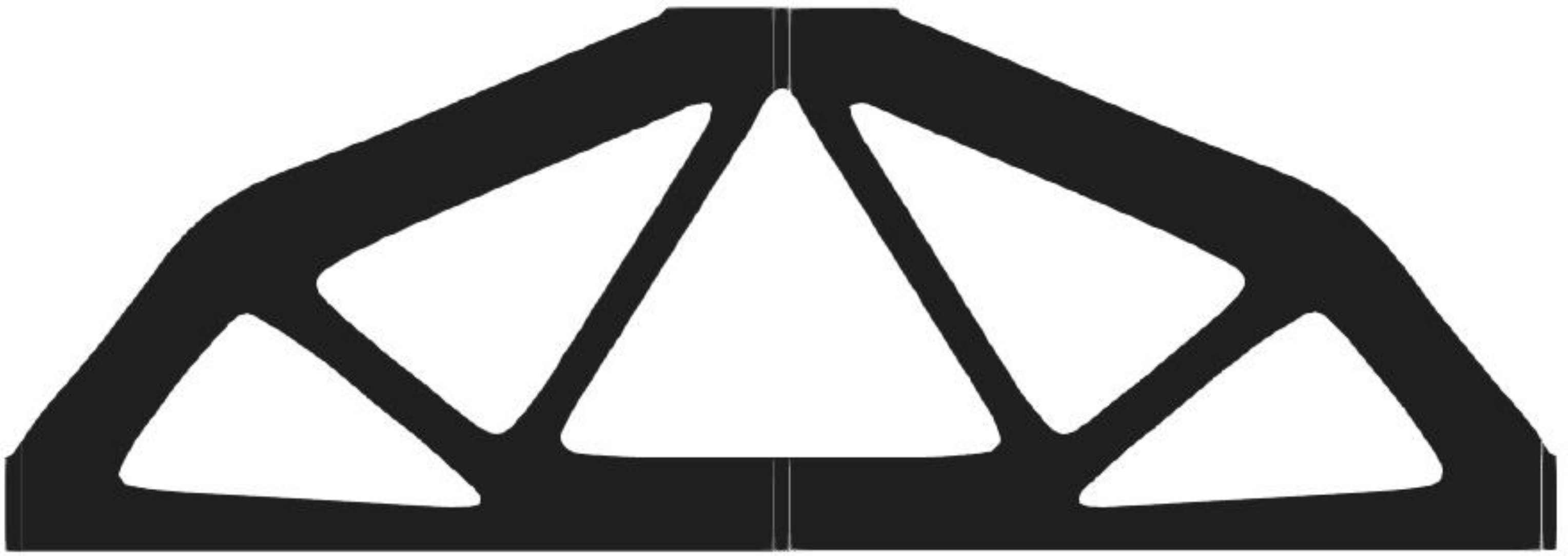}
        \subcaption{Step\,160}
        \label{mbb-g}
      \end{minipage}
      \begin{minipage}[t]{0.25\hsize}
        \centering
        \includegraphics[keepaspectratio, scale=0.09]{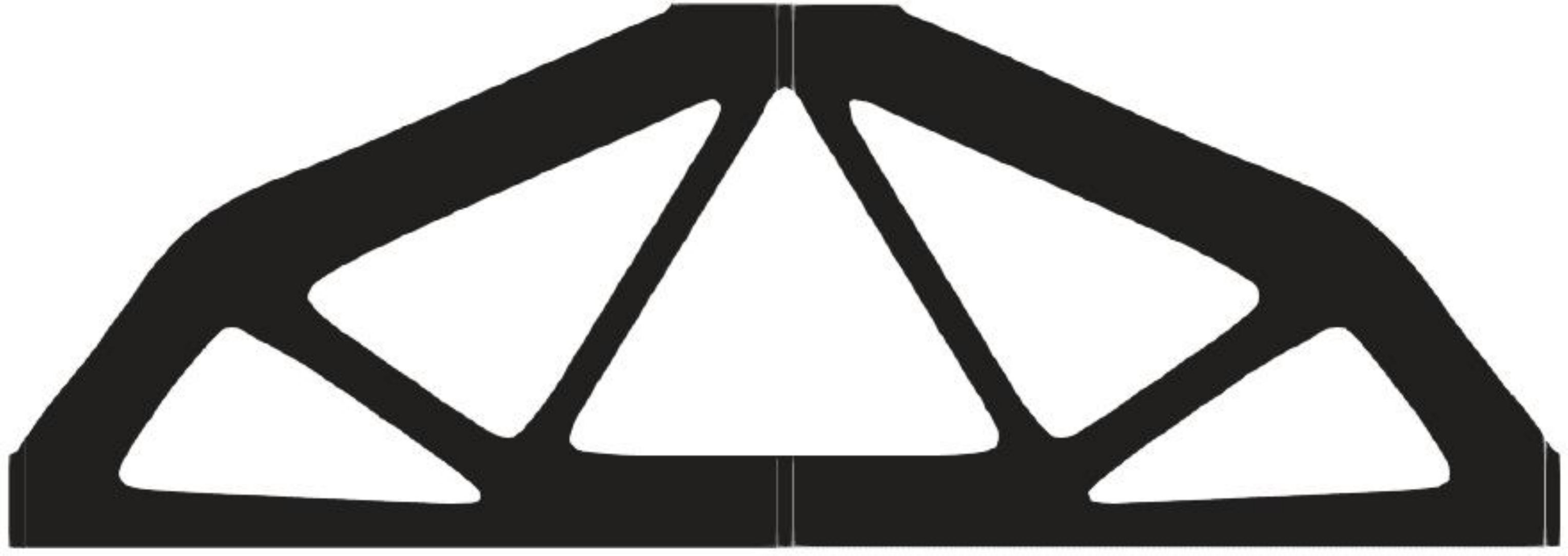}
        \subcaption{Step\,366$^\#$}
        \label{mbb-h}
      \end{minipage}
    \end{tabular}
     \caption{Configuration $\Omega_{\phi_n}\subset D$ for the case where the initial configuration is the whole domain. 
Figures (a)--(d) and (e)--(h)  
represent $\Omega_{\phi_n}\subset D$ for $q=1$ and $q=2$ in \eqref{discNLD}, respectively. 
{The symbol ${}^{\#}$ implies the final step.}     
     }
     \label{fig:cm}
  \end{figure}

\subsubsection{Bridge model}\label{SS:bri}
We consider the case where the boundary condition is shown in Figure \ref{ibri} as the last example for \eqref{op:MC}. 
Here we choose $(\tau, G_{\rm max},\varDelta t)=(1.0\times 10^{-4}, 0.35, 0.5)$.
In particular, let the initial configuration be a perforated domain to remove initial level set dependencies. Then Figure \ref{fig:bri} is obtained. 
Comparing Figures \ref{bri-a}--\ref{bri-e} with \ref{bri-k}--\ref{bri-o}, one can verify that
the proposed method yields the optimal topology in $30$ steps and drastically improves the convergence.  

As in Remark \ref{R:tinc}, since the improvement of convergence is expected by increasing $\varDelta t>0$, we next change $\varDelta t>0$ from $0.5$ to $0.9$ for $q=1$.
Then, we obtain Figure \ref{bri-f}--\ref{bri-j}, and the convergence for optimal configuration can be improved; however, the improvement is not as significant as that of the proposed method, which confirms the effectiveness of the proposed method. 

\begin{figure}[htbp]
   \hspace*{-5mm} 
    \begin{tabular}{ccccc} 
             \begin{minipage}[t]{0.2\hsize}
        \centering
        \includegraphics[keepaspectratio, scale=0.09]{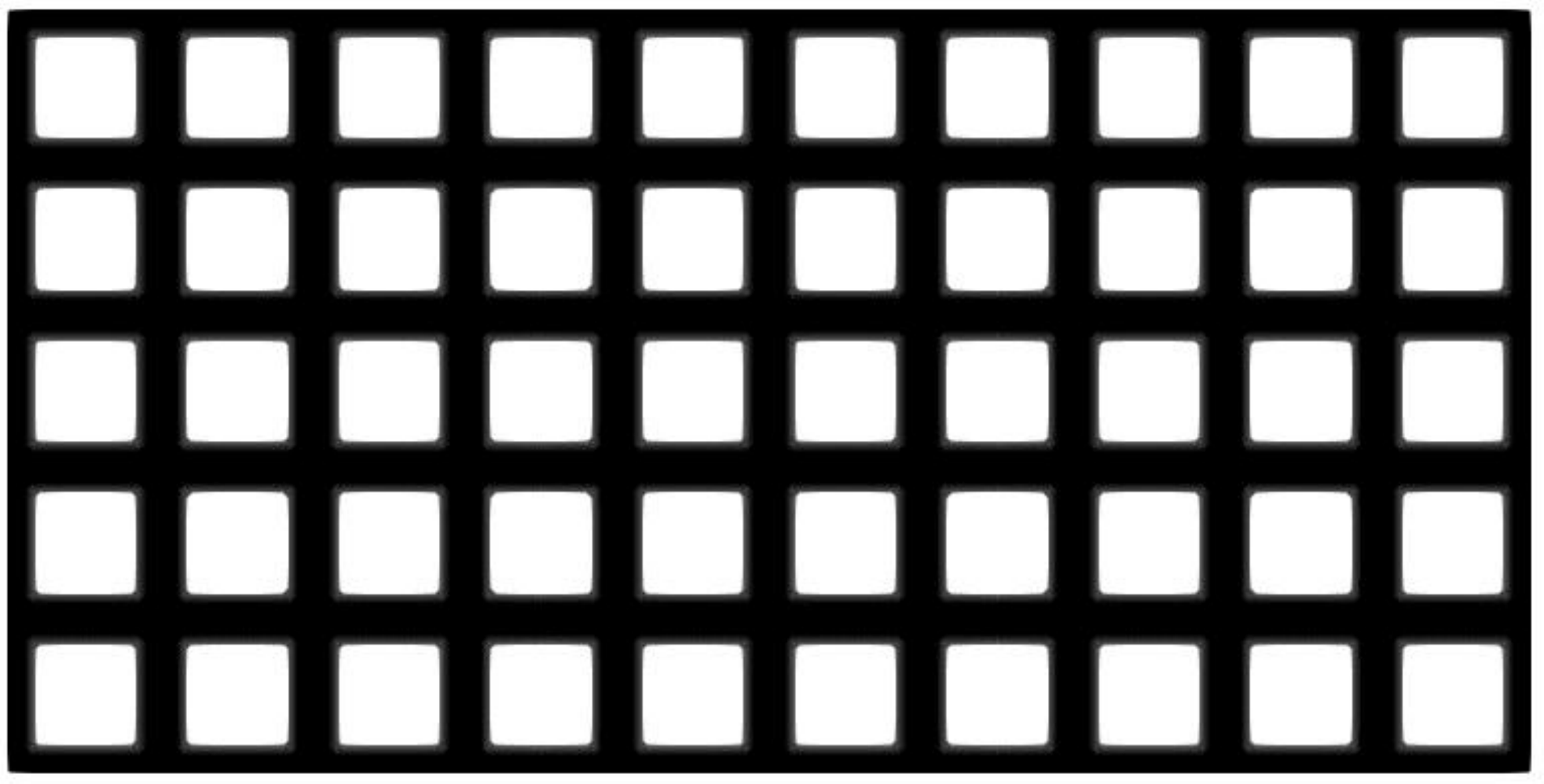}
        \subcaption{Step\,0}
        \label{bri-a}
      \end{minipage} 
      \begin{minipage}[t]{0.2\hsize}
        \centering
        \includegraphics[keepaspectratio, scale=0.09]{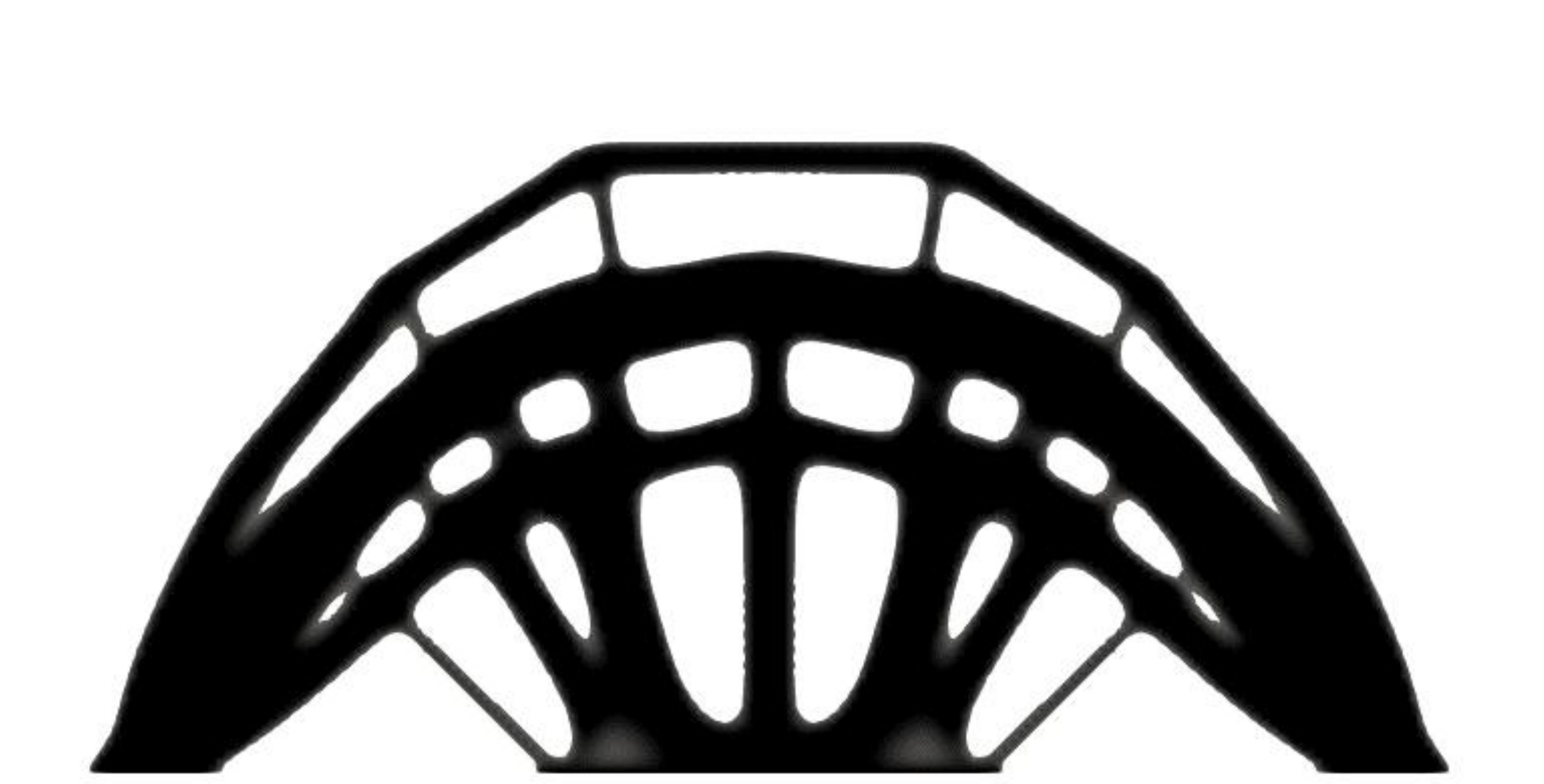}
        \subcaption{Step\,30}
        \label{bri-b}
      \end{minipage} 
         \begin{minipage}[t]{0.2\hsize}
        \centering
        \includegraphics[keepaspectratio, scale=0.09]{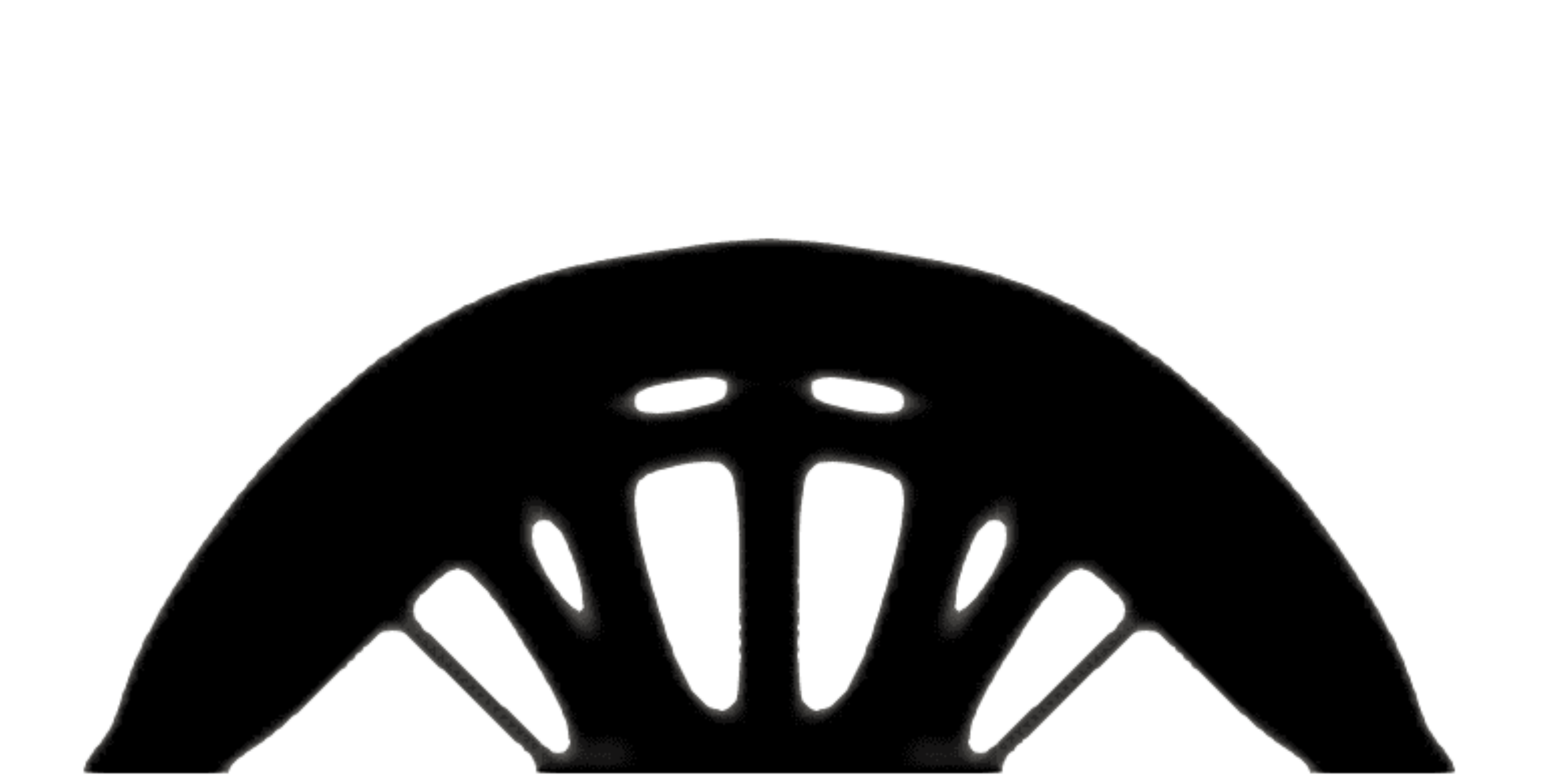}
        \subcaption{Step\,60}
        \label{bri-c}
      \end{minipage}
      \begin{minipage}[t]{0.2\hsize}
        \centering
        \includegraphics[keepaspectratio, scale=0.09]{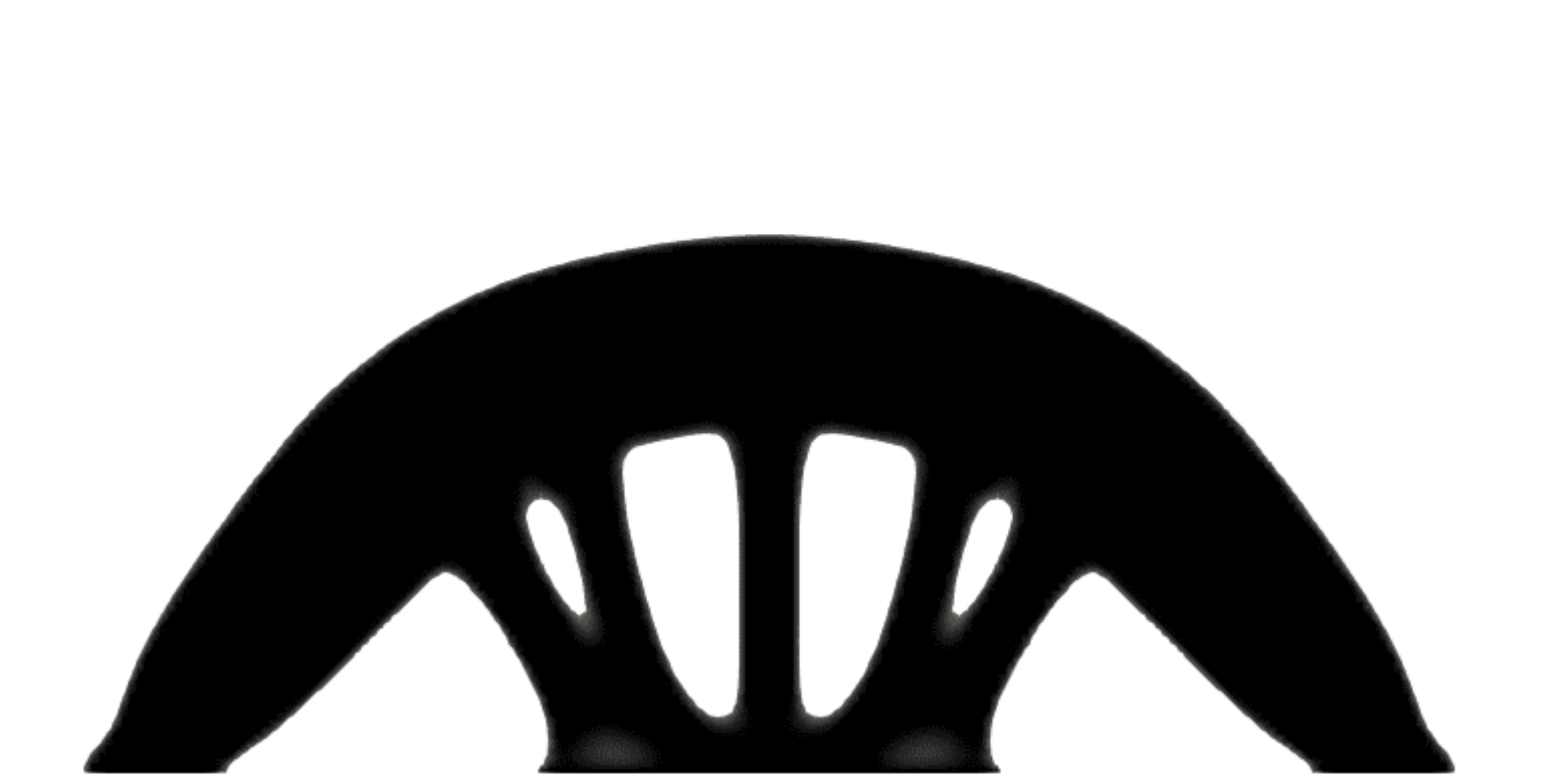}
        \subcaption{Step\,90}
        \label{bri-d}
      \end{minipage}
           \begin{minipage}[t]{0.2\hsize}
        \centering
        \includegraphics[keepaspectratio, scale=0.09]{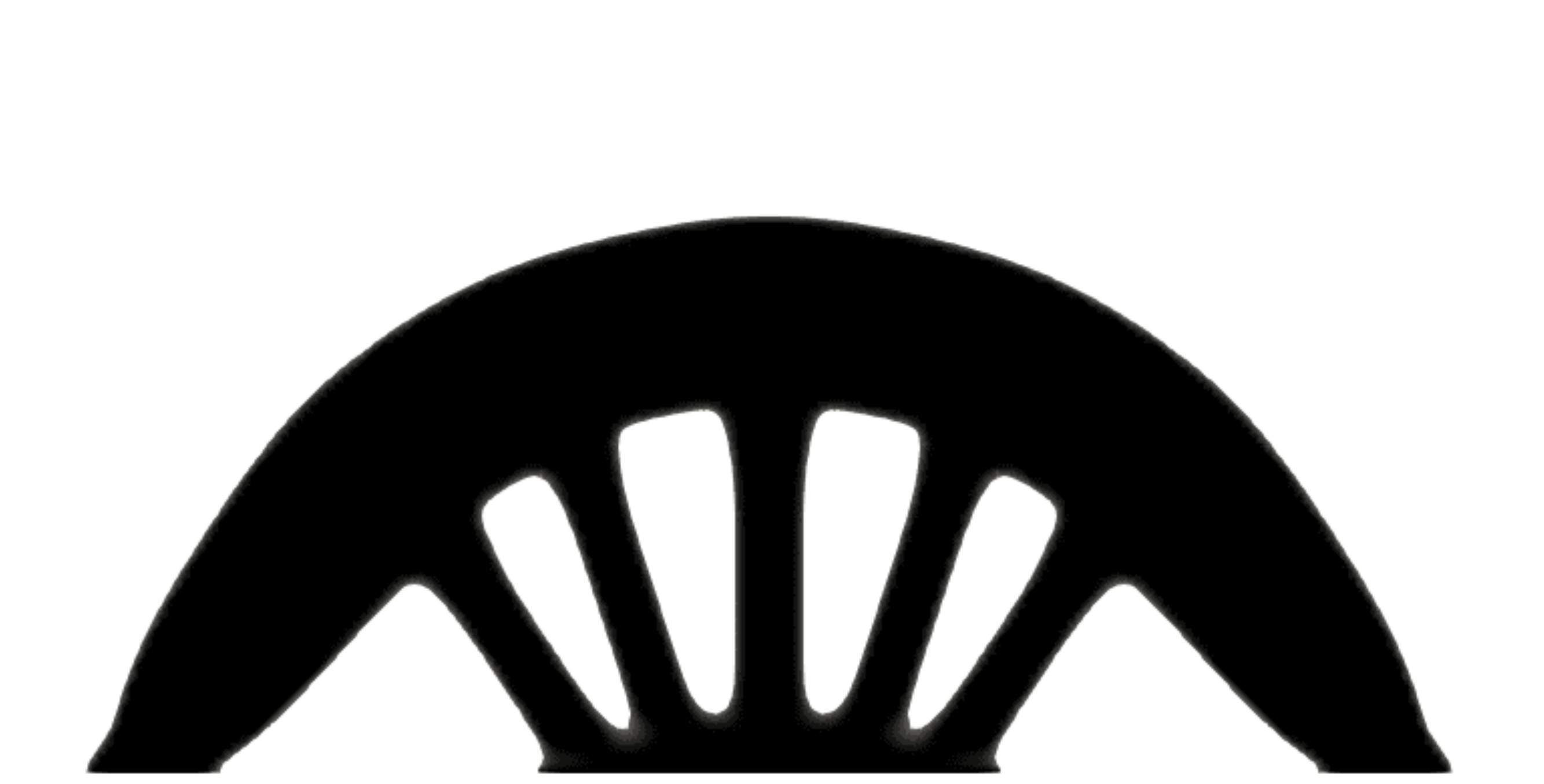}
        \subcaption{Step\,299$^{\#}$}
        \label{bri-e}
      \end{minipage}
      \\
                 \begin{minipage}[t]{0.2\hsize}
        \centering
        \includegraphics[keepaspectratio, scale=0.09]{bri0.pdf}
        \subcaption{Step\,0}
        \label{bri-f}
      \end{minipage} 
      \begin{minipage}[t]{0.2\hsize}
        \centering
        \includegraphics[keepaspectratio, scale=0.09]{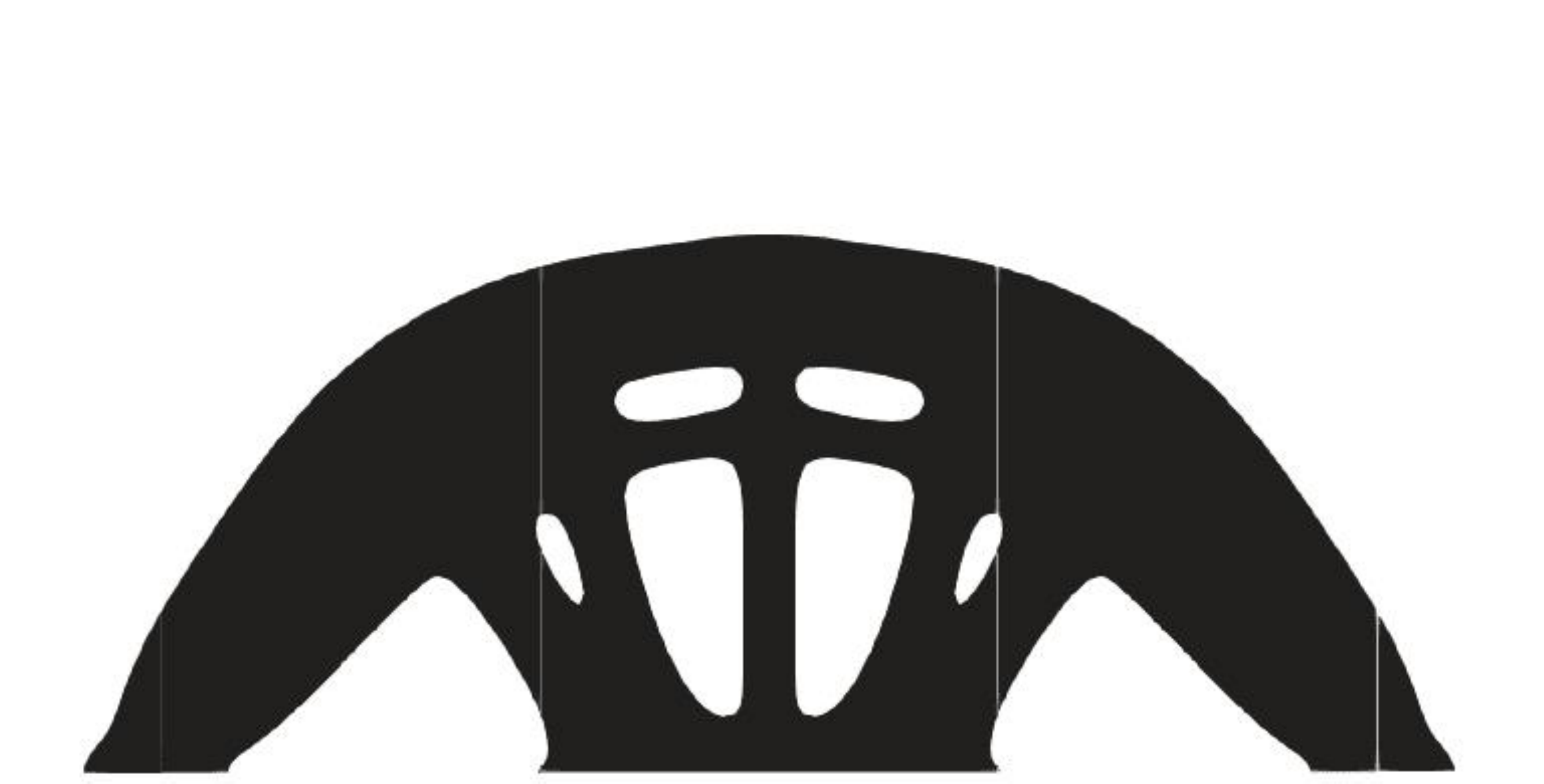}
        \subcaption{Step\,30}
        \label{bri-g}
      \end{minipage} 
         \begin{minipage}[t]{0.2\hsize}
        \centering
        \includegraphics[keepaspectratio, scale=0.09]{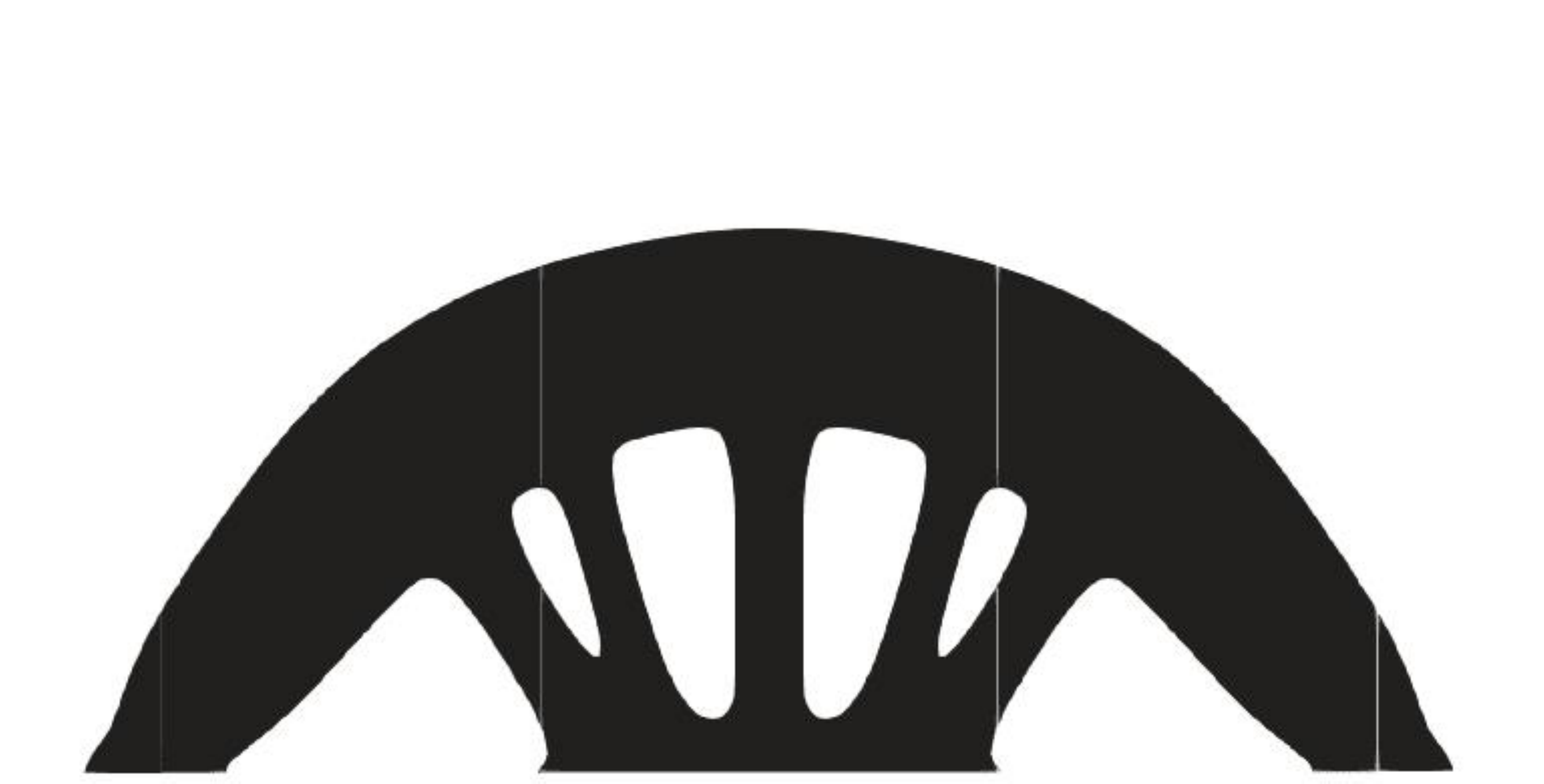}
        \subcaption{Step\,60}
        \label{bri-h}
      \end{minipage}
      \begin{minipage}[t]{0.2\hsize}
        \centering
        \includegraphics[keepaspectratio, scale=0.09]{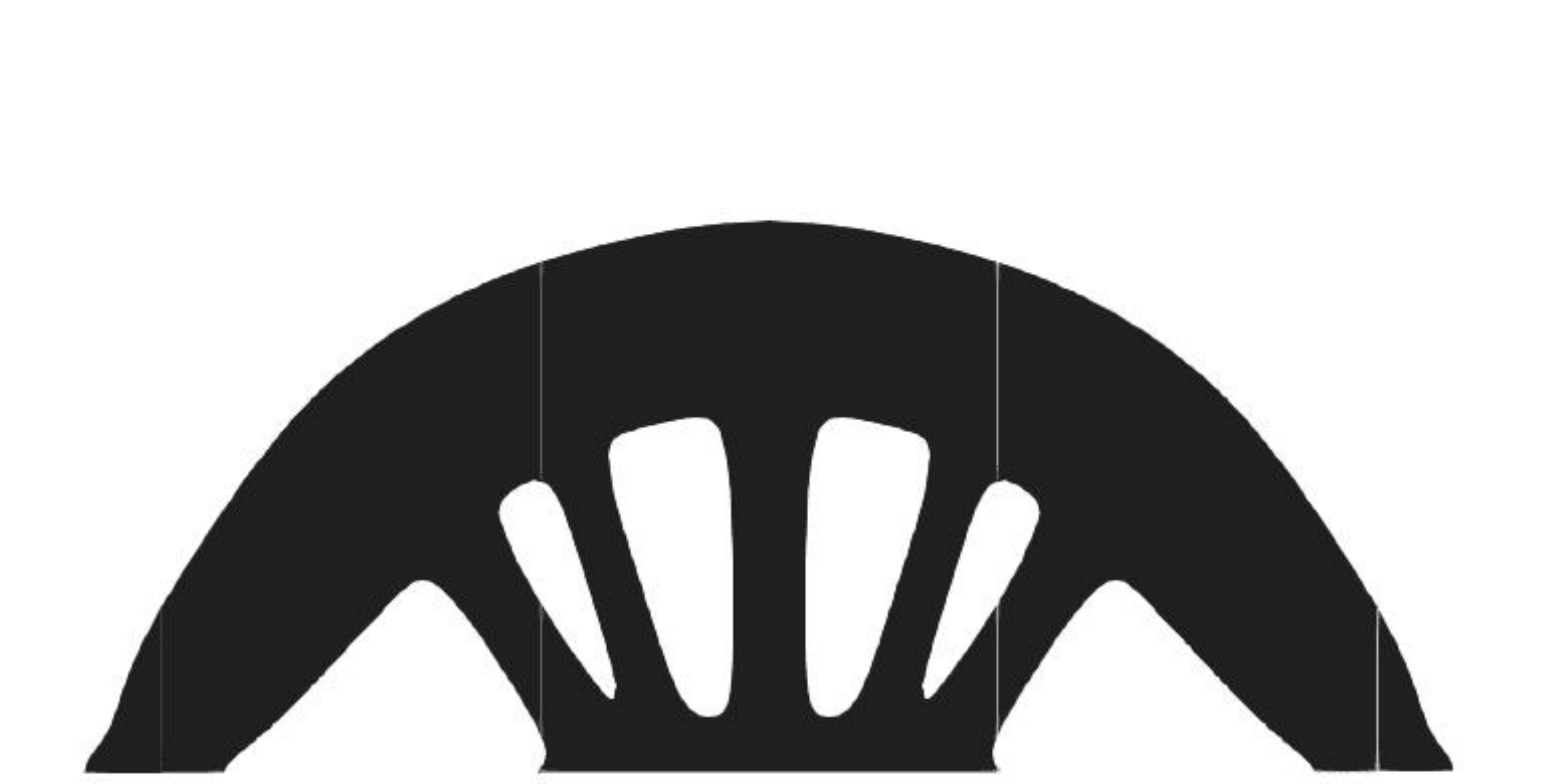}
        \subcaption{Step\,90}
        \label{bri-i}
      \end{minipage}
           \begin{minipage}[t]{0.2\hsize}
        \centering
        \includegraphics[keepaspectratio, scale=0.09]{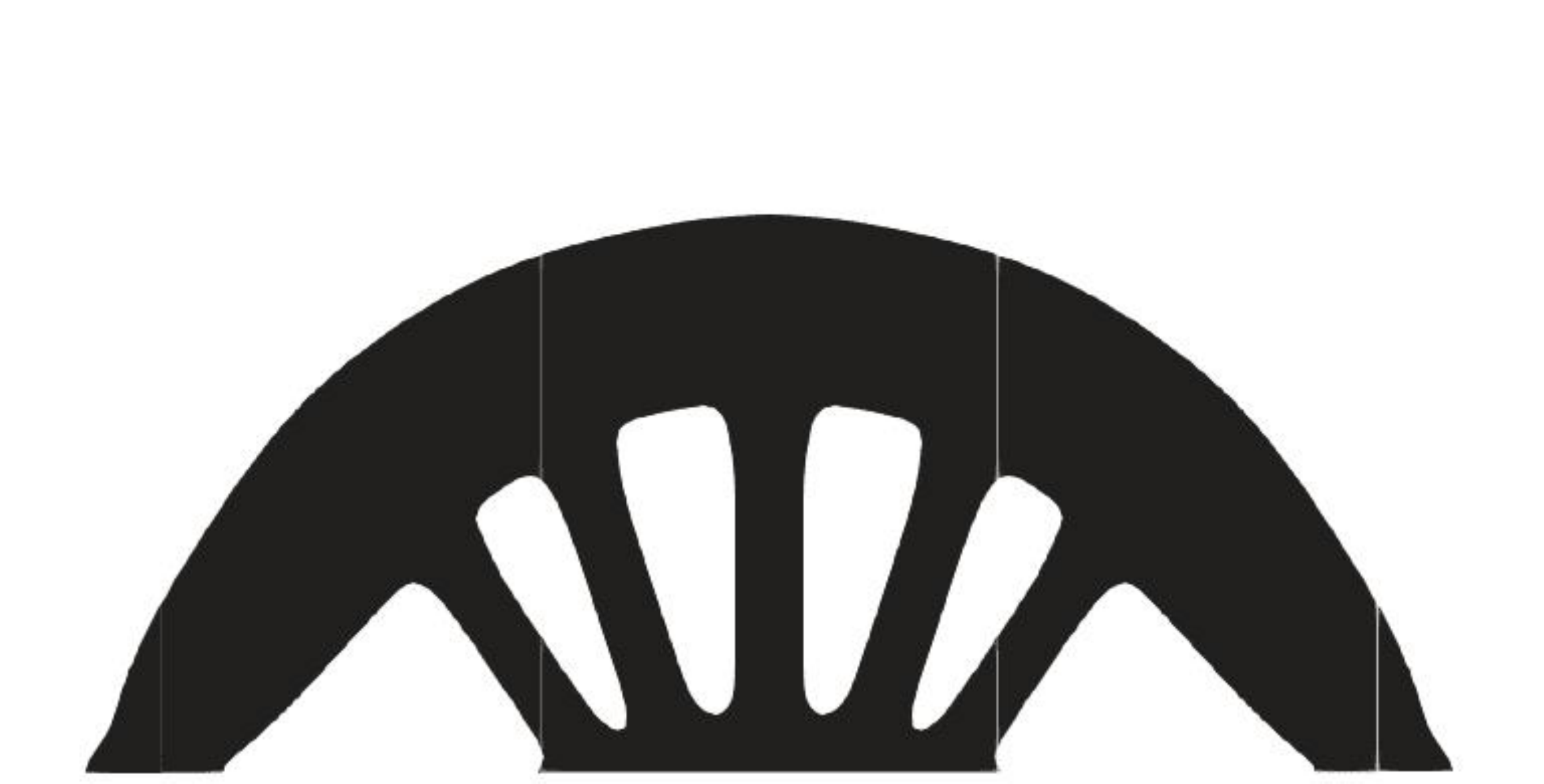}
        \subcaption{Step\,220$^{\#}$}
        \label{bri-j}
      \end{minipage}
\\
             \begin{minipage}[t]{0.2\hsize}
        \centering
        \includegraphics[keepaspectratio, scale=0.09]{bri0.pdf}
        \subcaption{Step\,0}
        \label{bri-k}
      \end{minipage} 
      \begin{minipage}[t]{0.2\hsize}
        \centering
        \includegraphics[keepaspectratio, scale=0.09]{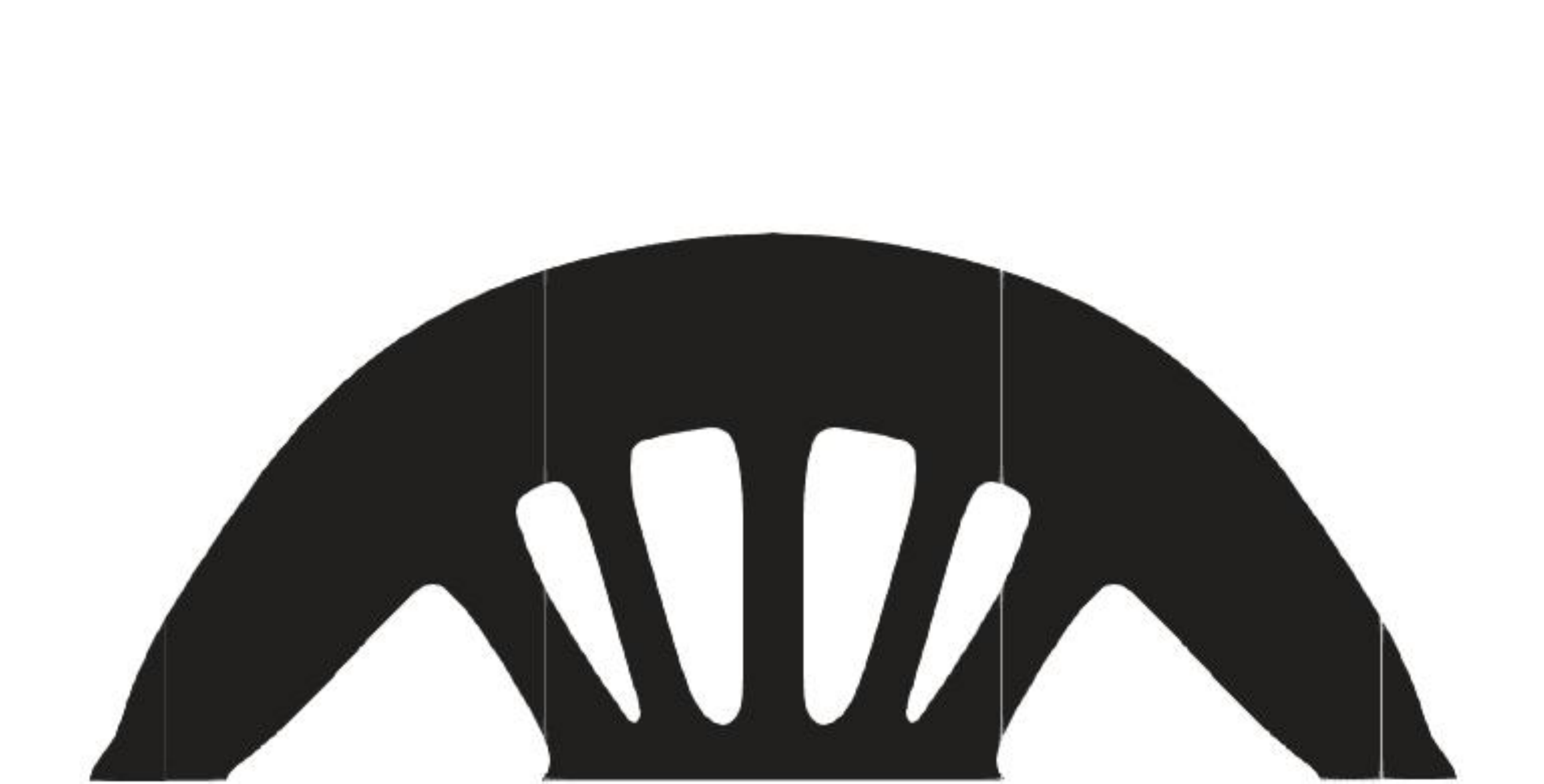}
        \subcaption{Step\,30}
        \label{bri-l}
      \end{minipage} 
         \begin{minipage}[t]{0.2\hsize}
        \centering
        \includegraphics[keepaspectratio, scale=0.09]{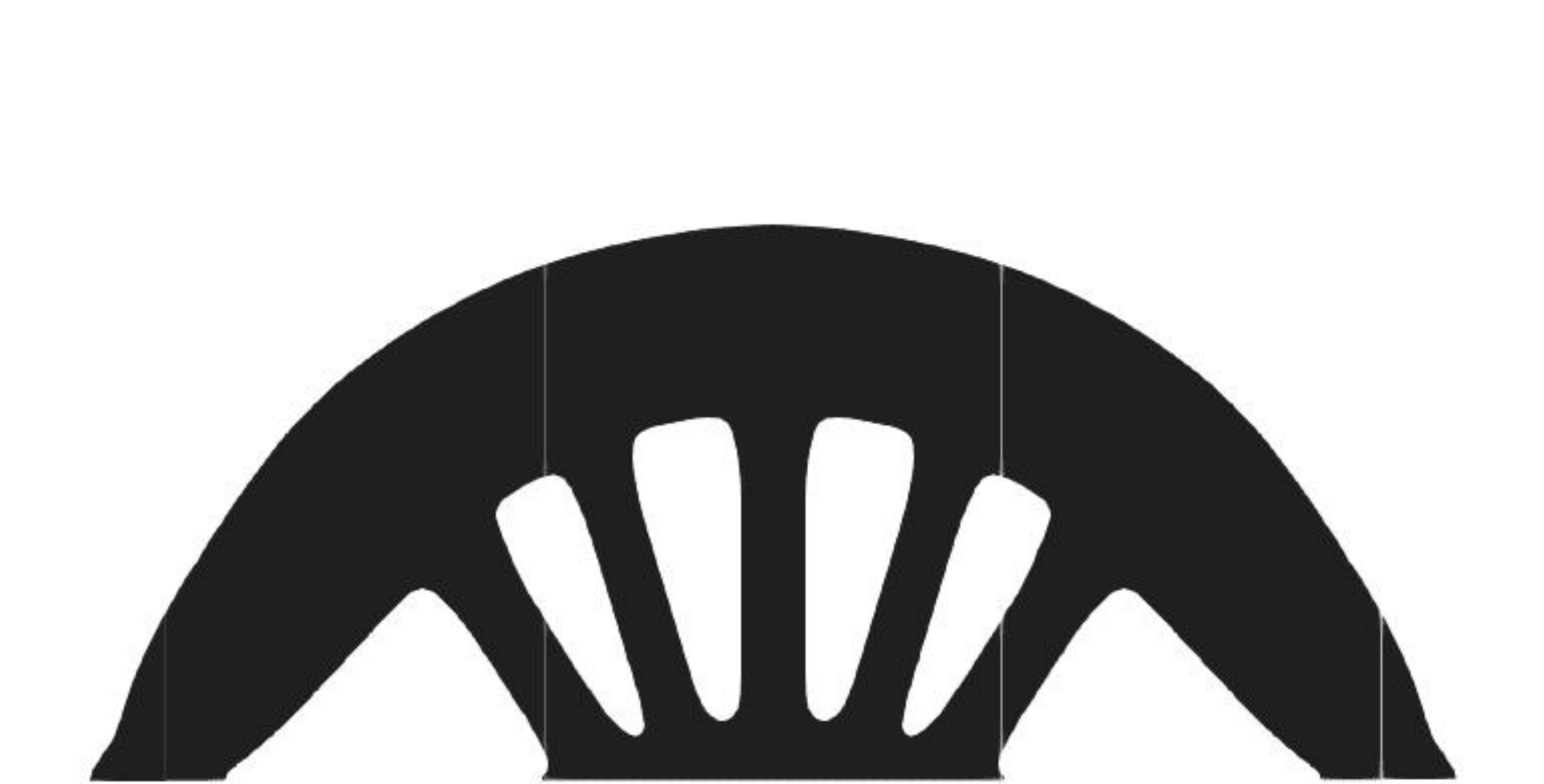}
        \subcaption{Step\,60}
        \label{bri-m}
      \end{minipage}
      \begin{minipage}[t]{0.2\hsize}
        \centering
        \includegraphics[keepaspectratio, scale=0.09]{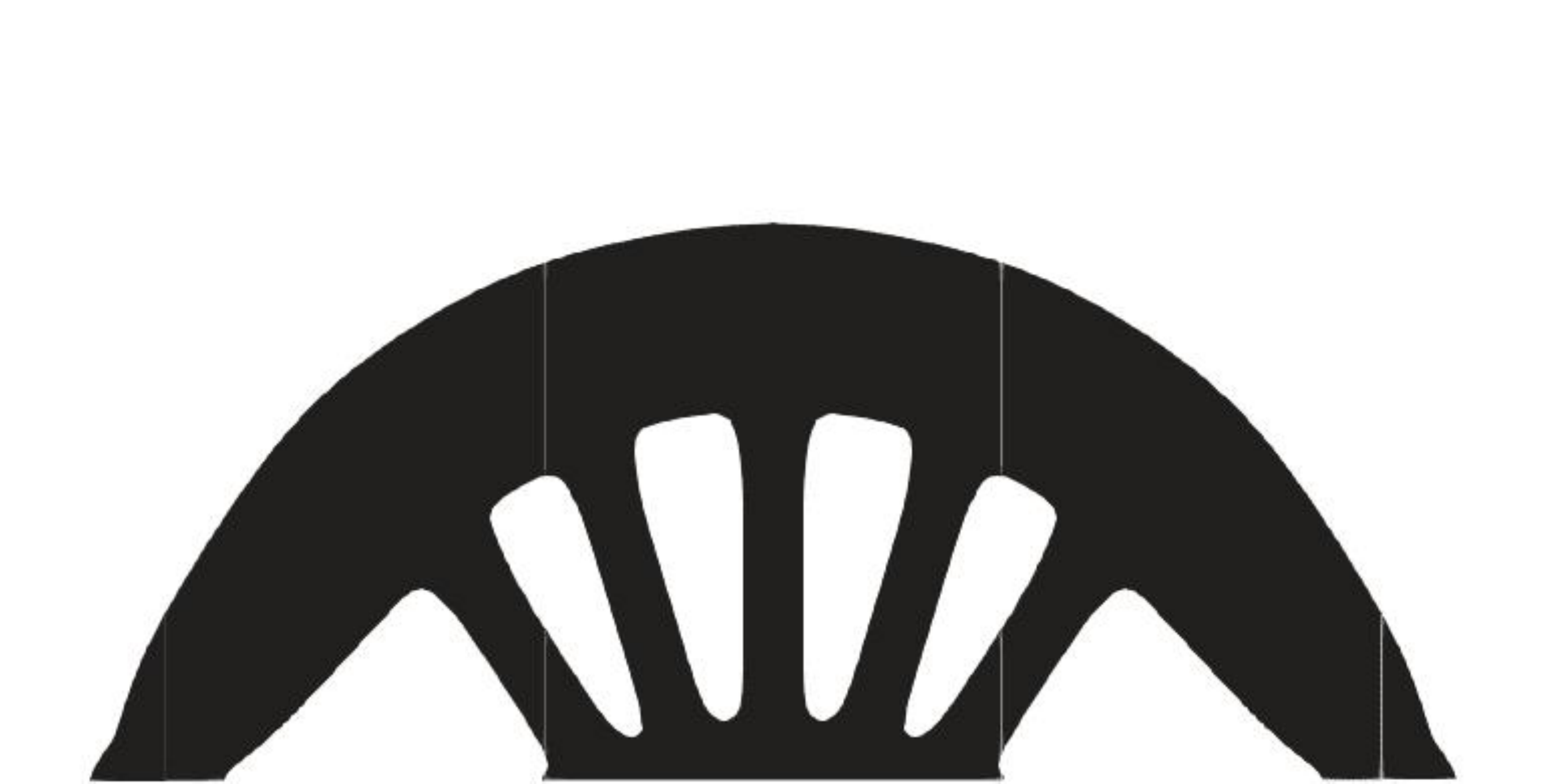}
        \subcaption{Step\,90}
        \label{bri-n}
      \end{minipage}
           \begin{minipage}[t]{0.2\hsize}
        \centering
        \includegraphics[keepaspectratio, scale=0.09]{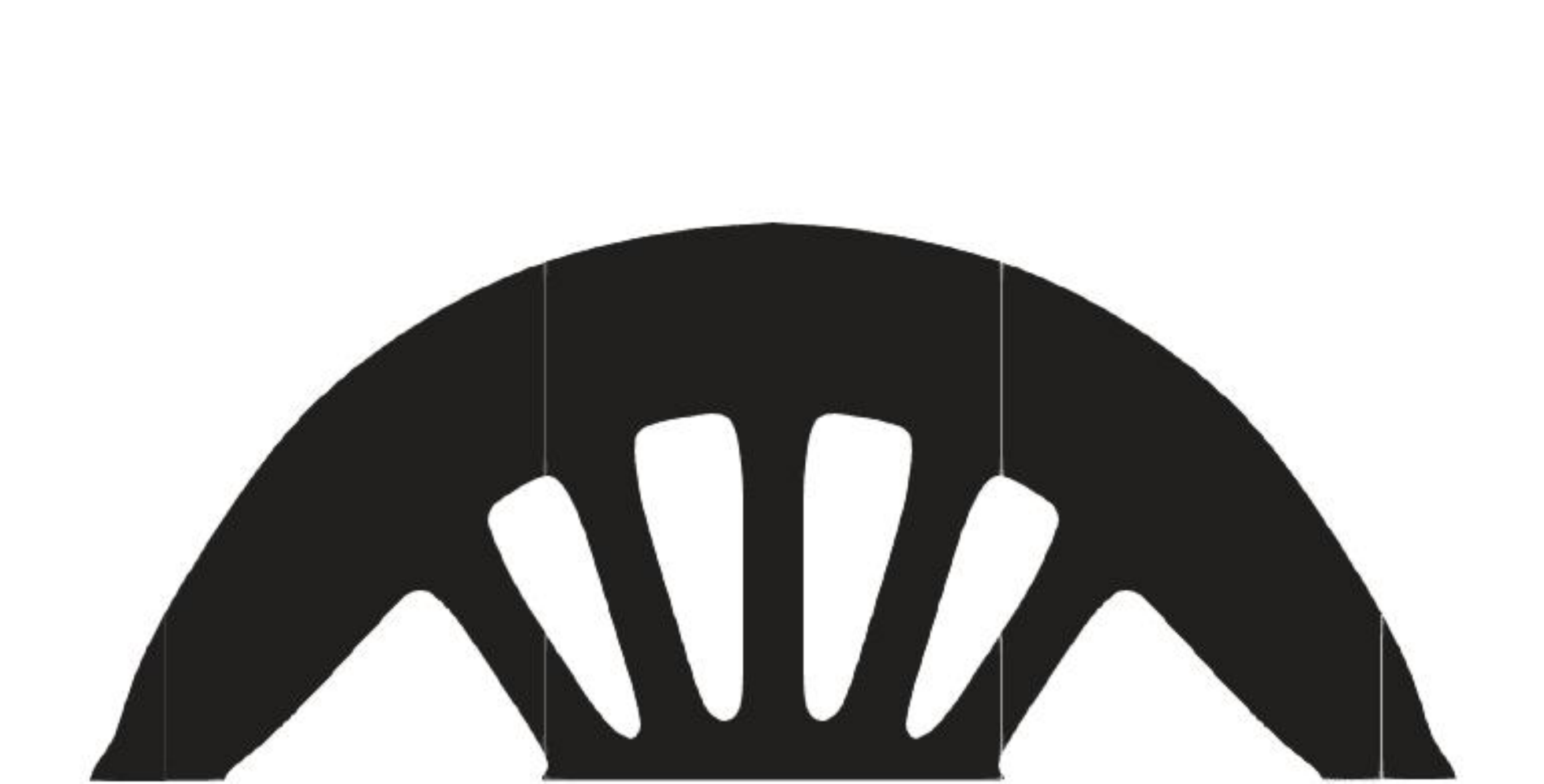}
        \subcaption{Step\,129$^{\#}$}
        \label{bri-o}
      \end{minipage}
    \end{tabular}
     \caption{Configuration $\Omega_{\phi_n}\subset D$ for the case where the initial configuration is the perforated domain. 
Figures (a)--(e), (f)--(j) and (k)--(o) 
represent $\Omega_{\phi_n}\subset D$ for $q=1$ , $q=1$ with $\varDelta t=0.9$ and $q=8$ in \eqref{discNLD}, respectively. The symbol ${}^{\#}$ implies the final step.    
     }
     \label{fig:bri}
  \end{figure}

\subsection{Optimal design problem for compliant mechanism} \label{SS:cm}
We next show that the proposed method is also valid for non-self-adjoint types. 
Let us consider the minimization problem \eqref{eq:opt-prob} under the following Lagrangian\/{\rm :}
\begin{align*}
\mathcal{L}(\phi,\lambda)=F(\phi)+\lambda G(\phi)
=
-\int_{\Gamma_{\rm out}}\boldsymbol{t}\cdot \boldsymbol{u}_{\phi}(x)\, \d \sigma
+\lambda\underbrace{\left(\int_D \chi_\phi(x)\, \d x- G_{\text{max}}|D|\right)}_{\le 0}, 
\end{align*}
where $G_{\text{max}}>0$,
$\boldsymbol{u}_\phi\in V^d$ denotes the state variable, which is a unique solution to
the following system\/{\rm :} 
\begin{align}
\lefteqn{\int_{D} \mathbb{D}\chi_\phi(x)\boldsymbol{\e} (\boldsymbol{u}_\phi)(x)\colon 
\boldsymbol{\e} (\boldsymbol{v})(x)\, \d  x}\nonumber\\
&\qquad=
\int_{\Gamma_{\rm in}} (\boldsymbol{t}+K_1 \boldsymbol{u}_\phi(x))\cdot \boldsymbol{v}(x)\, \d  \sigma
+
\int_{\Gamma_{\rm out}} K_2 \boldsymbol{u}_\phi(x)\cdot \boldsymbol{v}(x)\, \d  \sigma
\label{eq:state-cm}
\end{align}
for all $\boldsymbol{v}\in V^d$.
Here $K_{i}\in L^{\infty}(D;\R^{d\times d})$ ($i=1,2$). 
In particular, we set $\boldsymbol{t}=(1,0)$ on $\Gamma_{\rm in}$ and $\boldsymbol{t}=(0,\pm1)$ on $\Gamma_{\rm out}$ (see Figure \ref{icm} and \cite[Fig.~13]{O97} for boundary conditions).

The procedure of numerical analysis for the above minimization problem is similar to the previous subsection.
However, we note that one can not readily apply to Proposition \ref{prop}.
Here we employ the following
\begin{lem}[Sensitivity analysis for compliant mechanism]\label{lem}
Let $\boldsymbol{u}_\phi\in V^d$ be a unique solution to the state system \eqref{eq:state-cm} and let $\boldsymbol{\tilde{u}}_\phi\in V^d$ be 
a  unique solution to the following adjoint system\/{\rm :}
\begin{align*}
\lefteqn{\int_{D} \mathbb{D}\chi_\phi(x)\boldsymbol{\e} (\boldsymbol{\tilde{u}}_\phi)(x)\colon 
\boldsymbol{\e} (\boldsymbol{v})(x)\, \d  x}\nonumber\\
&\qquad=
\int_{\Gamma_{\rm in}} K_1 \boldsymbol{\tilde{u}}_\phi(x)\cdot \boldsymbol{v}(x)\, \d  \sigma
+
\int_{\Gamma_{\rm out}} (-\boldsymbol{t}+K_2 \boldsymbol{\tilde{u}}_\phi(x))\cdot \boldsymbol{v}(x)\, \d  \sigma 
\end{align*}
for all  $\boldsymbol{v}\in V^d$.
Then $F'(\phi)$ can be approximated by
\begin{align}
F_\eta'(\phi)=-\mathbb{D}\delta_\eta(\phi)\boldsymbol{\e} (\boldsymbol{u}_\phi)\colon\boldsymbol{\e} (\tilde{\boldsymbol{u}}_\phi)
\label{eq:appro-s}
\end{align}
for $\eta>0$ small enough. 
\end{lem}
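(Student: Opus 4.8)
The plan is the classical adjoint (Lagrangian) argument, appropriate here because $F$ depends on $\phi$ implicitly through the state $\boldsymbol{u}_\phi$ and explicitly through $\chi_\phi$, and the state system \eqref{eq:state-cm} is not self-adjoint. First I would introduce the Lagrangian
$$
\mathcal{G}(\phi,\boldsymbol{u},\tilde{\boldsymbol{u}}):=
-\int_{\Gamma_{\rm out}}\boldsymbol{t}\cdot\boldsymbol{u}\,\d\sigma
-\int_D\mathbb{D}\chi_\phi\boldsymbol{\varepsilon}(\boldsymbol{u})\colon\boldsymbol{\varepsilon}(\tilde{\boldsymbol{u}})\,\d x
+\int_{\Gamma_{\rm in}}(\boldsymbol{t}+K_1\boldsymbol{u})\cdot\tilde{\boldsymbol{u}}\,\d\sigma
+\int_{\Gamma_{\rm out}}K_2\boldsymbol{u}\cdot\tilde{\boldsymbol{u}}\,\d\sigma ,
$$
with adjoint variable $\tilde{\boldsymbol{u}}\in V^d$, and note that $\mathcal{G}(\phi,\boldsymbol{u}_\phi,\tilde{\boldsymbol{u}})=F(\phi)$ for every $\tilde{\boldsymbol{u}}$, since the last three terms cancel on the weak form \eqref{eq:state-cm}.

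Next I would differentiate with respect to $\phi$. Writing $\boldsymbol{u}'$ for the (formal) derivative of the state map $\phi\mapsto\boldsymbol{u}_\phi$, the chain rule yields $F'(\phi)=\partial_\phi\mathcal{G}+\partial_{\boldsymbol{u}}\mathcal{G}[\boldsymbol{u}']$. Using the symmetry of $\mathbb{D}$, the part linear in $\boldsymbol{u}'$ equals
$$
-\int_{\Gamma_{\rm out}}\boldsymbol{t}\cdot\boldsymbol{u}'\,\d\sigma
-\int_D\mathbb{D}\chi_\phi\boldsymbol{\varepsilon}(\tilde{\boldsymbol{u}})\colon\boldsymbol{\varepsilon}(\boldsymbol{u}')\,\d x
+\int_{\Gamma_{\rm in}}K_1\boldsymbol{u}'\cdot\tilde{\boldsymbol{u}}\,\d\sigma
+\int_{\Gamma_{\rm out}}K_2\boldsymbol{u}'\cdot\tilde{\boldsymbol{u}}\,\d\sigma ,
$$
which vanishes for every admissible $\boldsymbol{u}'\in V^d$ exactly when $\tilde{\boldsymbol{u}}=\tilde{\boldsymbol{u}}_\phi$ solves the adjoint system displayed in the lemma; here the non-self-adjointness is visible in that the spring terms $K_1,K_2$ move to the right-hand side and the adjoint load becomes $-\boldsymbol{t}$ on $\Gamma_{\rm out}$ rather than $\boldsymbol{t}$. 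With this choice only the explicit term survives, $F'(\phi)=\partial_\phi\mathcal{G}=-\int_D\mathbb{D}(\partial_\phi\chi_\phi)\boldsymbol{\varepsilon}(\boldsymbol{u}_\phi)\colon\boldsymbol{\varepsilon}(\tilde{\boldsymbol{u}}_\phi)\,\d x$.

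Finally, exactly as in Proposition \ref{prop} (cf.~\eqref{eq:NC}), I would replace the distributional derivative $\partial_\phi\chi_\phi$ by the mollified Dirac mass $\delta_\eta(\phi)$, incurring only an $O(C_\eta)$ error, which gives $F'(\phi)\approx-\mathbb{D}\delta_\eta(\phi)\boldsymbol{\varepsilon}(\boldsymbol{u}_\phi)\colon\boldsymbol{\varepsilon}(\tilde{\boldsymbol{u}}_\phi)$, that is \eqref{eq:appro-s}. The routine ingredients I would not dwell on are the well-posedness of the state and adjoint systems in $V^d$ via Lax--Milgram (requiring the coefficients $K_1,K_2$ to be small enough relative to the elastic bilinear form that coercivity on $V^d$ is preserved) and the trace inequalities making the boundary integrals and the load $\boldsymbol{t}$ meaningful. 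The main obstacle is the one already flagged after Proposition \ref{prop}: $\phi\mapsto\chi_\phi$ is not Fr\'echet differentiable and $\phi\mapsto\boldsymbol{u}_\phi$ is only formally differentiable, so the identity above is genuinely an approximation; a rigorous version demands restricting to perturbations of $\phi$ supported away from the zero level set $[\phi=0]$, so that the cancellation engineered by the adjoint equation holds on $[\phi\neq0]$ and the remainder is controlled as in \eqref{eq:NC}.
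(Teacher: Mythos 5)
Your proposal is correct and follows essentially the same route as the paper: the paper's proof likewise introduces a Lagrangian $\tilde{\mathcal{L}}(\phi,\boldsymbol{u},\boldsymbol{v})$ augmenting $F$ by the weak form of the state system, chooses the multiplier (there written as $\boldsymbol{v}=-\tilde{\boldsymbol{u}}_\phi$, the negative of your sign convention) so that the derivative with respect to the state vanishes, and is left with the explicit $\chi_\phi$-term, which is then replaced by $\delta_\eta(\phi)$ exactly as in your final step. Your write-up merely makes explicit the verification that the $\boldsymbol{u}'$-linear part vanishes precisely for the stated adjoint system, which the paper records tersely as $\nabla_{\boldsymbol{u}_\phi}\tilde{\mathcal{L}}(\phi,\boldsymbol{u}_\phi,-\tilde{\boldsymbol{u}}_\phi)=0$.
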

\begin{proof}
We prove \eqref{eq:appro-s} by employing the adjoint method. Define $\tilde{\mathcal{L}}:H^1(D)\times V^d\times V^d\to\R$ by 
\begin{align*}
\tilde{\mathcal{L}}(\phi,\boldsymbol{u}_\phi, \boldsymbol{v})
&=F(\phi)+
\int_{D} \mathbb{D}\chi_\phi(x)\boldsymbol{\e} (\boldsymbol{u}_\phi)(x)\colon 
\boldsymbol{\e} (\boldsymbol{v})(x)\, \d  x\\
&\qquad -
\int_{\Gamma_{\rm in}} (\boldsymbol{t}+K_1 \boldsymbol{u}_\phi(x))\cdot \boldsymbol{v}(x)\, \d  \sigma
-
\int_{\Gamma_{\rm out}} K_2 \boldsymbol{u}(x)\cdot \boldsymbol{v}(x)\, \d  \sigma.
\end{align*}
Then we have 
$F(\phi)=\tilde{\mathcal{L}}(\phi,\boldsymbol{u}_\phi, \boldsymbol{v})
$ and $F'(\phi)=\nabla_\phi \tilde{\mathcal{L}}(\phi,\boldsymbol{u}_\phi,-\boldsymbol{\tilde{u}}_\phi)$
by noting that $\nabla_{\boldsymbol{u}_\phi}\tilde{\mathcal{L}}(\phi,\boldsymbol{u}_\phi,-\boldsymbol{\tilde{u}}_\phi)=0 $.
Hence, we formally deduce that
\begin{align*}
\langle F'(\phi), w\rangle_{H^1(D;[-1,1])}
=-\lim_{\eta\to 0_+}\int_{D}\mathbb{D}\frac{\chi_{\phi+\eta w}-\chi_{\phi}}{\eta}(x)\boldsymbol{\e} (\boldsymbol{u}_\phi)(x)\colon\boldsymbol{\e} (\boldsymbol{\tilde{u}}_\phi)(x)\, \d x
\end{align*}
for all $w\in H^1(D;[-1,1])$, which completes the proof.
\end{proof}

\begin{rmk}[Approximated sensitivity without restrictions for initial LSFs and domains]\label{R:GR}
\rm
We note that the assumption for initial LSFs in Proposition \ref{prop} is not required in Lemma \ref{lem}. The same argument mentioned above is also effective for the sensitivity in \S \ref{SS:ca} as $\boldsymbol{\tilde{u}}_\phi=\boldsymbol{u}_\phi$ (i.e.,~self-adjoint type). 
\end{rmk}

The numerical results are shown in Figure \ref{fig:cm}. Here we set  	
$(\tau,G_{\rm max},\varDelta t)=(1.5\times 10^{-4},0.4,0.2)$ in Figures \ref{cm-a}--\ref{cm-e} and \ref{cm-k}--\ref{cm-o}.
Then the proposed method optimizes the topology faster than the method using reaction-diffusion and consequently presents faster convergence. 
Here we switched from $q=3$ to $q=1$ after $50$ steps to avoid oscillation on boundary structures in Figure \ref{cm-a}--\ref{cm-e}. Thus (i-FDE) also holds for the non-self-adjoint type (see also Figure \ref{fig:3dcm} for the corresponding three-dimensional case).  

As mentioned in \S \ref{SS:bri}, increasing (suitably) $\varDelta t>0$ improves convergence. 
Therefore, we next change $\varDelta t>0$ from $0.2$ to $0.5$ for $q=1$. 
However, Figure \ref{fig:cmconv} shows that the configuration $\Omega_{\phi_n}\subset D$ does not converge due to the oscillation on boundary structures; indeed, 
it is well known that the gradient descent method occurs in oscillation and does not converge whenever the step width is (too) large. 
In particular, since $\|\phi_{n+1}-\phi_n\|_{L^{\infty}(D)}$ coincides with $\|\phi_{n+1}-\phi_n\|_{L^{\infty}(A)}$ in almost steps after $50$ steps (see Figure \ref{fig:cmconv}), we deduce that the oscillation on boundary structures is deeply related to the factor.

On the other hand, choosing $q=0.5$ (i.e.,~slow diffusion) under this setting,  
we see by Figures \ref{cm-f}--\ref{cm-j} and \ref{fig:cmconv} that $\Omega_{\phi_n}$ converges, which implies that the proposed method yields not only fast convergence but also damping boundary oscillation. This completes the check for (ii-SDE), and hence, one can deduce from
the above result that the proposed method is more applicable than the method using reaction-diffusion. 

\begin{figure}[htbp]
   \hspace*{-5mm} 
    \begin{tabular}{ccccc}
    
        \begin{minipage}[t]{0.2\hsize}
        \centering
        \includegraphics[keepaspectratio, scale=0.09]{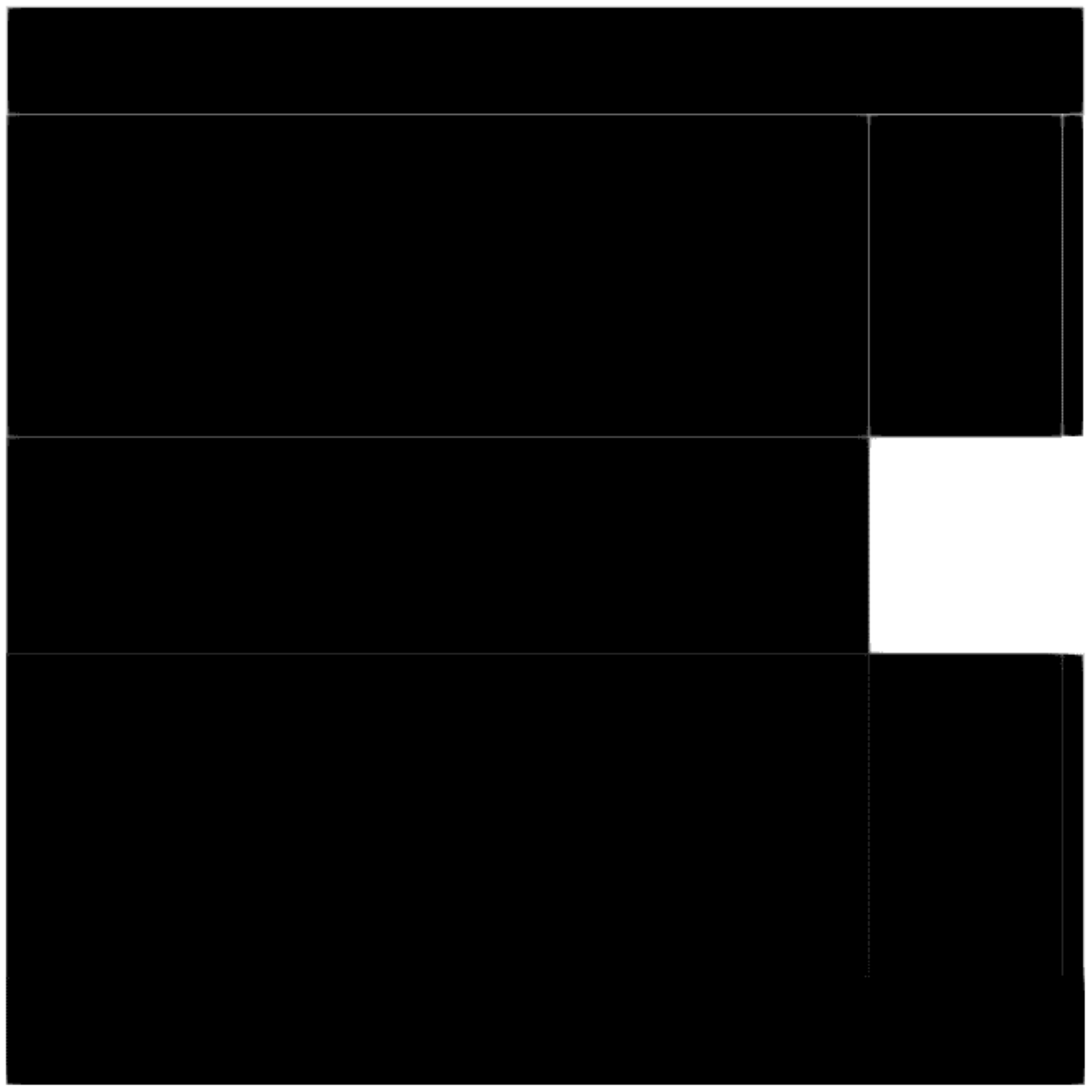}
        \subcaption{Step\,0}
        \label{cm-a}
      \end{minipage} 
      \begin{minipage}[t]{0.2\hsize}
        \centering
        \includegraphics[keepaspectratio, scale=0.09]{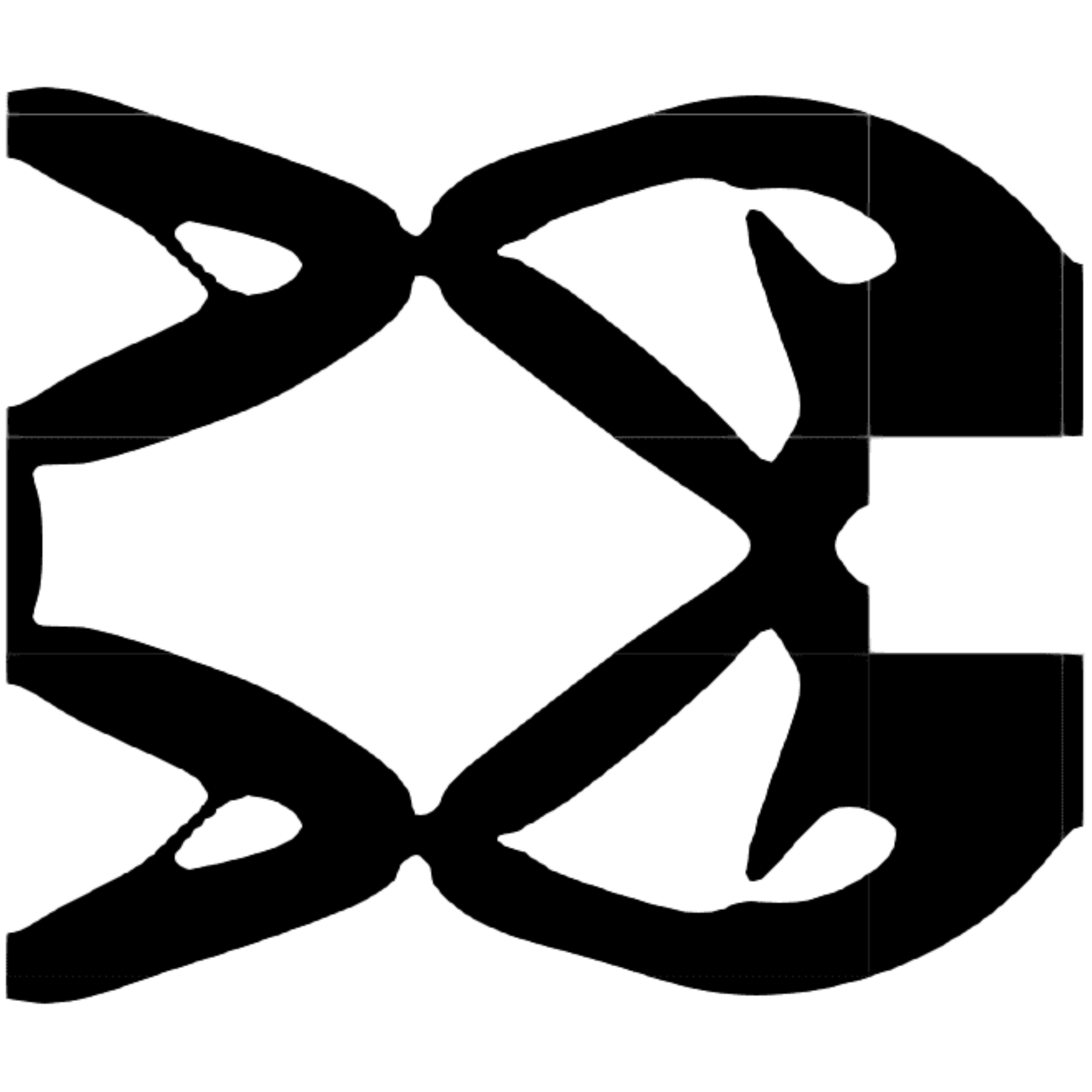}
        \subcaption{Step\,30}
        \label{cm-b}
      \end{minipage} 
         \begin{minipage}[t]{0.2\hsize}
        \centering
        \includegraphics[keepaspectratio, scale=0.09]{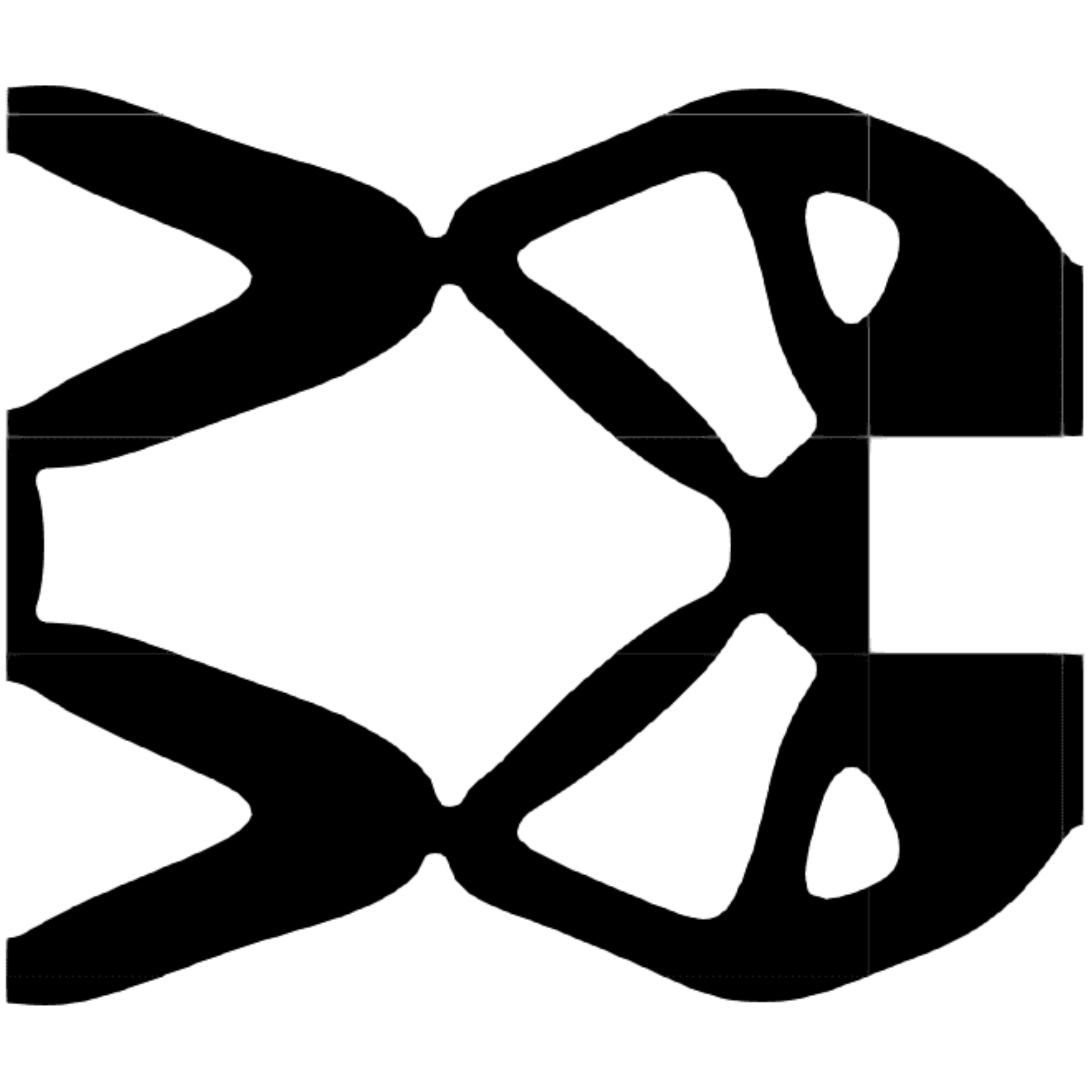}
        \subcaption{Step\,60}
        \label{cm-c}
      \end{minipage}
      \begin{minipage}[t]{0.2\hsize}
        \centering
        \includegraphics[keepaspectratio, scale=0.09]{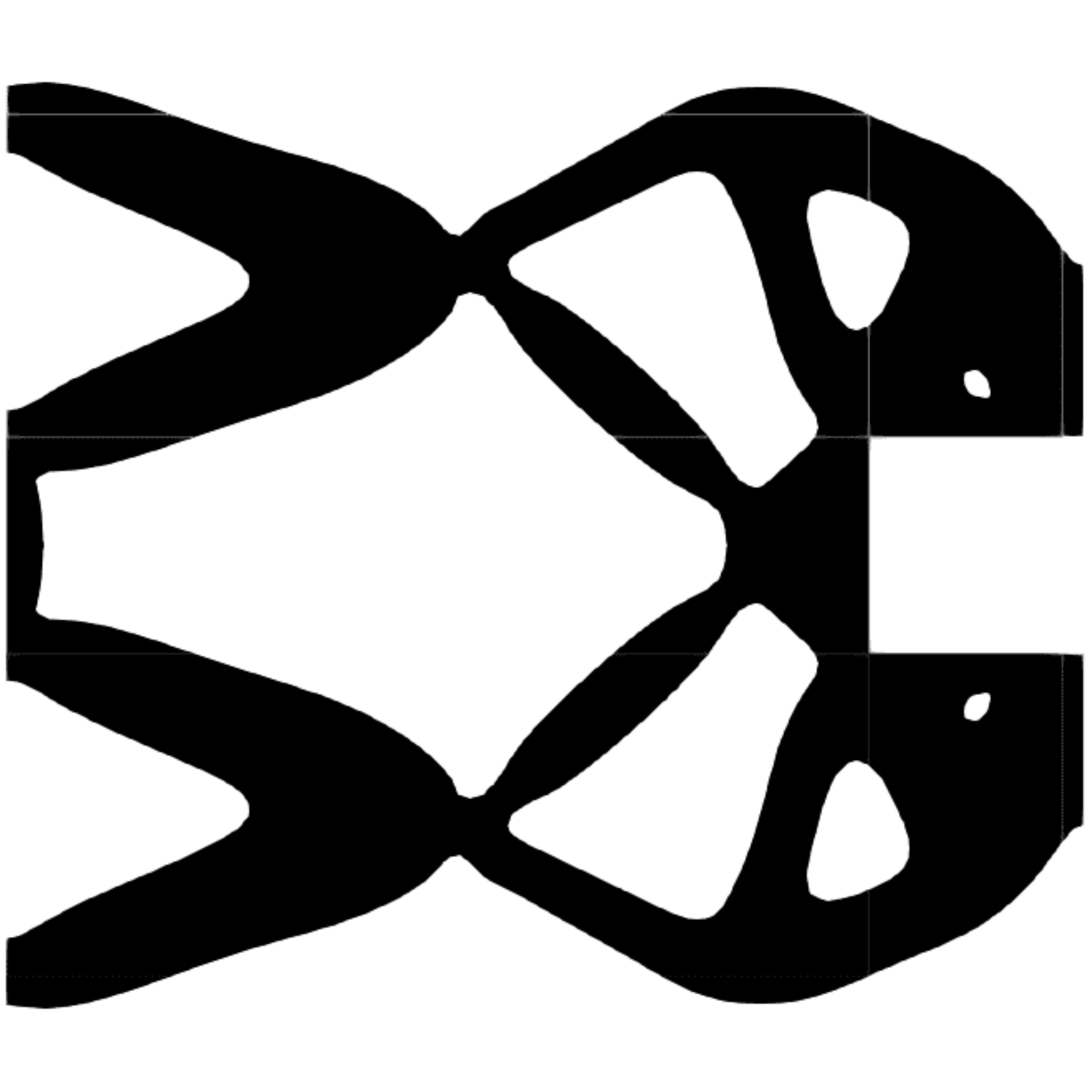}
        \subcaption{Step\,90}
        \label{cm-d}
      \end{minipage}
           \begin{minipage}[t]{0.2\hsize}
        \centering
        \includegraphics[keepaspectratio, scale=0.09]{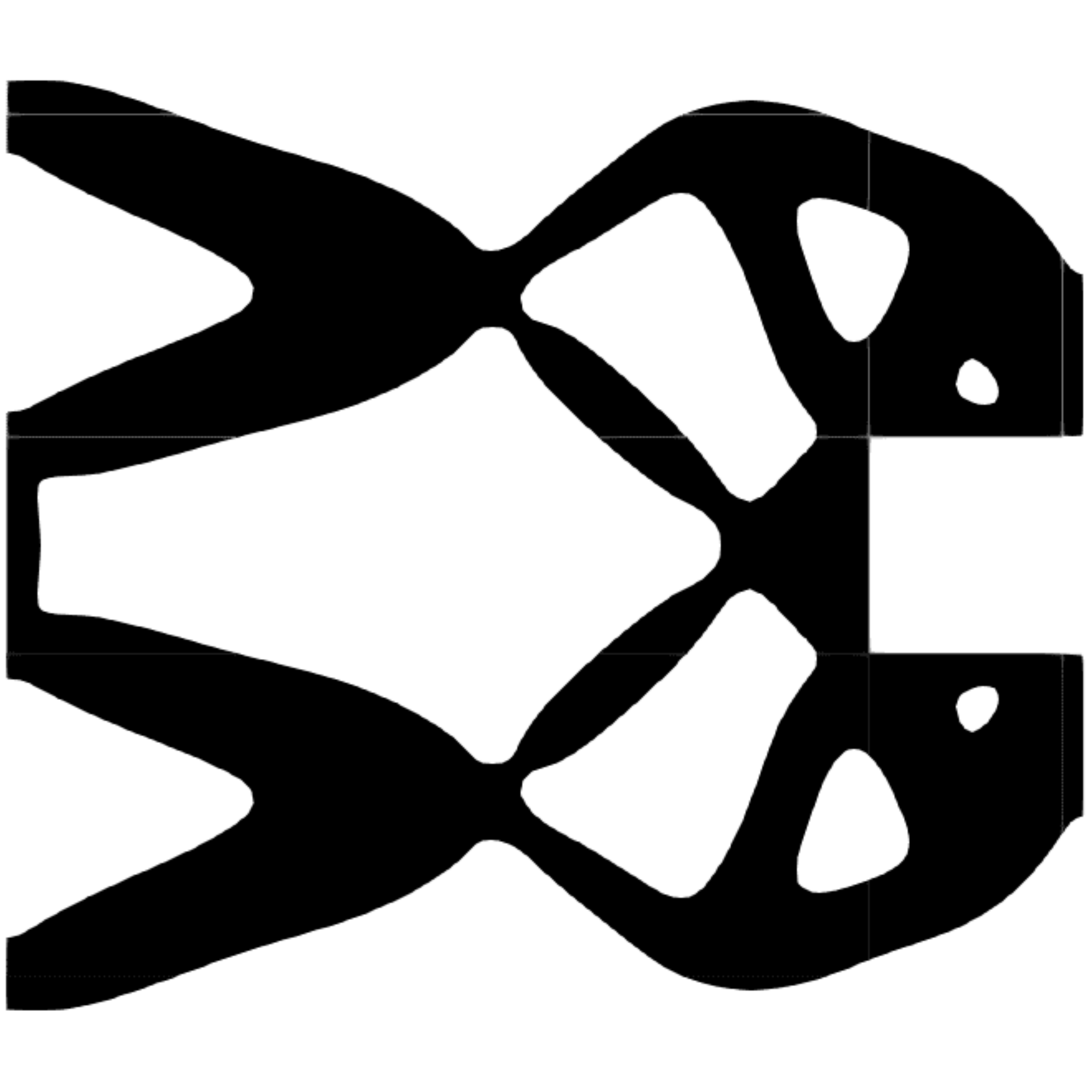}
        \subcaption{Step\,326$^{\#}$}
        \label{cm-e}
      \end{minipage}
      \\
      \begin{minipage}[t]{0.2\hsize}
        \centering
        \includegraphics[keepaspectratio, scale=0.09]{cm0.pdf}
        \subcaption{Step\,0}
        \label{cm-f}
      \end{minipage} 
      \begin{minipage}[t]{0.2\hsize}
        \centering
        \includegraphics[keepaspectratio, scale=0.09]{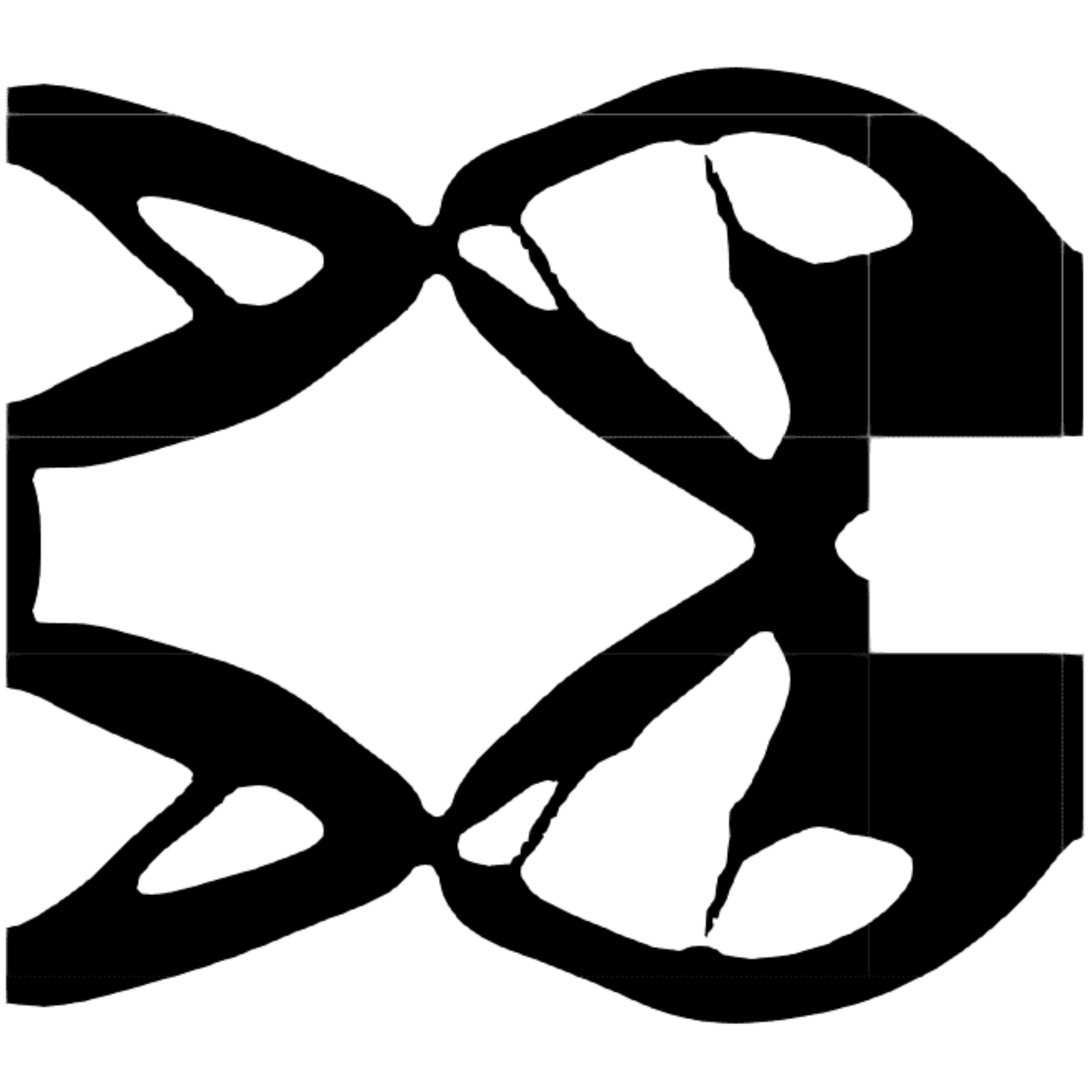}
        \subcaption{Step\,30}
        \label{cm-g}
      \end{minipage} 
         \begin{minipage}[t]{0.2\hsize}
        \centering
        \includegraphics[keepaspectratio, scale=0.09]{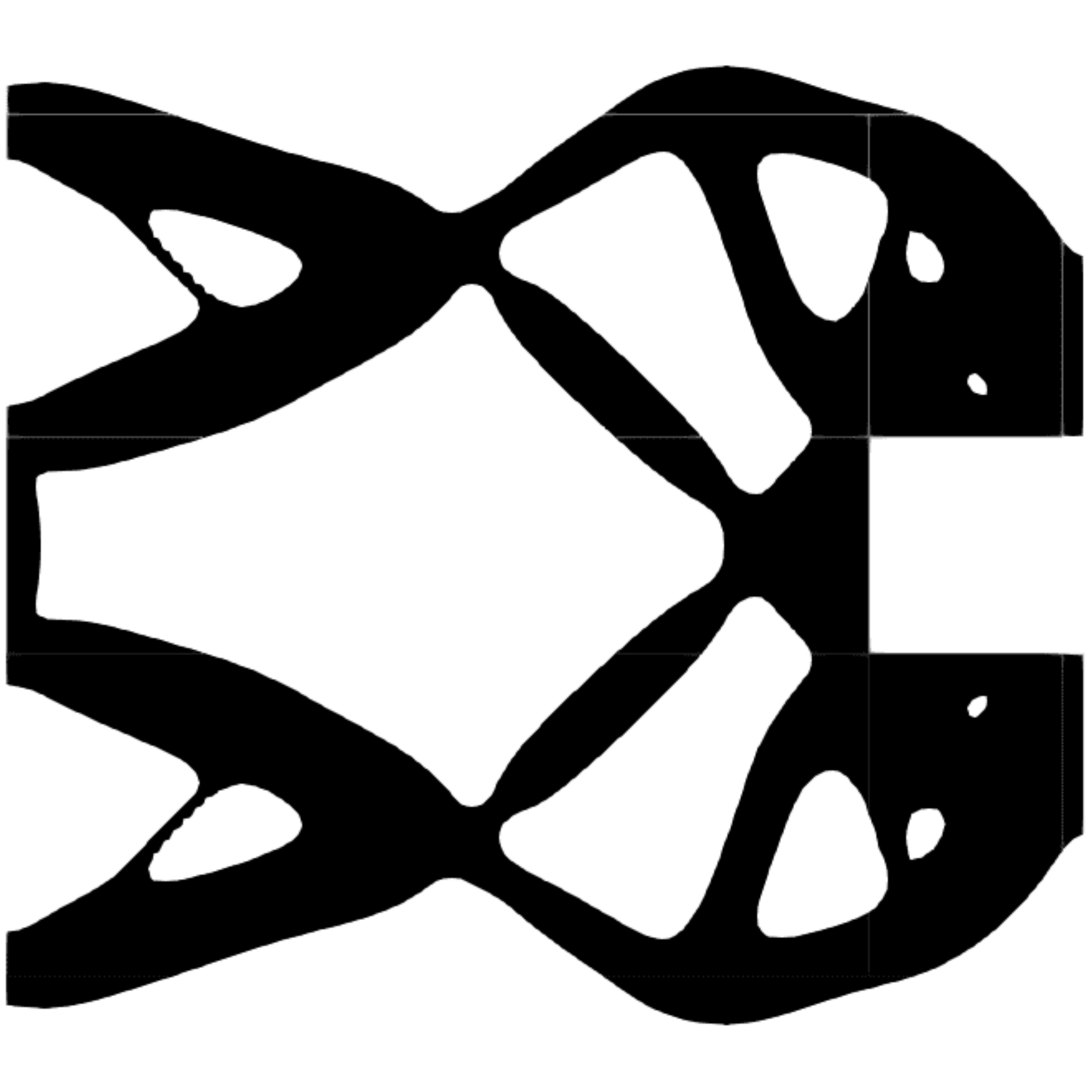}
        \subcaption{Step\,60}
        \label{cm-h}
      \end{minipage}
      \begin{minipage}[t]{0.2\hsize}
        \centering
        \includegraphics[keepaspectratio, scale=0.09]{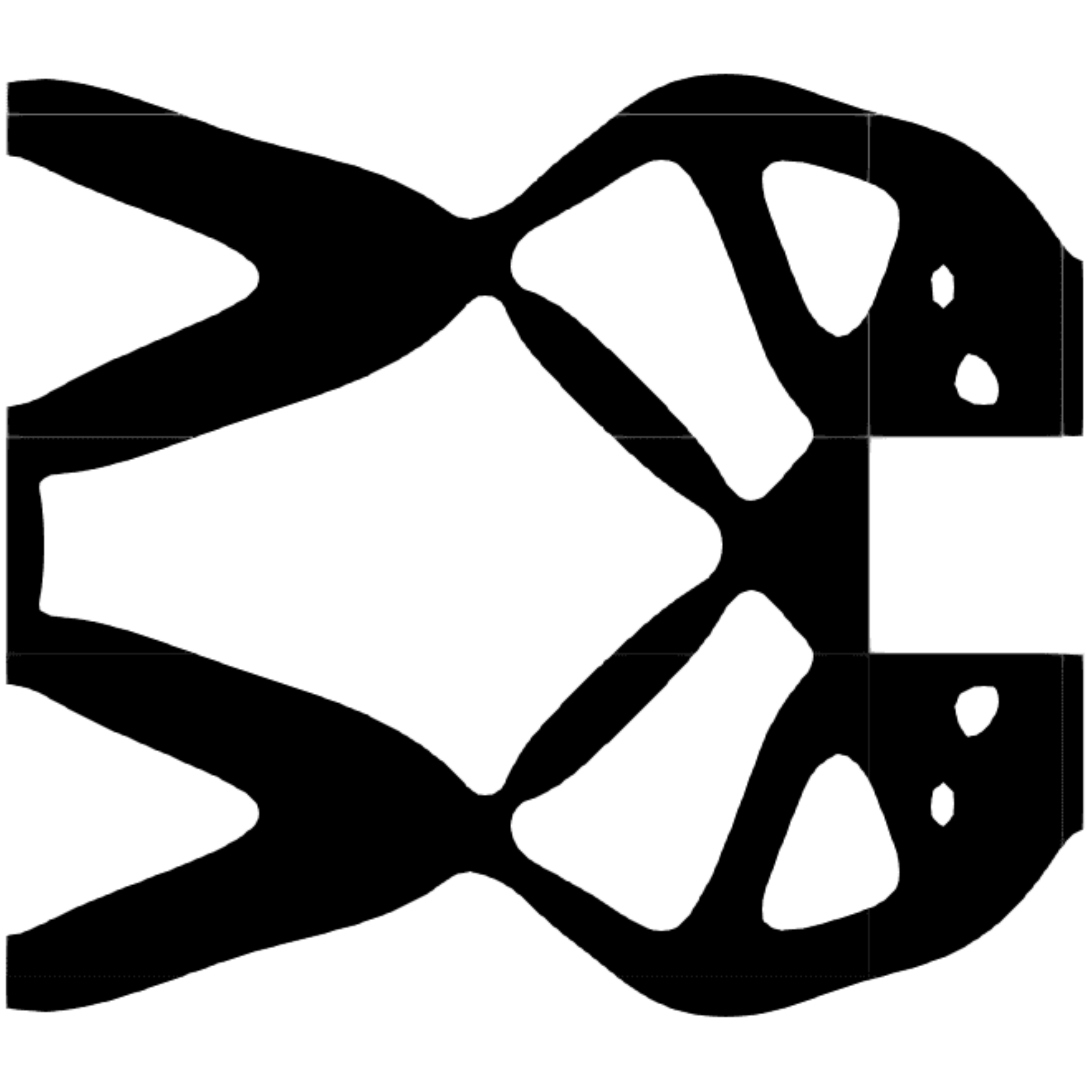}
        \subcaption{Step\,90}
        \label{cm-i}
      \end{minipage}
           \begin{minipage}[t]{0.2\hsize}
        \centering
        \includegraphics[keepaspectratio, scale=0.09]{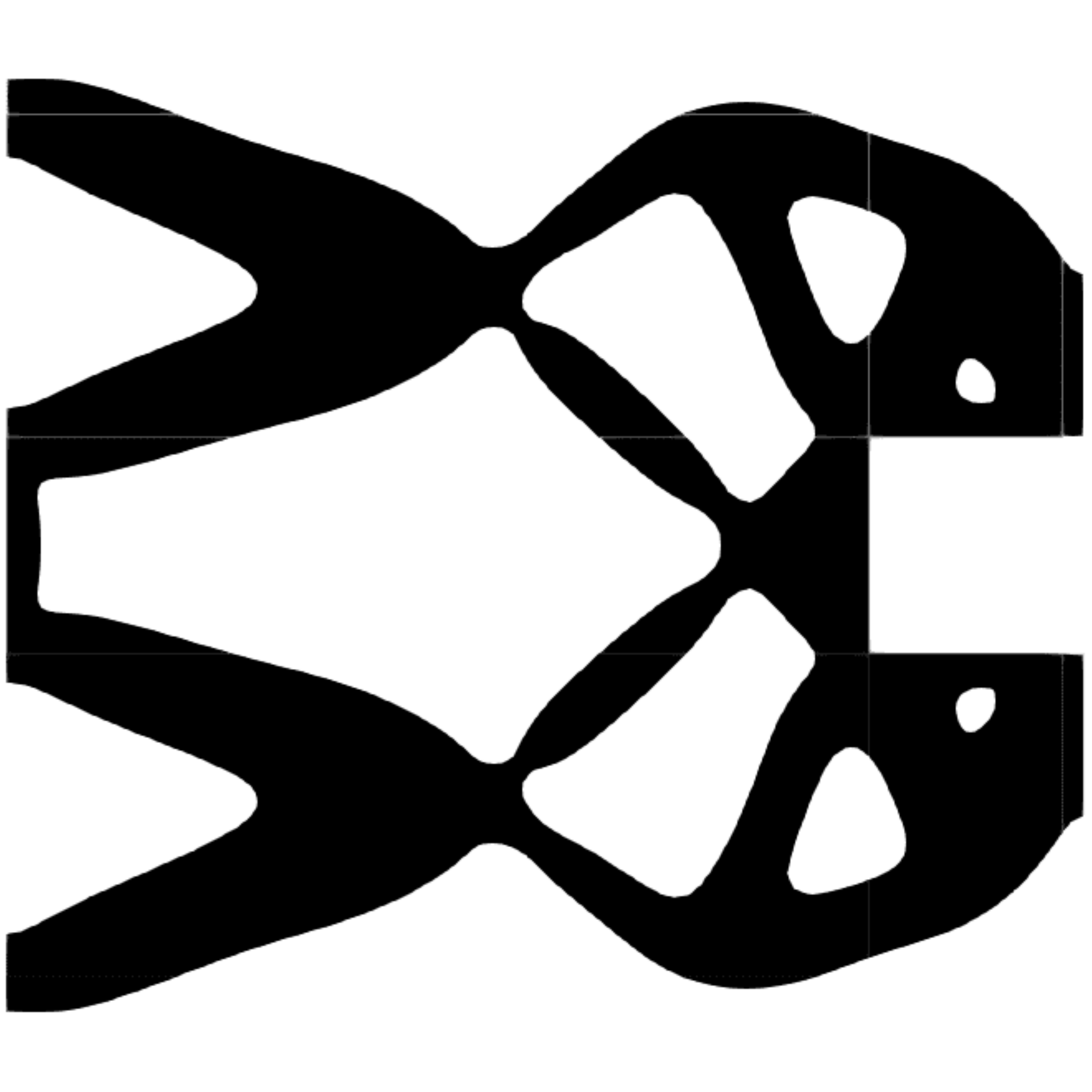}
        \subcaption{Step\,302$^{\#}$}
        \label{cm-j}
      \end{minipage}
      \\
        \begin{minipage}[t]{0.2\hsize}
        \centering
        \includegraphics[keepaspectratio, scale=0.09]{cm0.pdf}
        \subcaption{Step\,0}
        \label{cm-k}
      \end{minipage} 
      \begin{minipage}[t]{0.2\hsize}
        \centering
        \includegraphics[keepaspectratio, scale=0.09]{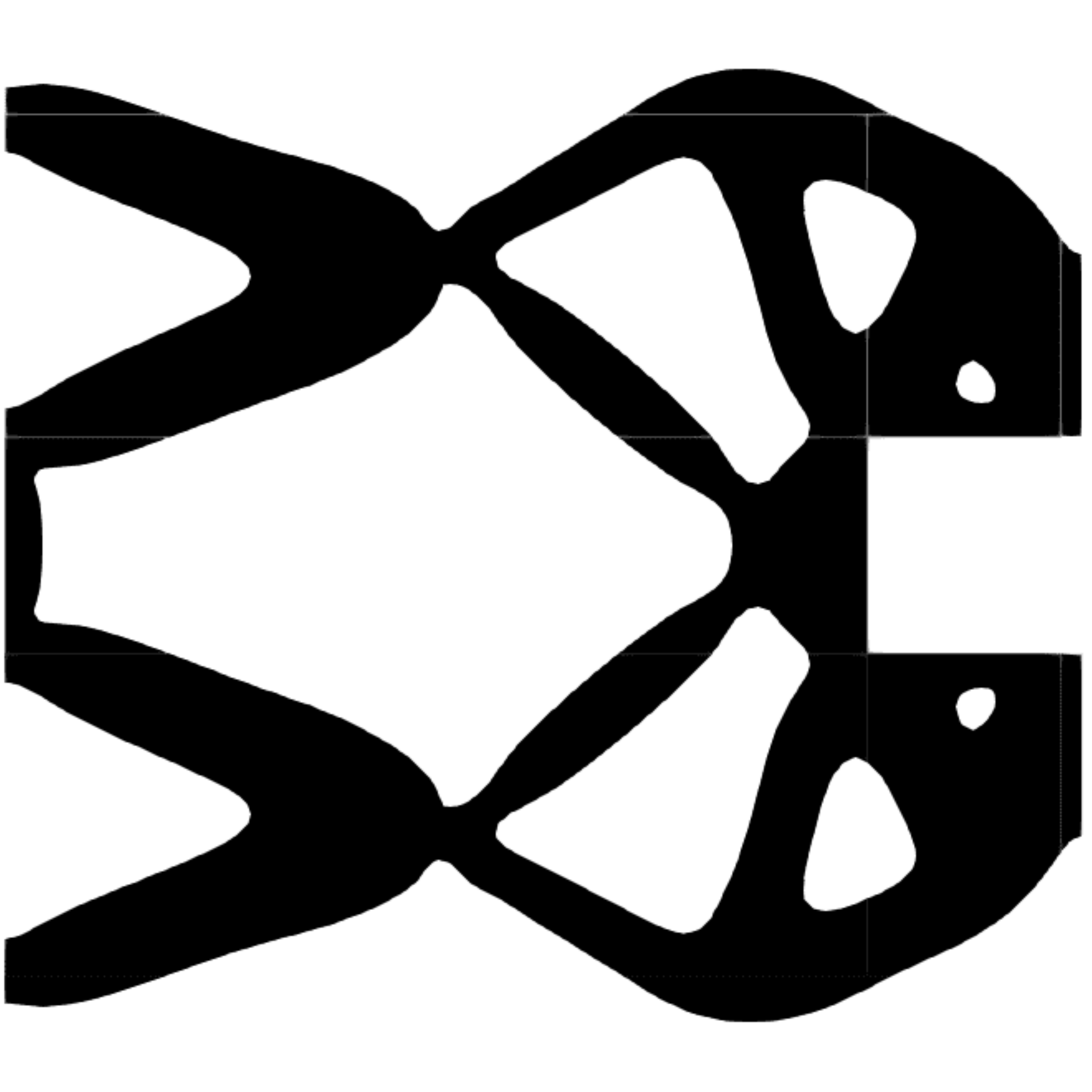}
        \subcaption{Step\,30}
        \label{cm-l}
      \end{minipage} 
         \begin{minipage}[t]{0.2\hsize}
        \centering
        \includegraphics[keepaspectratio, scale=0.09]{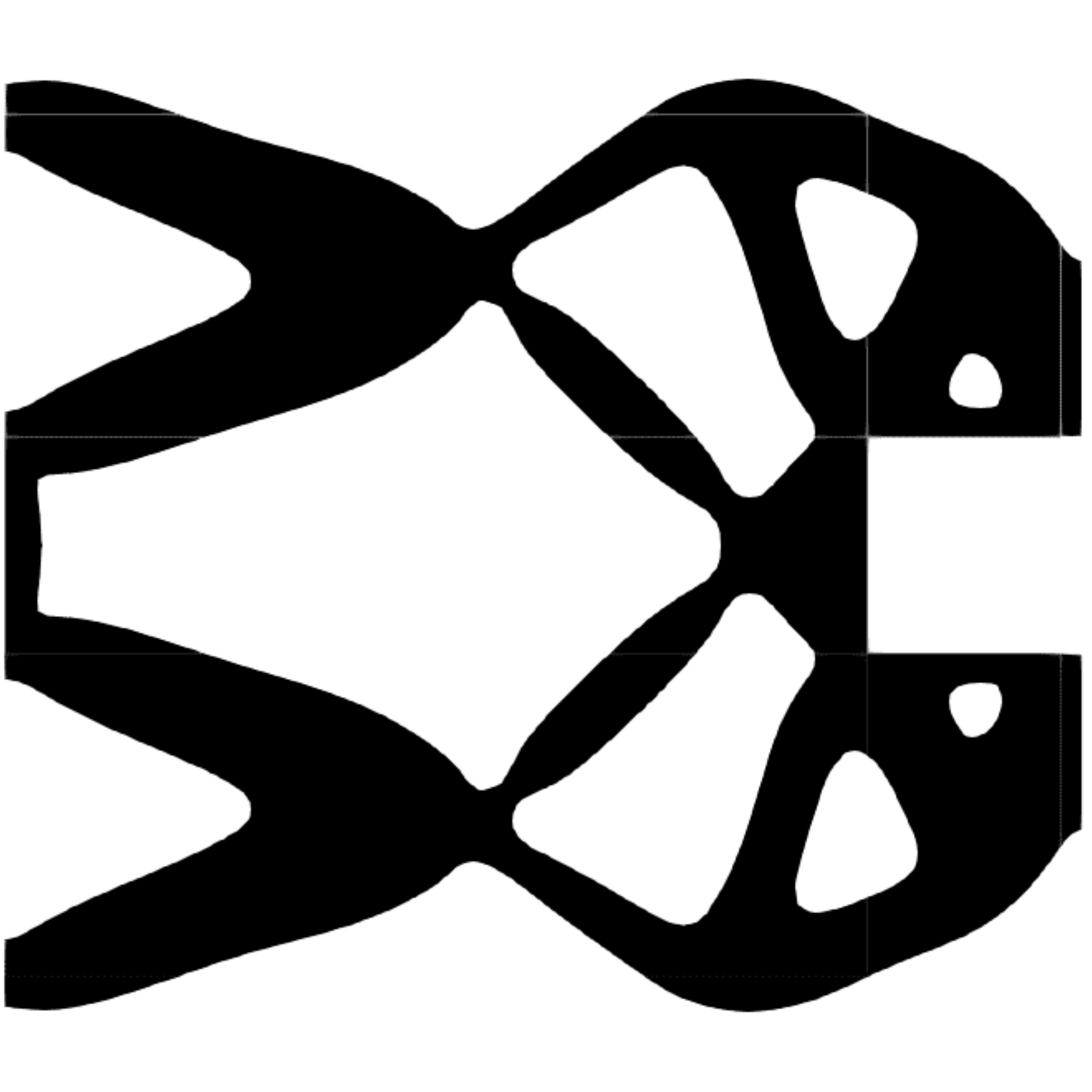}
        \subcaption{Step\,60}
        \label{cm-m}
      \end{minipage}
      \begin{minipage}[t]{0.2\hsize}
        \centering
        \includegraphics[keepaspectratio, scale=0.09]{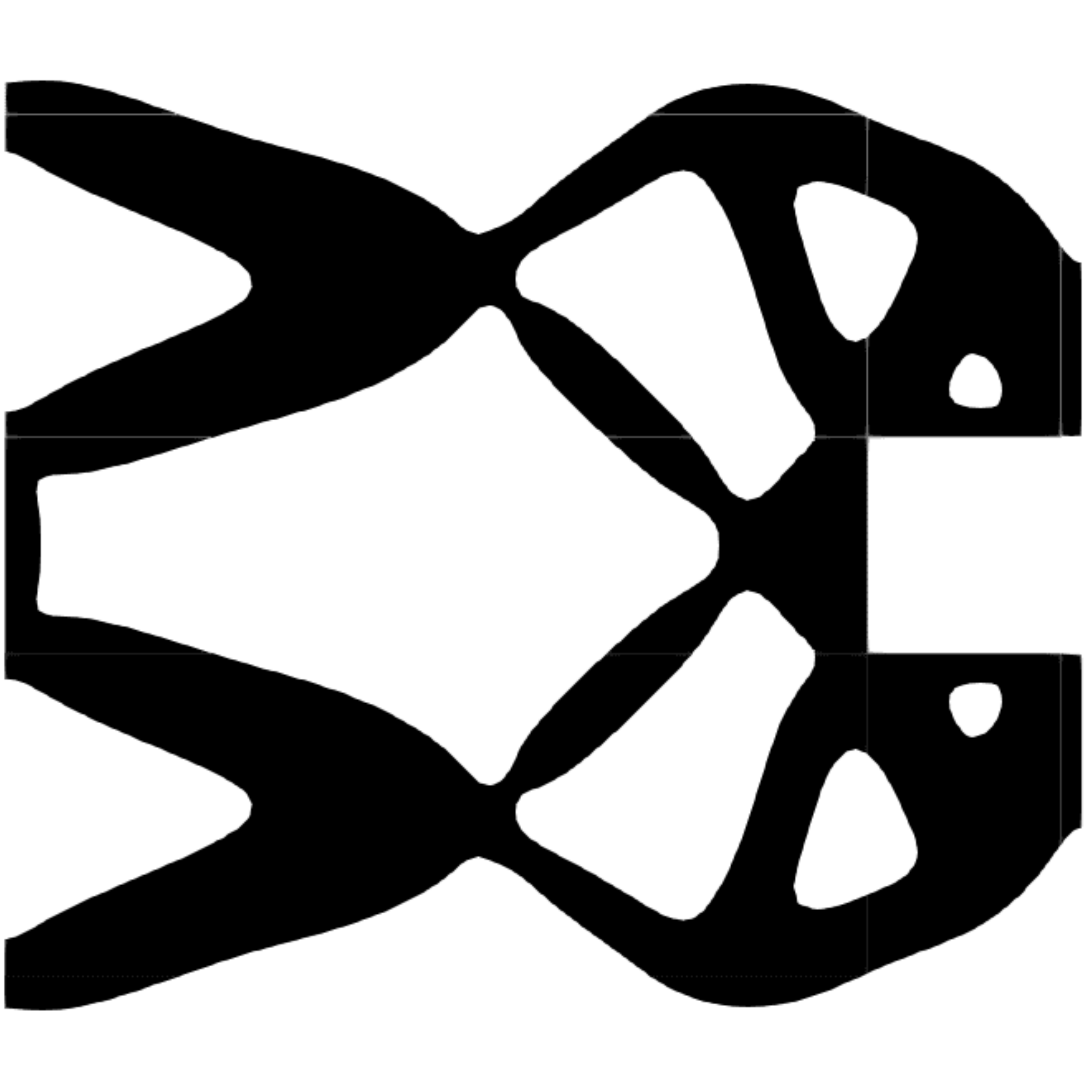}
        \subcaption{Step\,90}
        \label{cm-n}
      \end{minipage}
           \begin{minipage}[t]{0.2\hsize}
        \centering
        \includegraphics[keepaspectratio, scale=0.09]{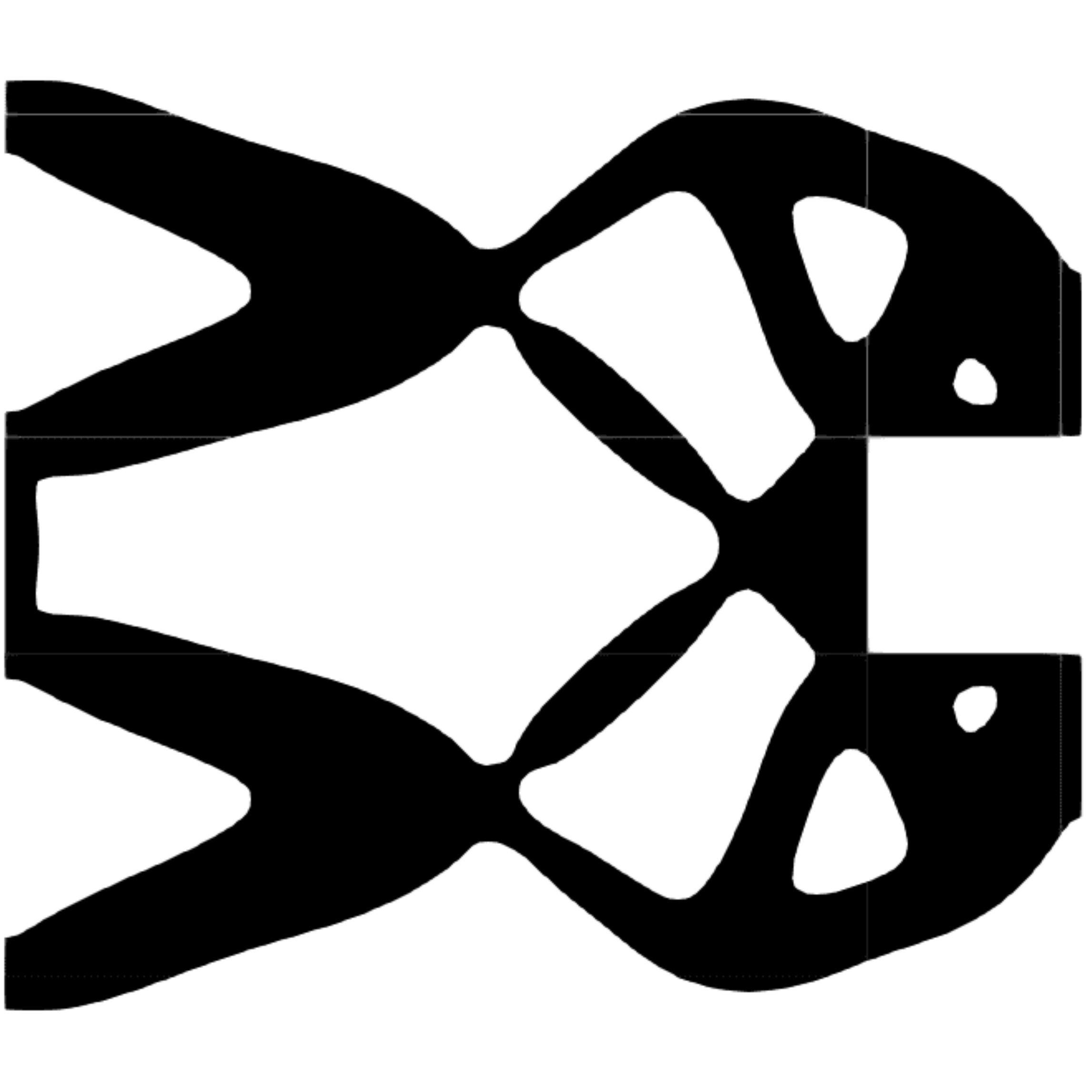}
        \subcaption{Step\,267$^{\#}$}
        \label{cm-o}
      \end{minipage}
    \end{tabular}
     \caption{Configuration $\Omega_{\phi_n}\subset D$ for the case where the initial configuration is the whole domain. 
Figures (a)--(e), (f)--(j) and (k)--(o) 
represent $\Omega_{\phi_n}\subset D$ for $(q,\varDelta t)=(1.0,0.2)$, $(q,\varDelta t)=(0.5,0.5)$ and $(q,\varDelta t)=(3.0,0.2)$ in \eqref{discNLD}, respectively. 
The symbol ${}^{ \#}$ implies the final step.    
     }
     \label{fig:cm}
  \end{figure}

\begin{figure}[htbp]
        \centering
        \includegraphics[keepaspectratio, scale=0.35]{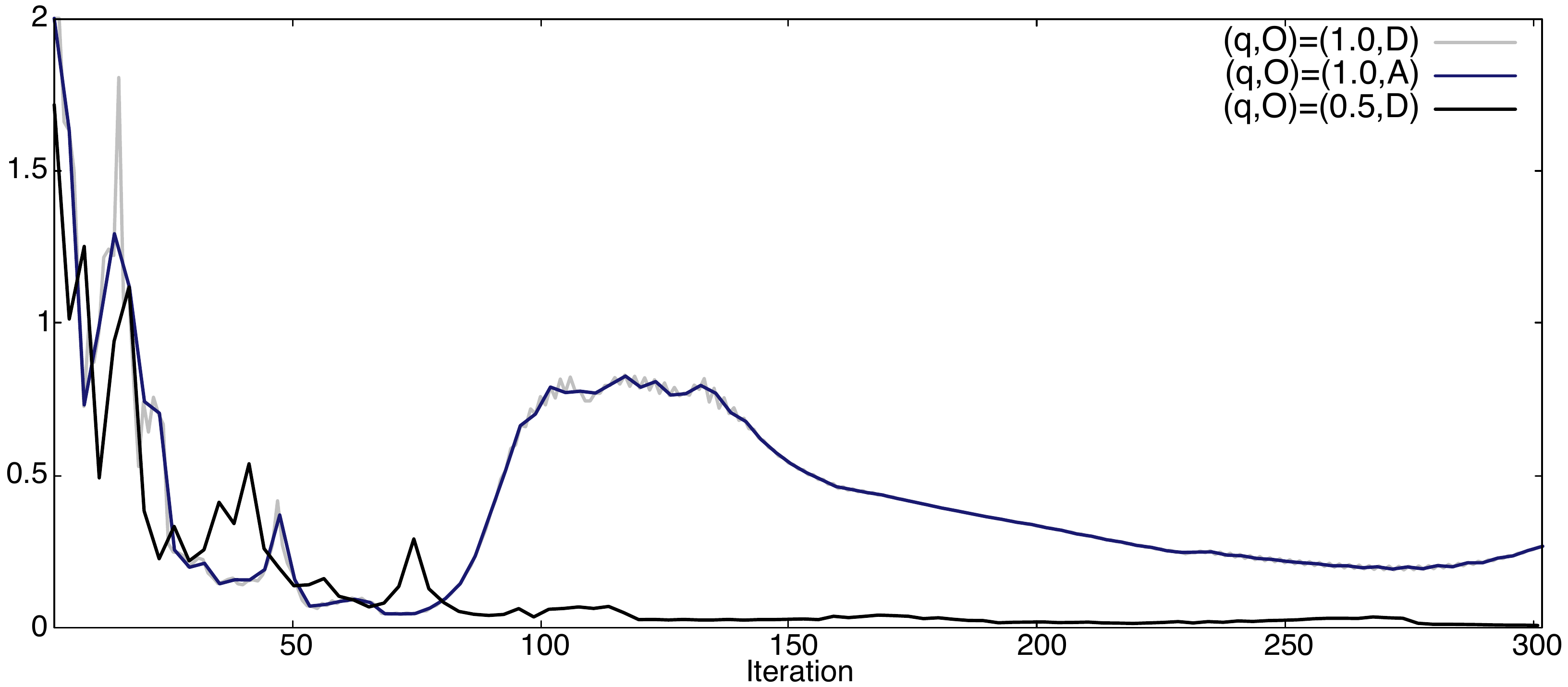}
   \caption{Convergence condition $\|\phi_{n+1}-\phi_{n}\|_{L^{\infty}(O)}$ with $\varDelta t=0.7$. Here $A\subset D$ denotes $[|\phi_n|\le 0.2]$.}
\label{fig:cmconv}
\end{figure}

  \begin{figure}[htbp]
   \hspace*{-5mm} 
    \begin{tabular}{ccccc}
    
        \begin{minipage}[t]{0.24\hsize}
        \centering
        \includegraphics[keepaspectratio, scale=0.11]{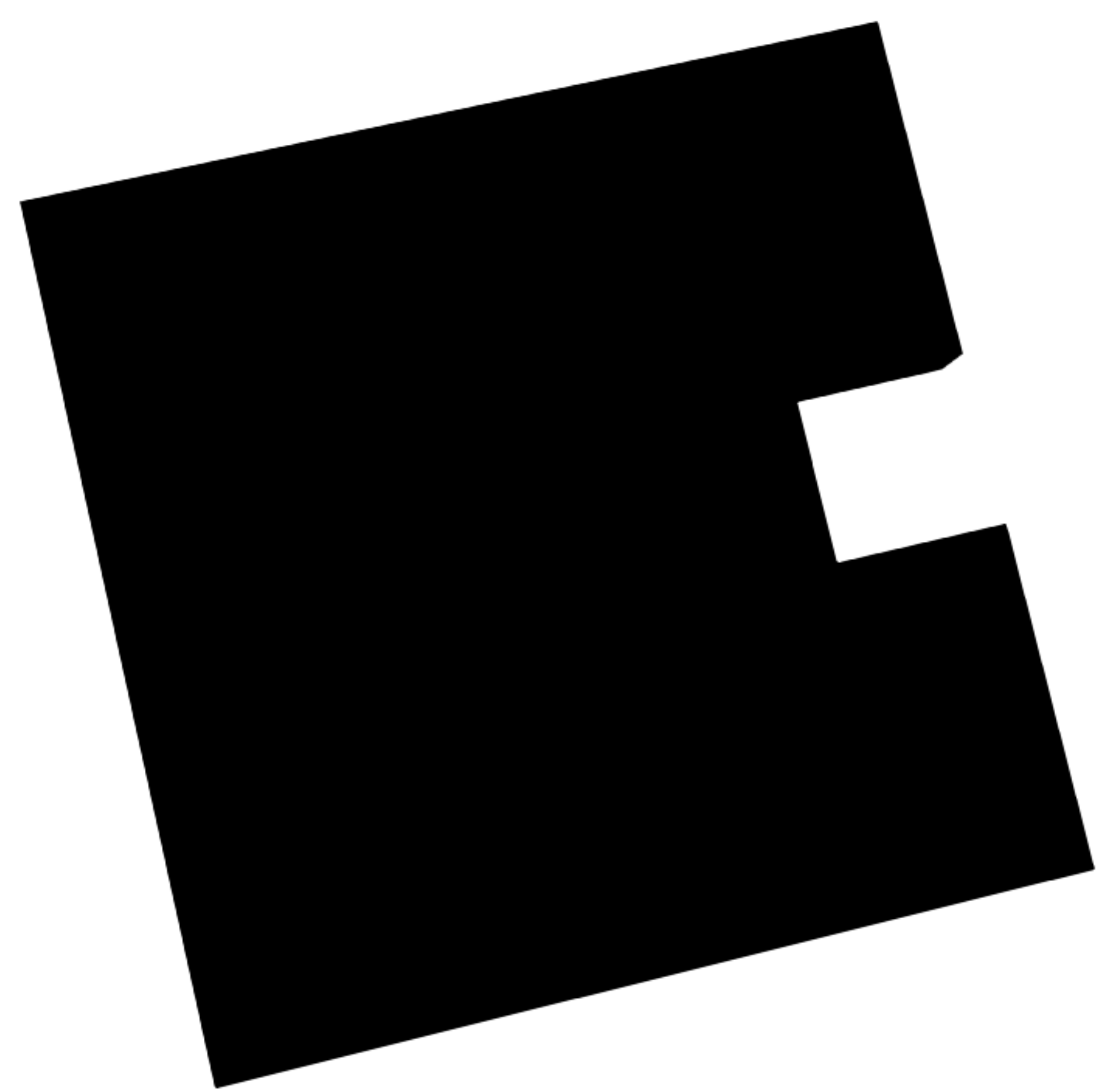}
        \subcaption{Step\,0}
        \label{3cm-a}
      \end{minipage} 
      \begin{minipage}[t]{0.24\hsize}
        \centering
        \includegraphics[keepaspectratio, scale=0.11]{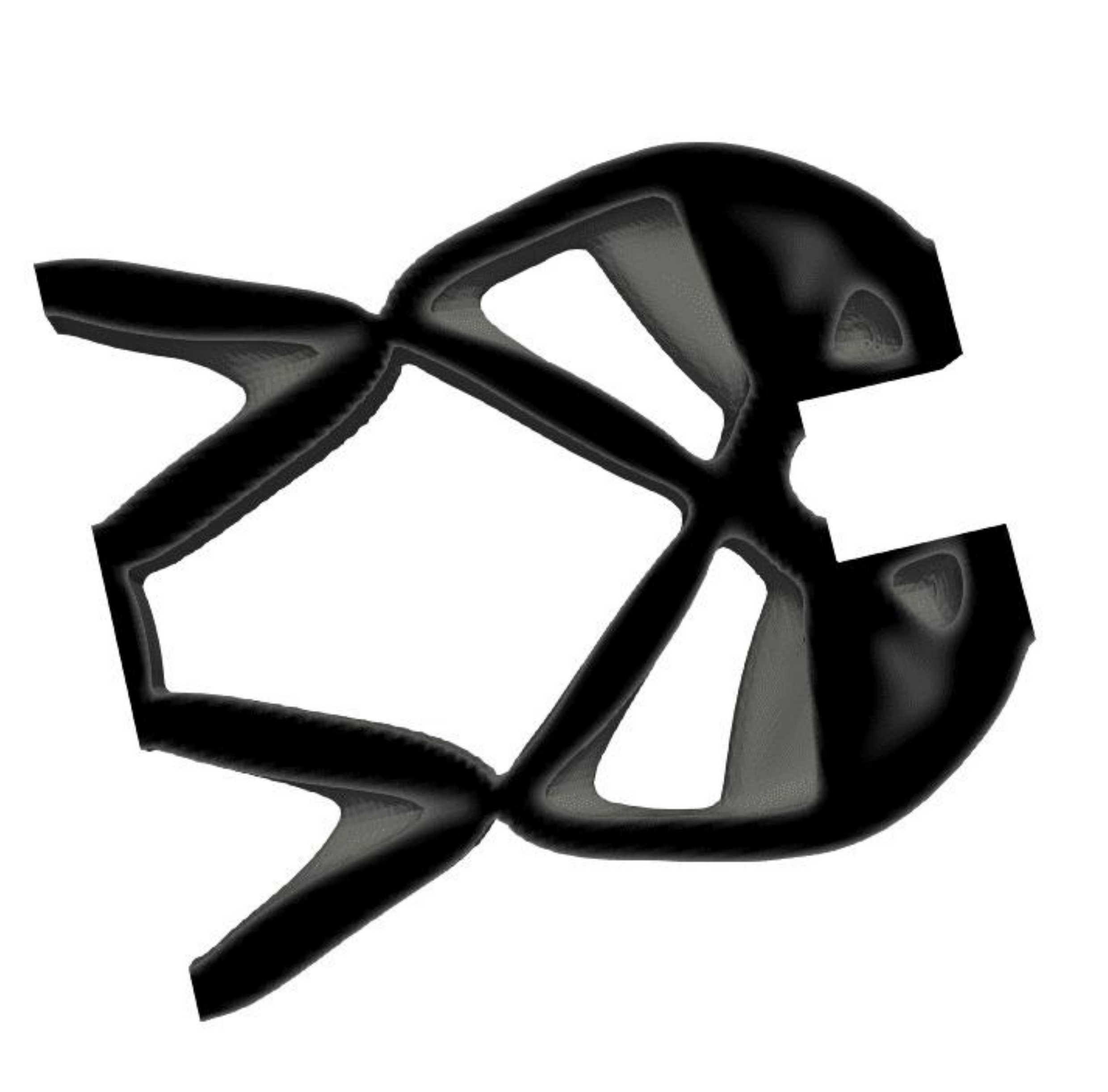}
        \subcaption{Step\,30}
        \label{3cm-b}
      \end{minipage} 
         \begin{minipage}[t]{0.24\hsize}
        \centering
        \includegraphics[keepaspectratio, scale=0.11]{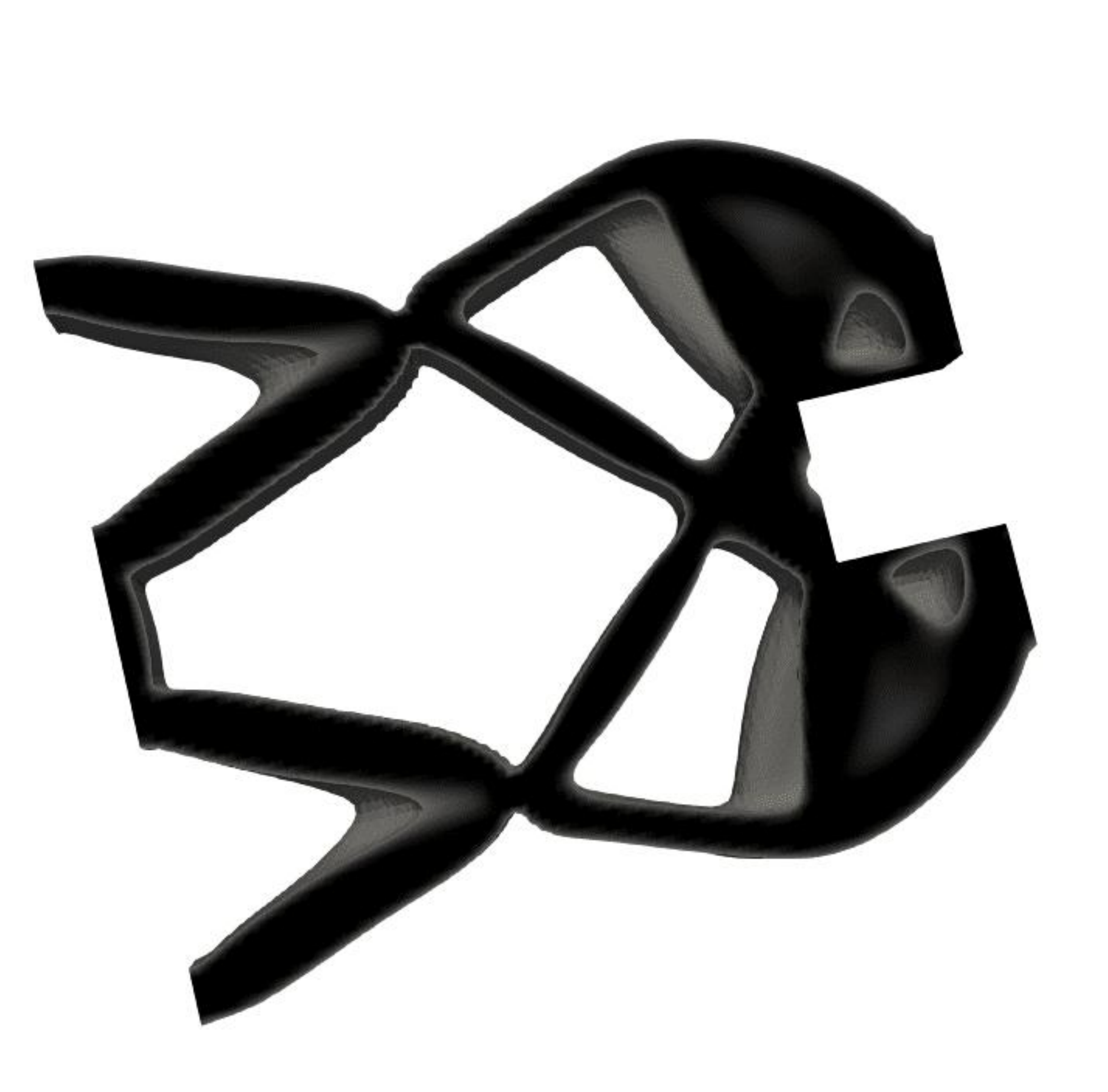}
        \subcaption{Step\,60}
        \label{3cm-c}
      \end{minipage}
           \begin{minipage}[t]{0.24\hsize}
        \centering
        \includegraphics[keepaspectratio, scale=0.11]{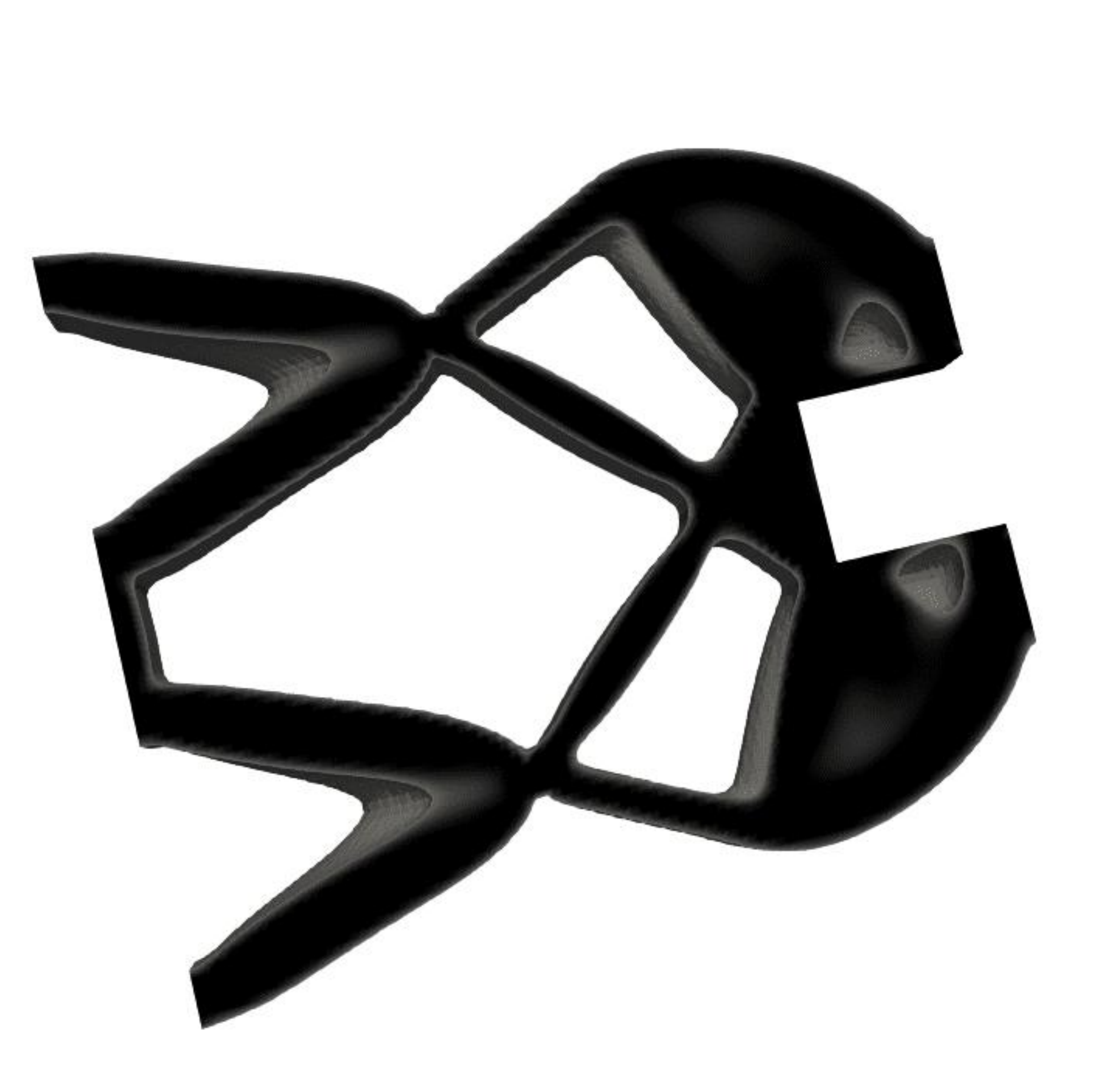}
        \subcaption{Step\,175$^{\#}$}
        \label{3cm-d}
      \end{minipage}
      \\
      \begin{minipage}[t]{0.24\hsize}
        \centering
        \includegraphics[keepaspectratio, scale=0.11]{3dcm0.pdf}
        \subcaption{Step\,0}
        \label{3cm-e}
      \end{minipage} 
      \begin{minipage}[t]{0.24\hsize}
        \centering
        \includegraphics[keepaspectratio, scale=0.11]{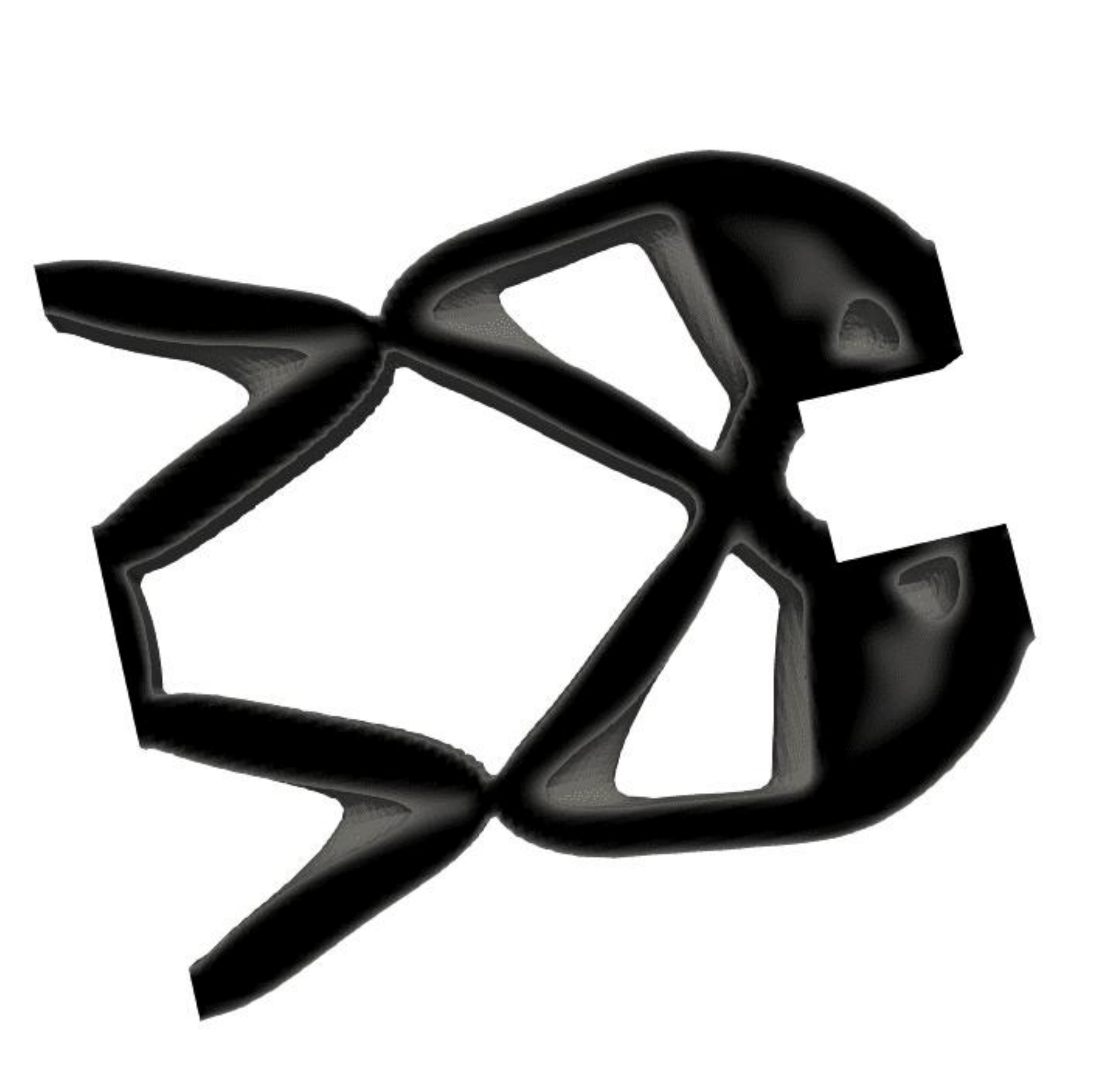}
        \subcaption{Step\,30}
        \label{3cm-f}
      \end{minipage} 
         \begin{minipage}[t]{0.24\hsize}
        \centering
        \includegraphics[keepaspectratio, scale=0.11]{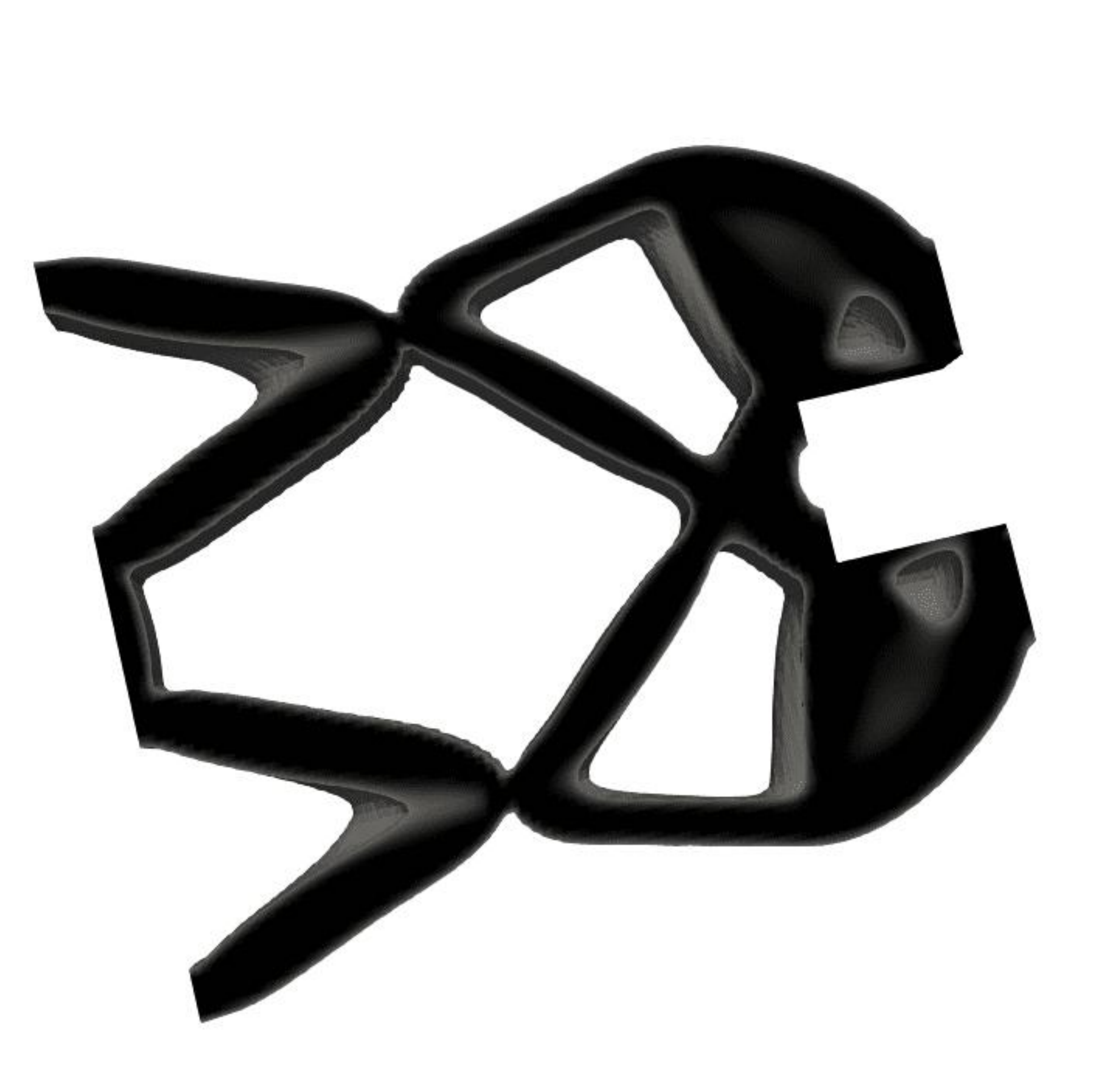}
        \subcaption{Step\,60}
        \label{3cm-g}
      \end{minipage}
           \begin{minipage}[t]{0.24\hsize}
        \centering
        \includegraphics[keepaspectratio, scale=0.11]{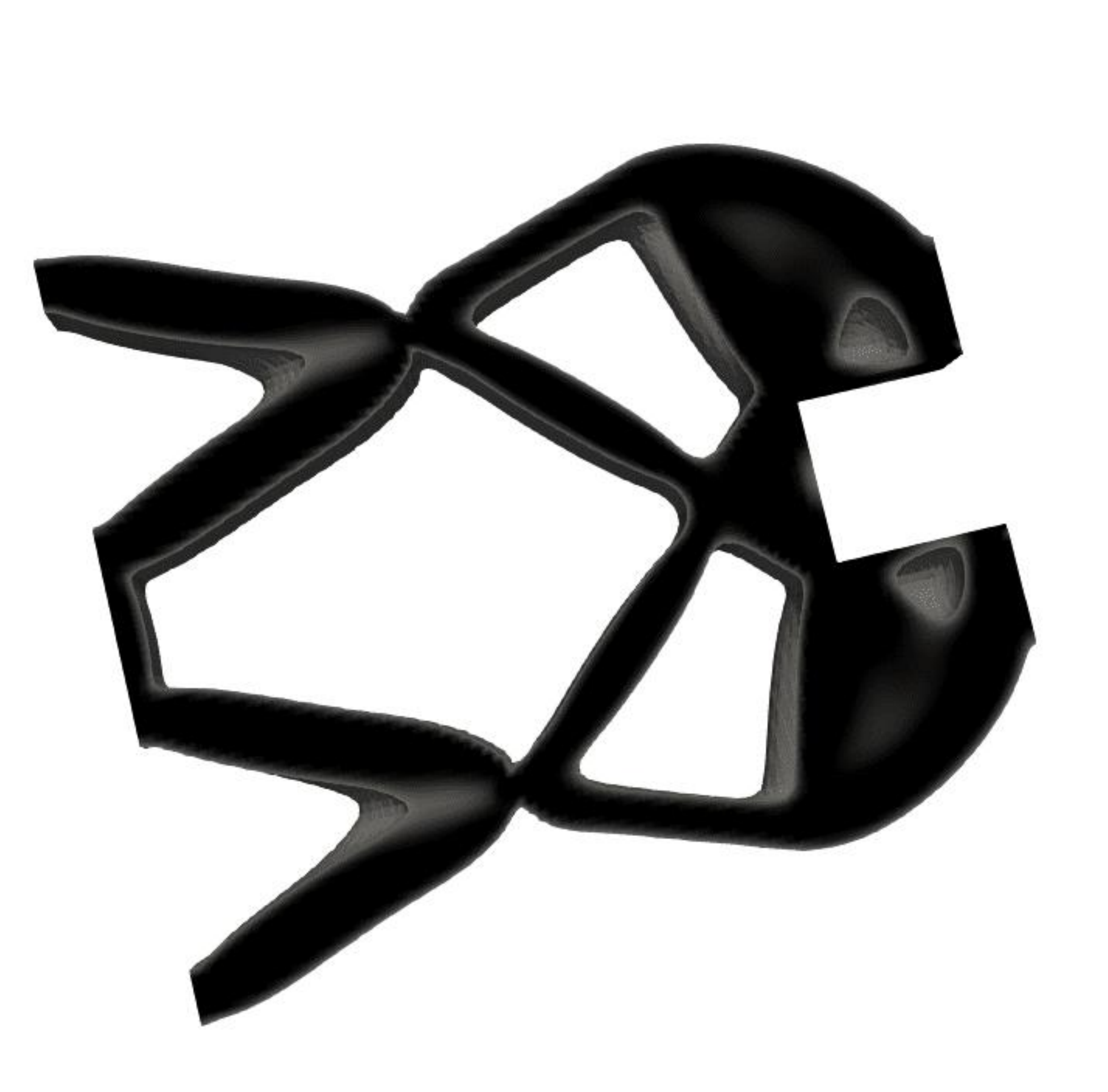}
        \subcaption{Step\,96$^{\#}$}
        \label{3cm-h}
      \end{minipage}
     \end{tabular}
     \caption{Configuration $\Omega_{\phi_n}\subset D\subset \R^3$ for the case where the initial configuration is the whole domain. 
Figures (a)--(d) and (e)--(h) represent $\Omega_{\phi_n}\subset D$ for $q=1$ and $q=2$ in \eqref{discNLD}, respectively. 
The symbol ${}^{ \#}$ implies the final step. 
Here the depth of $D$ is set to $0.1$ and $(\varDelta t, G_{\rm max})=(0.4,0.35)$.    
     }
     \label{fig:3dcm}
  \end{figure}

\subsection{Heat conduction problem}\label{SS:ex2}
We finally show (i-FDE) and (ii-SDE) for the so-called heat conduction problem with volume-constraint (see, e.g.,~\cite{HBS06, Y11, ZLWS15}).
Let us consider \eqref{eq:opt-prob}, i.e.,  
$$
\inf_{\phi\in H^1(D;[-1,1])} 
F(\phi)\quad \text{ subject to }\ G(\phi)\le 0.
$$
Here the Lagrangian of \eqref{eq:opt-prob} is given by 
\begin{align*}
\mathcal{L}(\phi,\lambda)=F(\phi)+\lambda G(\phi)
=
\langle f, u_\phi \rangle_{V}+\lambda\underbrace{\left(\int_D \chi_\phi(x)\, \d x- G_{\text{max}}|D|\right)}_{\le 0}, 
\end{align*}
where $G_{\text{max}}>0$, $u_\phi\in V$ is a unique solution to the steady-state heat equation,
\begin{align}
\int_D
\kappa_\phi(x) \nabla u_\phi(x)\cdot \nabla v(x)\, \d x
=
\langle f, v \rangle_{V}
\quad \text{ for all }\  v\in V
\label{eq:HC}
\end{align}
and $\kappa_\phi\in L^{\infty}(D)$ is the (two-phase) heat conductivity given by 
$\kappa_\phi=\alpha\chi_{\phi}+\beta(1-\chi_\phi)$, i.e.,~
$$
\kappa_\phi(x)=
\begin{cases}
\alpha\quad &\text{ if }\ \phi(x)\ge 0,\\  
\beta\quad &\text{ if }\ \phi(x)< 0.
\end{cases}
$$
Here $\alpha>0$ and $\beta>0$ are such that $\alpha\neq \beta$.

Based on \S \ref{S:algo}, Proposition \ref{prop} and Remark \ref{R:GR}, the numerical analysis is performed as in \S \ref{SS:ca}. Here we set $f\equiv 1$ in \eqref{eq:HC}.  

We first show (i-FDE) under $\partial D=\Gamma_D$ (i.e.,~$V=H^1_0(D)$), $(\alpha,\beta)=(1.0\times 10^{-2},1.0)$ (i.e.,~$\alpha<\beta$) and $(\tau,G_{\rm max},\varDelta t)=(1.0\times 10^{-5},0.5,0.5)$. 
The numerical results are shown in Figures \ref{fig:hb} and \ref{fig:hbb}.
From Figure \ref{fig:hb}, we infer that the method using fast diffusion (i.e.,~$q>1$) yields faster convergence to the optimal configuration than that of reaction-diffusion (i.e.,~$q=1$). 
In particular, the same assertion holds for different initial configurations (see Figure \ref{fig:hbb}). 
This completes the check for (i-FDE).  

\begin{figure}[htbp]
   \hspace*{-5mm} 
    \begin{tabular}{ccccc}
        \begin{minipage}[t]{0.2\hsize}
        \centering
        \includegraphics[keepaspectratio, scale=0.09]{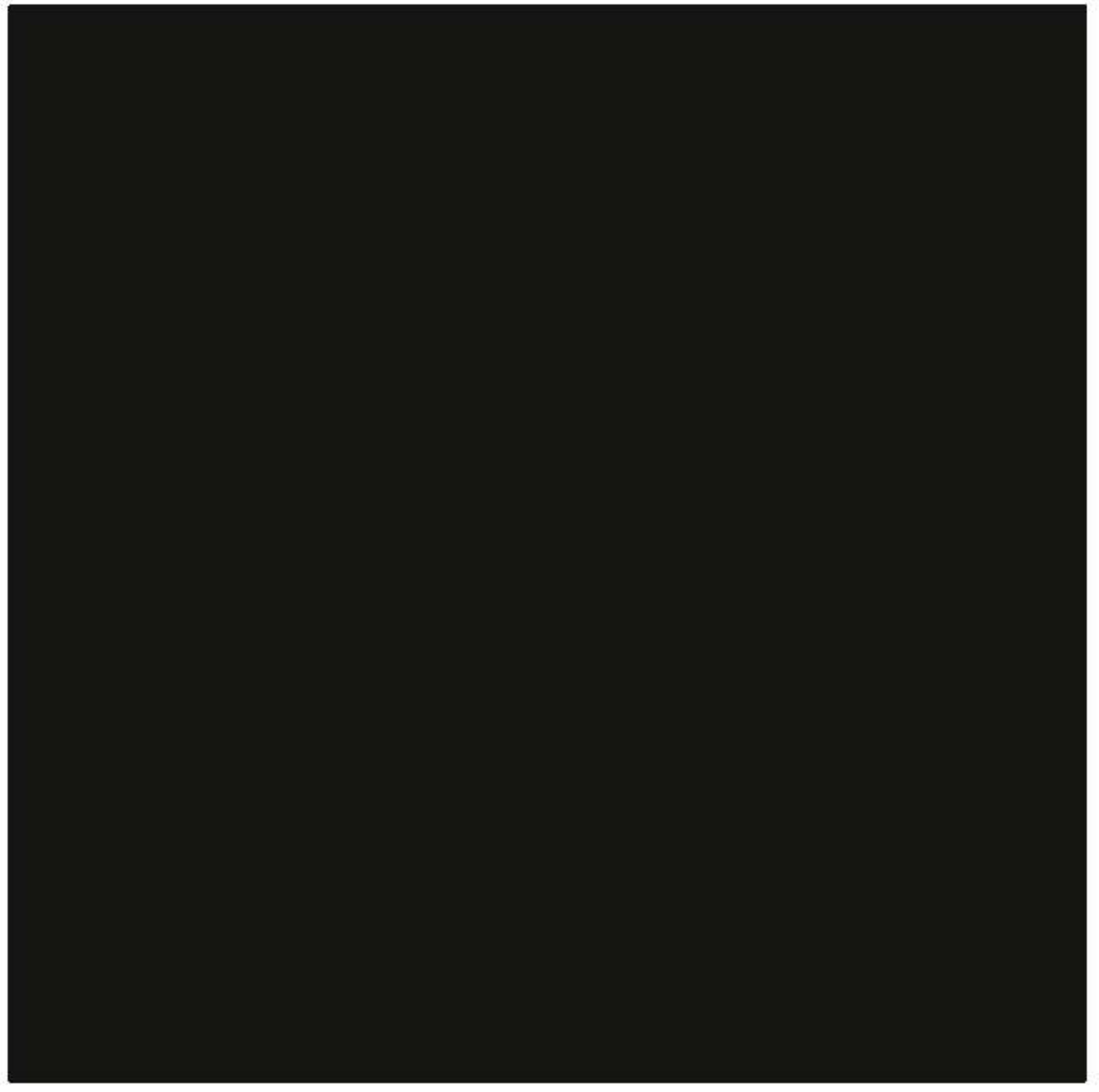}
        \subcaption{Step\,0}
        \label{hb-a}
      \end{minipage} 
      \begin{minipage}[t]{0.2\hsize}
        \centering
        \includegraphics[keepaspectratio, scale=0.09]{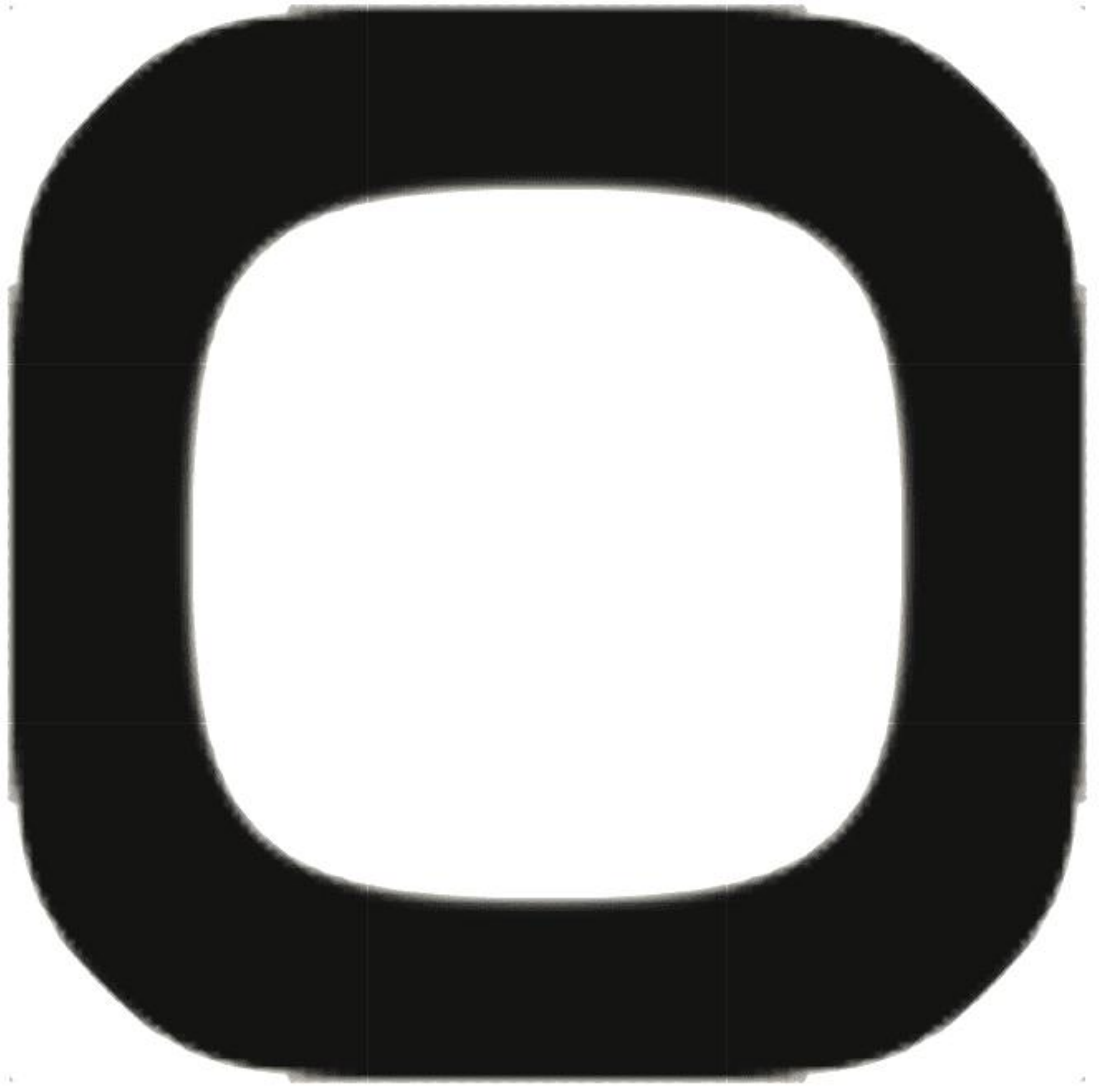}
        \subcaption{Step\,20}
        \label{hb-b}
      \end{minipage} 
         \begin{minipage}[t]{0.2\hsize}
        \centering
        \includegraphics[keepaspectratio, scale=0.09]{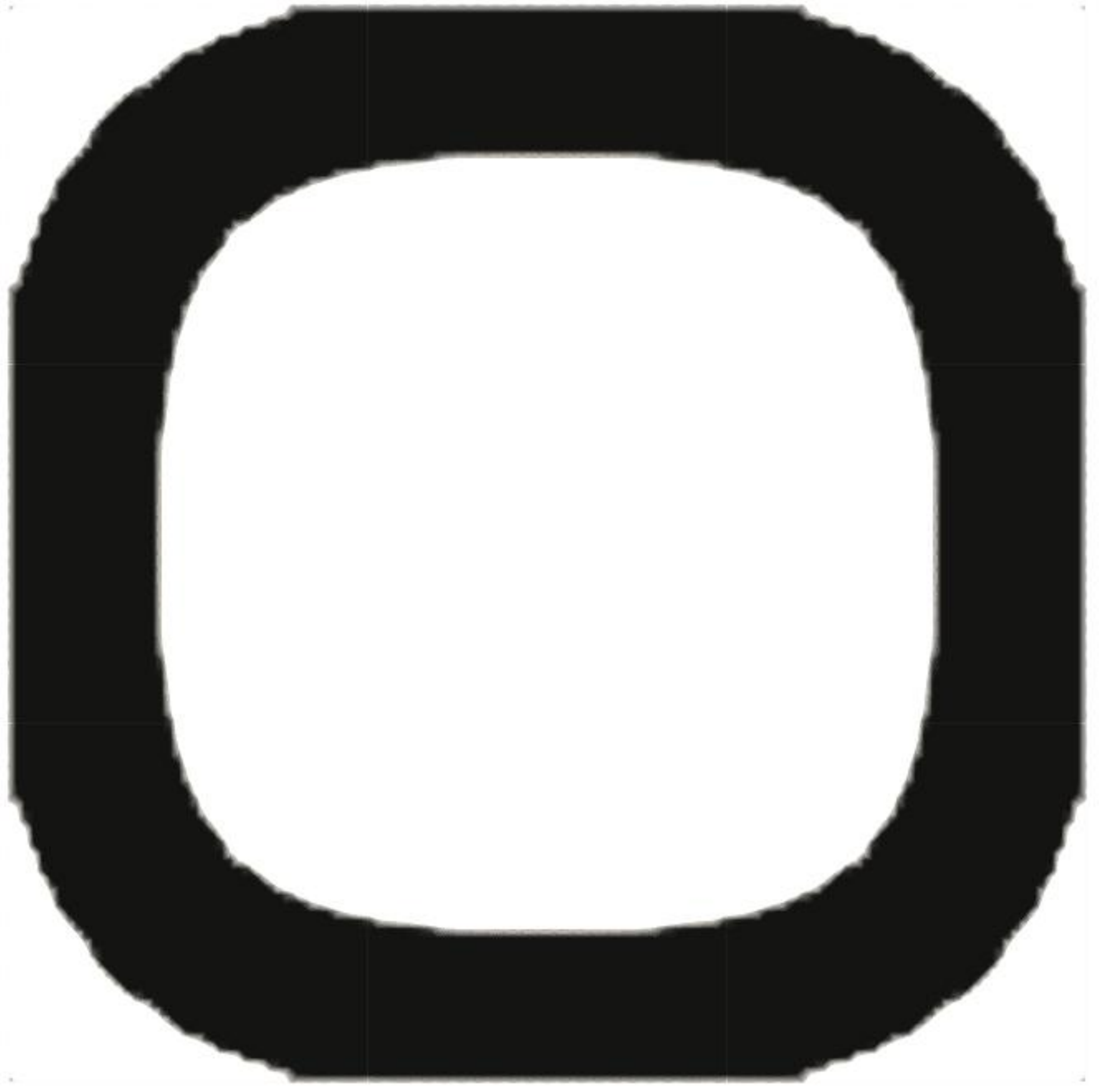}
        \subcaption{Step\,40}
        \label{hb-c}
      \end{minipage}
      \begin{minipage}[t]{0.2\hsize}
        \centering
        \includegraphics[keepaspectratio, scale=0.09]{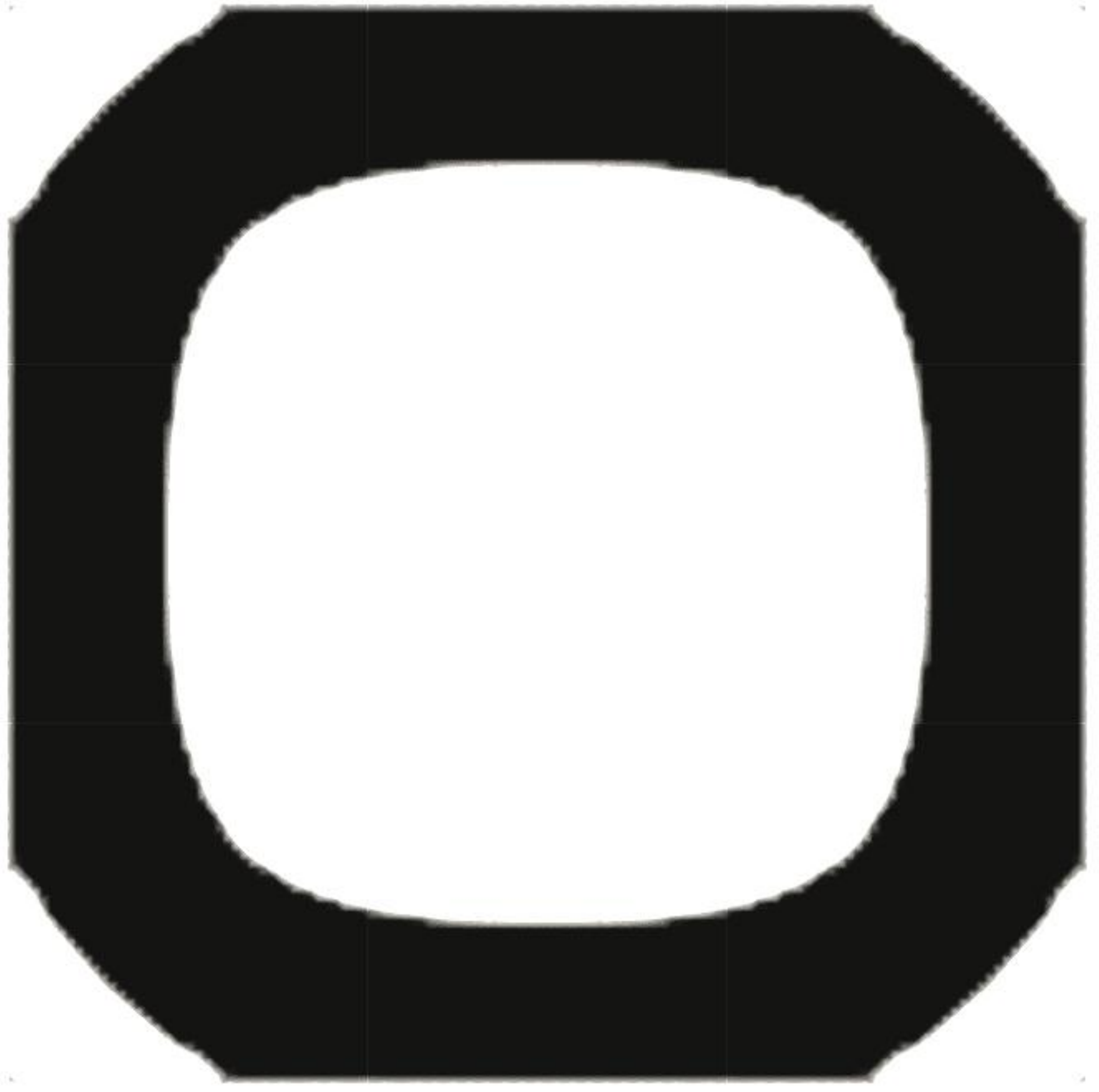}
        \subcaption{Step\,60}
        \label{hb-d}
      \end{minipage}
           \begin{minipage}[t]{0.2\hsize}
        \centering
        \includegraphics[keepaspectratio, scale=0.09]{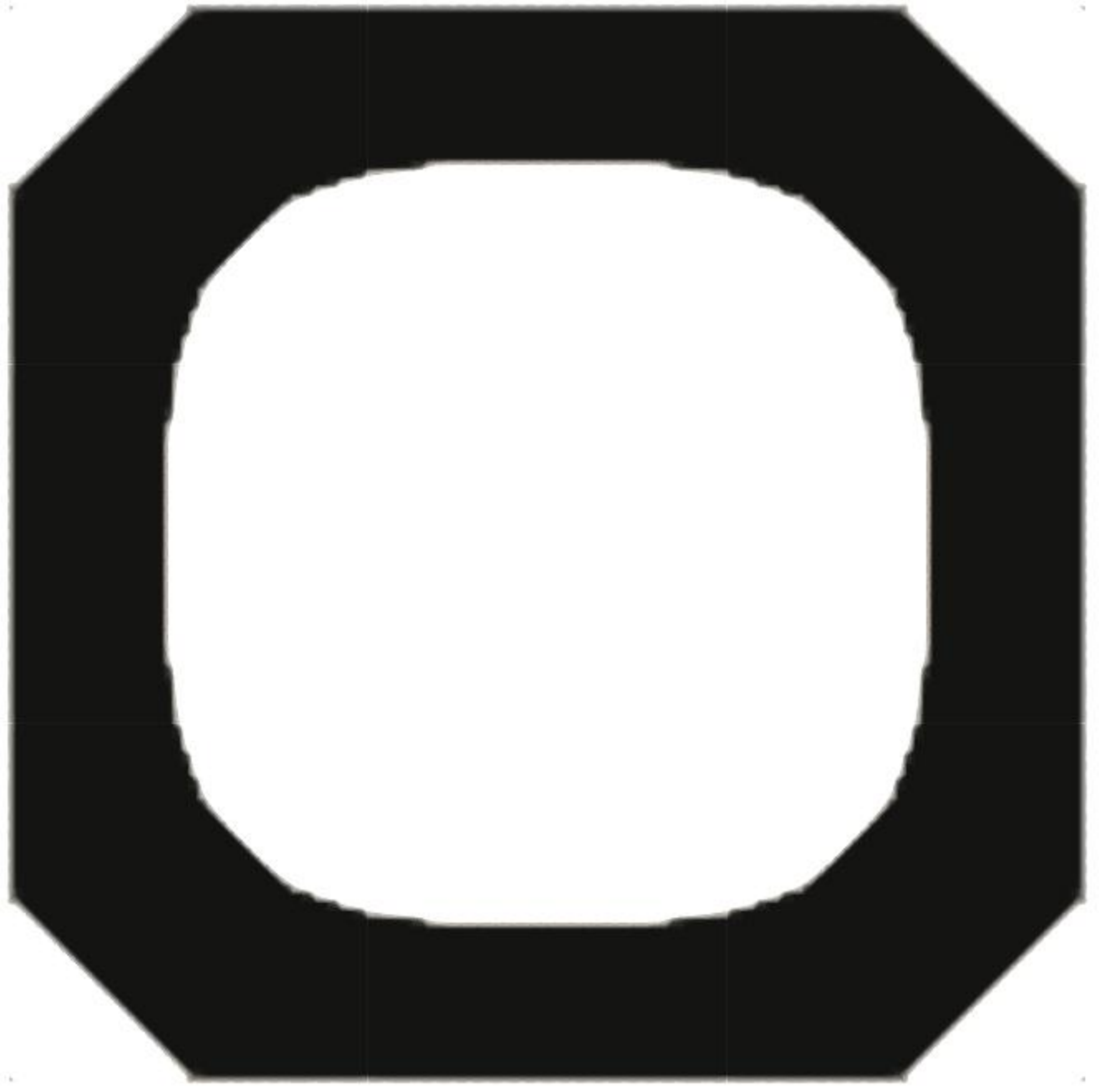}
        \subcaption{Step\,169$^{\#}$}
        \label{hb-e}
      \end{minipage}
      \\
              \begin{minipage}[t]{0.2\hsize}
        \centering
        \includegraphics[keepaspectratio, scale=0.09]{hb0.pdf}
        \subcaption{Step\,0}
        \label{hb-f}
      \end{minipage} 
      \begin{minipage}[t]{0.2\hsize}
        \centering
        \includegraphics[keepaspectratio, scale=0.09]{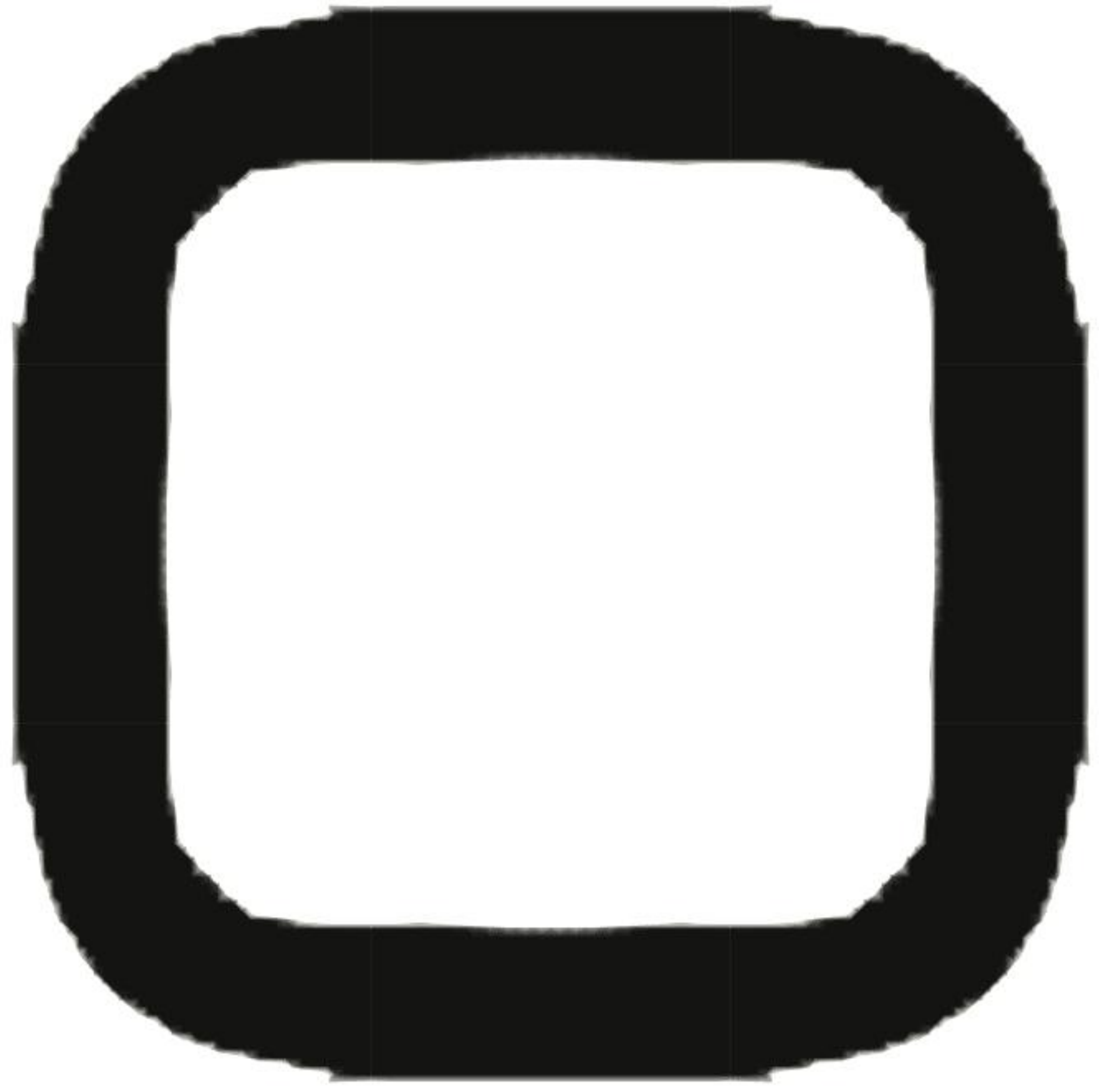}
        \subcaption{Step\,20}
        \label{hb-g}
      \end{minipage} 
         \begin{minipage}[t]{0.2\hsize}
        \centering
        \includegraphics[keepaspectratio, scale=0.09]{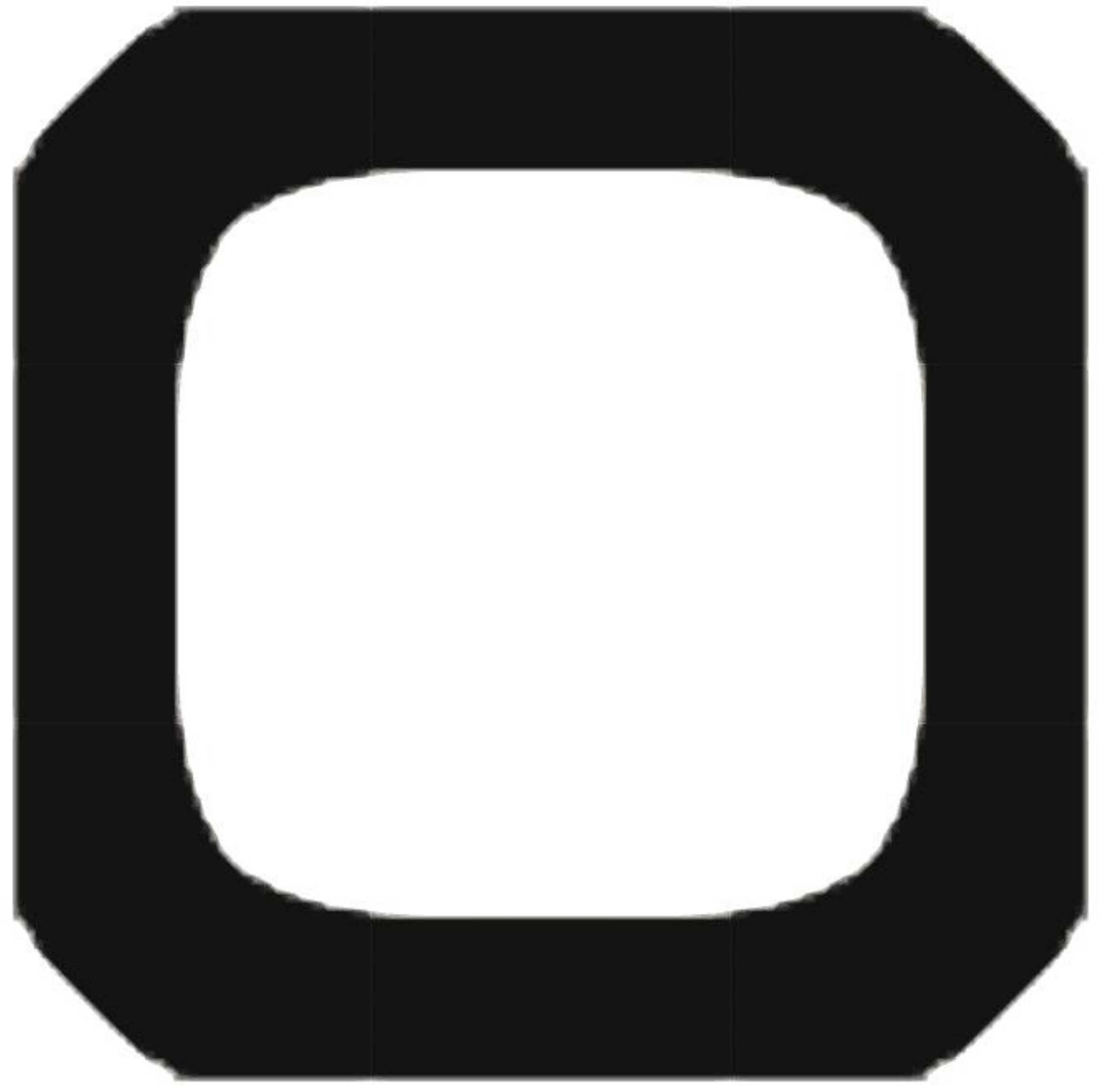}
        \subcaption{Step\,40}
        \label{hb-h}
      \end{minipage}
      \begin{minipage}[t]{0.2\hsize}
        \centering
        \includegraphics[keepaspectratio, scale=0.09]{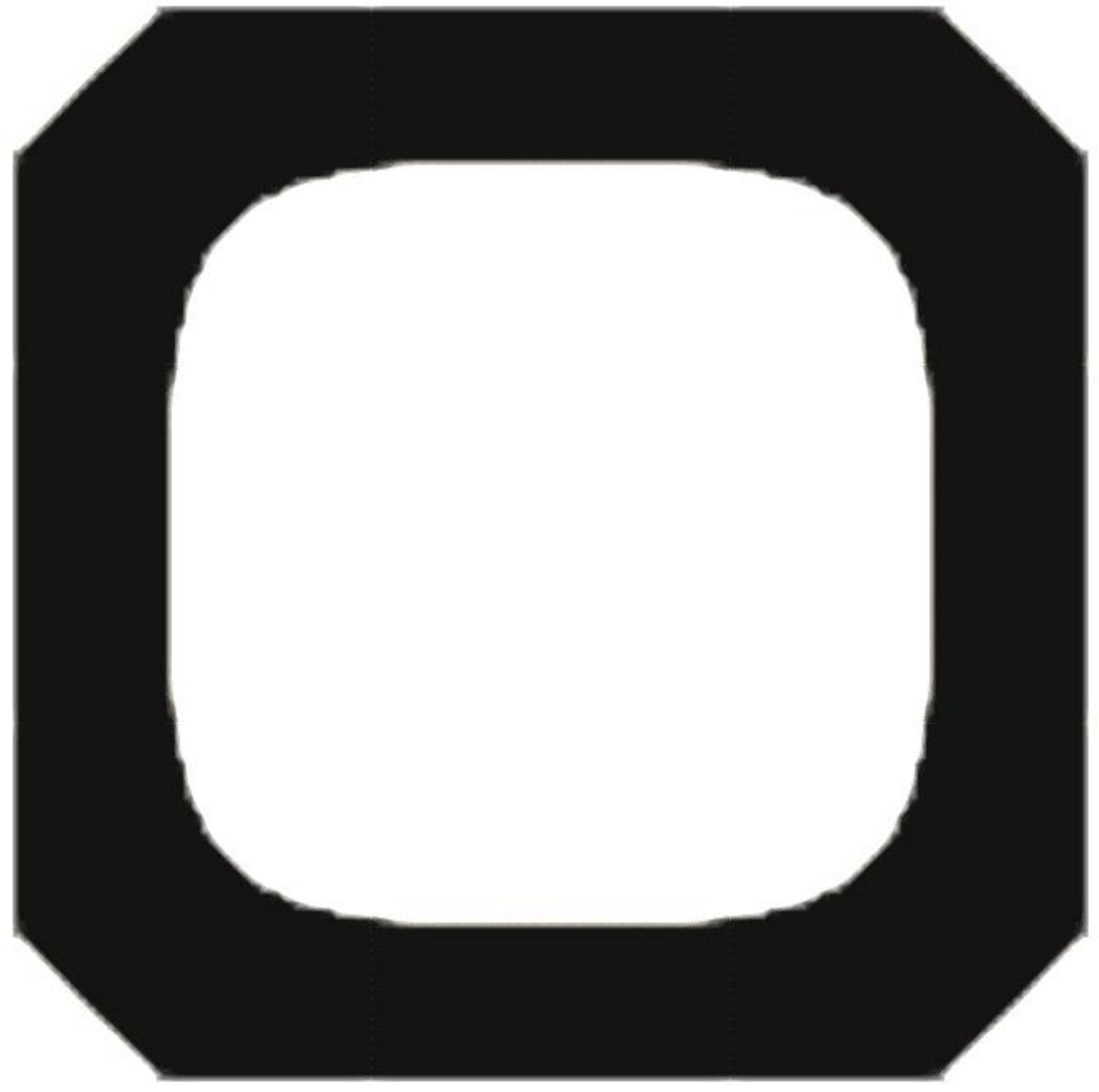}
        \subcaption{Step\,60}
        \label{hb-i}
      \end{minipage}
           \begin{minipage}[t]{0.2\hsize}
        \centering
        \includegraphics[keepaspectratio, scale=0.09]{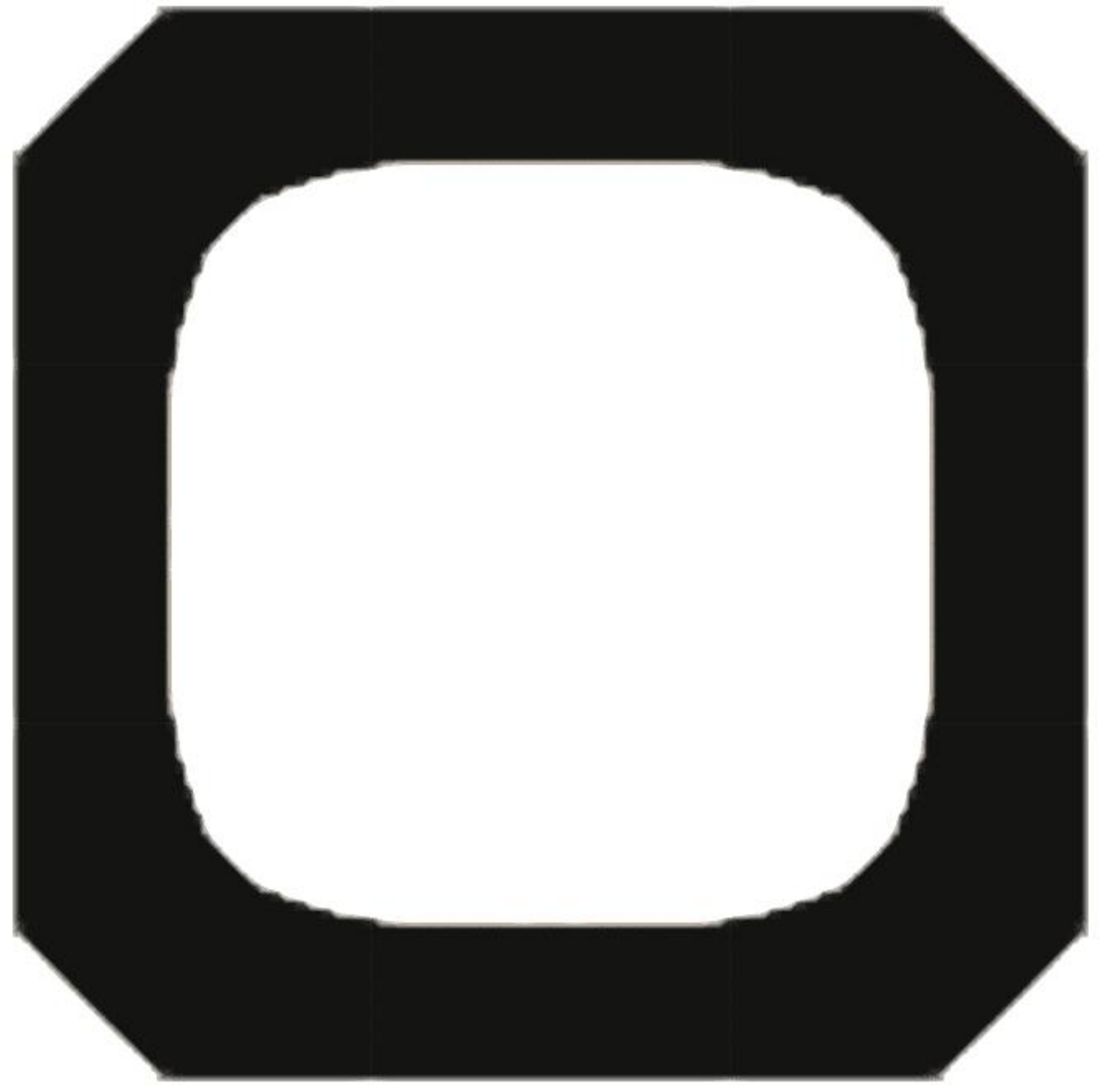}
        \subcaption{Step\,93$^{\#}$}
        \label{hb-j}
      \end{minipage}
       \end{tabular}
     \caption{Configuration $\Omega_{\phi_n}\subset D$ for the case where the initial configuration is the whole domain. 
Figures (a)--(e) and (f)--(j) 
represent $\Omega_{\phi_n}\subset D$ for $q=1$ and $q=6$ in \eqref{discNLD}, respectively. 
The symbol ${}^{ \#}$ implies the final step.    
     }
     \label{fig:hb}
  \end{figure}

\begin{figure}[htbp]
   \hspace*{-5mm} 
    \begin{tabular}{ccccc}
        \begin{minipage}[t]{0.2\hsize}
        \centering
        \includegraphics[keepaspectratio, scale=0.09]{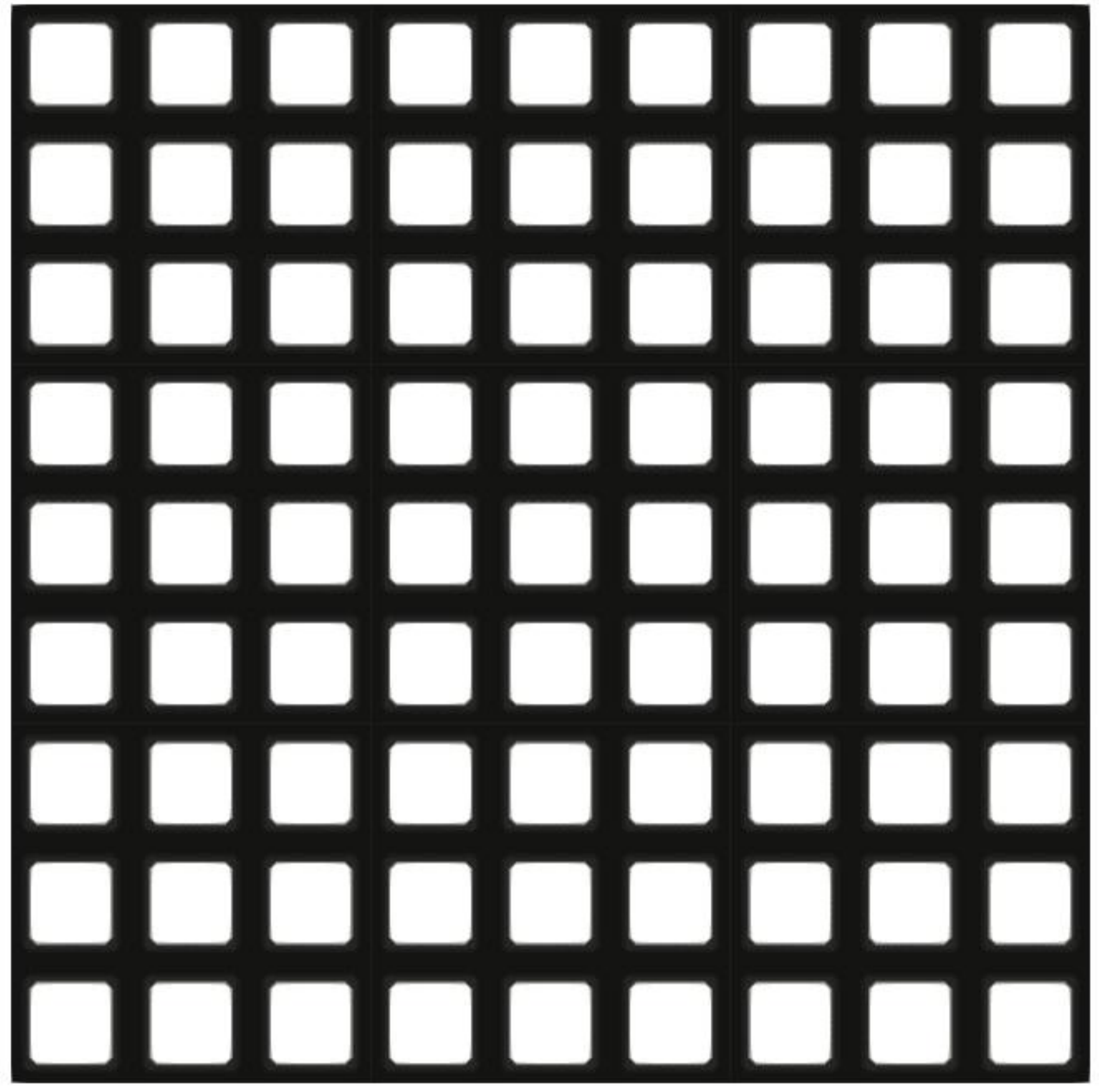}
        \subcaption{Step\,0}
        \label{hbb-a}
      \end{minipage} 
      \begin{minipage}[t]{0.2\hsize}
        \centering
        \includegraphics[keepaspectratio, scale=0.09]{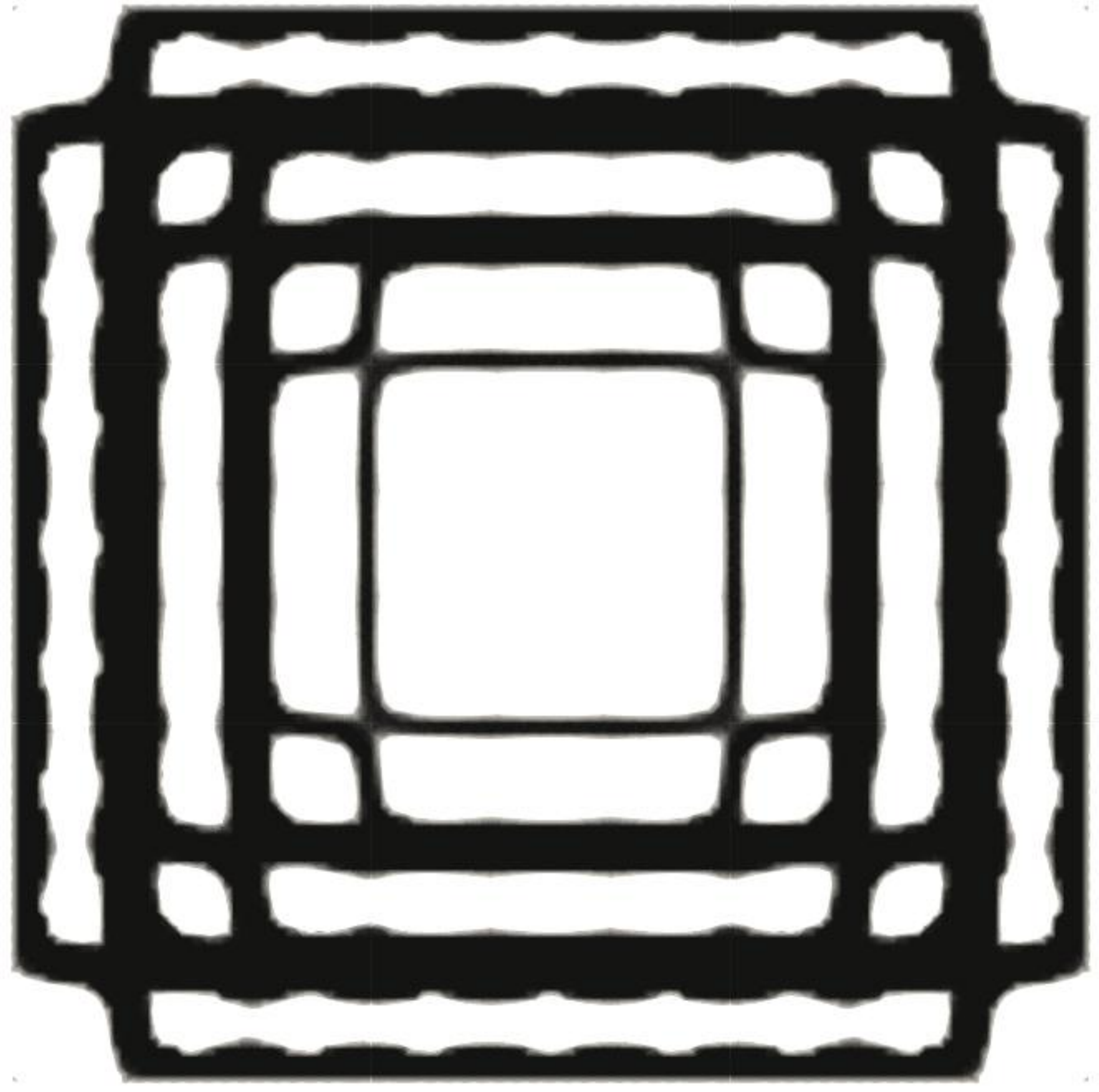}
        \subcaption{Step\,20}
        \label{hbb-b}
      \end{minipage} 
         \begin{minipage}[t]{0.2\hsize}
        \centering
        \includegraphics[keepaspectratio, scale=0.09]{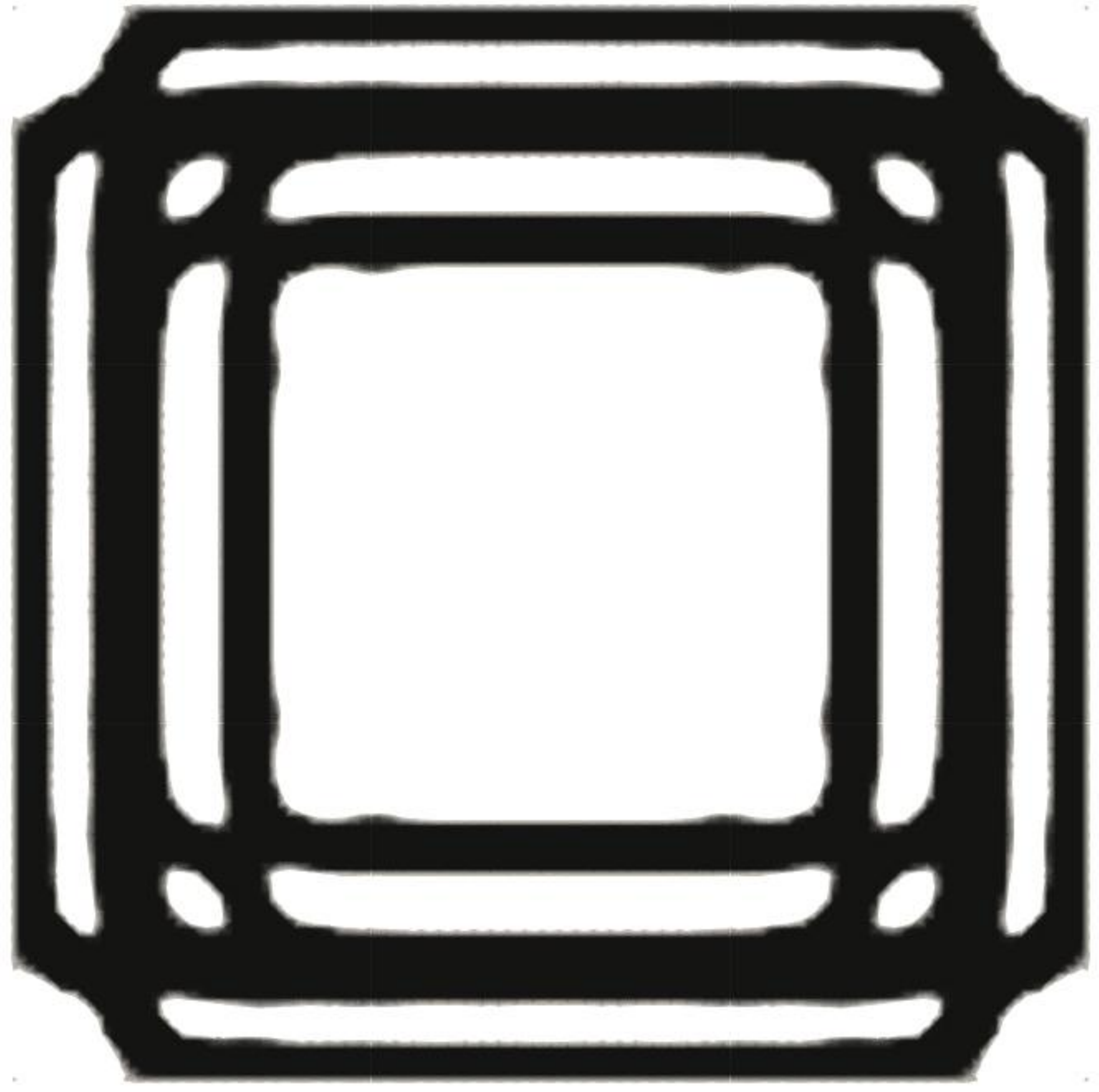}
        \subcaption{Step\,40}
        \label{hbb-c}
      \end{minipage}
      \begin{minipage}[t]{0.2\hsize}
        \centering
        \includegraphics[keepaspectratio, scale=0.09]{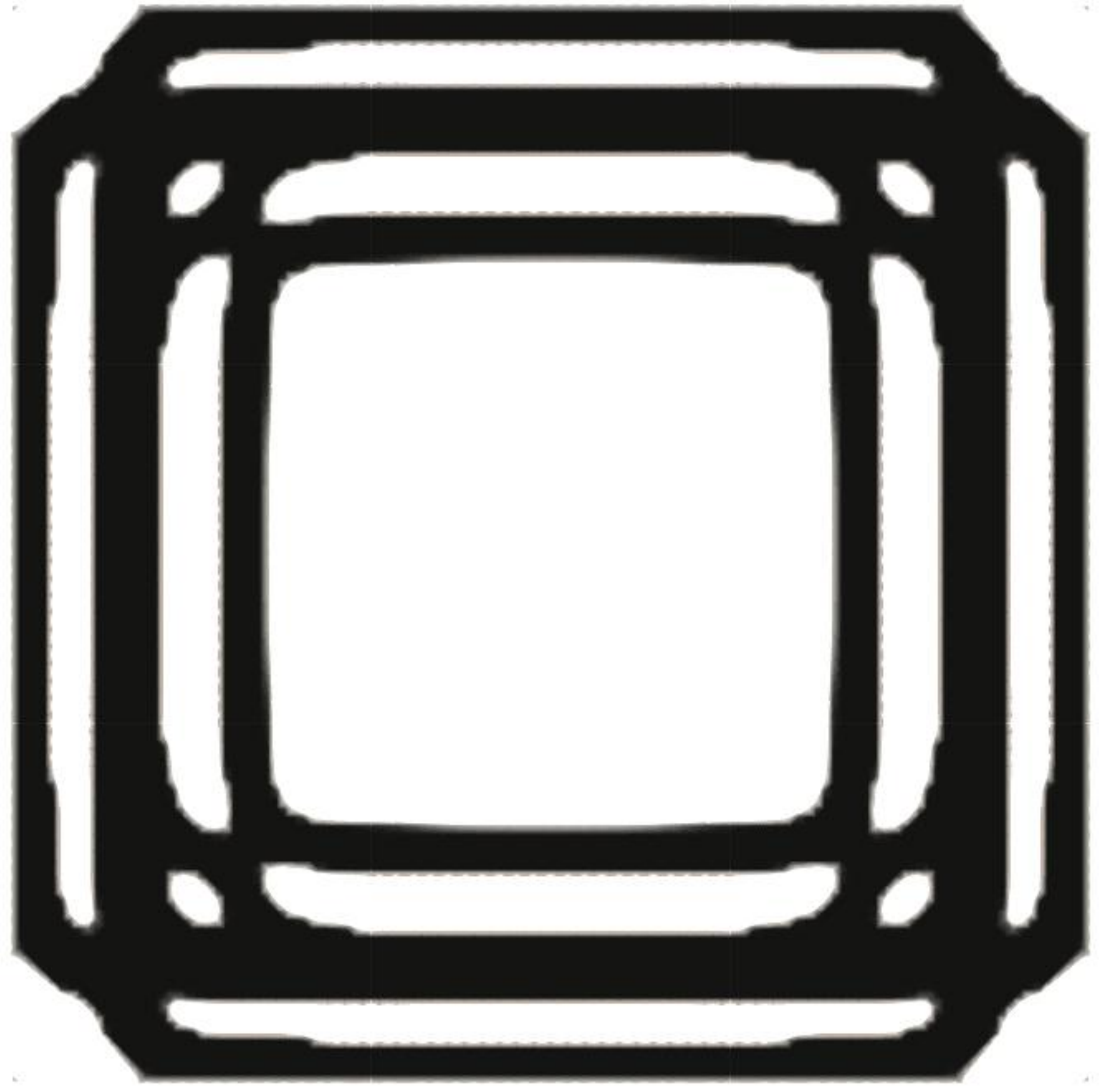}
        \subcaption{Step\,60}
        \label{hbb-d}
      \end{minipage}
           \begin{minipage}[t]{0.2\hsize}
        \centering
        \includegraphics[keepaspectratio, scale=0.09]{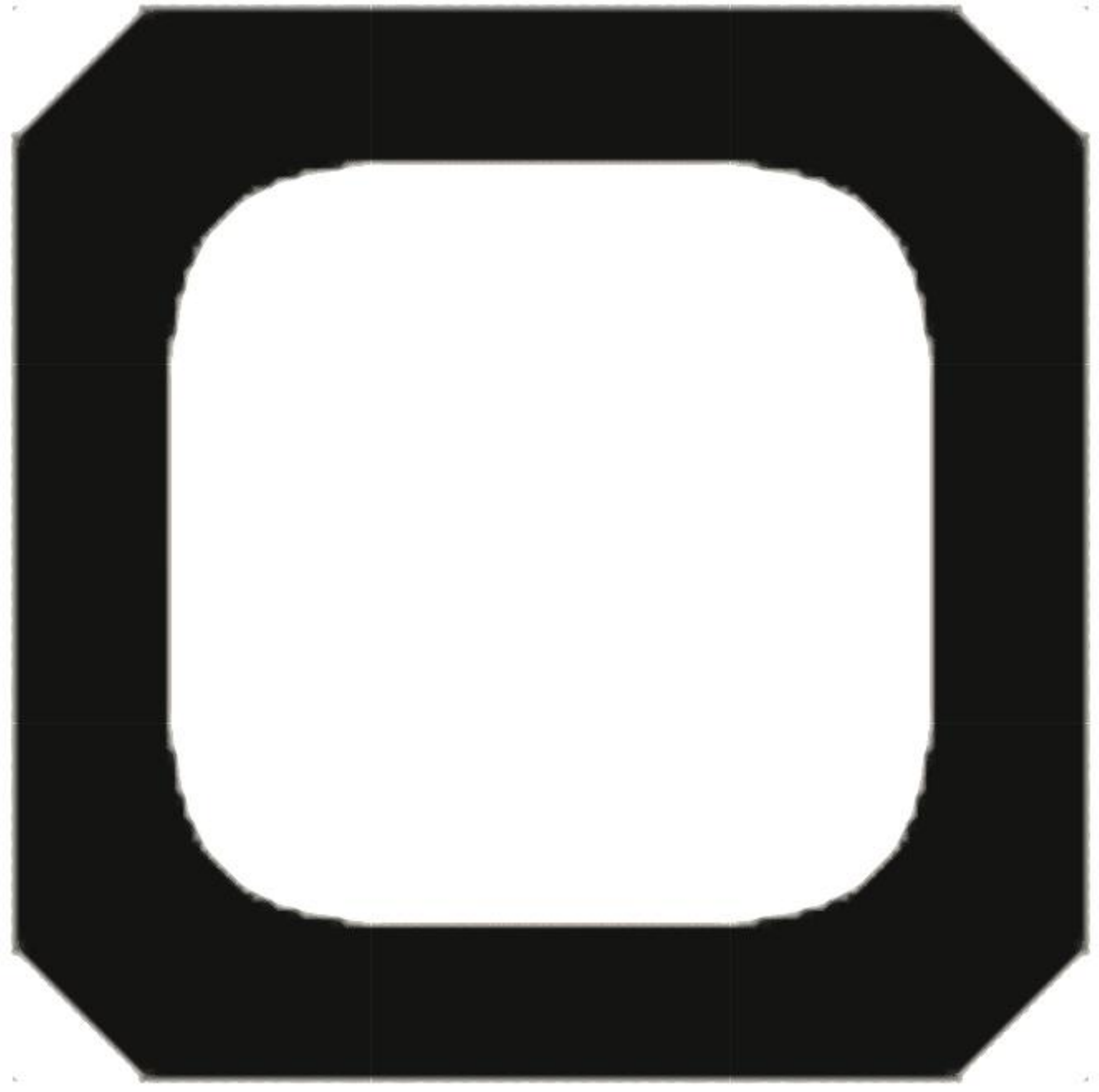}
        \subcaption{Step\,325$^{\#}$}
        \label{hbb-e}
      \end{minipage}
      \\
              \begin{minipage}[t]{0.2\hsize}
        \centering
        \includegraphics[keepaspectratio, scale=0.09]{hbb0.pdf}
        \subcaption{Step\,0}
        \label{hbb-f}
      \end{minipage} 
      \begin{minipage}[t]{0.2\hsize}
        \centering
        \includegraphics[keepaspectratio, scale=0.09]{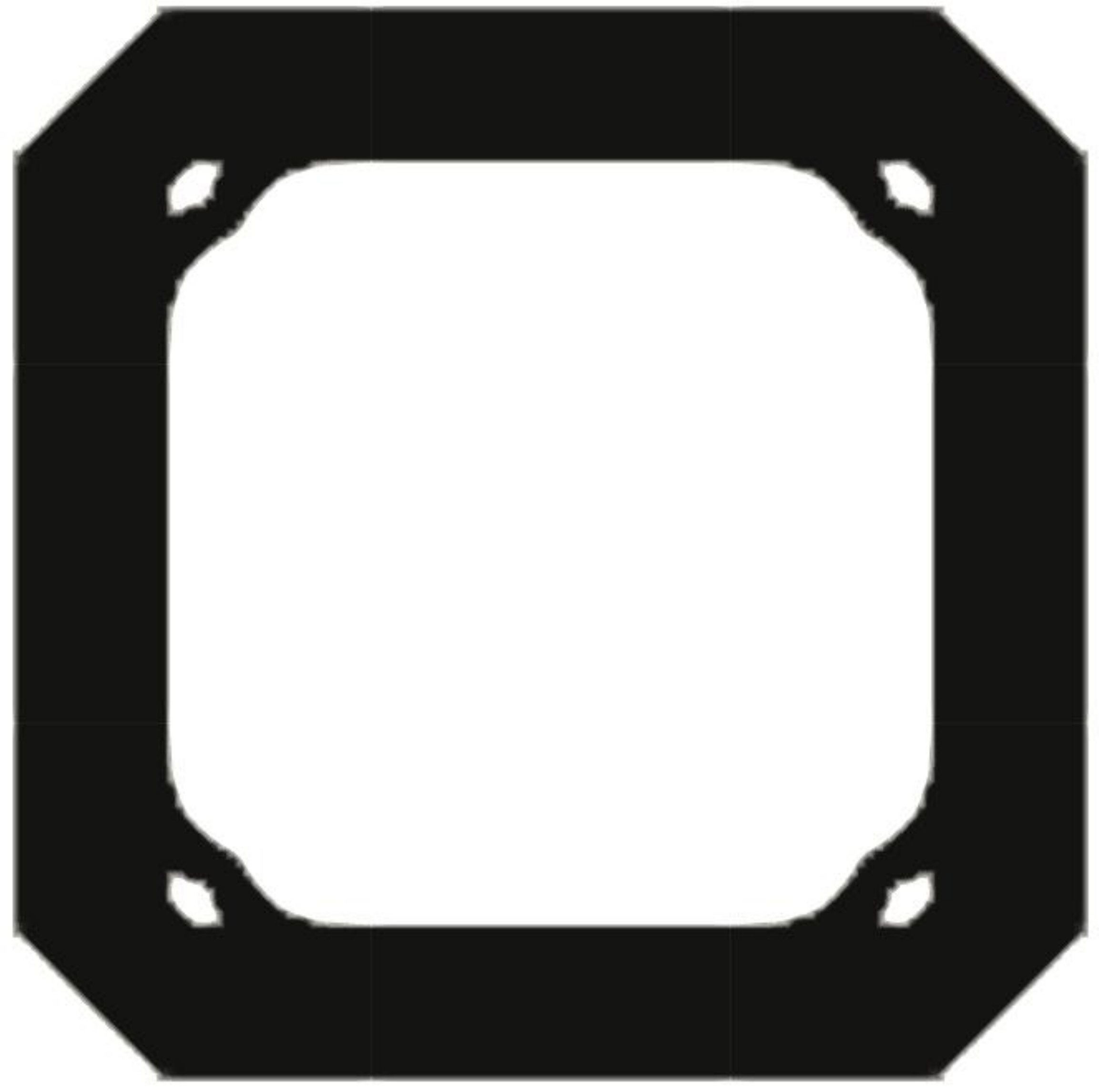}
        \subcaption{Step\,20}
        \label{hbb-g}
      \end{minipage} 
         \begin{minipage}[t]{0.2\hsize}
        \centering
        \includegraphics[keepaspectratio, scale=0.09]{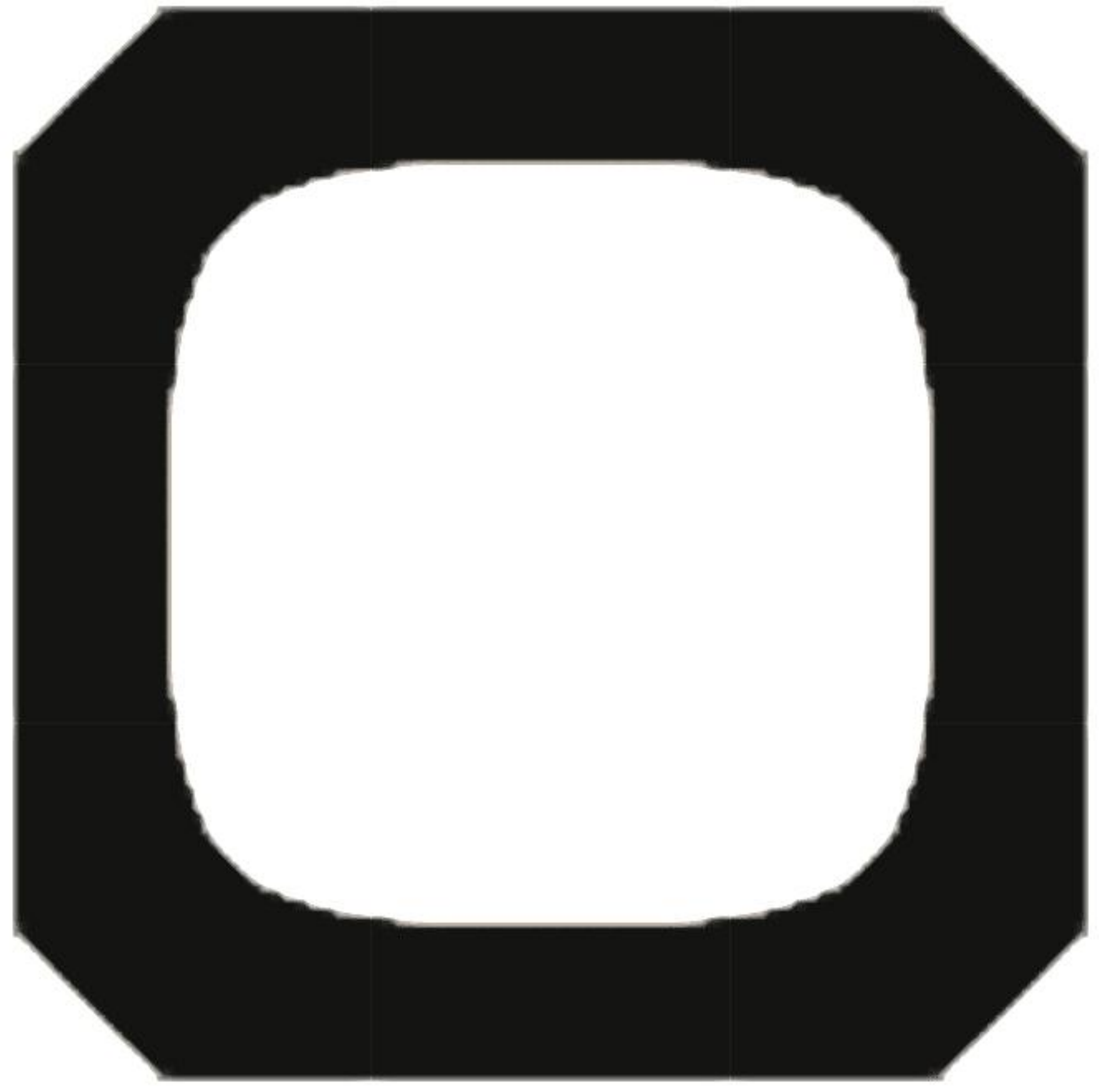}
        \subcaption{Step\,40}
        \label{hbb-h}
      \end{minipage}
      \begin{minipage}[t]{0.2\hsize}
        \centering
        \includegraphics[keepaspectratio, scale=0.09]{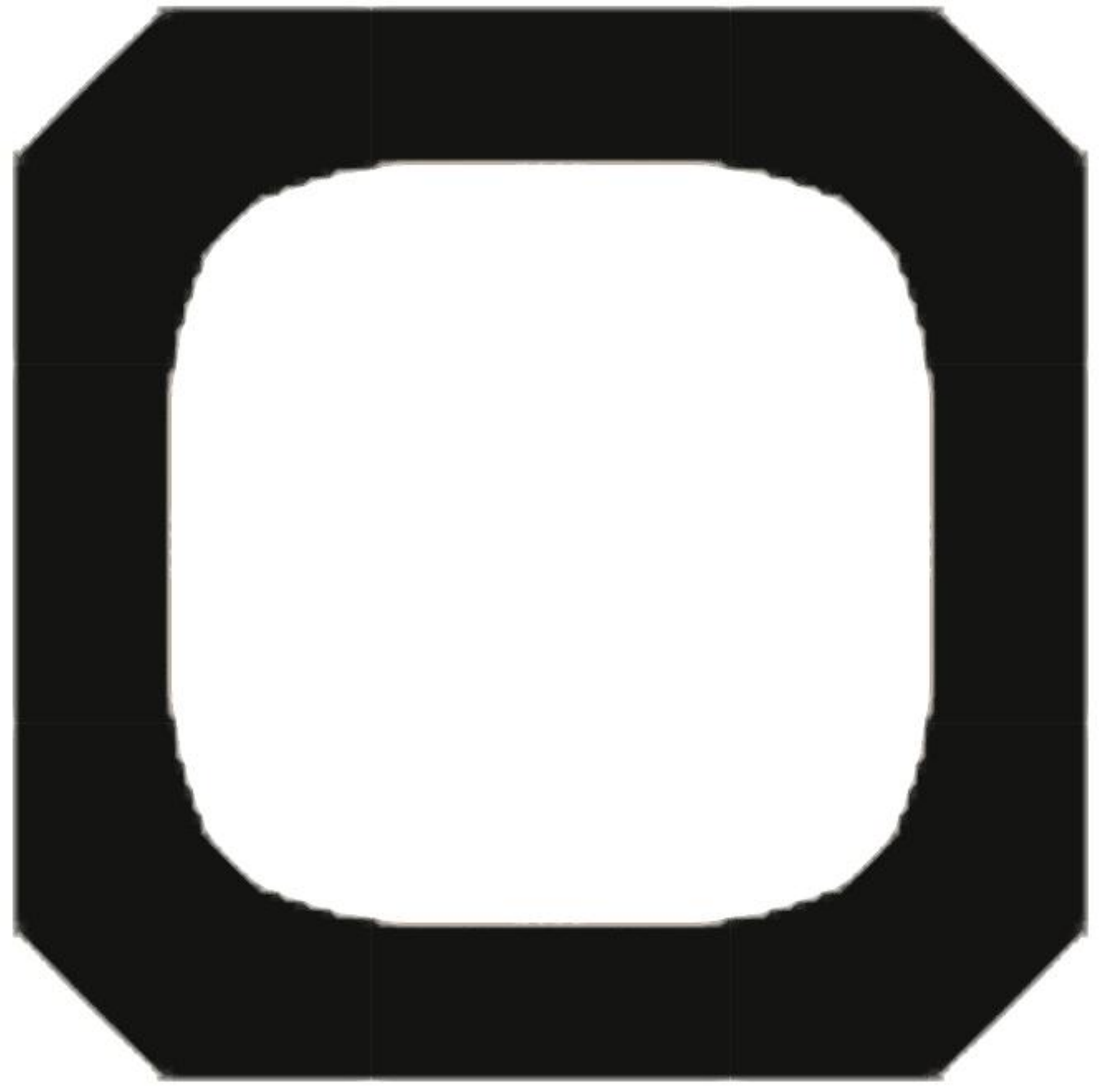}
        \subcaption{Step\,60}
        \label{hbb-i}
      \end{minipage}
           \begin{minipage}[t]{0.2\hsize}
        \centering
        \includegraphics[keepaspectratio, scale=0.09]{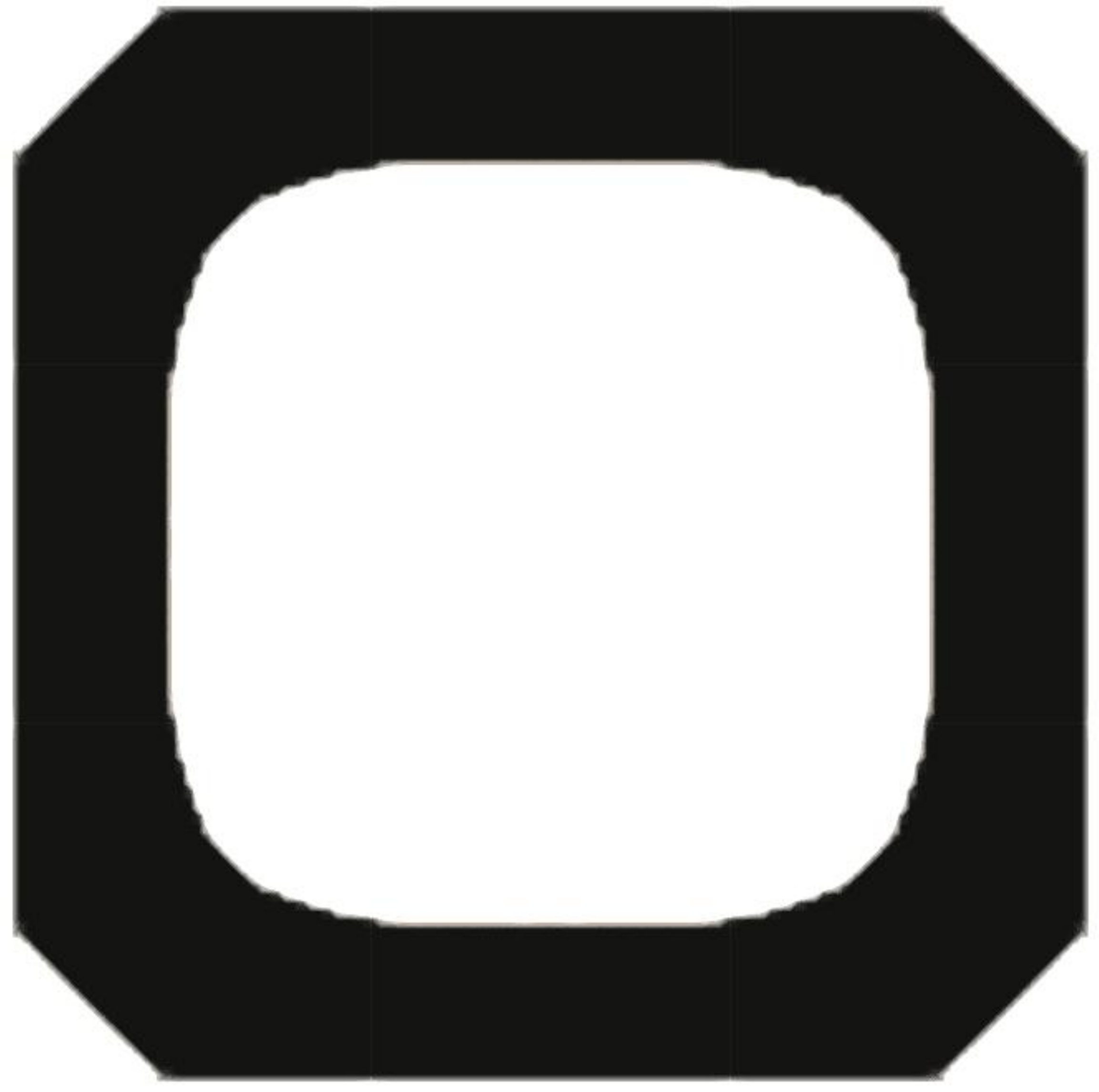}
        \subcaption{Step\,67$^{\#}$}
        \label{hbb-j}
      \end{minipage}
       \end{tabular}
     \caption{Configuration $\Omega_{\phi_n}\subset D$ for the case where the initial configuration is the perforated domain. 
Figures (a)--(e) and (f)--(j) 
represent $\Omega_{\phi_n}\subset D$ for $q=1$ and $q=6$ in \eqref{discNLD}, respectively. 
The symbol ${}^{ \#}$ implies the final step.    
     }
     \label{fig:hbb}
  \end{figure}

\begin{rmk}[Optimality]
\rm
We recall that a minimum value for the generalized problem is derived in terms of homogenization theory.
Actually, $H$-convergence theory ensures that
\begin{equation*}
\inf_{\phi\in H^1(D)}F(\phi)
= \min_{(\theta,\kappa_\theta)\in L^{\infty}(D;[-1,1]\times \R^{d\times d})} \left\{F^\ast(\theta,\kappa_\theta)
:=\int_{D} f(x)u_\theta(x)\, \d x\right\},
\end{equation*}
where $u_\theta \in H^1_0(D)$ is a unique weak solution to the following equation\/{\rm:}
$$
-\dv (\kappa_\theta\nabla u_\theta)=f\quad \text{ in } H^{-1}(D).
$$
Since $\nabla_{\kappa_\theta}F^\ast(\theta,\kappa_\theta)=-|\nabla u_\theta|^2\le 0$,
$\kappa_\theta\in L^{\infty}(D;\R^{d\times d})$ can be treated as the upper bound 
$(\alpha\theta+\beta(1-\theta))I$, and hence,~$F^\ast(\theta,\kappa_\theta)=F^\ast(\theta)$. 
Furthermore, with the aid of the so-called {\it dual energy method} (see, e.g.,~\cite[\S 2.4]{ACMOY19} for details), setting $\theta_{n}:=\theta_{n-1}-k|\nabla u_{\theta_{n-1}}|^2$ for some step with $k>0$ and $\theta^\ast=\lim_{n\to +\infty}\theta_{n}$, we have
\begin{equation*}
\inf_{\phi\in H^1(D)}F(\phi)=F^\ast(\theta^\ast).
\end{equation*}
Noting that $(F^{\ast})'(\theta)=-(\alpha-\beta)|\nabla u_\theta|^2\ge 0$ and Figures \ref{hb-e}, \ref{hb-j}, \ref{hbb-e} and \ref{hbb-j} are similar to \cite[Fig.~23]{ACMOY19}, one can conclude that the proposed method yields an optimal configuration by verifying $F^{\ast}(\theta^{\ast})\approx F(\phi_n)$.
\end{rmk}

On the other hand, let us next consider the opposite heat conductivity, i.e.,~$\alpha=1.0$ and $\beta=1.0\times 10^{-2}$. 
Here we set $\Gamma_D\subset \partial D$ as in Figure \ref{ih} and $(\tau,G_{\rm max},\varDelta t)=(1.0\times 10^{-4},0.4,0.4)$.
Then, from Figures \ref{hs-a}-\ref{hs-e}, \ref{hs-f}-\ref{hs-j} and \ref{fig:h2}, it can be seen that the convergence for configurations is improved by the method using slow diffusion. 
Although the shapes of the optimized configurations are different, Figure \ref{fig:h1} shows that the objective functionals converge to almost the same values, which means that the same assertion as in (i-FDE) can also be obtained for the method using slow diffusion.
This is due to the contribution of sensitivity in the void domains and the finite propagation property based on the degeneracy of the diffusion coefficient for nonlinear diffusion near the optimal configuration;
indeed, from Figure \ref{hs-g}-\ref{hs-h}, one can confirm that the sensitivity of the void domains plays a crucial role since the degeneracy of the diffusion coefficient prevents the propagation of the interface. Furthermore, due to the degeneracy, it is suggested that preventing propagation for the interface near the optimal configuration makes it easier to satisfy the convergence condition \eqref{eq:CC}.

As for the high-contrast case $(\alpha,\beta)=(1.0, 1.0\times 10^{-3})$, 
we note that the so-called fin appears remarkably near the boundary structures, which occurs oscillation (see Figures \ref{fig:hs} and \ref{fig:h3}), and moreover, Figure \ref{fig:h3} shows that the configuration $\Omega_{\phi_n}\subset D$ for $q=1$
does not converge due to the oscillation near the boundary structures. 
On the other hand, Figure \ref{fig:h3} also shows that the method using slow diffusion 
enables optimization of the configuration $\Omega_{\phi_n}\subset D$ (see also Figure \ref{hs-k}-\ref{hs-o}), which completes the confirmation of (ii-SDE).

\begin{figure}[htbp]
   \hspace*{-5mm} 
    \begin{tabular}{ccccc}
        \begin{minipage}[t]{0.2\hsize}
        \centering
        \includegraphics[keepaspectratio, scale=0.11]{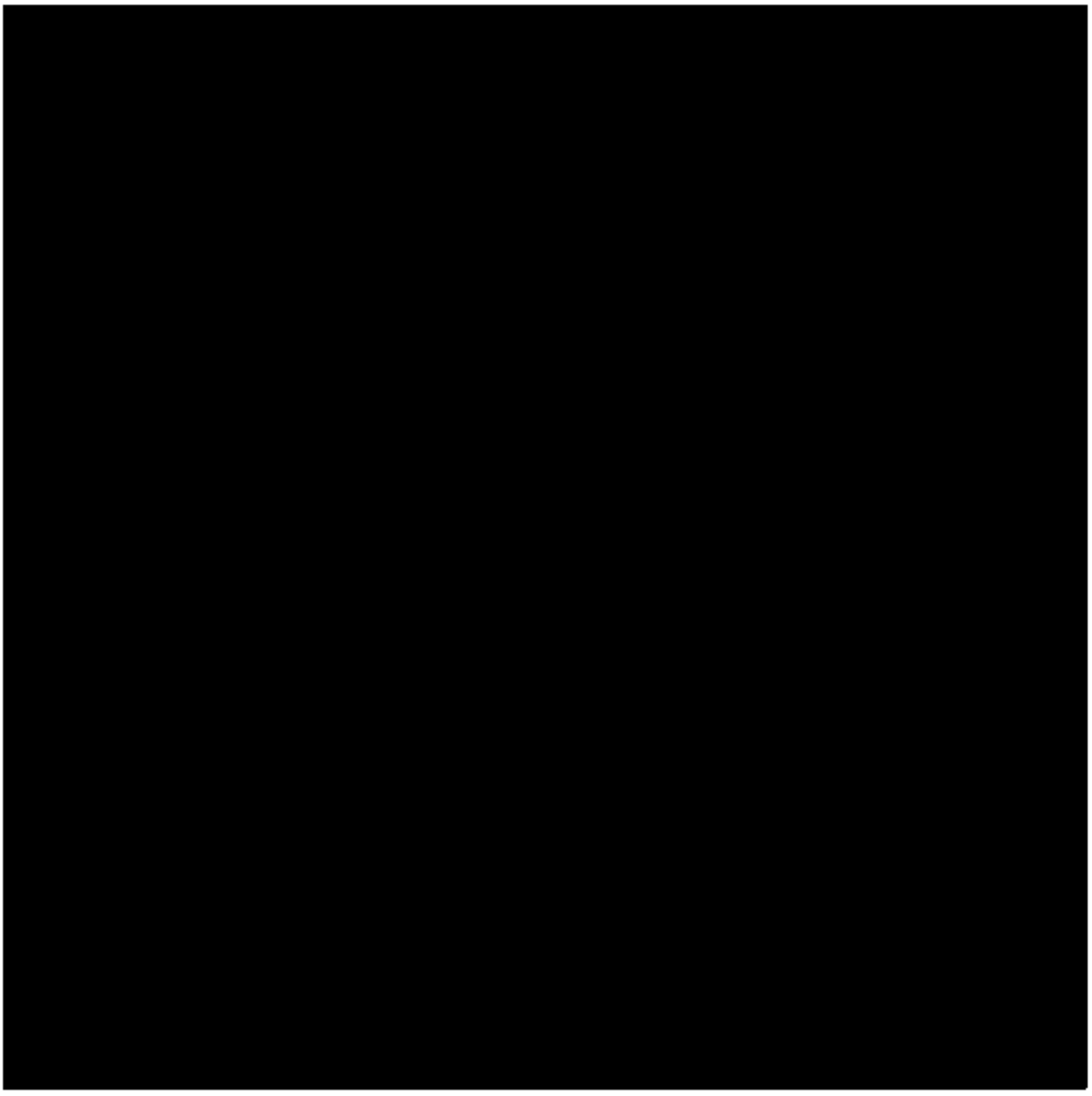}
        \subcaption{Step\,0}
        \label{hs-a}
      \end{minipage} 
      \begin{minipage}[t]{0.2\hsize}
        \centering
        \includegraphics[keepaspectratio, scale=0.11]{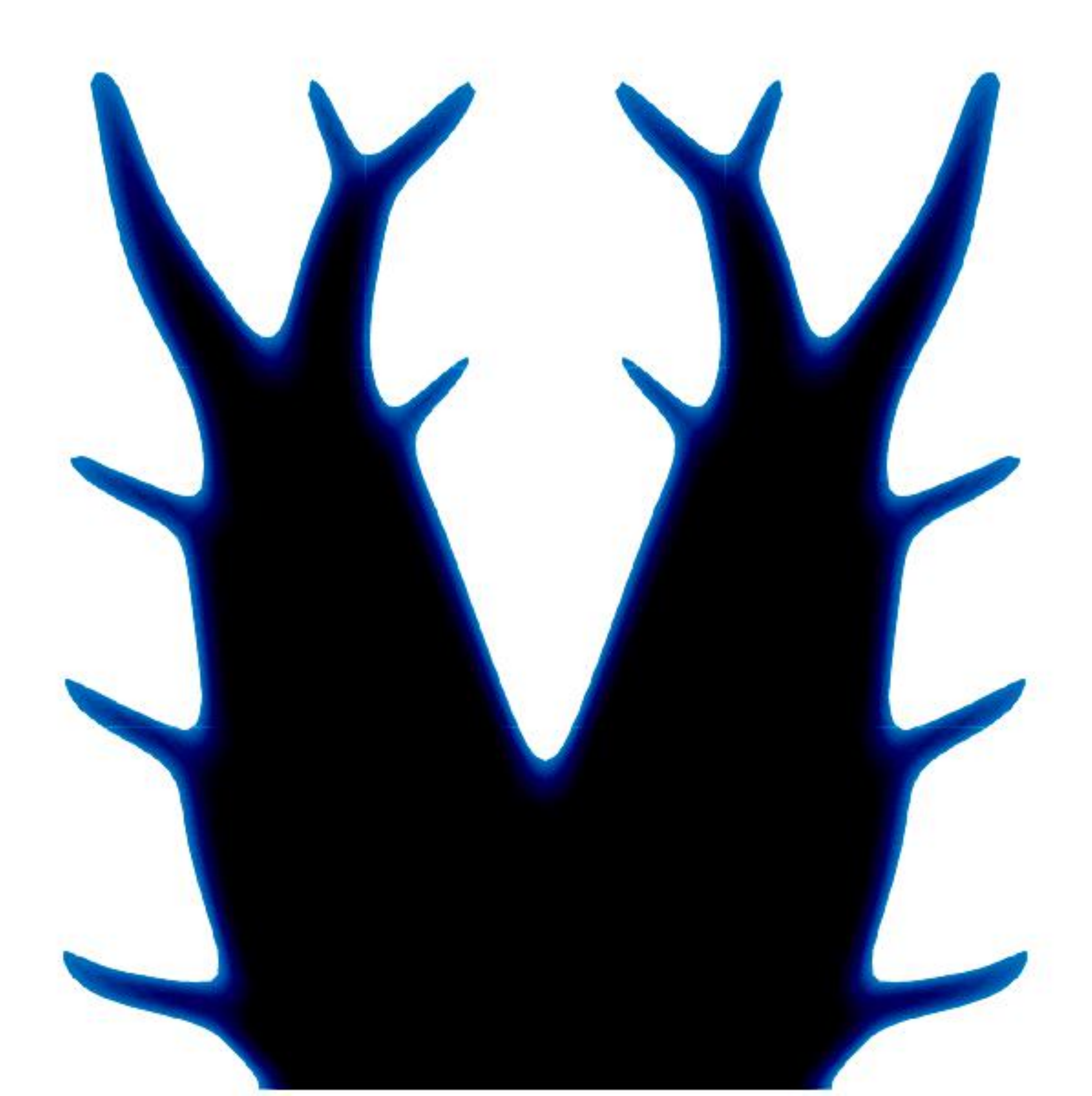}
        \subcaption{Step\,100}
        \label{hs-b}
      \end{minipage} 
         \begin{minipage}[t]{0.2\hsize}
        \centering
        \includegraphics[keepaspectratio, scale=0.11]{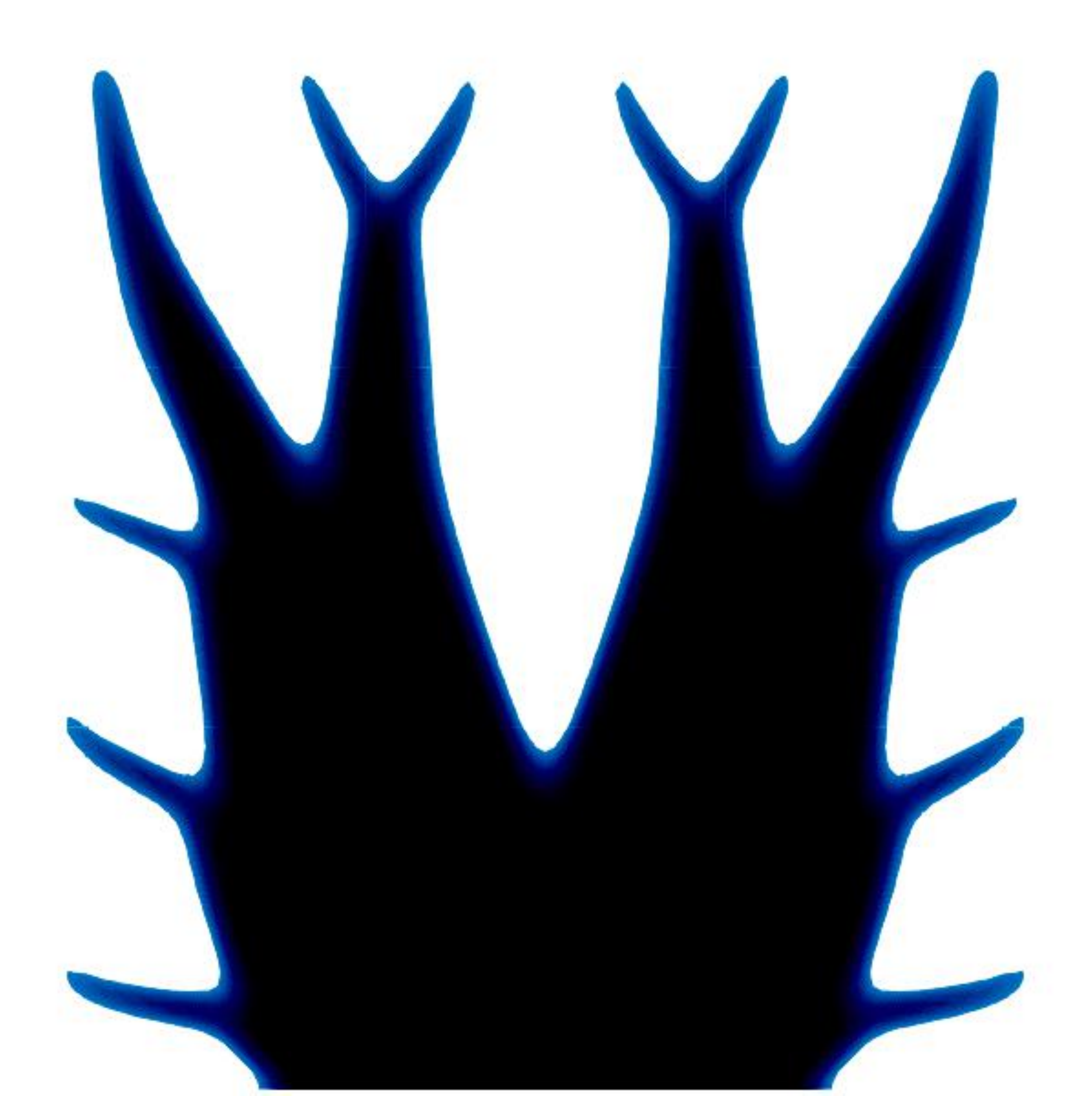}
        \subcaption{Step\,200}
        \label{hs-c}
      \end{minipage}
      \begin{minipage}[t]{0.2\hsize}
        \centering
        \includegraphics[keepaspectratio, scale=0.11]{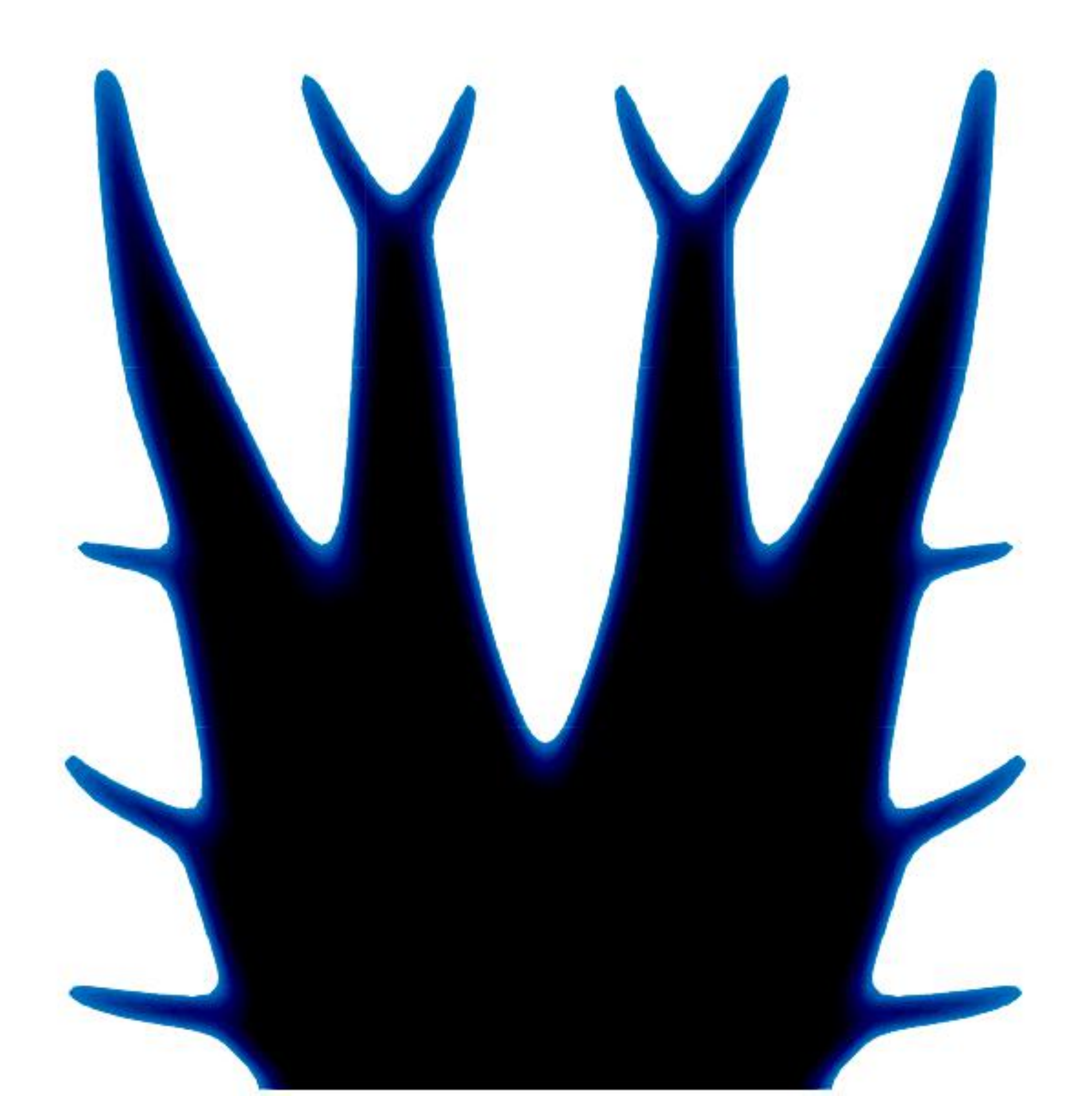}
        \subcaption{Step\,300}
        \label{hs-d}
      \end{minipage}
           \begin{minipage}[t]{0.2\hsize}
        \centering
        \includegraphics[keepaspectratio, scale=0.11]{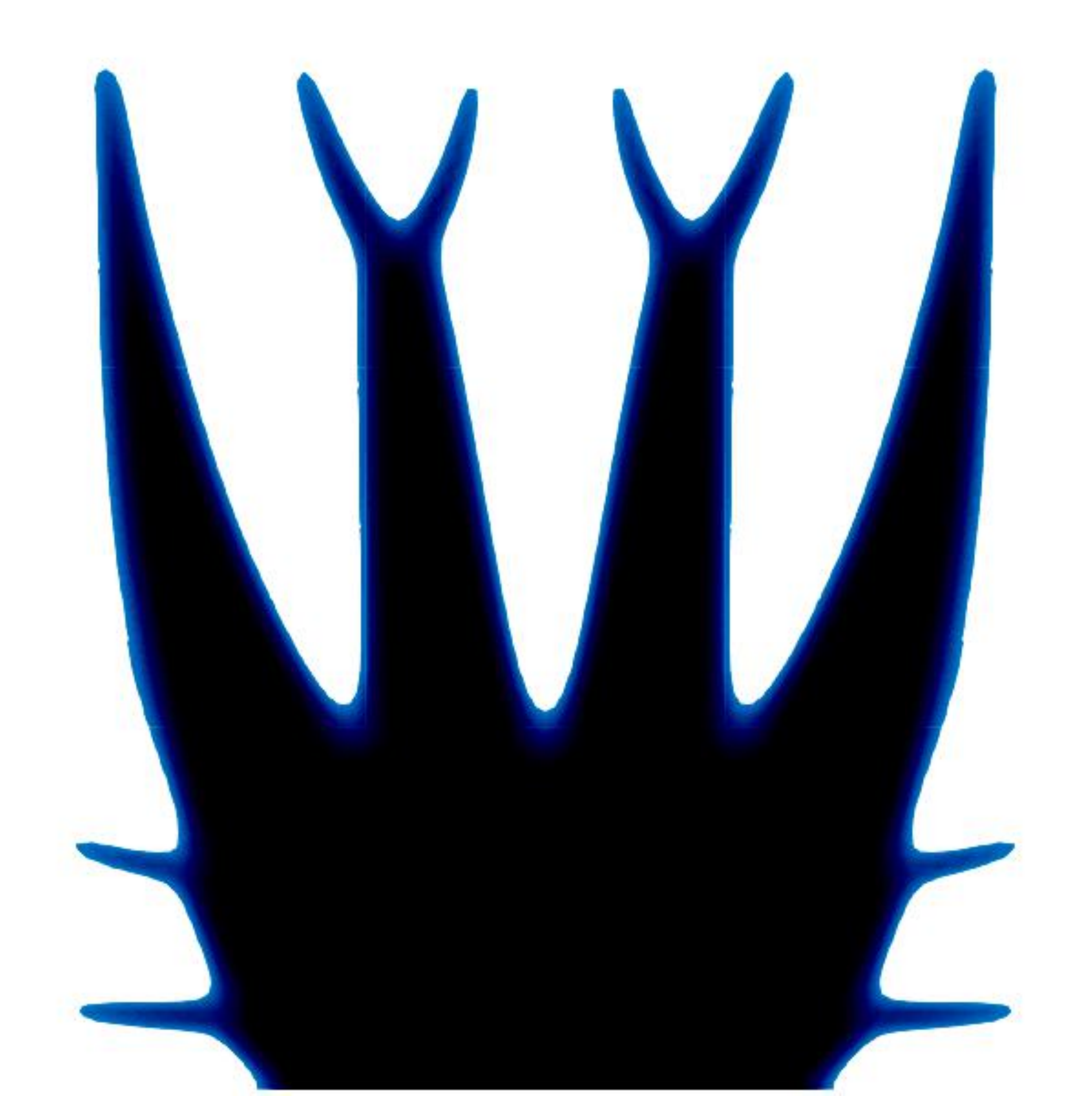}
        \subcaption{Step\,594$^{\#}$}
        \label{hs-e}
      \end{minipage}
      \\
          \begin{minipage}[t]{0.2\hsize}
        \centering
        \includegraphics[keepaspectratio, scale=0.11]{hs0.pdf}
        \subcaption{Step\,0}
        \label{hs-f}
      \end{minipage} 
      \begin{minipage}[t]{0.2\hsize}
        \centering
        \includegraphics[keepaspectratio, scale=0.11]{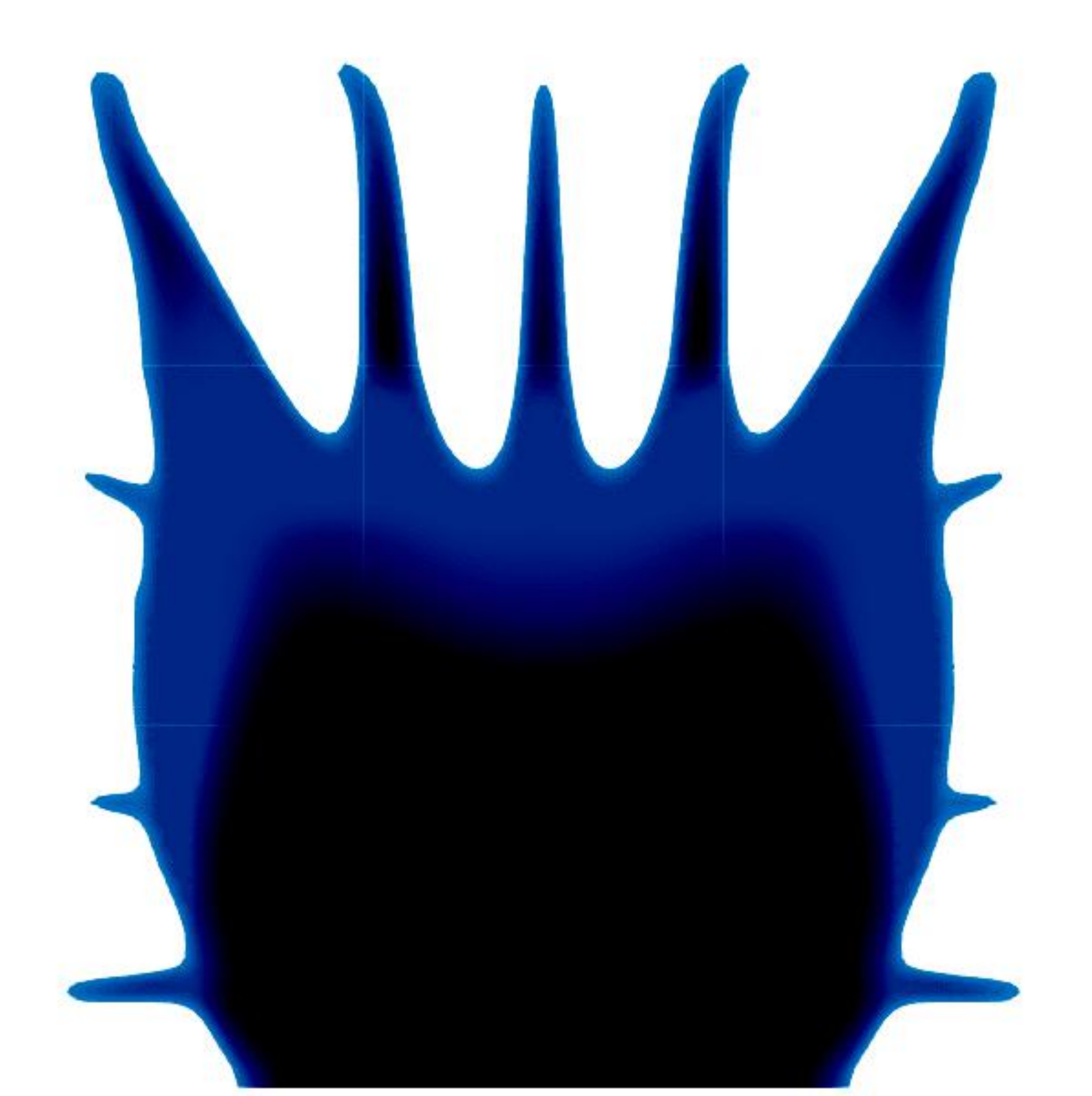}
        \subcaption{Step\,100}
        \label{hs-g}
      \end{minipage} 
         \begin{minipage}[t]{0.2\hsize}
        \centering
        \includegraphics[keepaspectratio, scale=0.11]{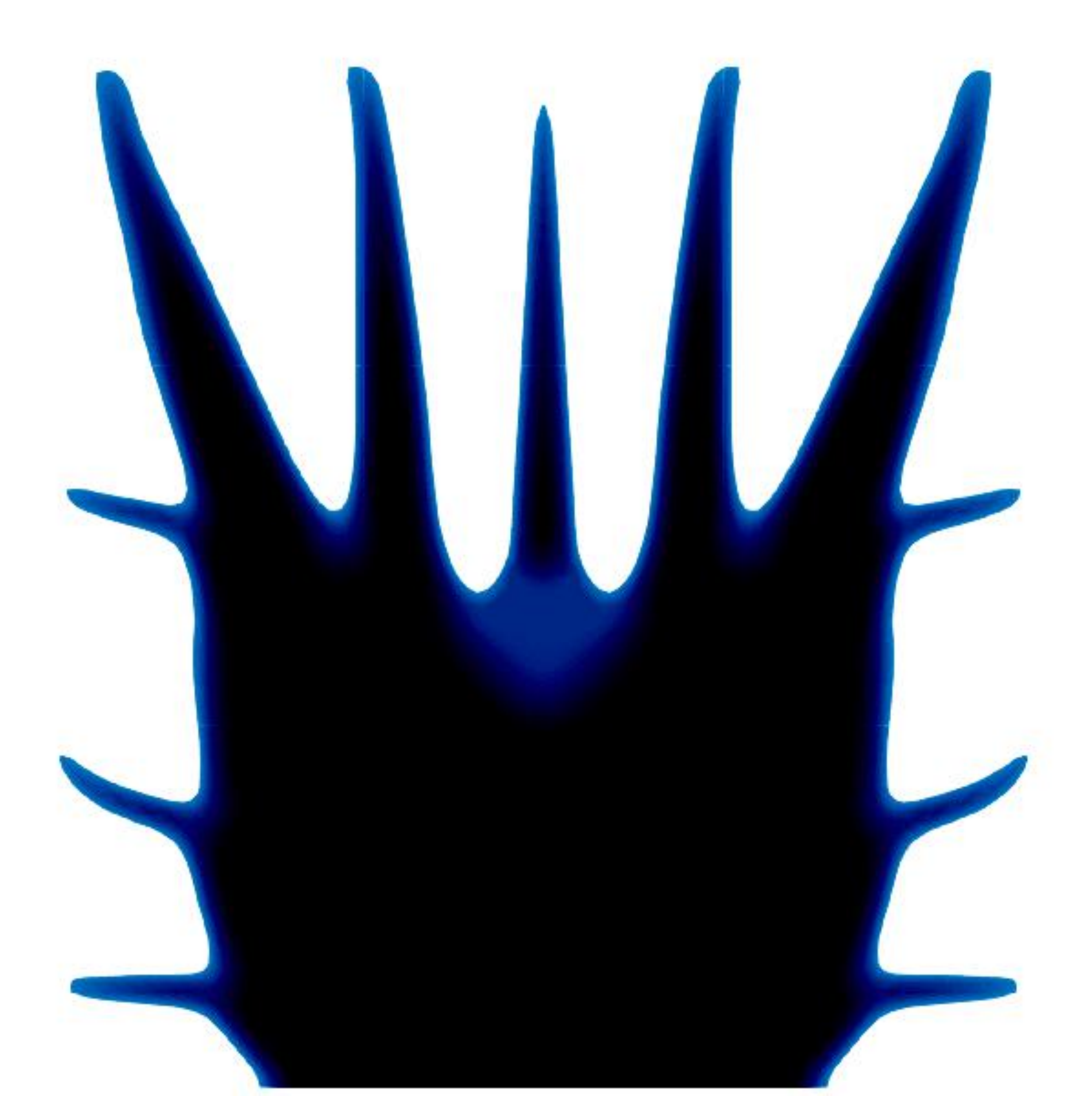}
        \subcaption{Step\,200}
        \label{hs-h}
      \end{minipage}
      \begin{minipage}[t]{0.2\hsize}
        \centering
        \includegraphics[keepaspectratio, scale=0.11]{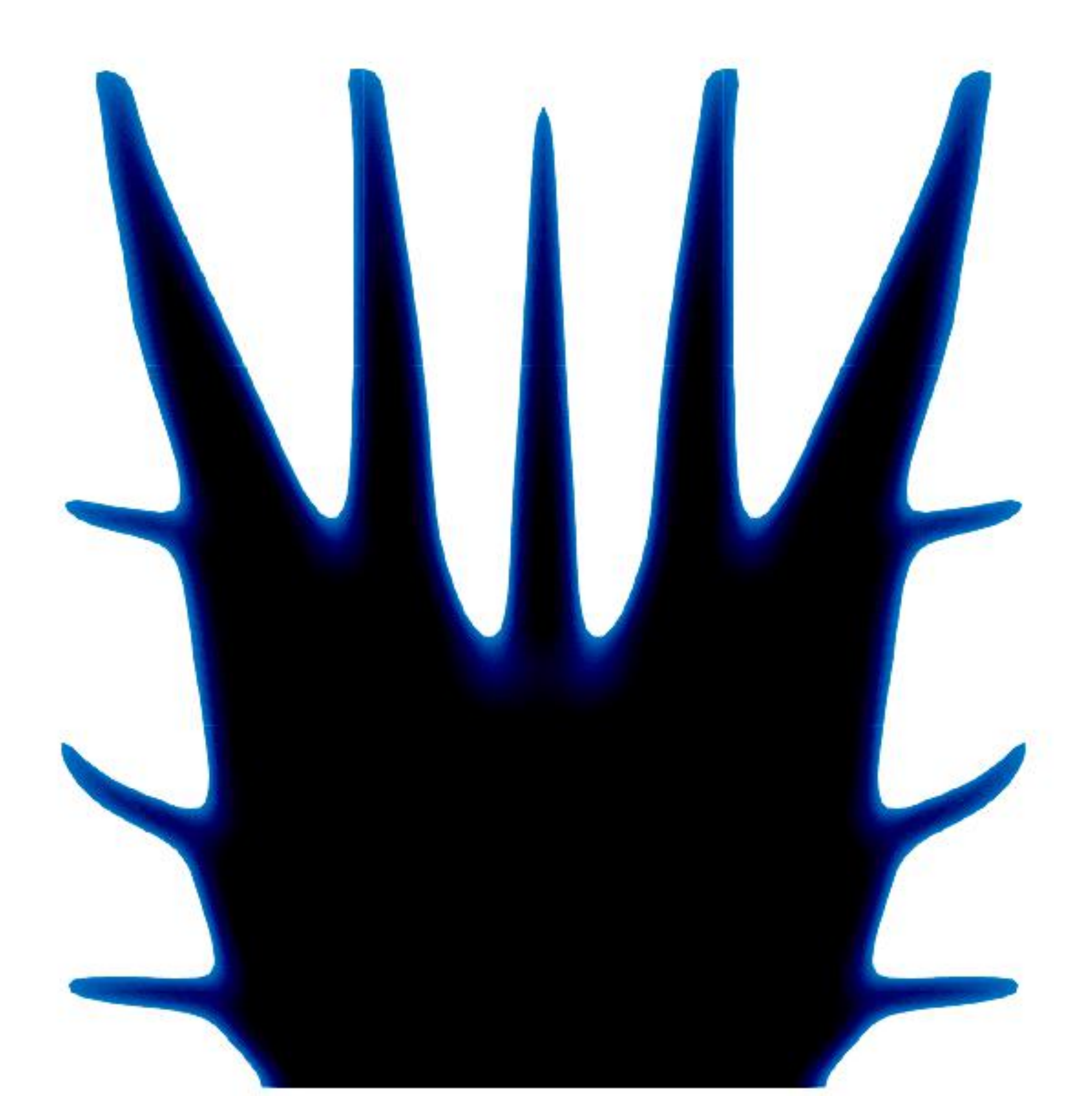}
        \subcaption{Step\,300}
        \label{hs-i}
      \end{minipage}
           \begin{minipage}[t]{0.2\hsize}
        \centering
        \includegraphics[keepaspectratio, scale=0.11]{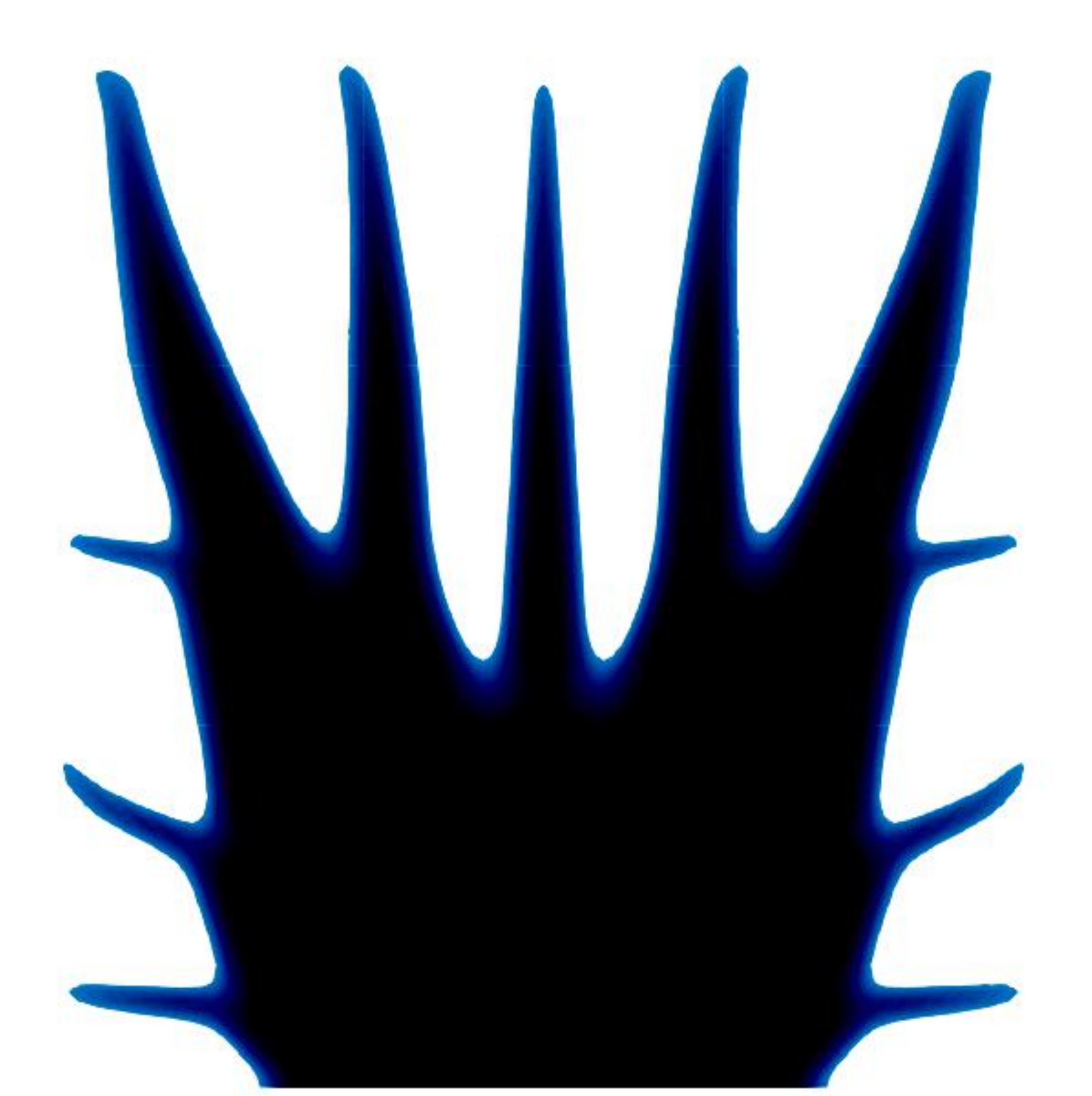}
        \subcaption{Step\,324$^{\#}$}
        \label{hs-j}
      \end{minipage}
      \\
   \begin{minipage}[t]{0.2\hsize}
        \centering
        \includegraphics[keepaspectratio, scale=0.11]{hs0.pdf}
        \subcaption{Step\,0}
        \label{hs-k}
      \end{minipage} 
      \begin{minipage}[t]{0.2\hsize}
        \centering
        \includegraphics[keepaspectratio, scale=0.11]{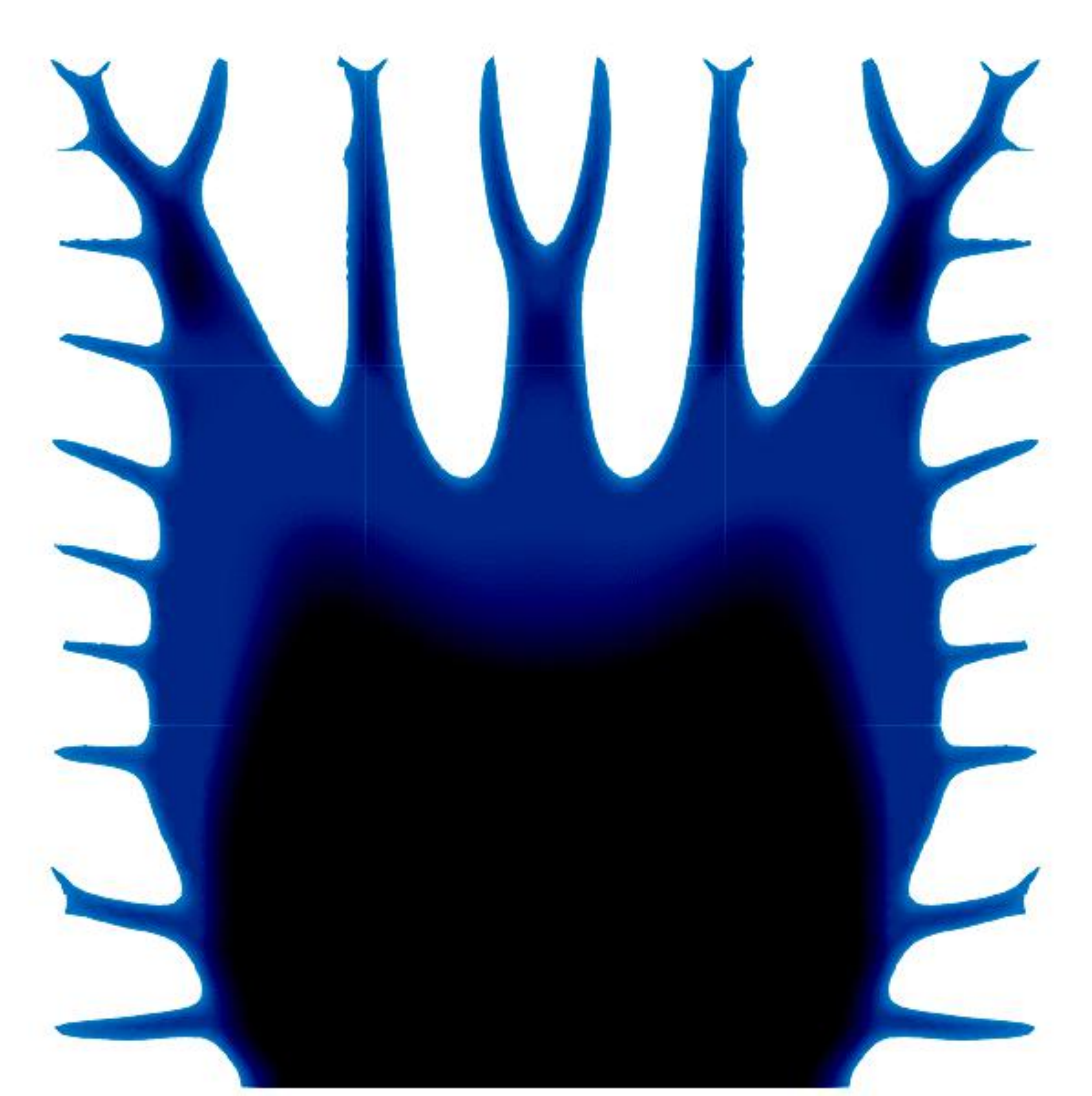}
        \subcaption{Step\,100}
        \label{hs-l}
      \end{minipage} 
         \begin{minipage}[t]{0.2\hsize}
        \centering
        \includegraphics[keepaspectratio, scale=0.11]{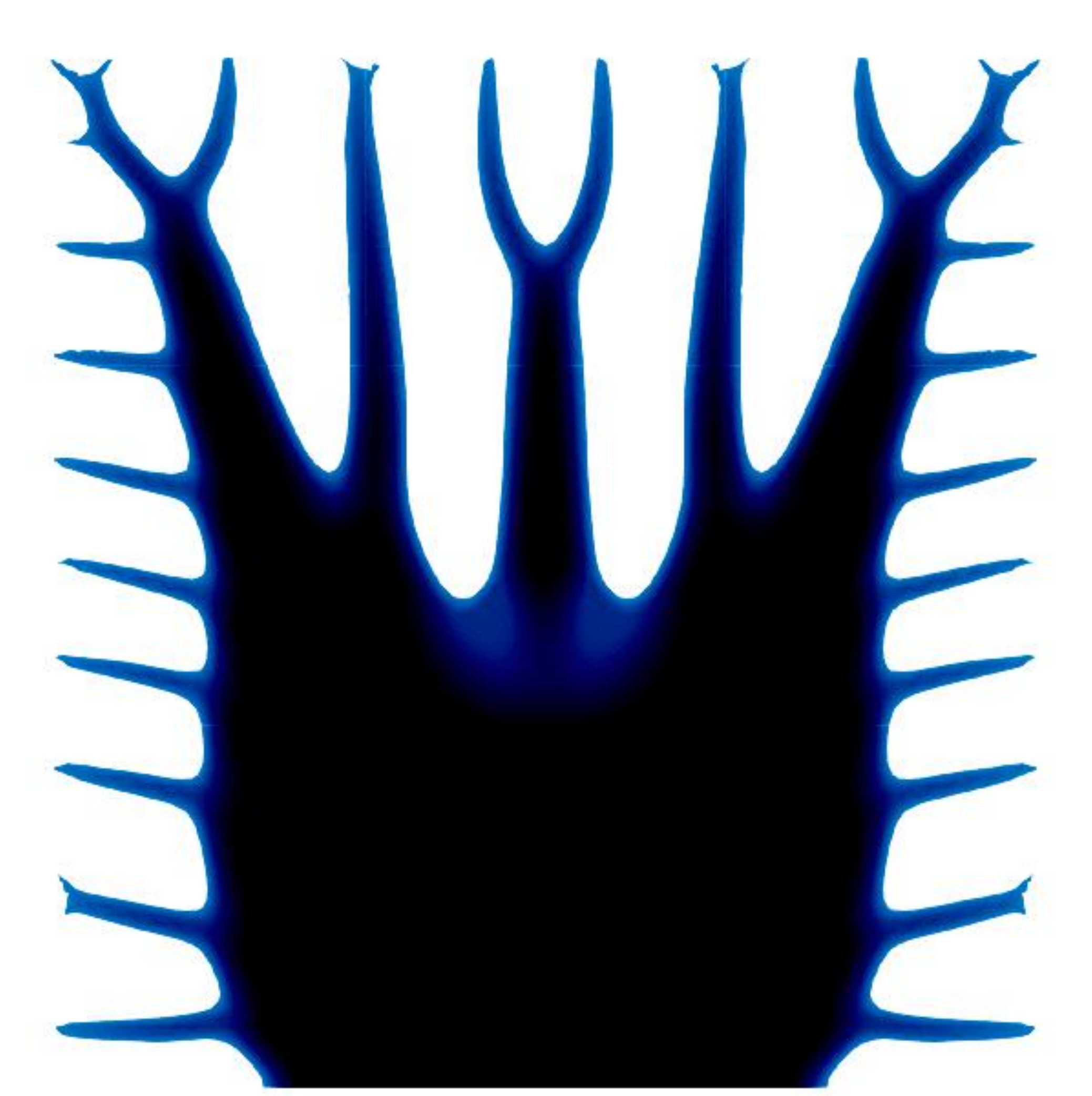}
        \subcaption{Step\,200}
        \label{hs-m}
      \end{minipage}
      \begin{minipage}[t]{0.2\hsize}
        \centering
        \includegraphics[keepaspectratio, scale=0.11]{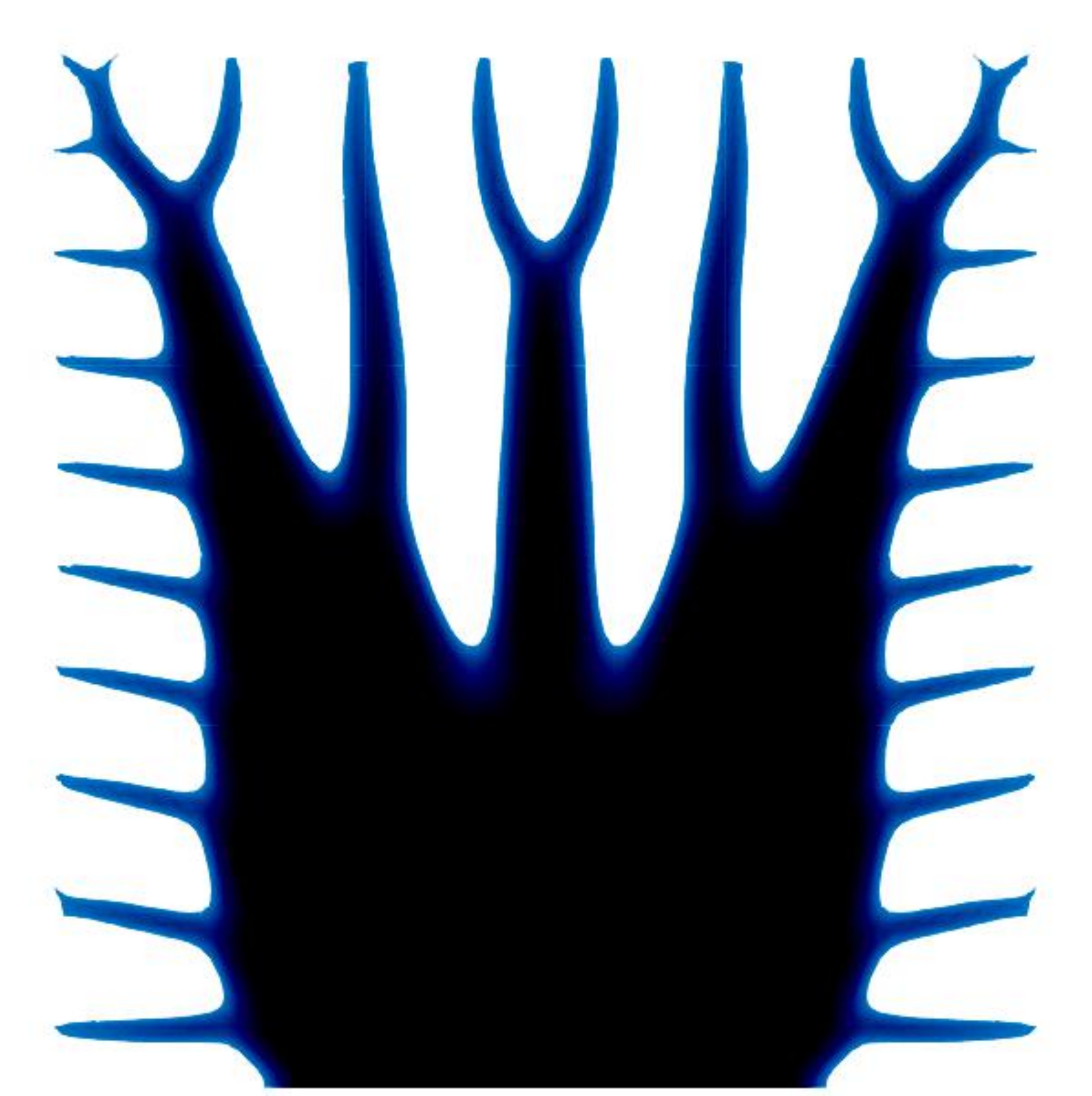}
        \subcaption{Step\,300}
        \label{hs-n}
      \end{minipage}
           \begin{minipage}[t]{0.2\hsize}
        \centering
        \includegraphics[keepaspectratio, scale=0.11]{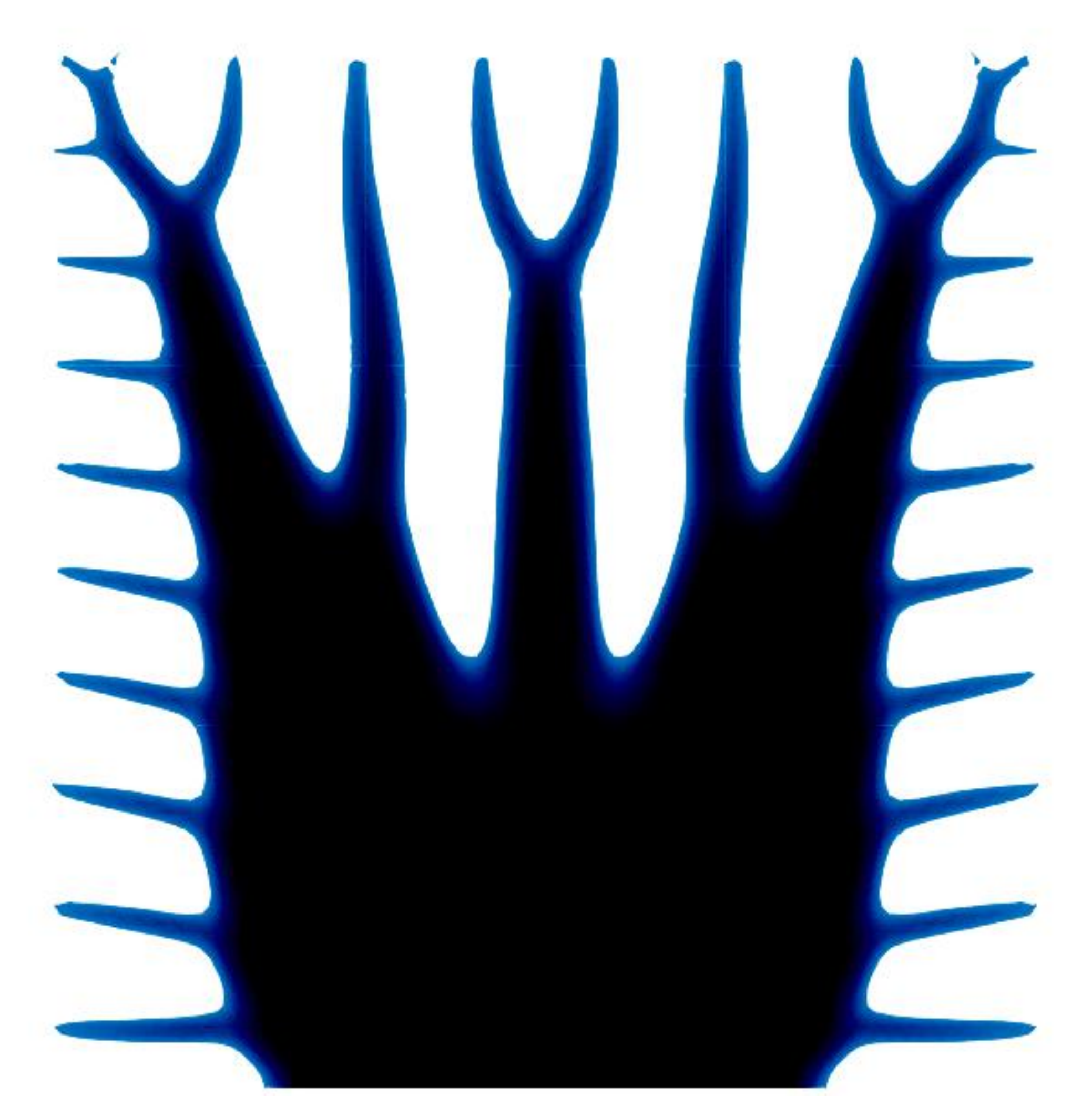}
        \subcaption{Step\,387$^{\#}$}
        \label{hs-o}
      \end{minipage}
              \end{tabular}
     \caption{ 
     Black and blue domains indicate material domain $[\phi_n\ge 0]=\Omega_{\phi_n}\subset D$ and  partial void domain $[-0.5\le \phi_n< 0]\subset D$, respectively. 
Figures (a)--(e), (f)--(j) and (k)--(o) represent domains for $(q,r)=(1.0,2.0)$, $(q,r)=(0.1,2.0)$  and $(q,r)=(0.1,3.0)$ in \eqref{discNLD}, respectively. Here $r>0$ is a power of $(\alpha,\beta)=(1.0, 1.0\times 10^{-r})$.
The symbol ${}^{ \#}$ implies the final step.    
     }
     \label{fig:hs}
  \end{figure}

\begin{figure}[htbp]
        \centering
        \includegraphics[keepaspectratio, scale=0.33]{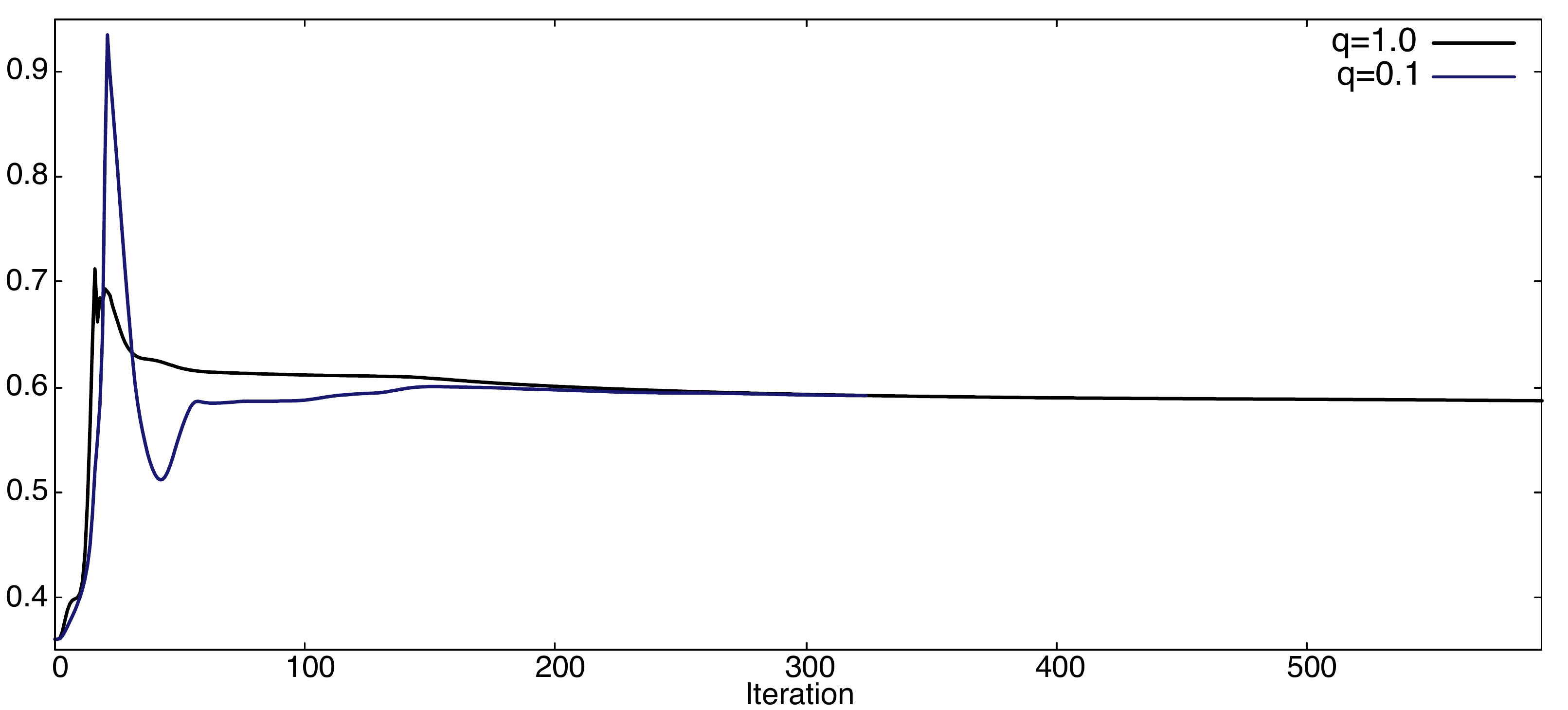}
   \caption{Objective functional $F(\phi_n)$ with $(\alpha,\beta)=(1.0,1.0\times 10^{-2})$.}
\label{fig:h1}
\end{figure}       

\begin{figure}[htbp]
        \centering
        \includegraphics[keepaspectratio, scale=0.33]{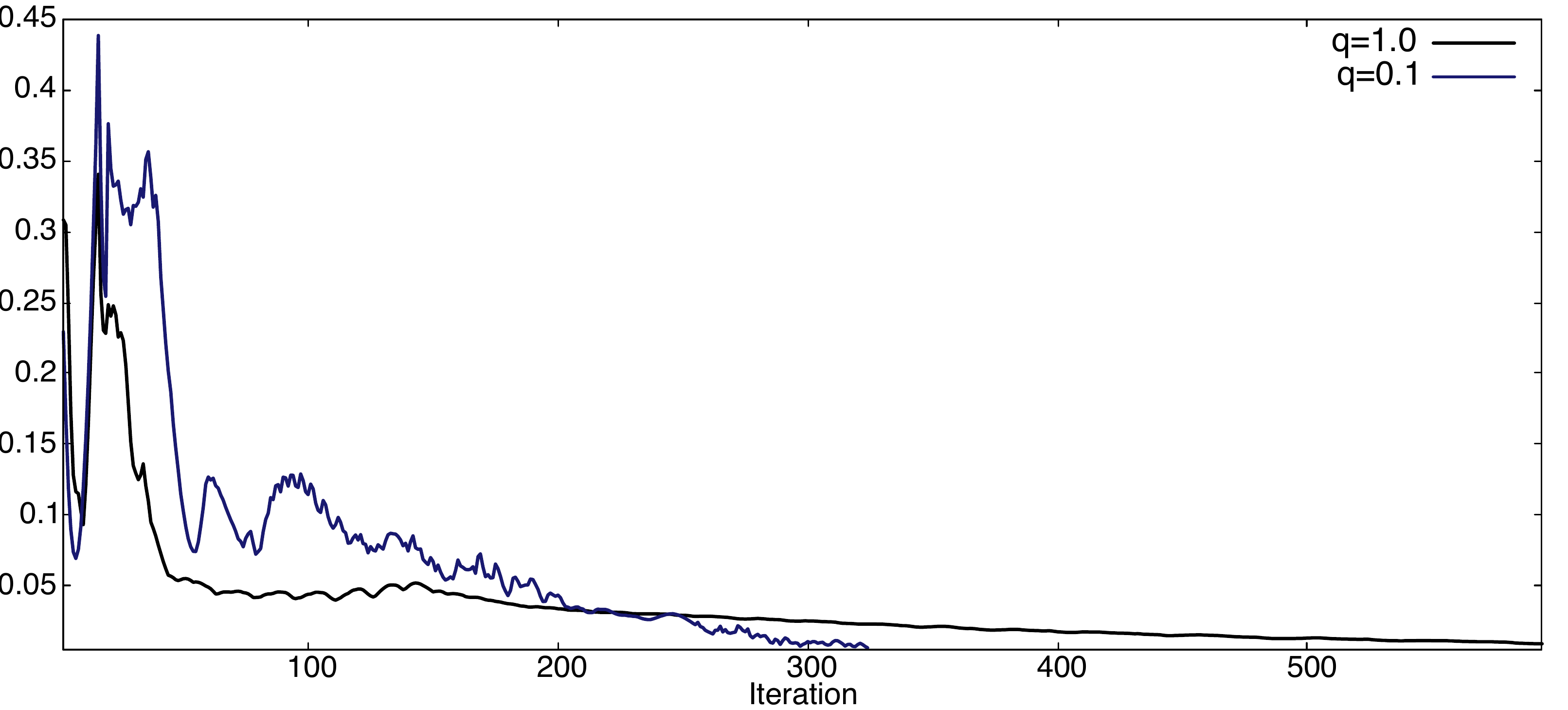}
   \caption{Convergence condition $\|\phi_{n+1}-\phi_{n}\|_{L^{\infty}(D)}$ with $(\alpha,\beta)=(1.0,1.0\times 10^{-2})$.}
\label{fig:h2}
\end{figure}       

\begin{figure}[htbp]
        \centering
        \includegraphics[keepaspectratio, scale=0.33]{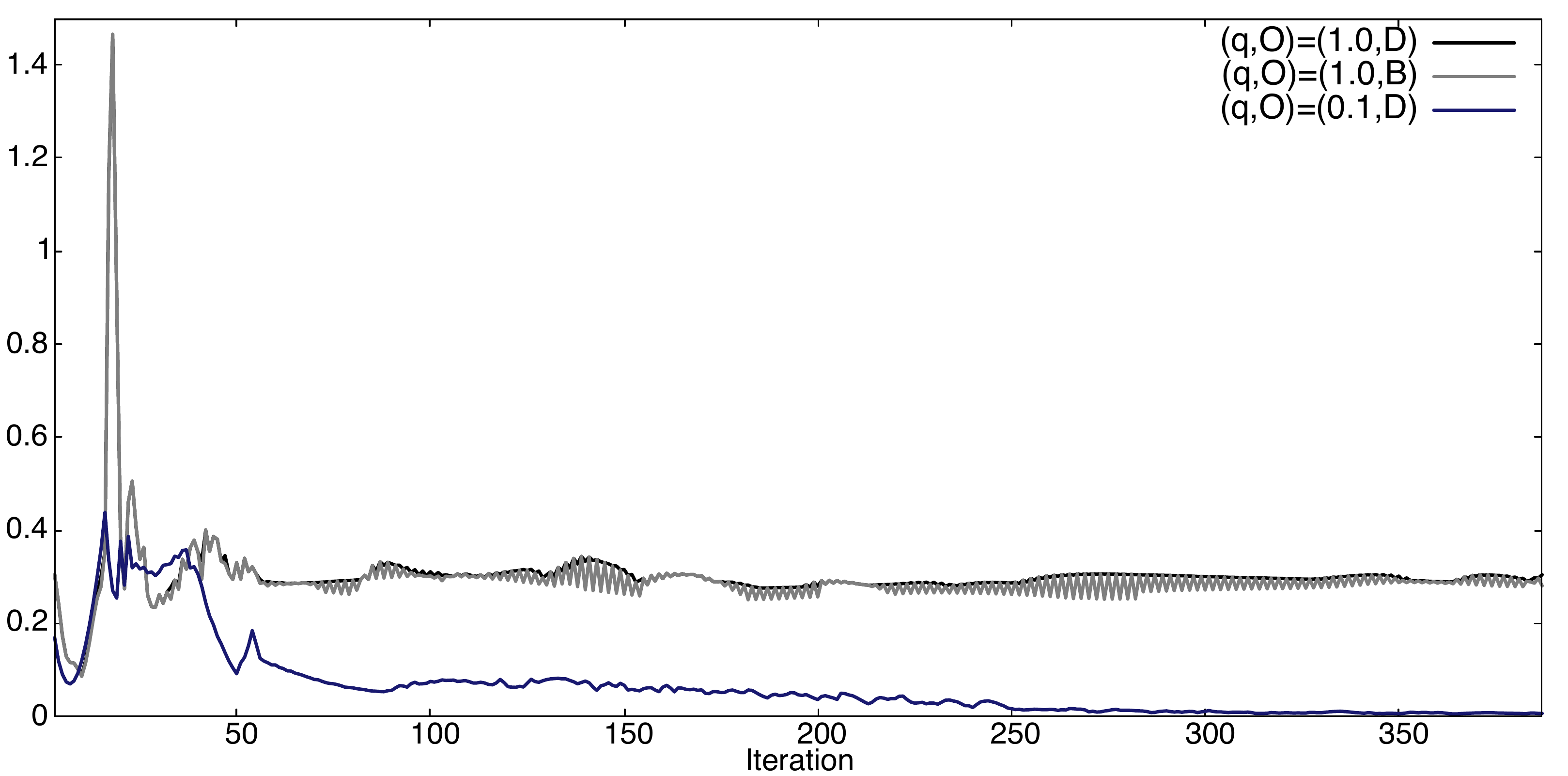}
   \caption{Convergence condition $\|\phi_{n+1}-\phi_{n}\|_{L^{\infty}(O)}$ with $(\alpha,\beta)=(1.0,1.0\times 10^{-3})$. Here $B\subset D$ denotes $[-0.5\le \phi_n <0]$.}
\label{fig:h3}
\end{figure}

Furthermore, (i-FDE) and (ii-SDE) can also be confirmed for the three-dimensional case; indeed,
it can be seen by Figures \ref{3dh-a}-\ref{3dh-d} and \ref{3dh-i}-\ref{3dh-l} that
convergence to the optimal configuration is slightly improved (see Figure \ref{fig:3dh1}), and moreover, Figure \ref{fig:3dh2} shows that, by increasing $\varDelta t>0 $, the method for $q=1$ (i.e.,~reaction-diffusion) does not converge due to oscillation near the boundary structure, but that for $q=0.5$ (i.e.,~slow diffusion) enables the configuration $\Omega_{\phi_n}\subset D$ to be optimized (see also Figure \ref{3dh-e}-\ref{3dh-h}). 
Here $D\subset \R^3$ and $\Gamma_D\subset \partial D$ are the cube  with a side length of $0.3$ and the square with a side length of $0.1$, respectively, and we put $G_{\rm max}=0.2$ and $(\alpha,\beta)=(1.0,1.0\times 10^{-3})$.

 \begin{figure}[htbp]
   \hspace*{-5mm} 
    \begin{tabular}{ccccc}
    
        \begin{minipage}[t]{0.24\hsize}
        \centering
        \includegraphics[keepaspectratio, scale=0.12]{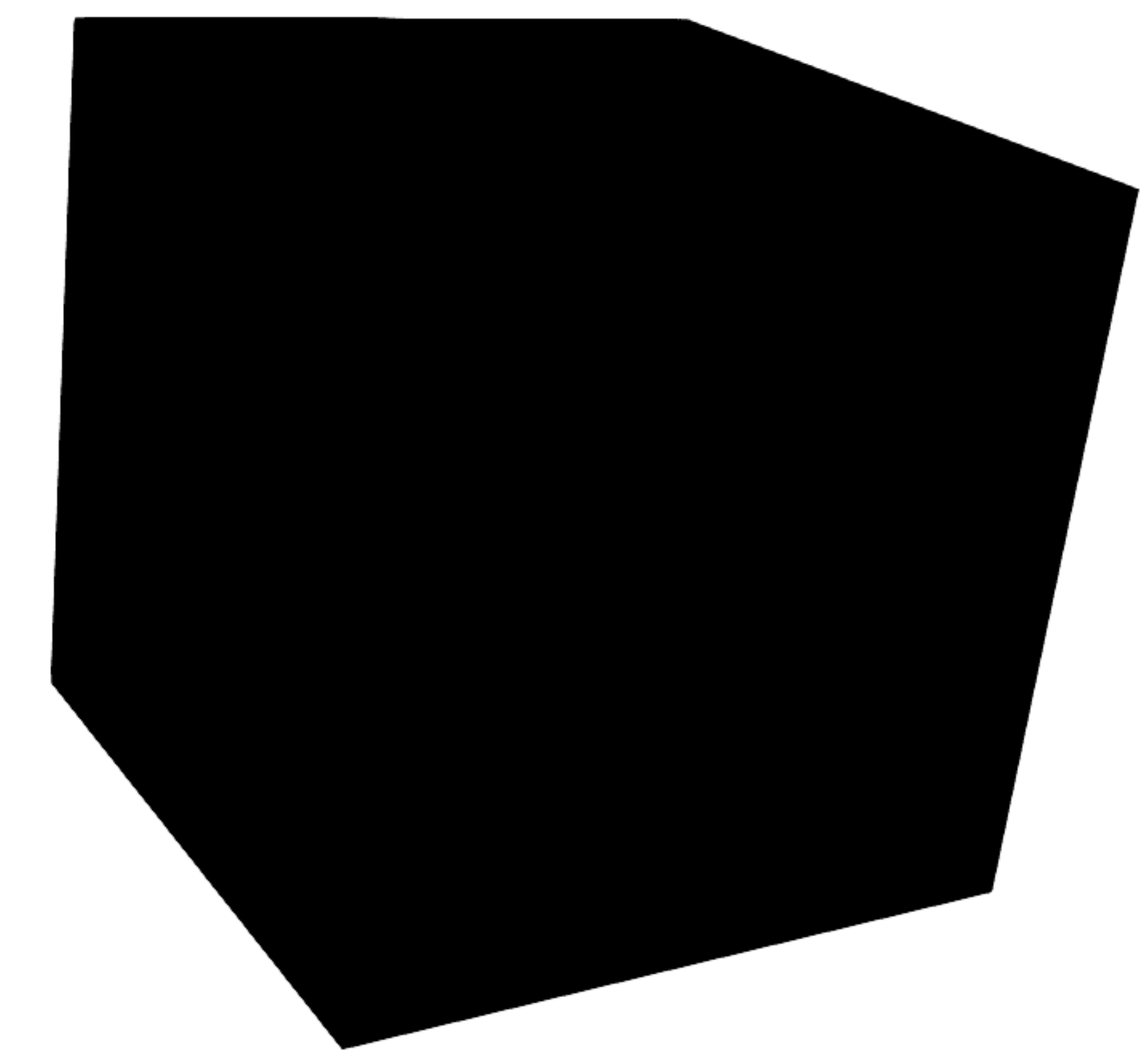}
        \subcaption{Step\,0}
        \label{3dh-a}
      \end{minipage} 
      \begin{minipage}[t]{0.24\hsize}
        \centering
        \includegraphics[keepaspectratio, scale=0.12]{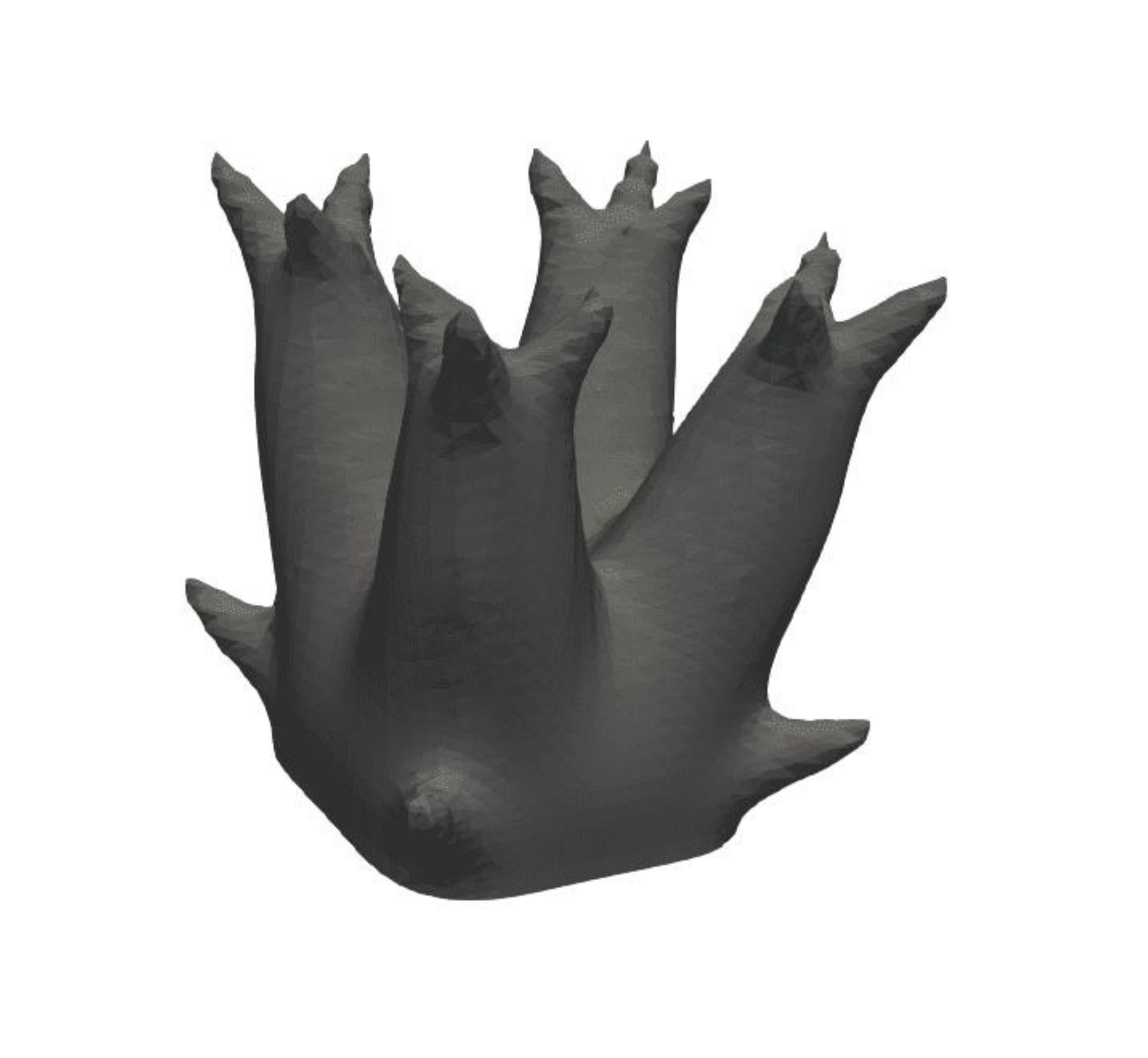}
        \subcaption{Step\,30}
        \label{3dh-b}
      \end{minipage} 
         \begin{minipage}[t]{0.24\hsize}
        \centering
        \includegraphics[keepaspectratio, scale=0.12]{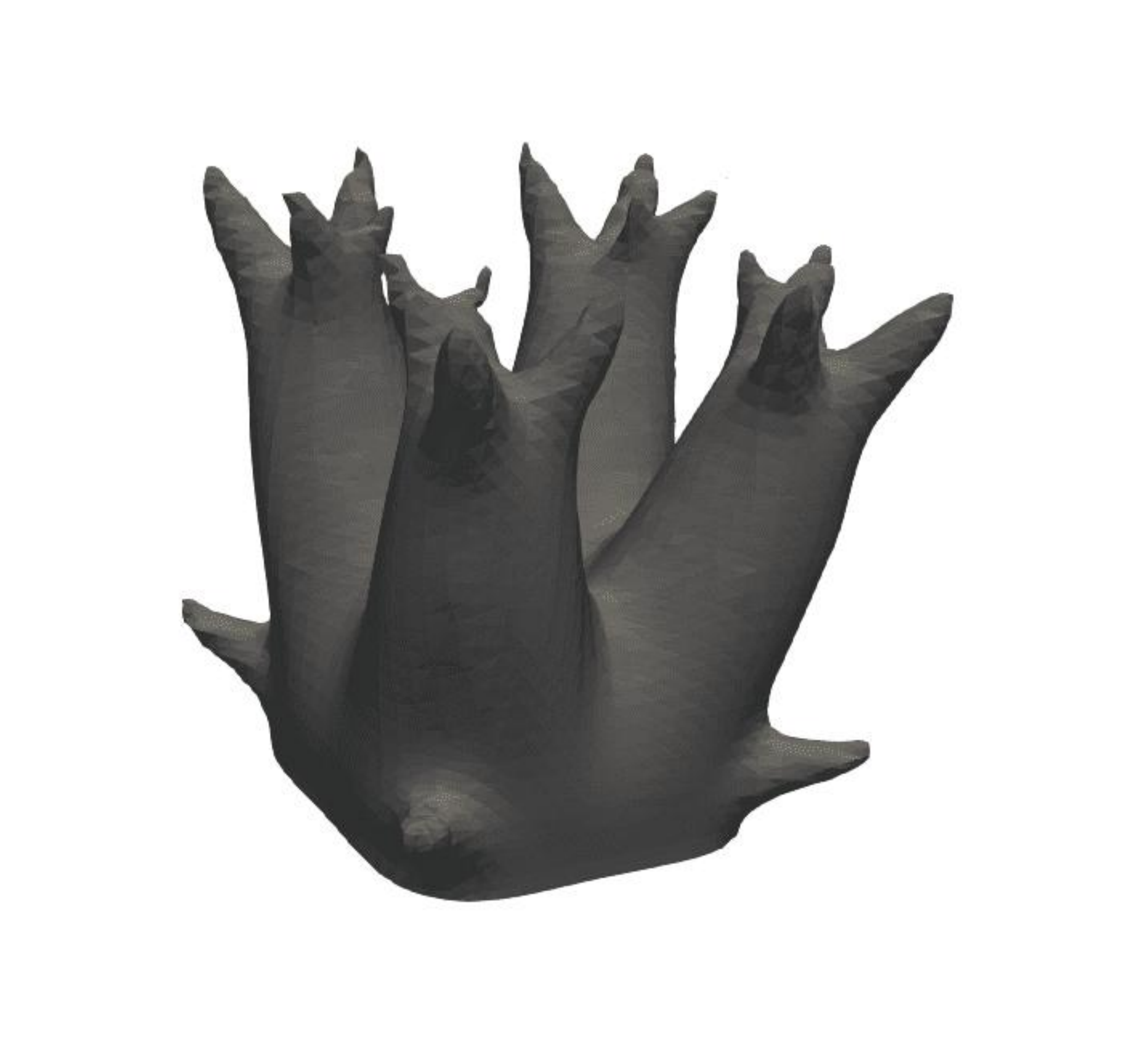}
        \subcaption{Step\,60}
        \label{3dh-c}
      \end{minipage}
           \begin{minipage}[t]{0.24\hsize}
        \centering
        \includegraphics[keepaspectratio, scale=0.12]{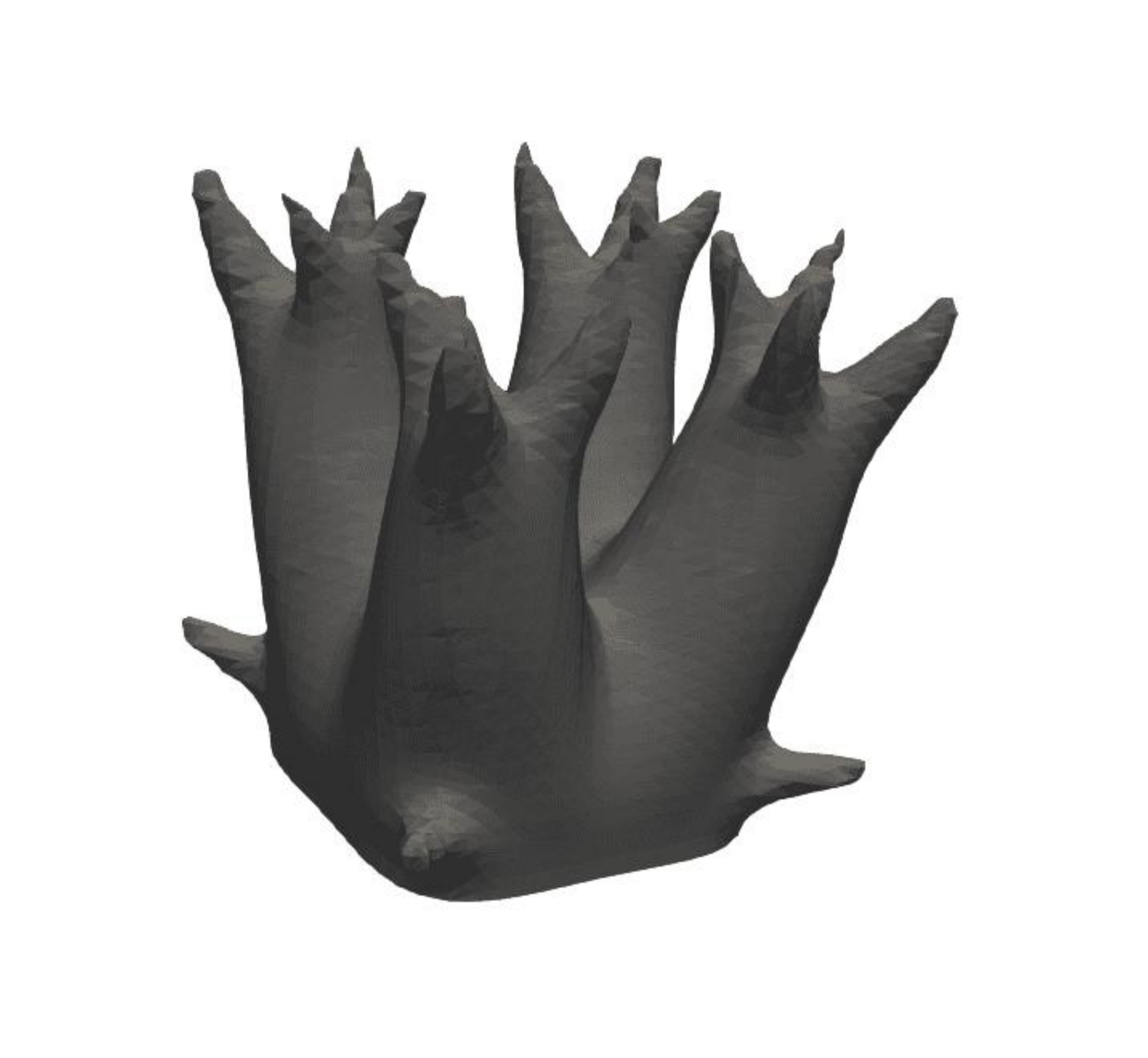}
        \subcaption{Step\,105$^{\#}$}
        \label{3dh-d}
      \end{minipage}
      \\
      \begin{minipage}[t]{0.24\hsize}
        \centering
        \includegraphics[keepaspectratio, scale=0.12]{3dh0.pdf}
        \subcaption{Step\,0}
        \label{3dh-e}
      \end{minipage} 
      \begin{minipage}[t]{0.24\hsize}
        \centering
        \includegraphics[keepaspectratio, scale=0.12]{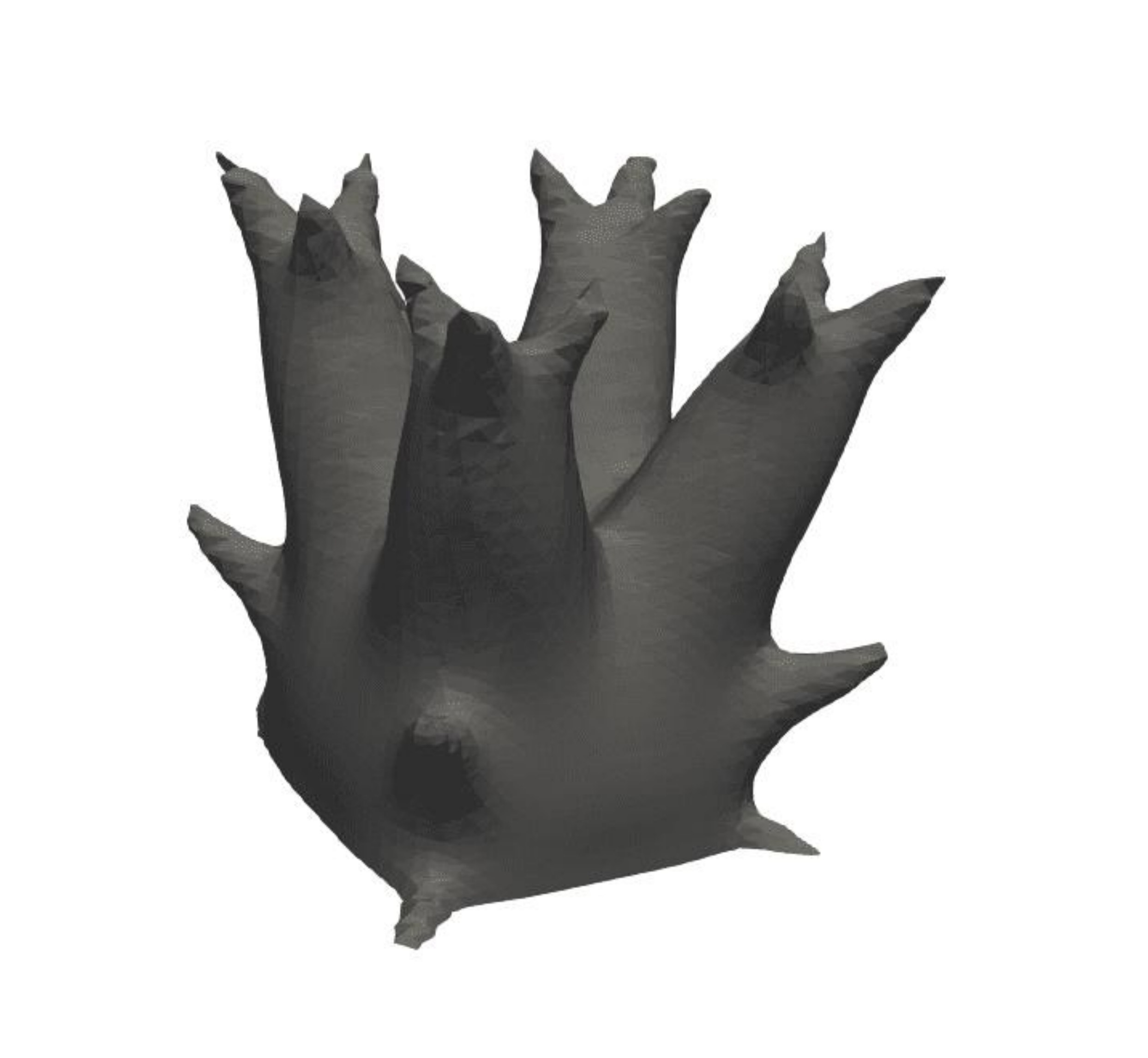}
        \subcaption{Step\,30}
        \label{3dh-f}
      \end{minipage} 
         \begin{minipage}[t]{0.24\hsize}
        \centering
        \includegraphics[keepaspectratio, scale=0.12]{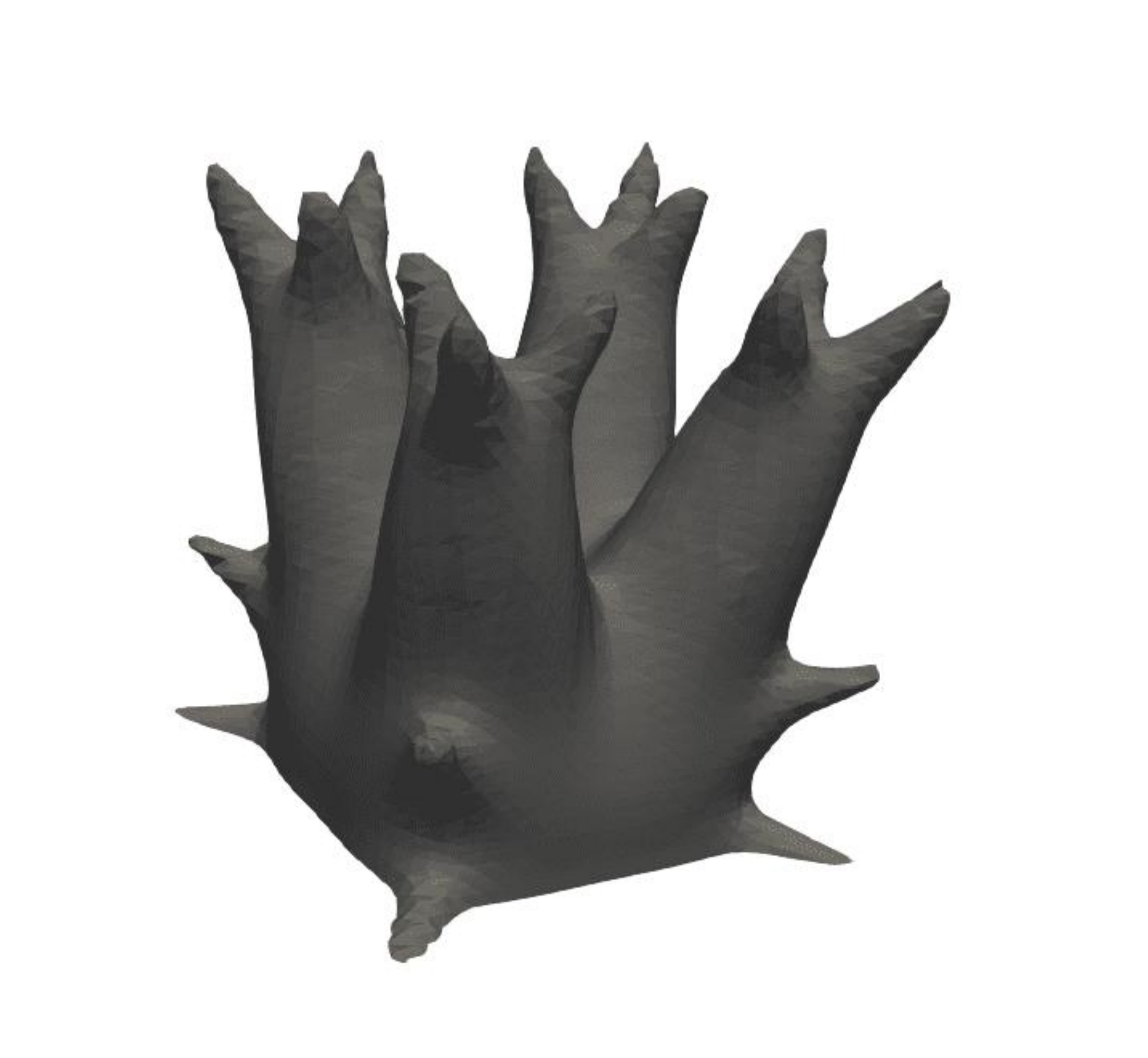}
        \subcaption{Step\,60}
        \label{3dh-g}
      \end{minipage}
           \begin{minipage}[t]{0.24\hsize}
        \centering
        \includegraphics[keepaspectratio, scale=0.12]{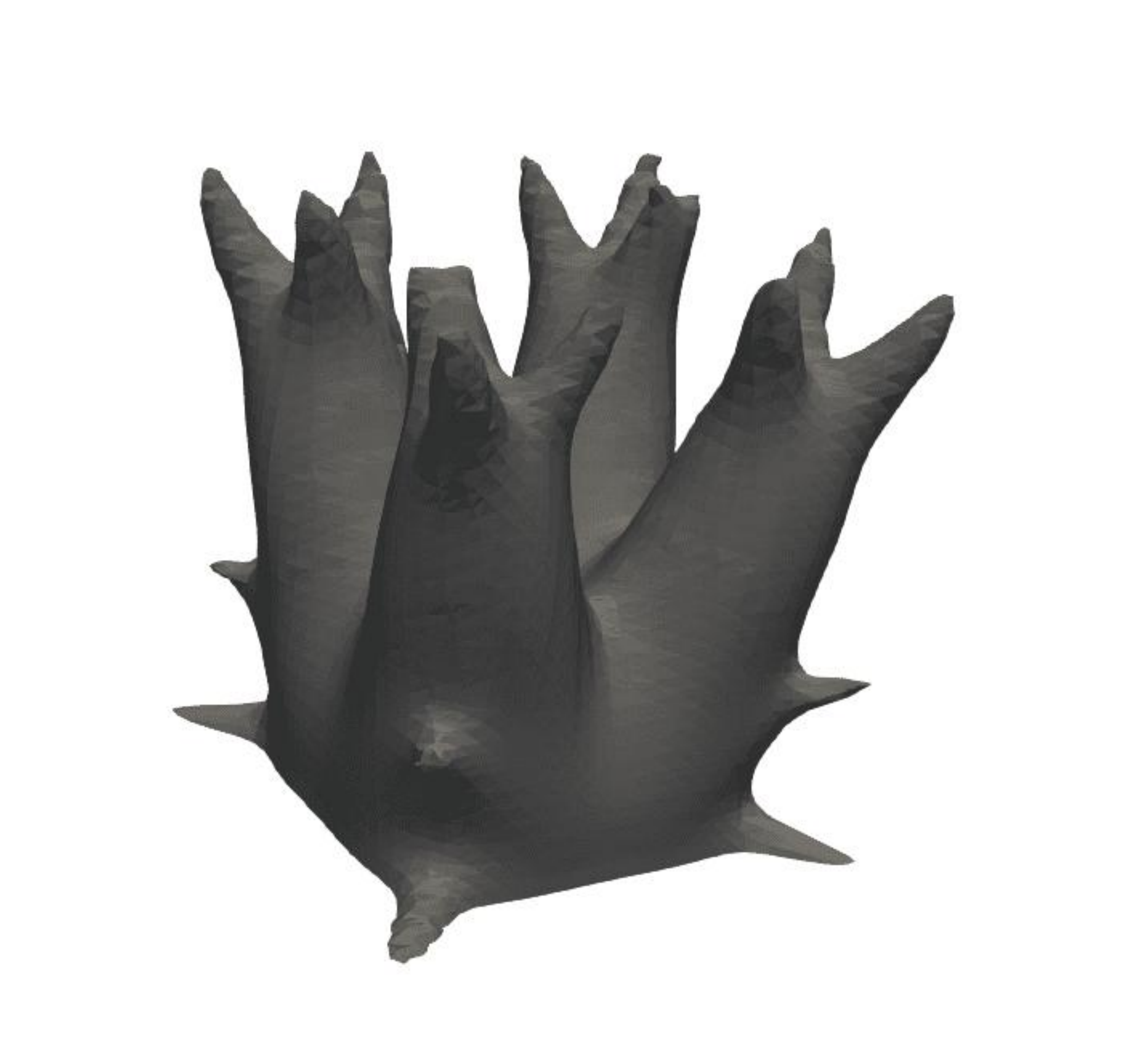}
        \subcaption{Step\,105$^{\#}$}
        \label{3dh-h}
      \end{minipage}
      \\
      \begin{minipage}[t]{0.24\hsize}
        \centering
        \includegraphics[keepaspectratio, scale=0.12]{3dh0.pdf}
        \subcaption{Step\,0}
        \label{3dh-i}
      \end{minipage} 
      \begin{minipage}[t]{0.24\hsize}
        \centering
        \includegraphics[keepaspectratio, scale=0.12]{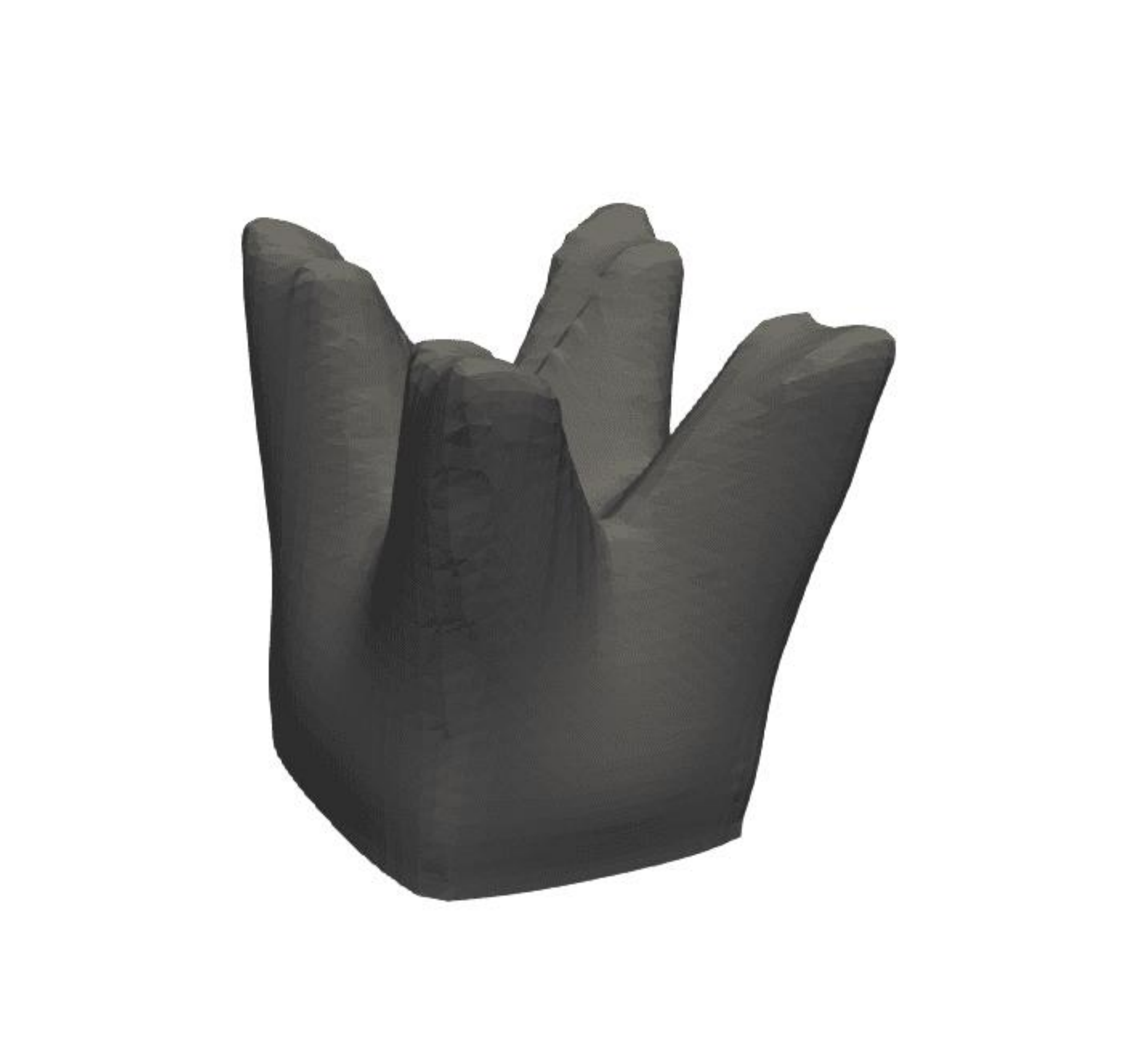}
        \subcaption{Step\,30}
        \label{3dh-j}
      \end{minipage} 
         \begin{minipage}[t]{0.24\hsize}
        \centering
        \includegraphics[keepaspectratio, scale=0.12]{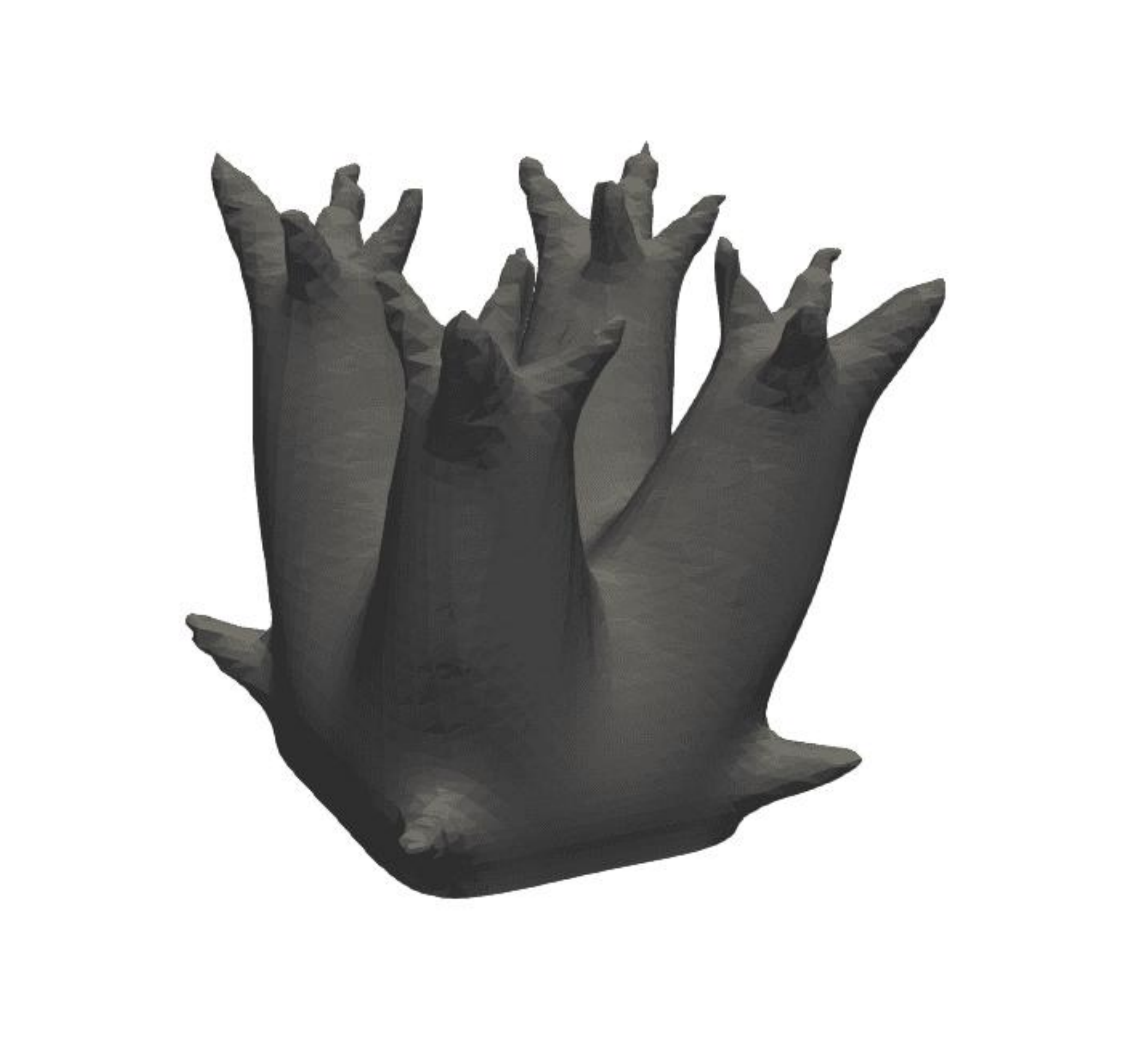}
        \subcaption{Step\,60}
        \label{3dh-k}
      \end{minipage}
           \begin{minipage}[t]{0.24\hsize}
        \centering
        \includegraphics[keepaspectratio, scale=0.12]{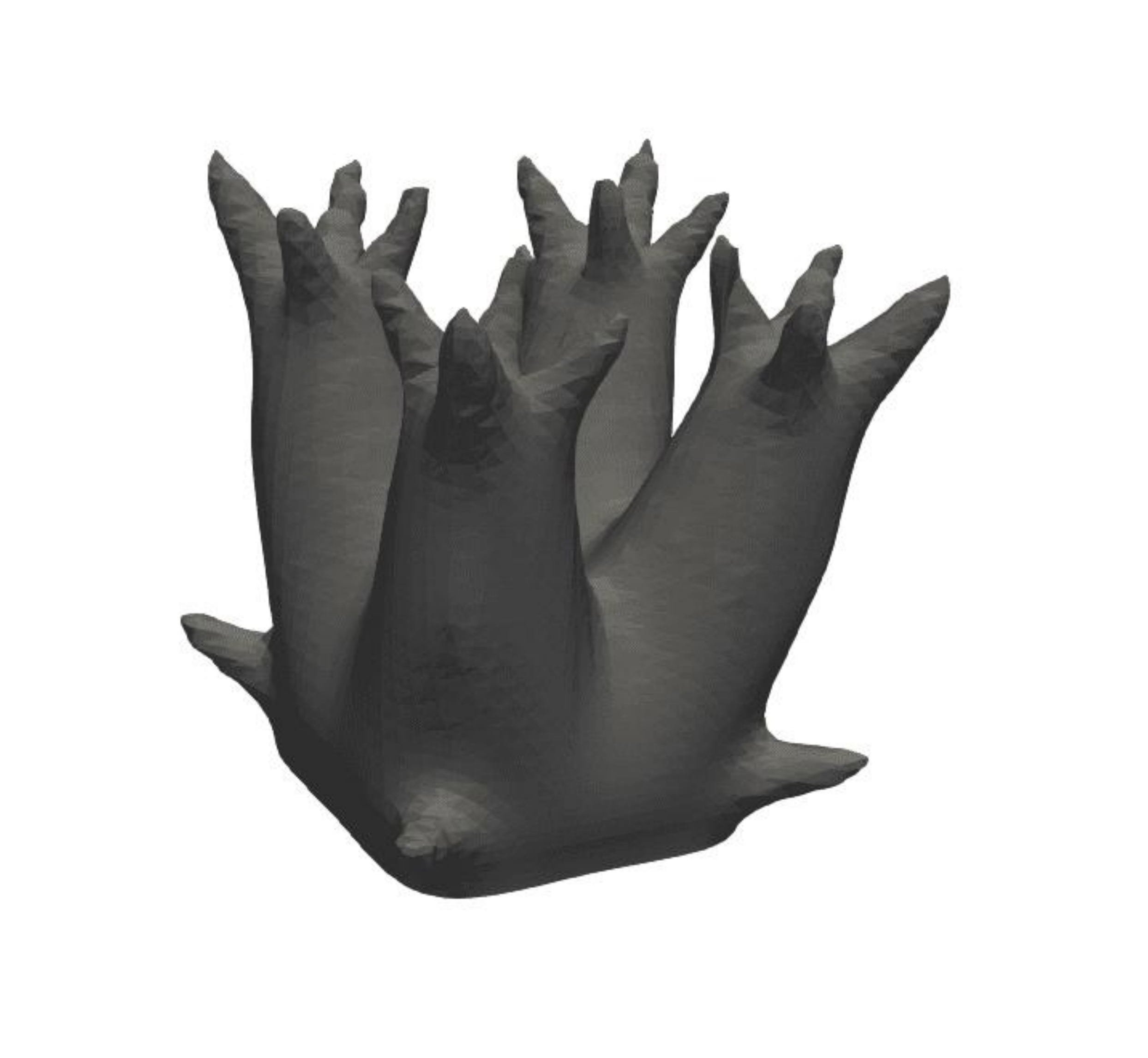}
        \subcaption{Step\,80$^{\#}$}
        \label{3dh-l}
      \end{minipage}
     \end{tabular}
     \caption{Configuration $\Omega_{\phi_n}\subset D\subset \R^3$ for the case where the initial configuration is the whole domain. 
Figures (a)--(d), (e)--(h) and (i)--(l) represent $\Omega_{\phi_n}\subset D$ for $(q,\varDelta t)=(1.0,0.4)$, $(q,\varDelta t)=(0.5,0.8)$ and $(q,\varDelta t)=(4.0,0.4)$ in \eqref{discNLD}, respectively. 
The symbol ${}^{ \#}$ implies the final step. 
     }
     \label{fig:3dh}
  \end{figure}

\begin{figure}[htbp]
        \centering
        \includegraphics[keepaspectratio, scale=0.33]{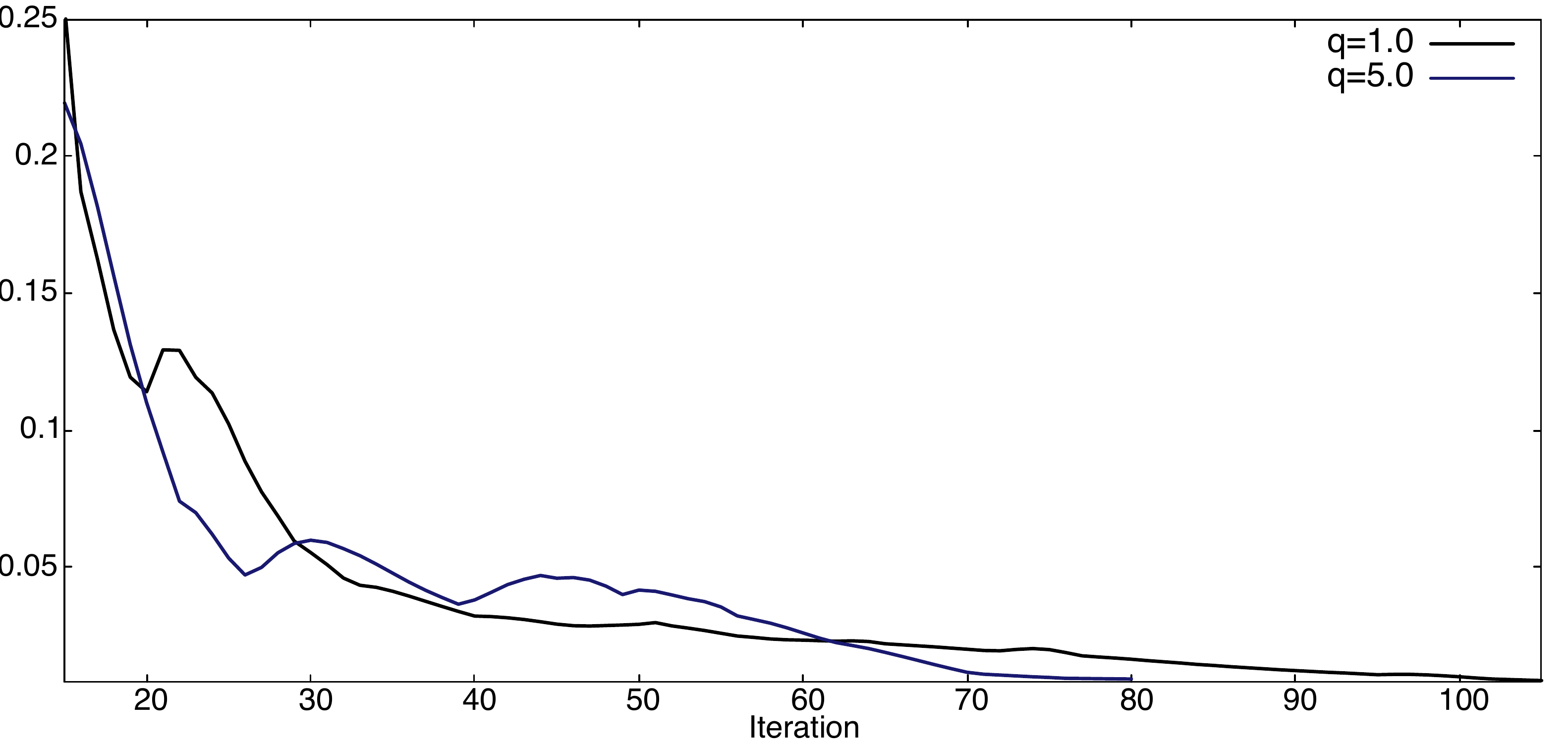}
   \caption{Convergence condition $\|\phi_{n+1}-\phi_{n}\|_{L^{\infty}(D)}$ with $\varDelta t=0.5$.}
\label{fig:3dh1}
\end{figure}       

\begin{figure}[htbp]
        \centering
        \includegraphics[keepaspectratio, scale=0.33]{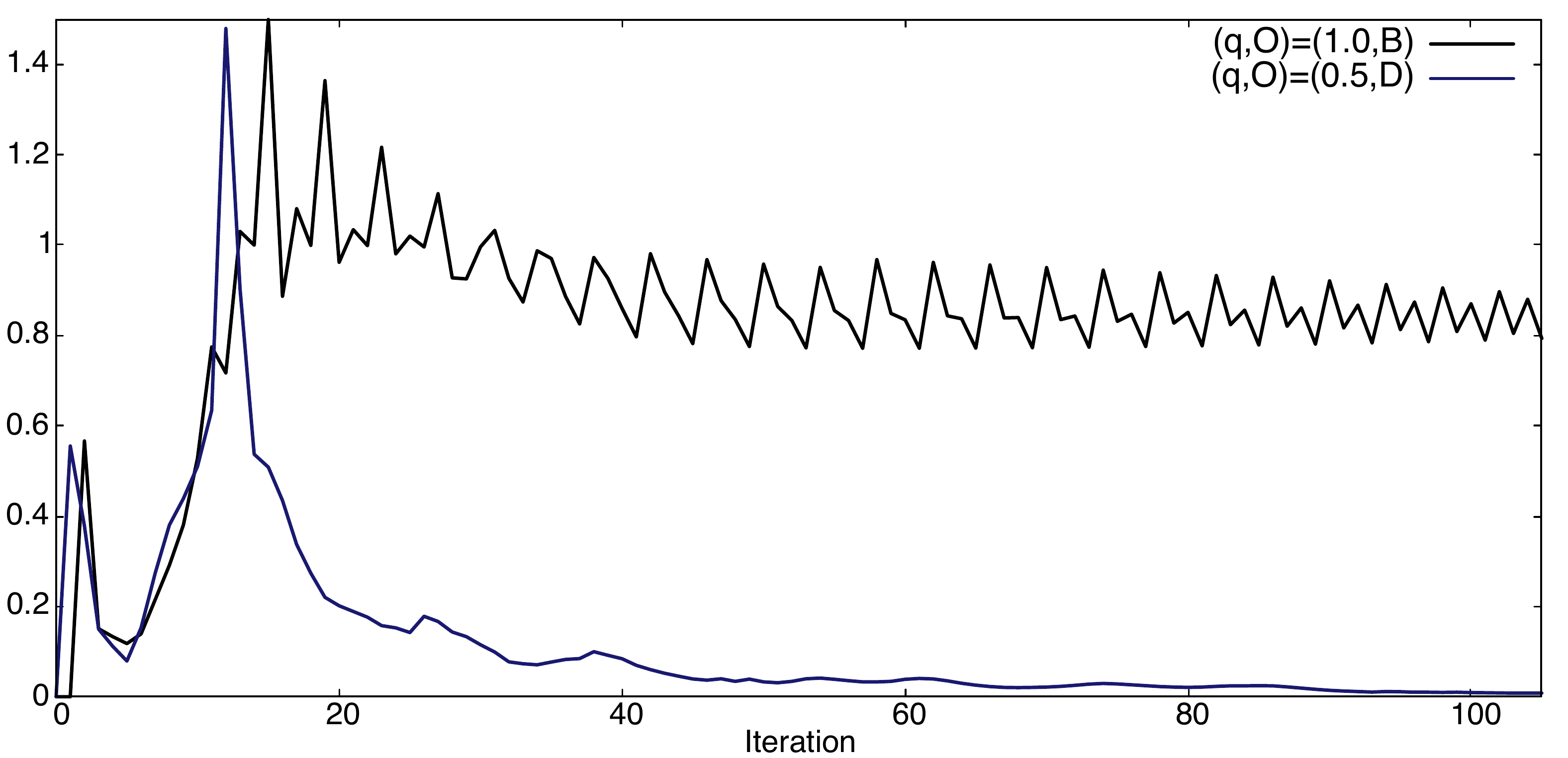}
   \caption{Convergence condition $\|\phi_{n+1}-\phi_{n}\|_{L^{\infty}(O)}$ with $\varDelta t=0.8$. Here $B\subset D$ denotes $[-0.5\le \phi_n <0]$.}
\label{fig:3dh2}
\end{figure}

\section{Applications}\label{S:app}

In this section, under the same assumption as in \S \ref{SS:ca}, 
recalling that the gradient descent method \eqref{GM} forms the basis in the derivation of \eqref{NLD}, 
we discuss methods to obtain even faster convergence to optimal configurations using other gradient methods. 
As already mentioned in a previous study \cite{O22}, Nesterov's accelerated gradient method in \cite{N83} is used to achieve this objective, and it is given by 
\begin{align}
\label{AGM}
\psi_{n+1}(x)=\phi_{n}(x)-K(\phi_n)F'(\phi_n),
\end{align}
where
\begin{align}\label{eq:inertia}
\phi_n(x)=\psi_{n}(x)+\frac{n-1}{n+2}(\psi_n-\psi_{n-1}).
\end{align}
Thus, the effect of inertia is expected by the second term of the right-hand side of \eqref{eq:inertia}, and moreover, by the results in \cite{SBC14} along with \eqref{AGM}--\eqref{eq:inertia}, we have obtained some nonlinear damped wave equation (see \cite{O22} for details).
Hence, combining the argument in \S \ref{S:nld} with Nesterov's accelerated gradient method together with \cite{SBC14}, one can formally get the following quasilinear hyperbolic equation\/{\rm :} 
\begin{align}\label{NLQH}
\begin{cases}
\partial_{t}^2\phi^q+(3/t)\partial_{t}\phi^q-\tau \Delta \phi=\rho F_\eta'(\phi) \ \text{ in } D\times (0,+\infty),\\
\phi|_{\partial D}=0,\quad \phi|_{t=0}=\phi_1,\quad \partial_t\phi|_{t=0}=\phi_2.
\end{cases}
\end{align}
Here $\phi_1$ and $\phi_2$ are initial data constructed by solving \eqref{discNLD}.
Based on \eqref{NLQH}, we devise a way to employ the following equation instead of \eqref{discNLD} in Step\,4\/{\rm :} 
\begin{align}
&
\int_D \tilde{q}(|\phi_{n}(x)|+\xi)^{q-1}\frac{\phi_{n+1}-(1+h)\phi_{n}+h\phi_{n-1}}{\varDelta t}(x){\psi}(x) \, \d x \nonumber \\
&\quad+
\int_D \frac{3h\tilde{q}}{n+\xi}(|\phi_{n}(x)|+\xi)^{q-1}\frac{\phi_{n+1}-\phi_{n}}{\varDelta t}(x){\psi}(x) \, \d x\nonumber \\
&\quad+\int_D\tau \nabla \phi_{n+1}(x)\cdot \nabla {\psi}(x)\, \d x
=
\int_D \rho \mathcal{L}_\eta'(\phi_{n},\lambda_n) \psi(x)\, \d x 
\quad \text{ for all } V, \label{discNNLD}
\end{align}
where $\phi_{n+i}=\phi(x,(n+i)\sqrt{\varDelta t})$ ($i=-1,0,1$) and $h\in\{0,1\}$. 
Thus, \eqref{discNLD} coincides with \eqref{discNNLD} for $h=0$ and $\phi_{n+i}=\phi(x,(n+i)\varDelta t)$ ($i=0,1)$. 
Then, by employing \eqref{discNNLD} to the optimization problem in \S \ref{SS:ca}, 
the numerical results shown in Figure \ref{fig:nmc} are obtained. 
 \begin{figure}[htbp]
   \hspace*{-5mm} 
    \begin{tabular}{cccc}
       \begin{minipage}[t]{0.24\hsize}
        \centering
        \includegraphics[keepaspectratio, scale=0.202]{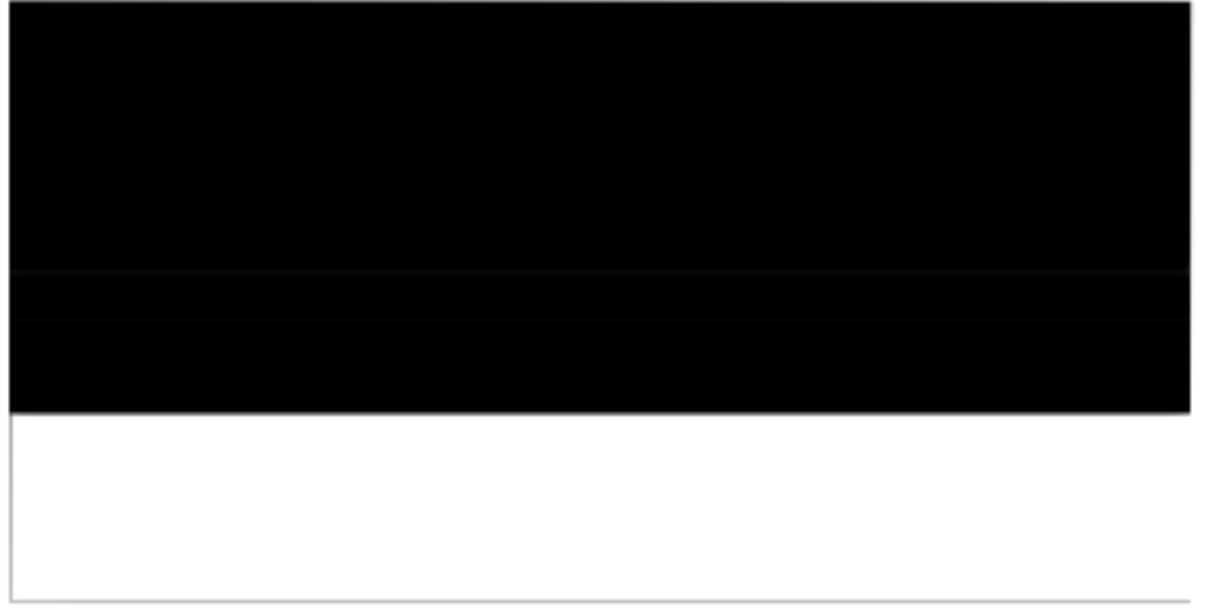}	
        \subcaption{Step\,0}
        \label{5-a}
      \end{minipage} 
      \begin{minipage}[t]{0.24\hsize}
        \centering
        \includegraphics[keepaspectratio, scale=0.1]{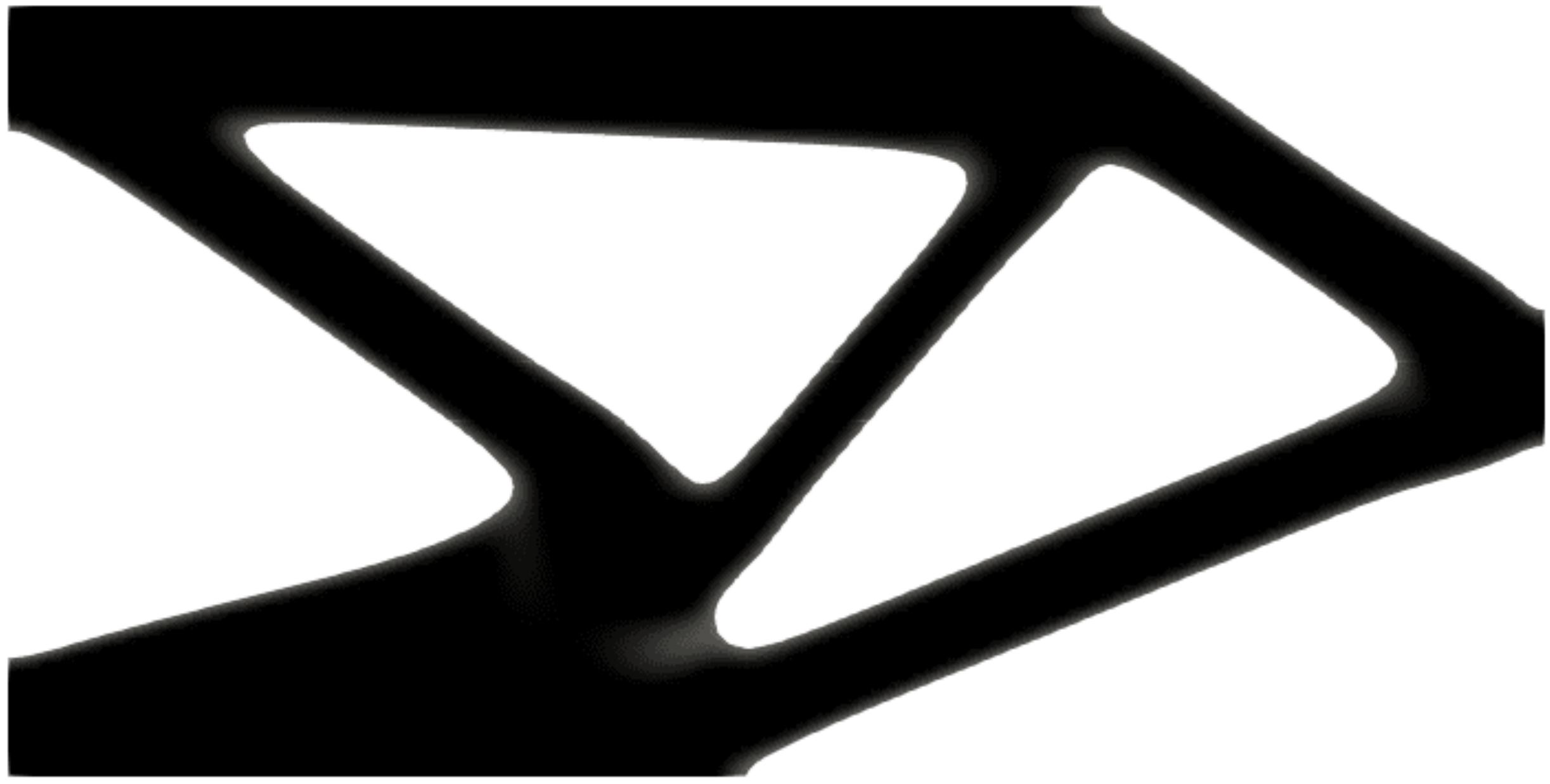}
        \subcaption{Step\,100}
        \label{5-b}
      \end{minipage} 
       \begin{minipage}[t]{0.24\hsize}
        \centering
        \includegraphics[keepaspectratio, scale=0.1]{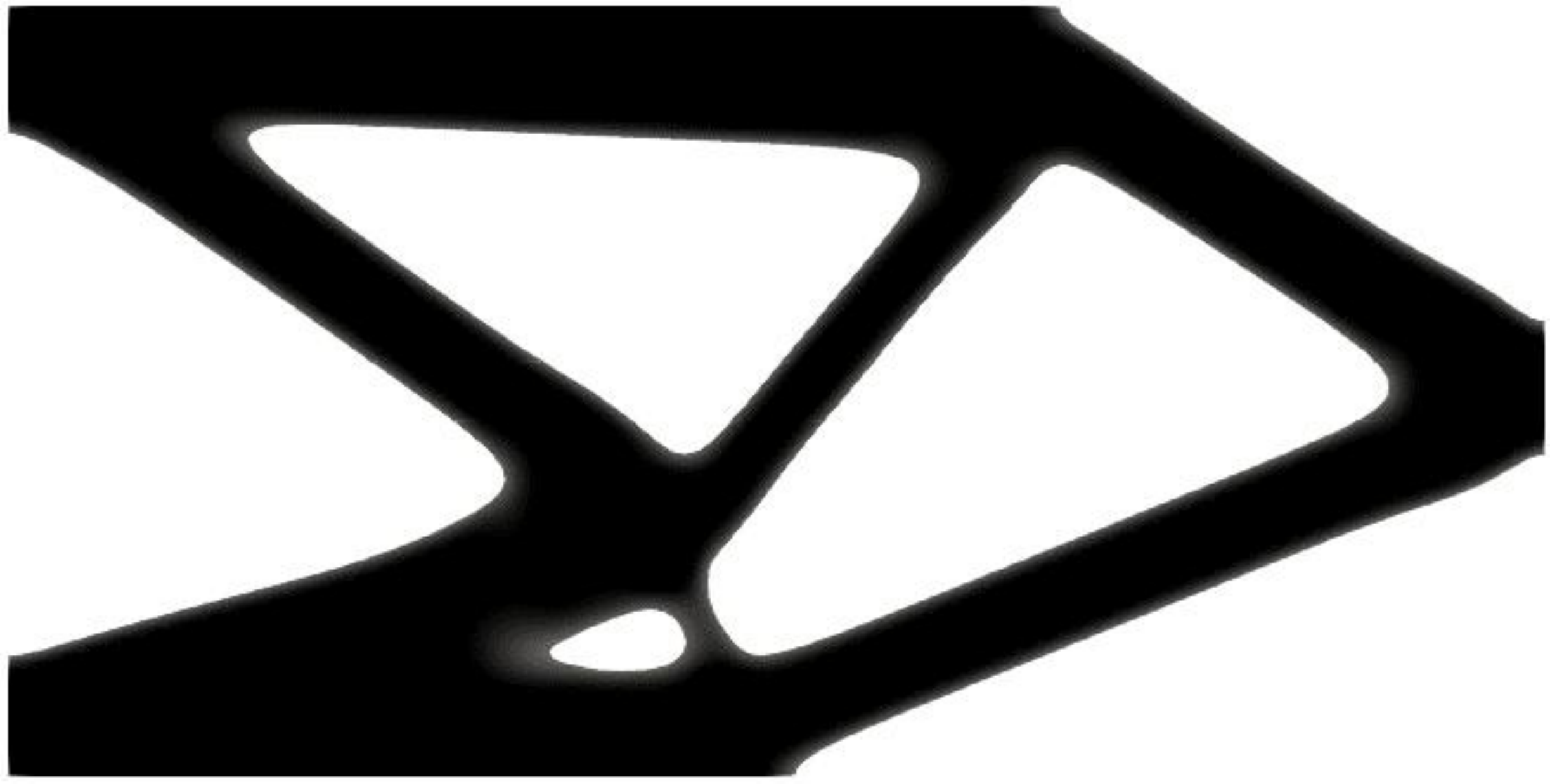}
        \subcaption{Step\,200}
        \label{5-c}
      \end{minipage} 
           \begin{minipage}[t]{0.24\hsize}
        \centering
        \includegraphics[keepaspectratio, scale=0.1]{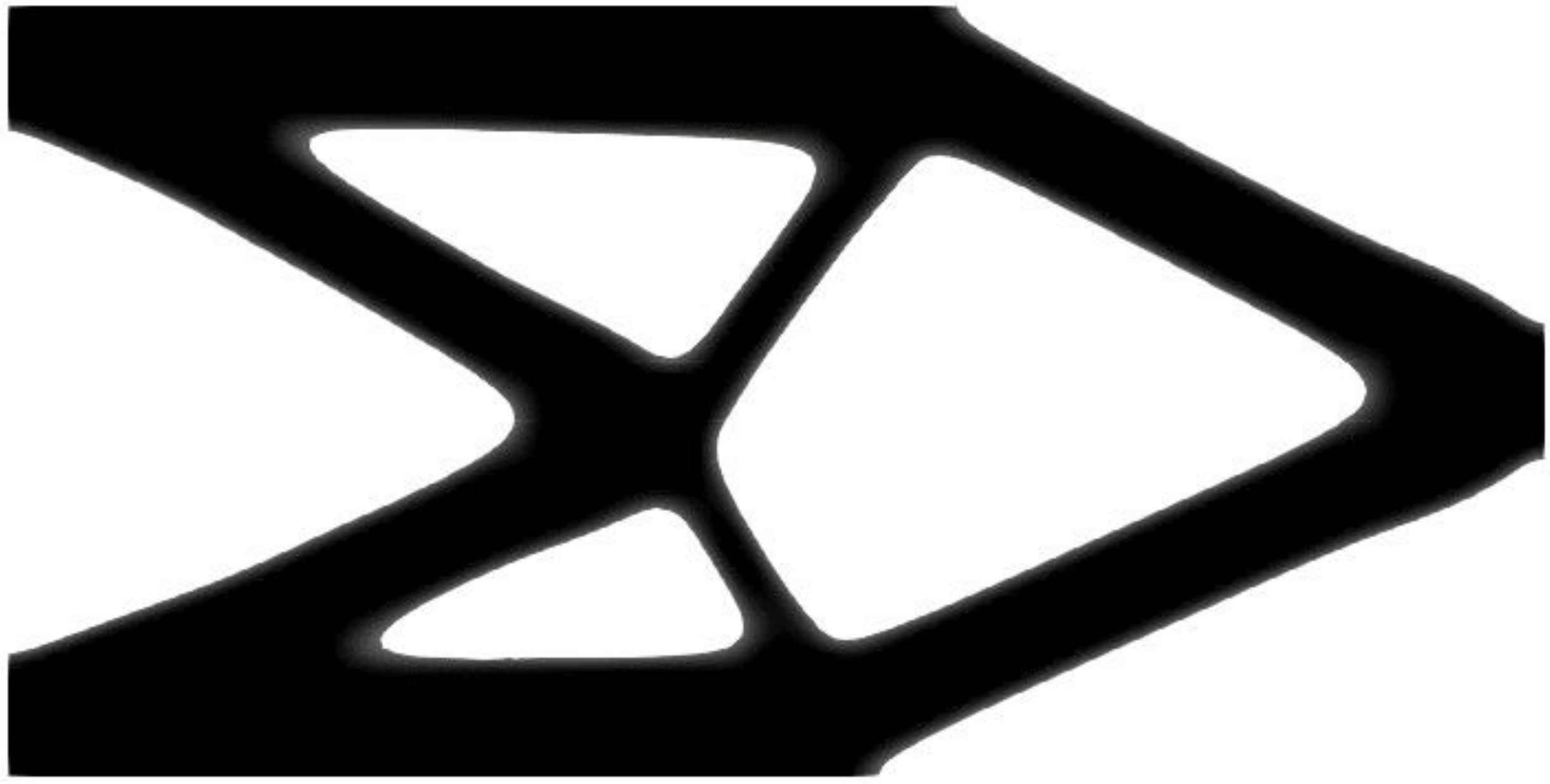}
        \subcaption{Step\,641${}^{ \#}$}
        \label{5-d}
        \end{minipage} \\ 
    \begin{minipage}[t]{0.24\hsize}
        \centering
        \includegraphics[keepaspectratio, scale=0.202]{n0.pdf}
        \subcaption{Step\,0}
        \label{5-e}
      \end{minipage} 
      \begin{minipage}[t]{0.24\hsize}
        \centering
        \includegraphics[keepaspectratio, scale=0.1]{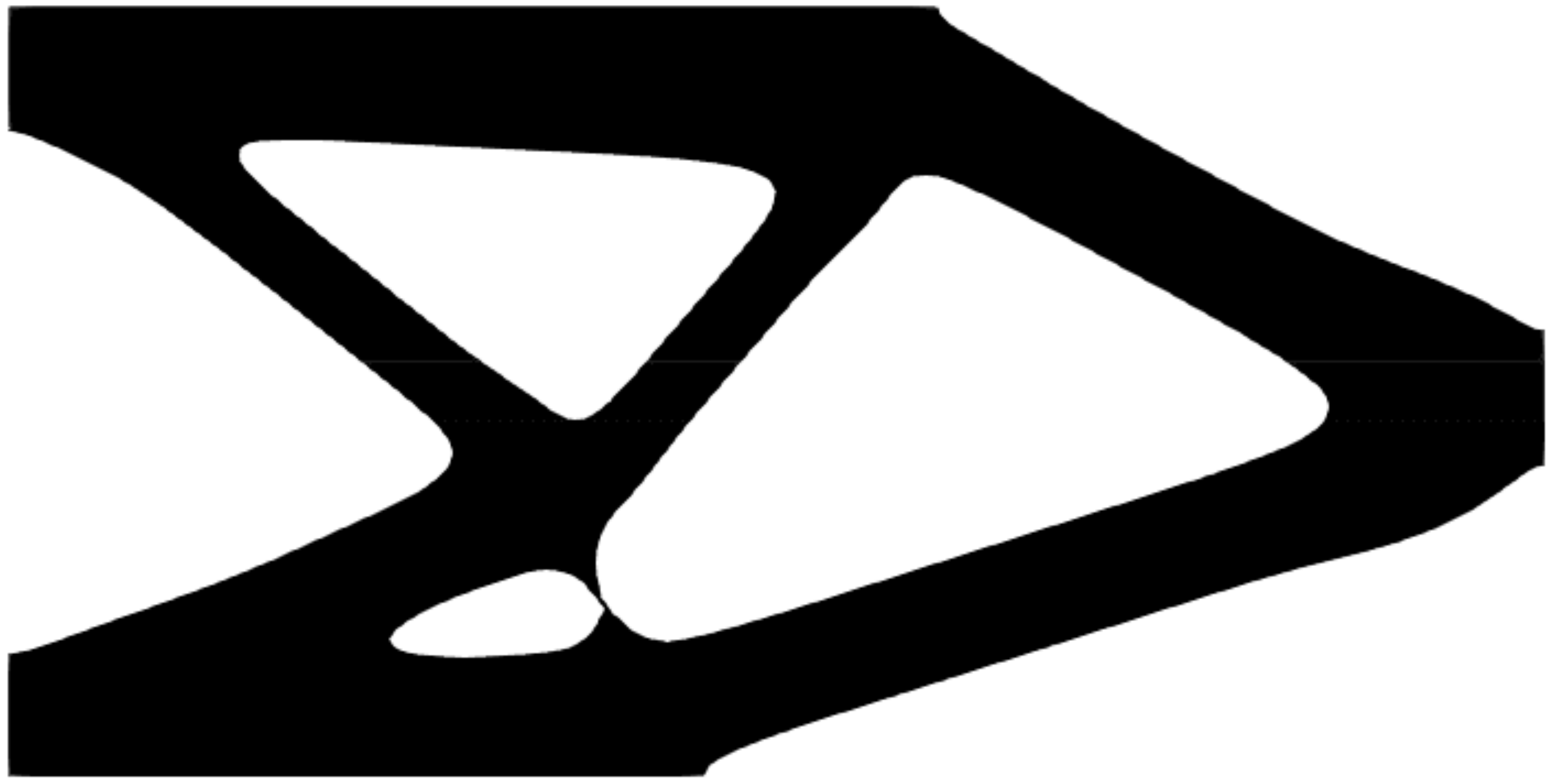}
        \subcaption{Step\,100}
        \label{5-f}
      \end{minipage} 
       \begin{minipage}[t]{0.24\hsize}
        \centering
        \includegraphics[keepaspectratio, scale=0.1]{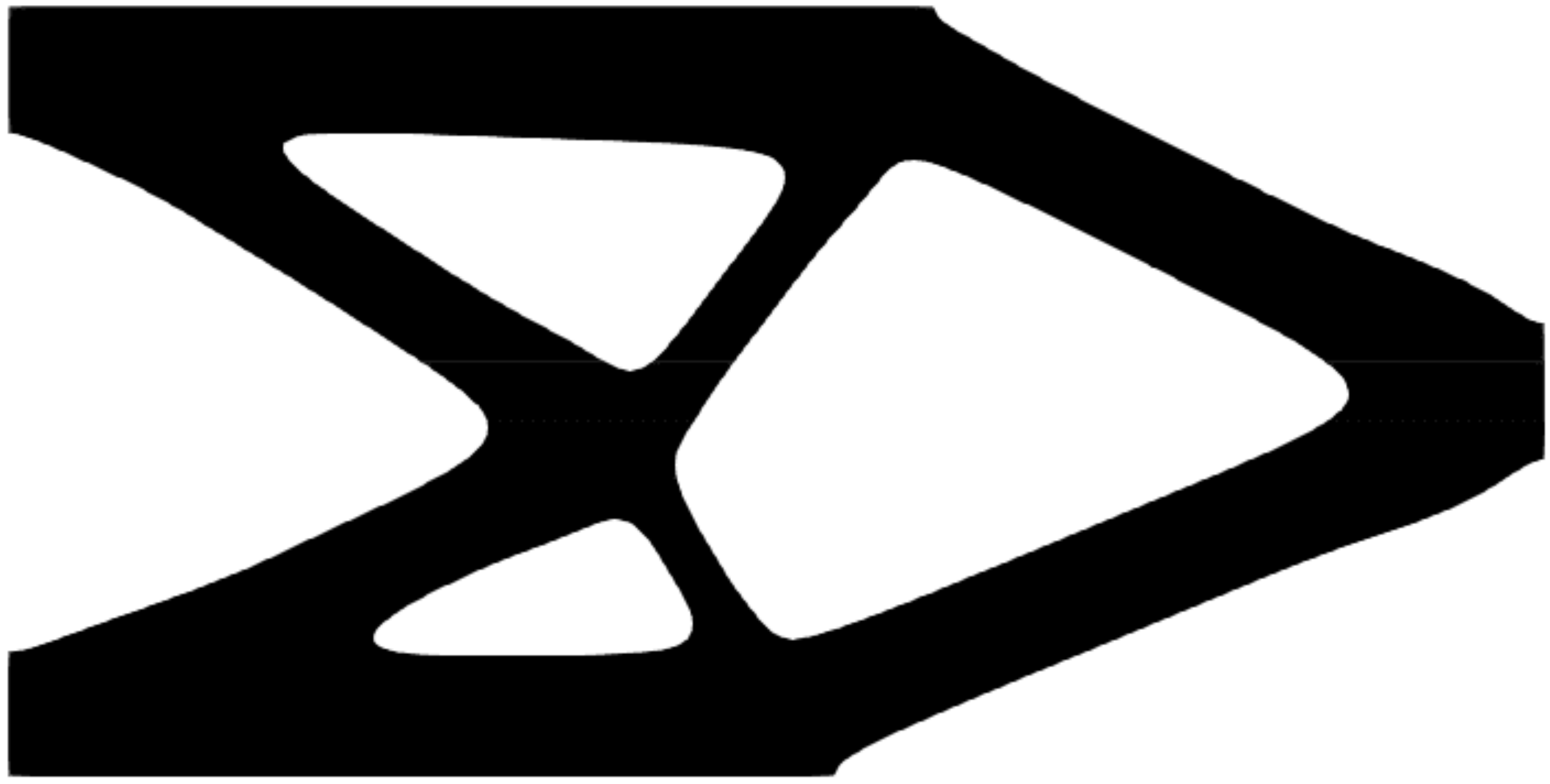}
        \subcaption{Step\,200}
        \label{5-g}
      \end{minipage} 
           \begin{minipage}[t]{0.24\hsize}
        \centering
        \includegraphics[keepaspectratio, scale=0.1]{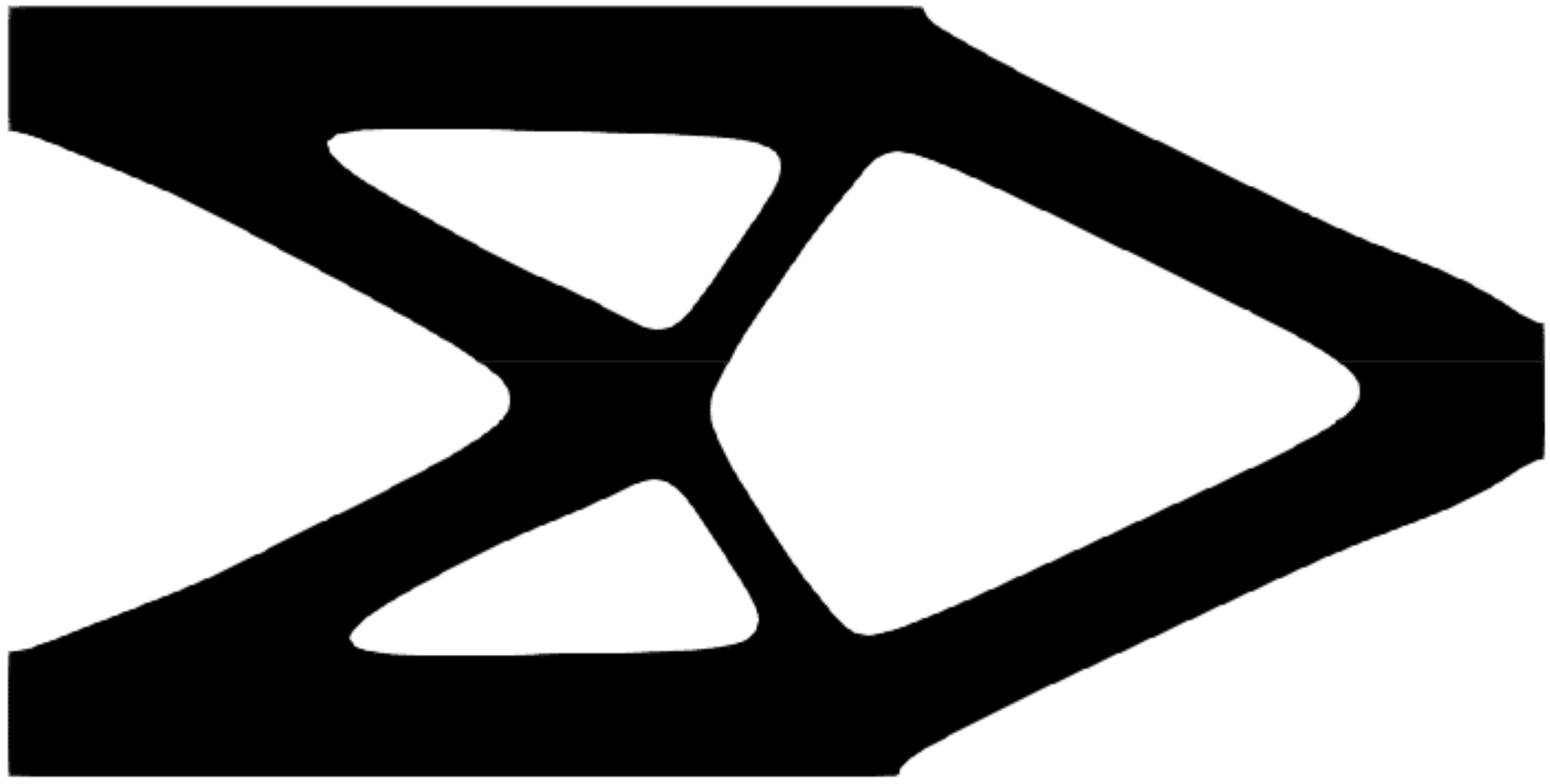}
        \subcaption{Step\,422${}^{ \#}$}
        \label{5-h}
        \end{minipage} \\
         \begin{minipage}[t]{0.24\hsize}
        \centering
        \includegraphics[keepaspectratio, scale=0.202]{n0.pdf}
        \subcaption{Step\,0}
        \label{5-i}
      \end{minipage} 
      \begin{minipage}[t]{0.24\hsize}
        \centering
        \includegraphics[keepaspectratio, scale=0.1]{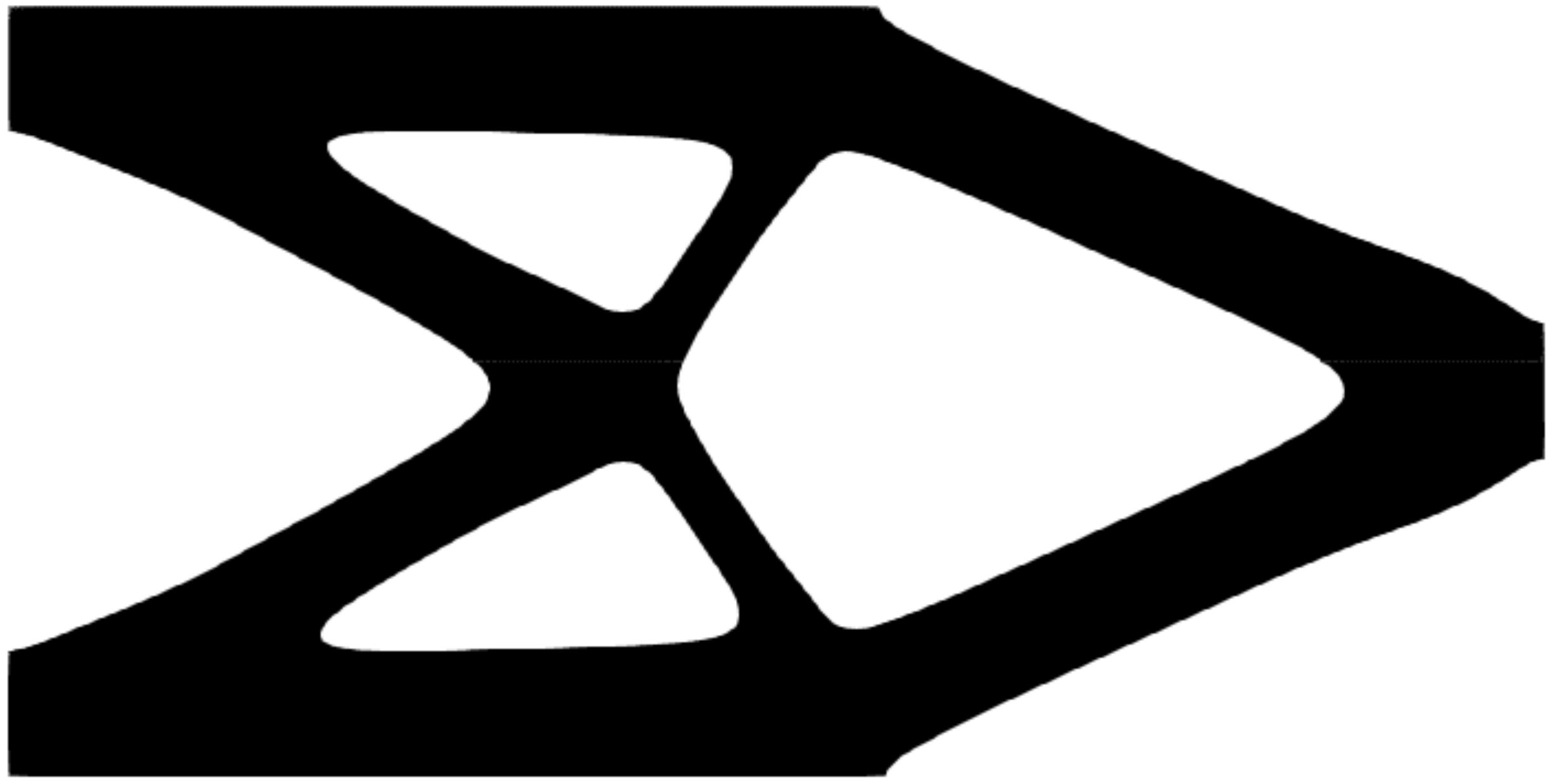}
        \subcaption{Step\,100}
        \label{5-j}
      \end{minipage} 
         \begin{minipage}[t]{0.24\hsize}
        \centering
        \includegraphics[keepaspectratio, scale=0.1]{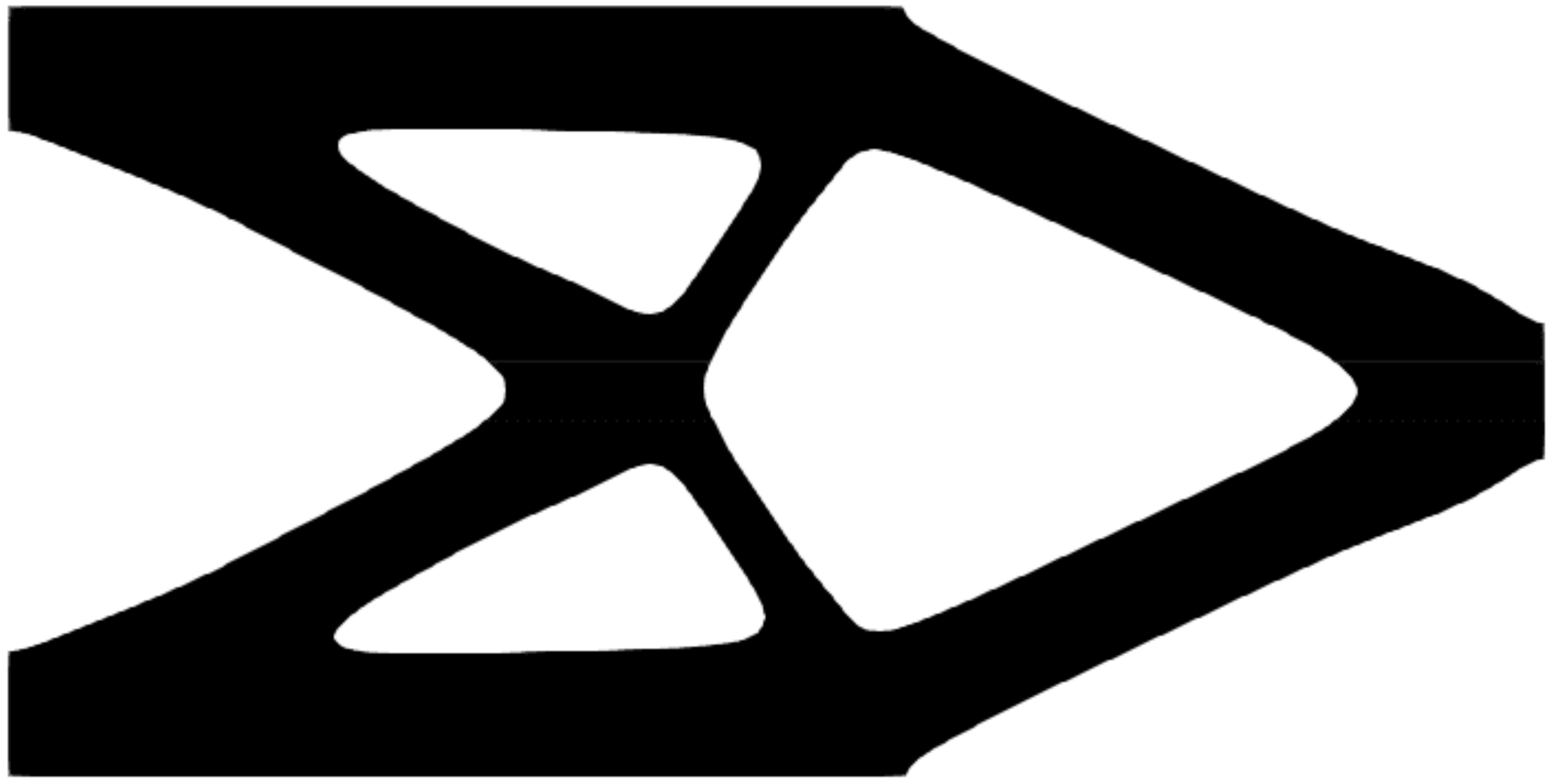}
        \subcaption{Step\,200}
        \label{5-k}
      \end{minipage}
           \begin{minipage}[t]{0.24\hsize}
        \centering
        \includegraphics[keepaspectratio, scale=0.1]{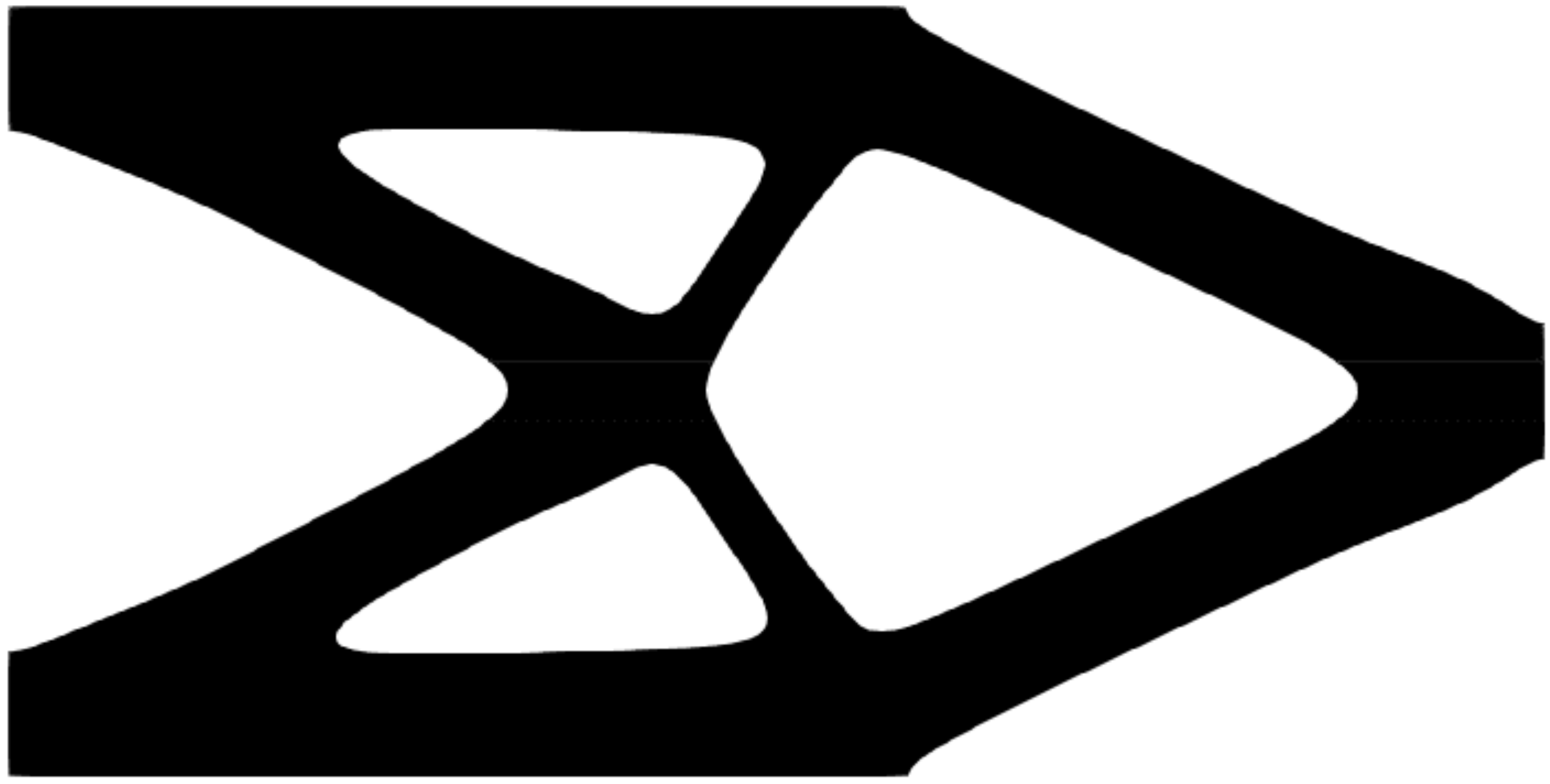}
        \subcaption{Step\,223${}^{ \#}$}
        \label{5-l}
      \end{minipage}
           \end{tabular}
     \caption{
     Configuration $\Omega_{\phi_n}\subset D$ for the case where the initial configuration is the upper domain. 
Figures (a)--(d), (e)--(h) and (i)--(l)
represent $\Omega_{\phi_n}\subset D$ using  \eqref{discNLD} for $q=1$, $q=5$ and \eqref{discNNLD} for $q=5$, respectively.
The symbol ${}^{ \#}$ implies the final step.   }
     \label{fig:nmc}
  \end{figure}

\begin{figure}[htbp]
   \hspace*{-5mm} 
    \begin{tabular}{cccc}
      \begin{minipage}[t]{0.24\hsize}
        \centering
        \includegraphics[keepaspectratio, scale=0.202]{n0.pdf}	
        \subcaption{Step\,0}
        \label{6-a}
      \end{minipage} 
      \begin{minipage}[t]{0.24\hsize}
        \centering
        \includegraphics[keepaspectratio, scale=0.1]{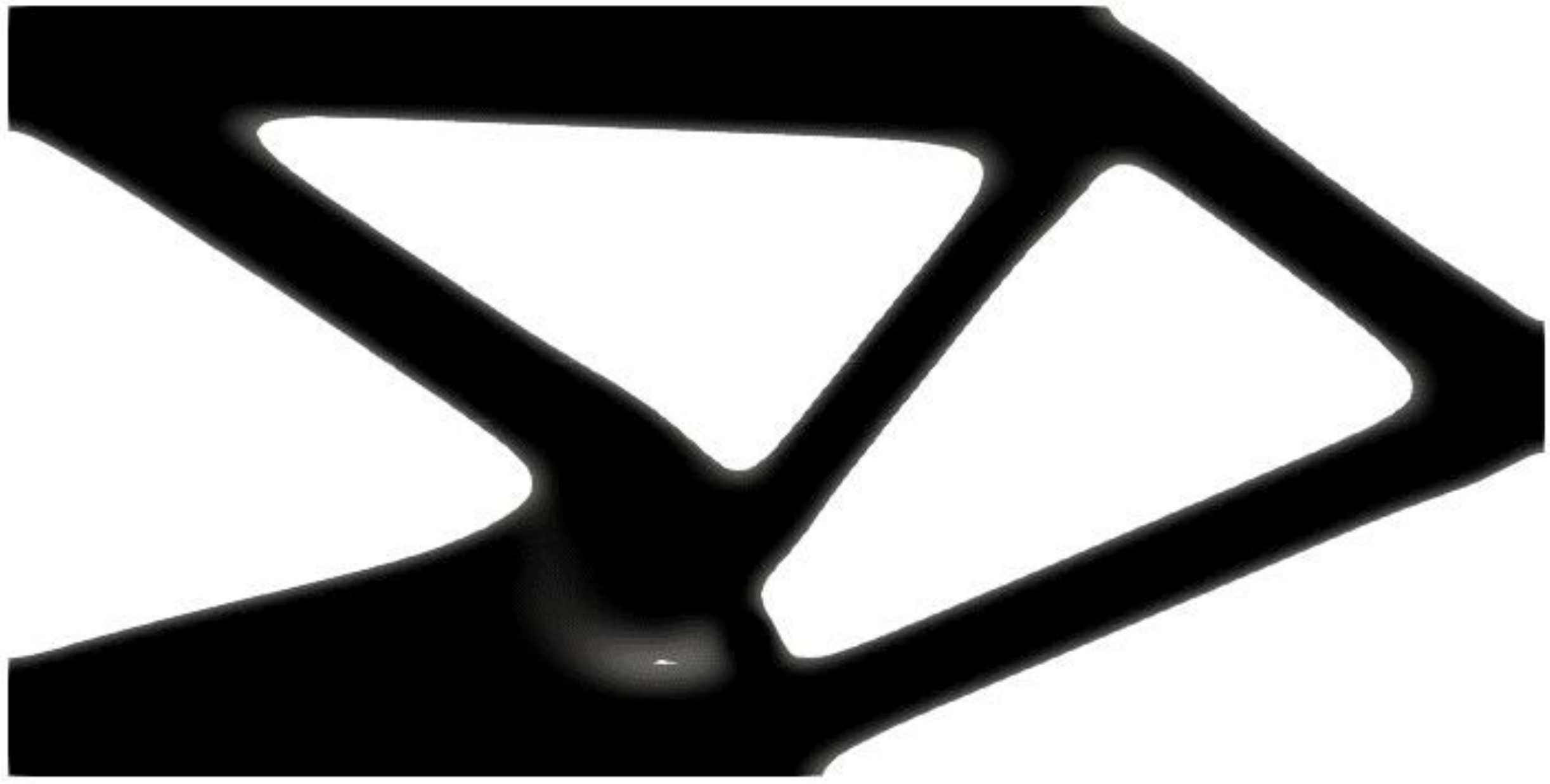}
        \subcaption{Step\,100}
        \label{6-b}
      \end{minipage} 
       \begin{minipage}[t]{0.24\hsize}
        \centering
        \includegraphics[keepaspectratio, scale=0.1]{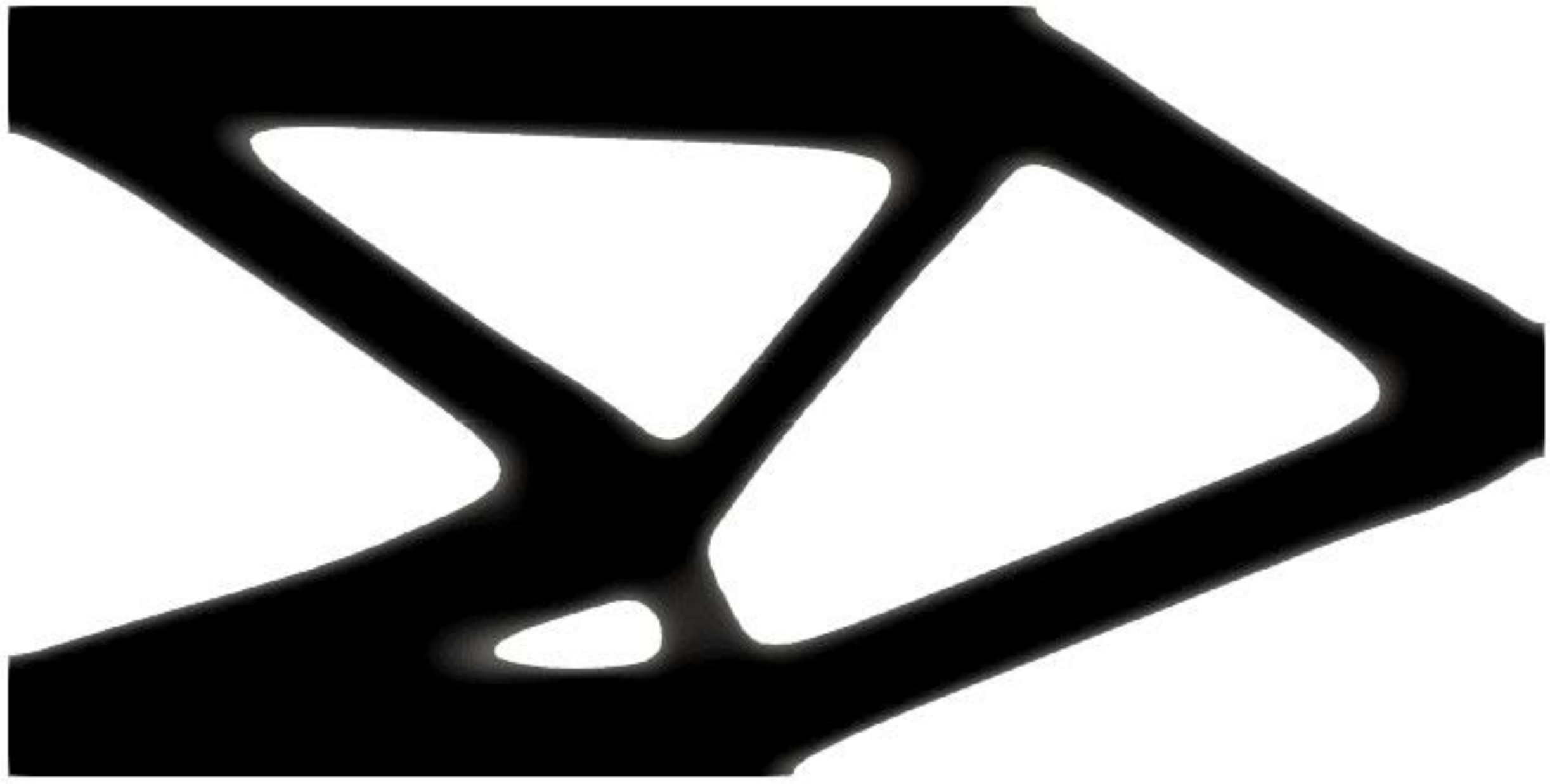}
        \subcaption{Step\,200}
        \label{6-c}
      \end{minipage} 
           \begin{minipage}[t]{0.24\hsize}
        \centering
        \includegraphics[keepaspectratio, scale=0.1]{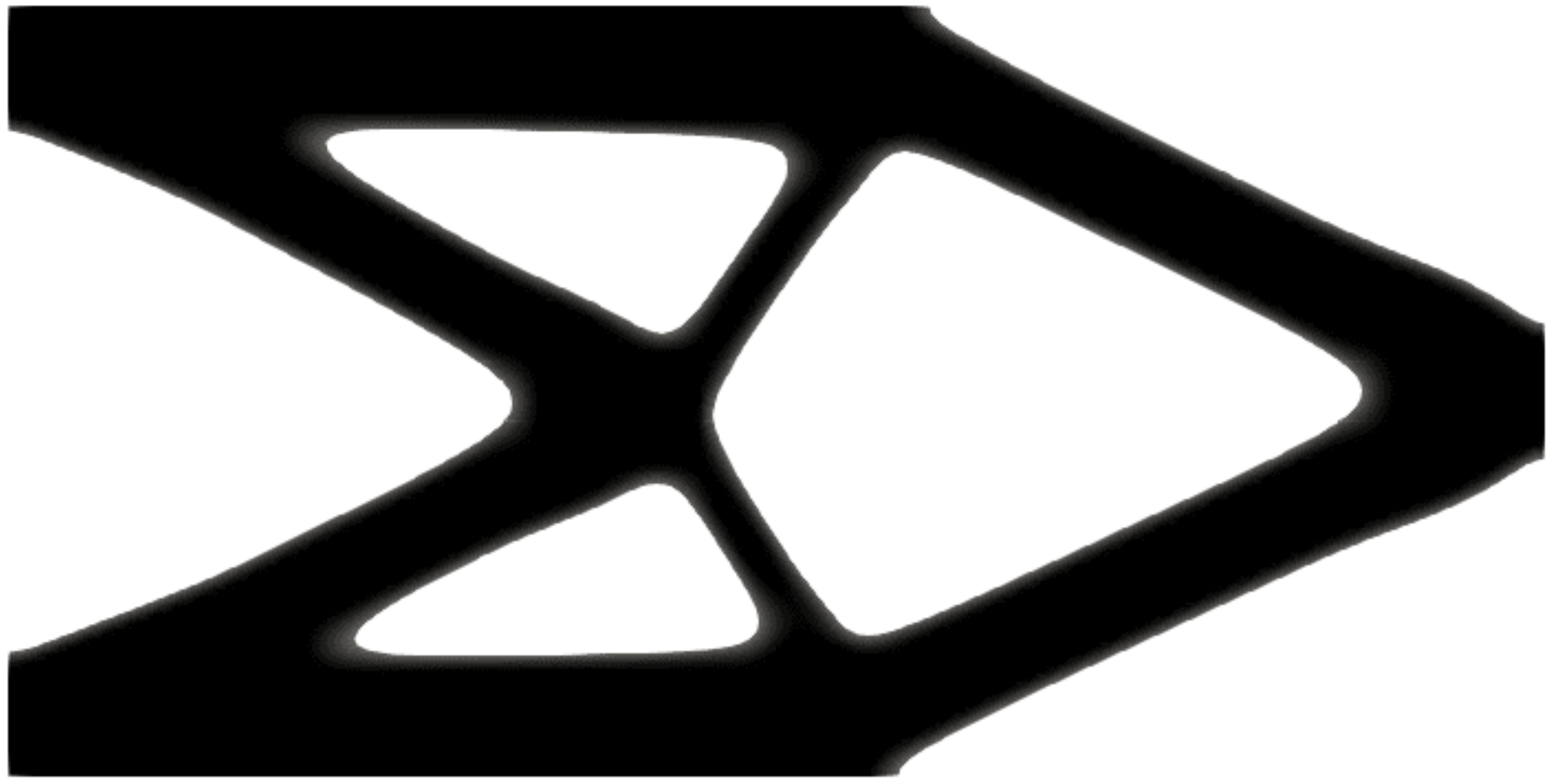}
        \subcaption{Step\,527$^{\#}$}
        \label{6-d}      
        \end{minipage} 
         \\ 
    \begin{minipage}[t]{0.24\hsize}
        \centering
        \includegraphics[keepaspectratio, scale=0.202]{n0.pdf}
        \subcaption{Step\,0}
        \label{6-e}
      \end{minipage} 
      \begin{minipage}[t]{0.24\hsize}
        \centering
        \includegraphics[keepaspectratio, scale=0.1]{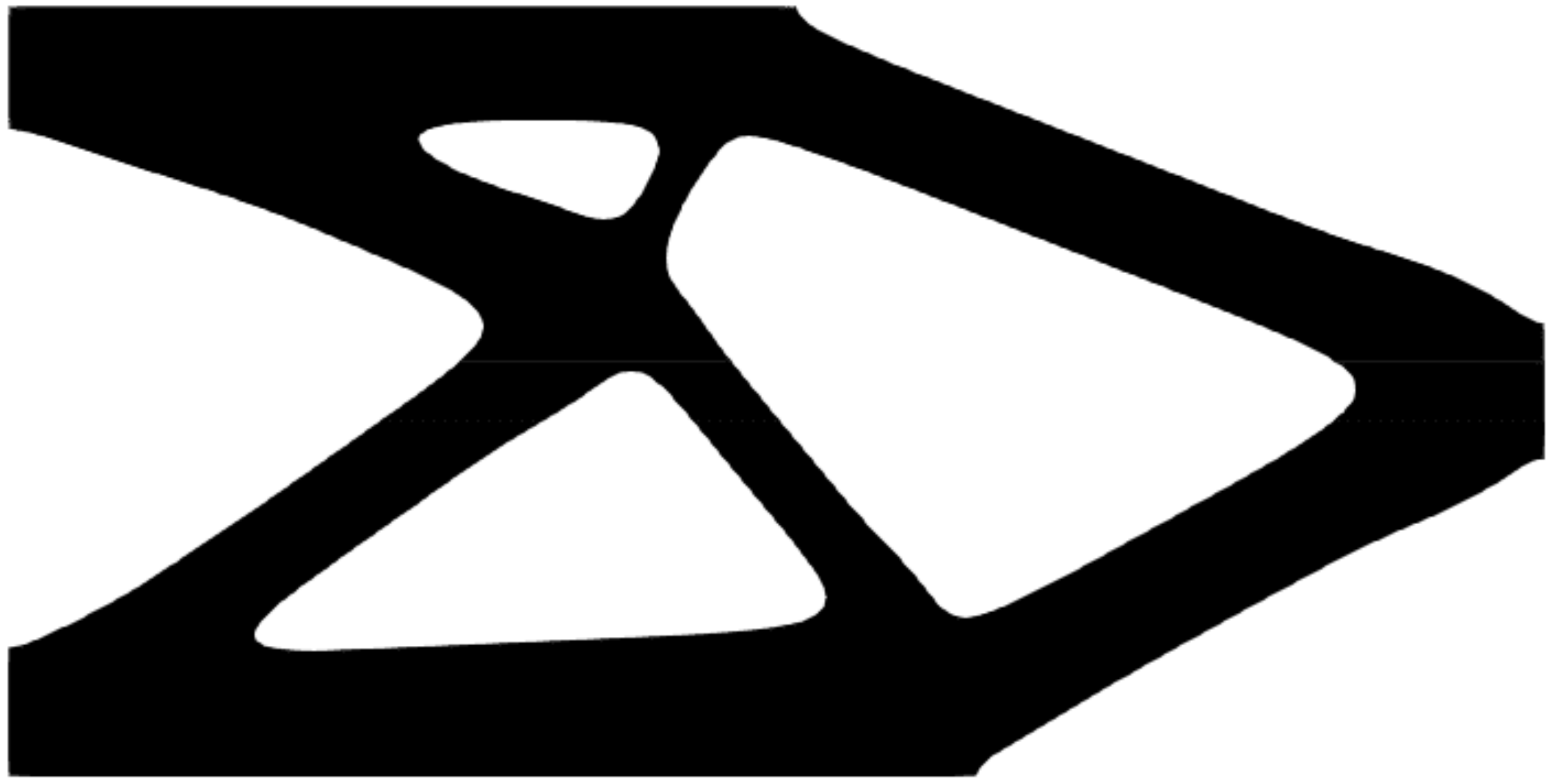}
        \subcaption{Step\,100}
        \label{6-f}
      \end{minipage} 
       \begin{minipage}[t]{0.24\hsize}
        \centering
        \includegraphics[keepaspectratio, scale=0.1]{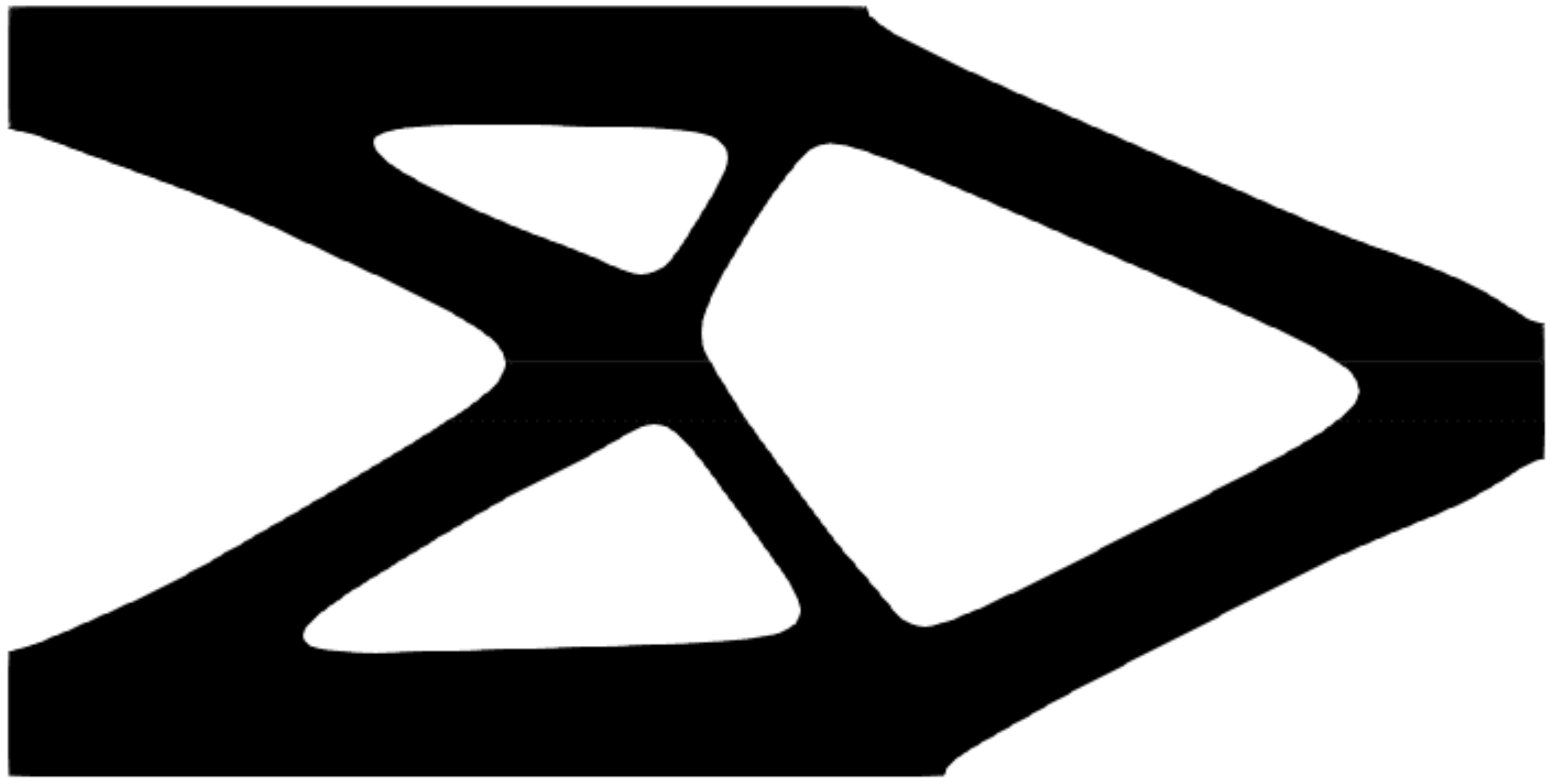}
        \subcaption{Step\,200}
        \label{6-g}
      \end{minipage} 
           \begin{minipage}[t]{0.24\hsize}
        \centering
        \includegraphics[keepaspectratio, scale=0.1]{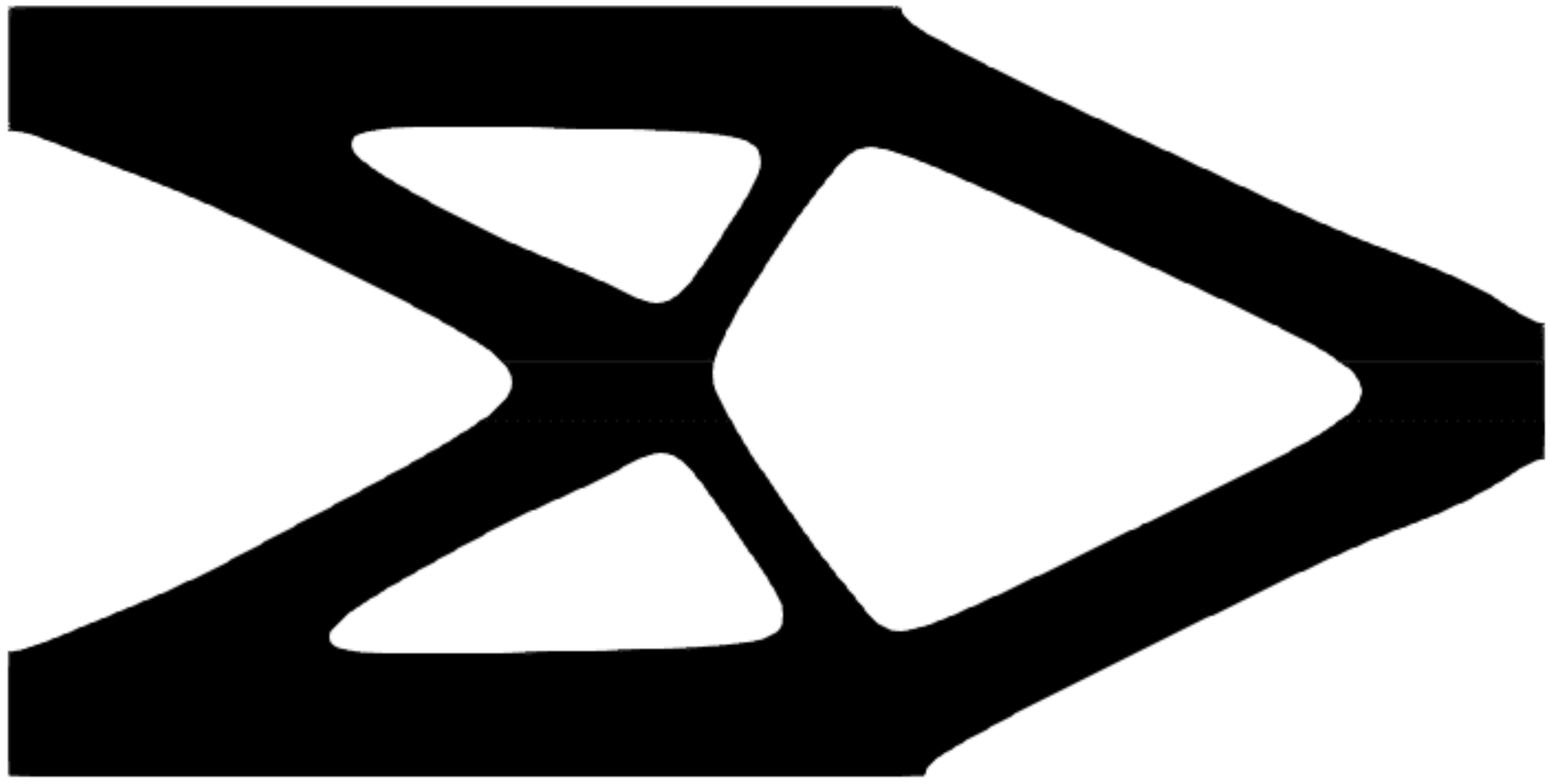}
        \subcaption{Step\,343$^{\#}$}
        \label{6-h}
        \end{minipage} 
        \\
        \begin{minipage}[t]{0.24\hsize}
        \centering
        \includegraphics[keepaspectratio, scale=0.202]{n0.pdf}
        \subcaption{Step\,0}
        \label{8-a}
      \end{minipage} 
      \begin{minipage}[t]{0.24\hsize}
        \centering
        \includegraphics[keepaspectratio, scale=0.1]{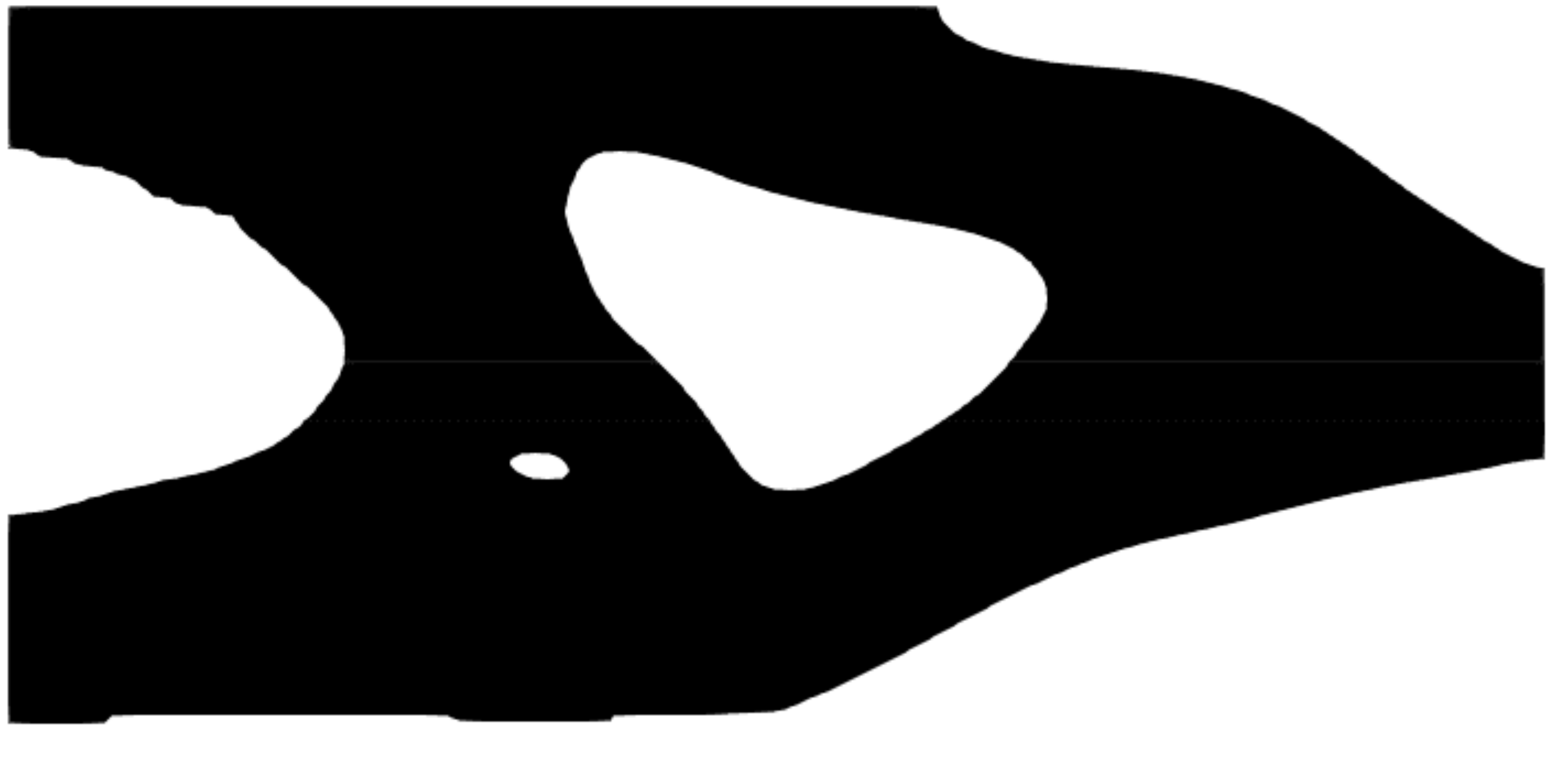}
        \subcaption{Step\,10}
        \label{8-b}
      \end{minipage} 
         \begin{minipage}[t]{0.24\hsize}
        \centering
        \includegraphics[keepaspectratio, scale=0.1]{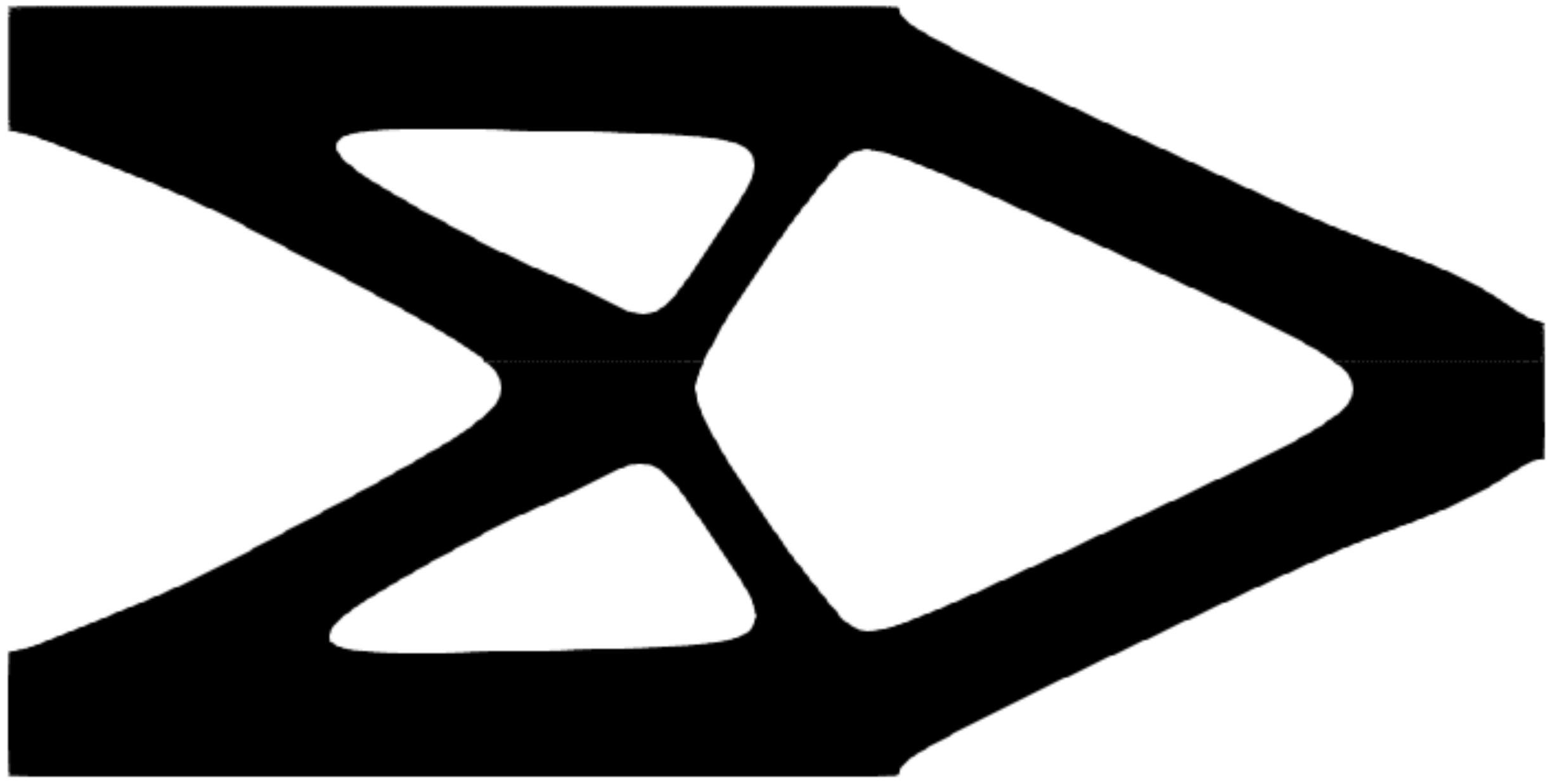}
        \subcaption{Step\,100}
        \label{8-c}
      \end{minipage}
           \begin{minipage}[t]{0.24\hsize}
        \centering
        \includegraphics[keepaspectratio, scale=0.1]{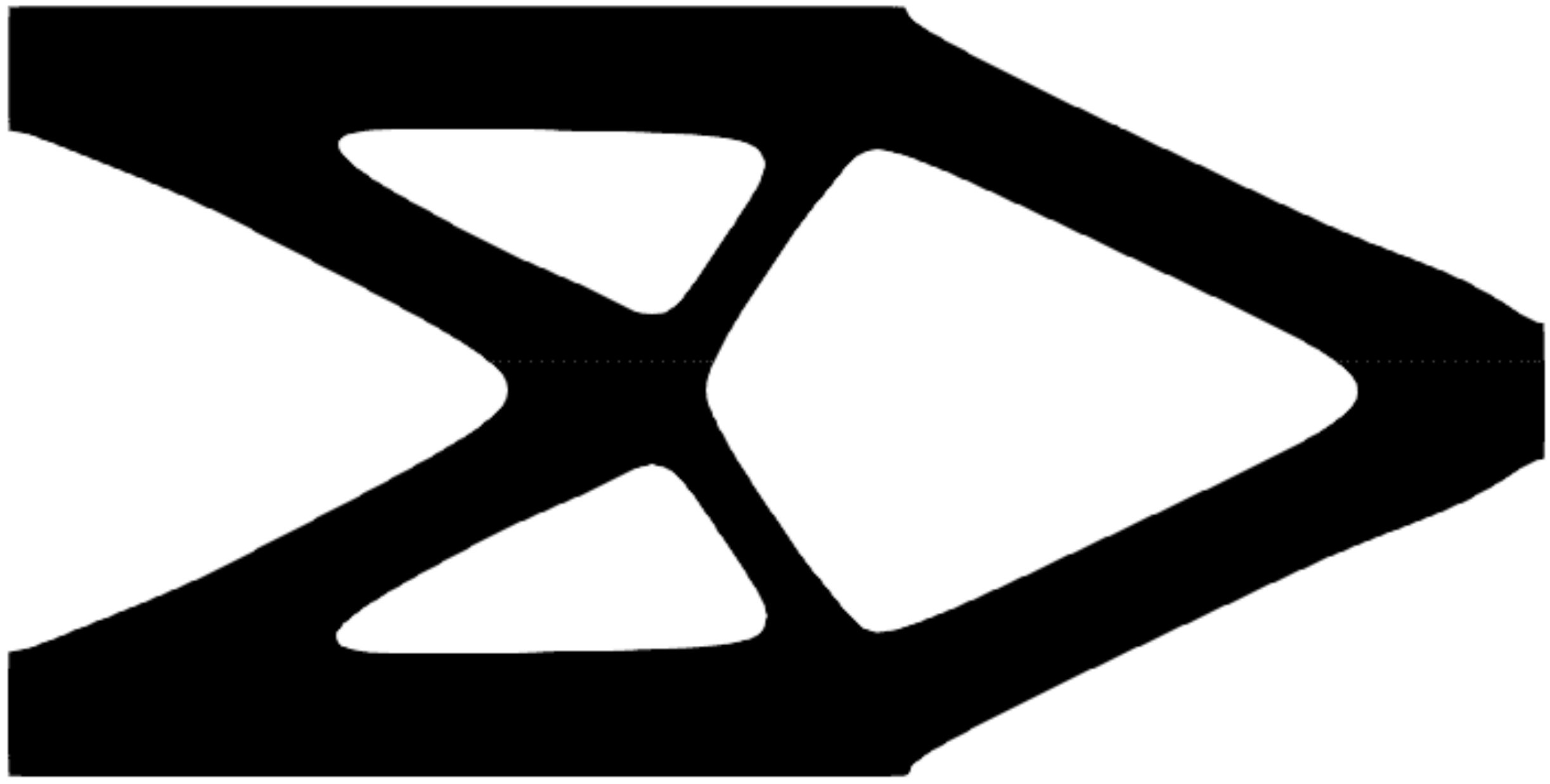}
        \subcaption{Step\,163$^{\#}$}
        \label{8-d}
      \end{minipage}\\
           \end{tabular}
     \caption{
     Configuration $\Omega_{\phi_n}\subset D$ for the case where the initial configuration is the upper domain. 
Figures (a)--(d), (e)--(h) and (i)--(l) represent $\Omega_{\phi_n}\subset D$ using  \eqref{discNLD} for $q=6$, \eqref{discNNLD} for $q=6$ and the combination of \eqref{discDNLD} and \eqref{discNNLD} for $q=6$. The symbol ${}^{ \#}$ implies the final step.   
     }
     \label{fig:nfmc}
  \end{figure}  
Obviously, the method using \eqref{discNLD} converges
faster to the optimal configuration than that with the RDE, and the method using \eqref{discNNLD} converges it even faster.
Thus, one can conclude from Figures \eqref{5-e}--\eqref{5-h} and \eqref{5-i}--\eqref{5-l} that there exists $q\in (1,+\infty)$ such that the algorithm constructed in this section rapidly improves the convergence to the optimal configuration more than that in \S \ref{S:algo}. 

On the other hand, as shown in Figure \ref{fig:nfmc}, all methods converge faster than the method using the RDE for $q=6$ (see \eqref{5-a}--\eqref{5-d} in Figure \ref{fig:nmc}). 
However, the improvement does not seem to be as great as in the previous case $q=5$.

We thus propose a method to further improve the convergence, and then we show numerically that the convergence for $q=6$ can be markedly improved below.  
Compare Figures \ref{fig:mc}, \ref{fig:nmc} with \ref{fig:nfmc}. It can be seen that Figure \ref{fig:nfmc} requires a large amount of updating for LSFs since the initial configuration needs to be changed markedly to obtain the optimized configuration. 
As already mentioned in \S \ref{S:nld}, we recall that, for \eqref{nld} and \eqref{DNLD}, $|v|^{1-p}$ and $|\nabla v|^{2-p}$ correspond to the diffusion coefficients, respectively. In other words, as shown in Figure \ref{fig:plp}, the diffusion coefficient is large near boundary structures if $q>1$ ( i.e.,~the FDE), and larger movements of the shape are expected. In contrast, for the degenerate $p$-Laplace case (i.e.,~$p>2$ in \eqref{DNLD}), the gradient is small near the boundary structures, and similar expectations cannot be made. 
Conversely, since the LSFs are restricted to $[-1,1]$ in Step 7 (see \S \ref{S:algo}), the gradient is likely greater in $[|\phi|<1]\cap[|\phi|\neq 0]$ (see Figure \ref{fig:plp}), and a larger change in shape can be expected than for \eqref{discNLD}. 
\begin{figure}[htbp]
   \hspace*{-5mm} 
    \begin{tabular}{cccc}
      \begin{minipage}[t]{0.32\hsize}
        \centering
        \includegraphics[keepaspectratio, scale=0.09]{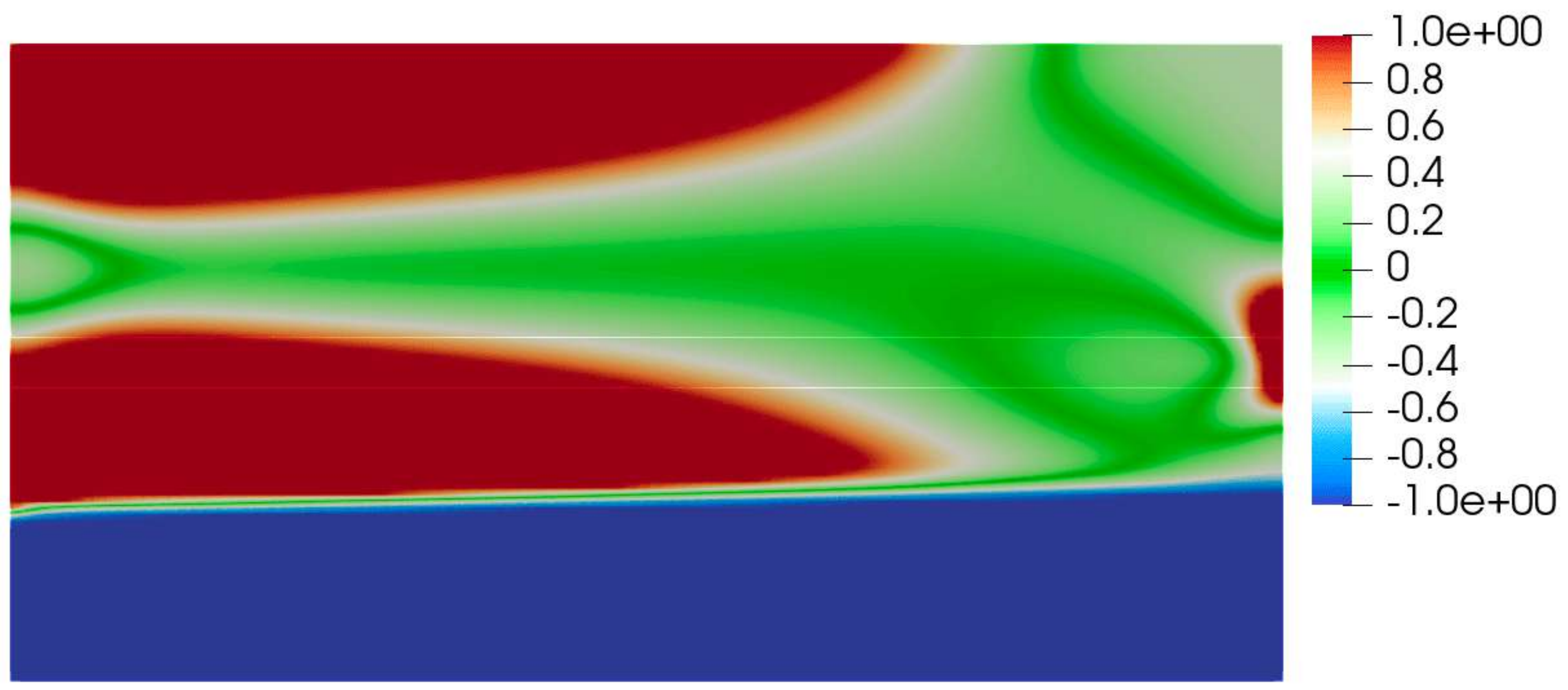}
        \subcaption{$\phi_n\in H^1(D;[-1,1])$}
        \label{7-a}
      \end{minipage} 
         \begin{minipage}[t]{0.32\hsize}
        \centering
        \includegraphics[keepaspectratio, scale=0.09]{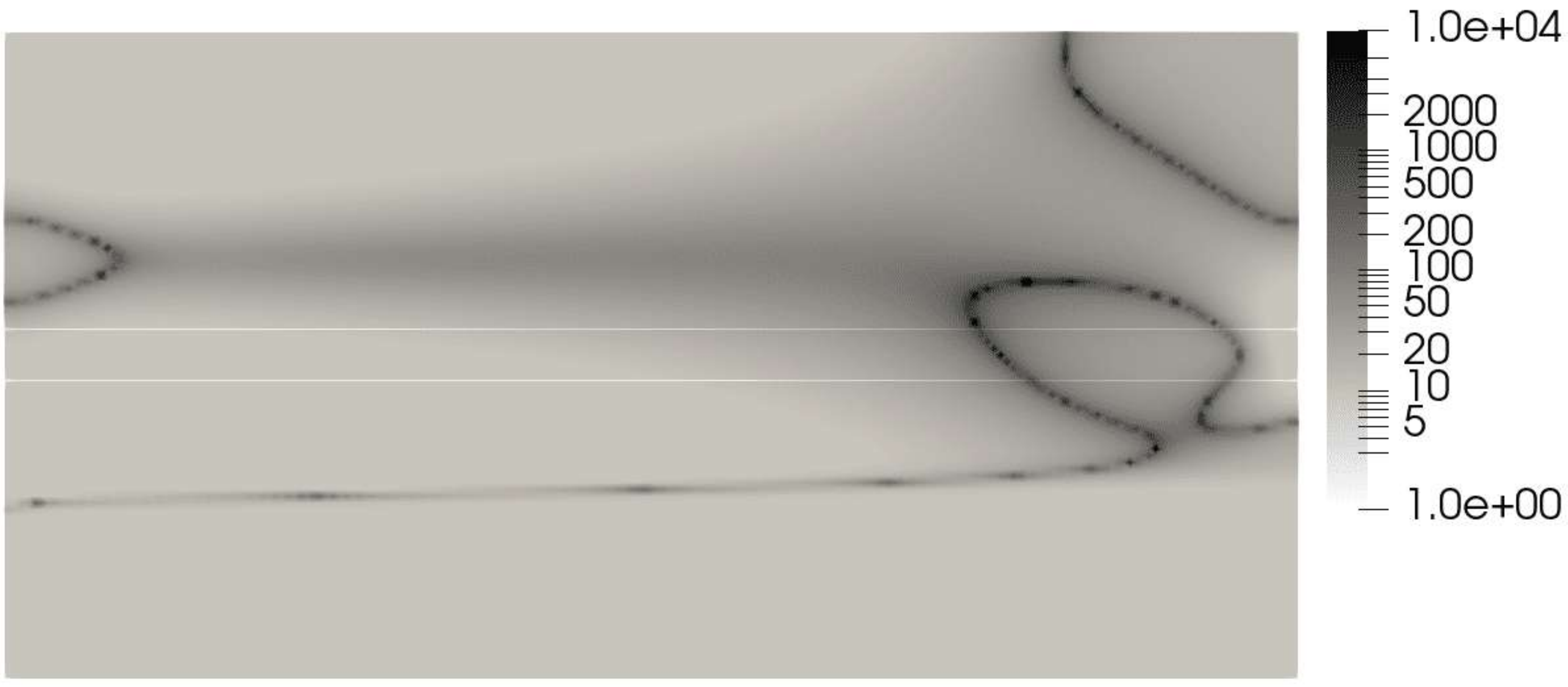}
        \subcaption{$|\phi_n|^{1-q}$ for $q>1$} 
        \label{7-b}
      \end{minipage}
           \begin{minipage}[t]{0.32\hsize}
        \centering
        \includegraphics[keepaspectratio, scale=0.09]{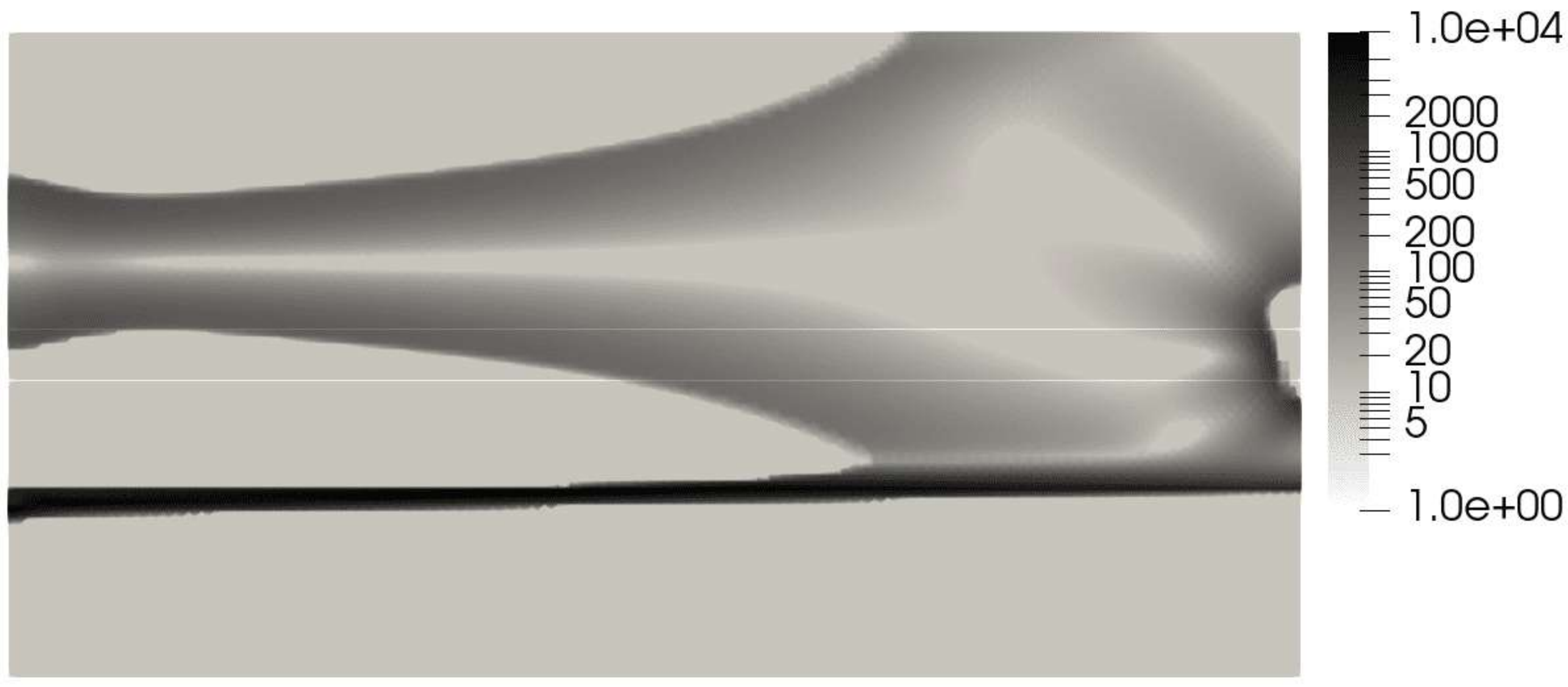}
        \subcaption{$|\nabla \phi_n|^{p-2}$ for $p>2$} 
        \label{7-c}
      \end{minipage}  
    \end{tabular}
    \caption{
    Cut-off LSFs using reaction-diffusion at $n=3$. In (a),
    red, blue and green domains represent the cut-off material domain $[\phi_n=1]$, cut-off void domain $[\phi_n=-1]$, and neighborhood of boundary structure $[|\phi_n|\le 0.2]$, respectively. 
    }
    \label{fig:plp}
\end{figure}
Hence, we establish a method using the following doubly nonlinear diffusion equation 
before applying \eqref{discNNLD}\/{\rm :}
\begin{align}
&
\int_D \tilde{q}(|\phi_{n}(x)|+\xi)^{q-1}\frac{\phi_{n+1}-\phi_{n}}{\varDelta t}(x){\psi}(x) \, \d x\nonumber\\
&\quad+\int_D\tau V_n(x)\nabla \phi_{n+1}(x)\cdot \nabla {\psi}(x)\, \d x 
=
\int_D \rho \mathcal{L}_\eta'(\phi_{n},\lambda_n) \psi(x)\, \d x \label{discDNLD}
\end{align}
for all  $\psi\in V$, where $V_n(x)=\gamma V_{n-1}+(1-\gamma)|\nabla \phi_n|^{p-2}$ and $V_0=\phi_0$.
In this study, we set $p=6$, $q=2.5$ and $\gamma=0.1$, and \eqref{discDNLD} will be used until $10$ steps. 

Figure \ref{8-a}-\ref{8-d} shows the numerical results.
As expected from Figure \ref{7-c}, the configuration on $\partial D$ is similar to the optimized configuration after only $10$ updates of the LSF.  
Furthermore, the proposed method achieves nearly the same convergence to the optimized configuration within only $100$ steps, which is similar to the result for $q=5$. 
This result demonstrates the validity of the proposed method, which is composed of both 
\eqref{discDNLD} and \eqref{discNNLD}.

\section{Conclusion}\label{S:conc}
This study proposed a topology optimization method using (doubly) nonlinear diffusion equations with reaction terms independent of the topological derivative.
Furthermore, 
 the proposed method has been applied to typical minimization problems, and three methods were established. The details are as follows\/{\rm :}
\begin{itemize}
\item[(i)] A ``relaxation method for sensitivity'' 
to derive reaction terms, which are used as an indicator of descent directions for objective functionals in level set-based topology optimization with reaction-diffusion, has been proposed and shown to be theoretically and numerically effective.
\item[(ii)] Using the singularity of the diffusion coefficient for fast diffusion to boundary structures, we theoretically have explained a ``fast convergence method'' to optimized configurations and numerically demonstrated 
the effectiveness of the proposed method. Furthermore, we have shown numerically that even fast convergence can be achieved by combining the result with Nesterov's accelerated gradient method or $p$-laplacian. 
\item[(iii)]
Conversely, using the degeneracy of the diffusion coefficient for slow diffusion to boundary structures and numerically demonstrating that objective functionals converge
for problems with the oscillation near boundary structures, we have developed a ``vanishing oscillation method'', which means that we have extended the range of applicability for the method using reaction-diffusion. 
\end{itemize}

The proposed method is a natural theoretical generalization of \cite{Y10} and will be applied to various topology optimization problems.

\end{document}